\def\timenow{\@tempcnta\time
  \@tempcntb\@tempcnta
  \divide\@tempcntb60
  \ifnum10>\@tempcntb0\fi\number\@tempcntb
  \multiply\@tempcntb60
  \advance\@tempcnta-\@tempcntb
  :\ifnum10>\@tempcnta0\fi\number\@tempcnta}
\theoremstyle{plain}
\newtheorem{theo}{Theorem}
\newtheorem{prop}[theo]{Proposition}
\newtheorem{lemme}[theo]{Lemma}
\newtheorem{cor}[theo]{Corollary}
\theoremstyle{remark}
\newtheorem{remark}[theo]{Remark}
\newtheorem{example}[theo]{Example}
\title{Spectral analysis of a class of L\'evy-type processes and connection with some spin systems}
\def\psl{\langle}
\def\psrll{\rangle_{L^2(\mathcal{L})}}
\def\psrm{\rangle_{L^2(m)}}
\def\psrr{\rangle_{L^2(\mathbb{R})}}
\def\psri{\rangle_{L^2((-1,1))}}
\def\nr{\|_{L^2(\mathbb{R})}}
\def\nri{\|_{L^2((-1,1))}}
\def\crz{\mathcal{C}_0(\mathbb{R})}
\def\pot{V_{int}}
\def\pen{U_{mass}}
\def\partfct{Z}
\def\multip{M}
\newcommand*{\affmark}[1][*]{\textsuperscript{#1}}
\author{Gr\'egoire V\'echambre\affmark[1]}
\address{\affmark[1]Academy of Mathematics and Systems Science, Chinese Academy of Sciences, No. 55, Zhongguancun East Road, Haidian District, Beijing, China}
\email{vechambre@amss.ac.cn}
\begin{document} 

\maketitle

\pagestyle{myheadings}
\markboth{Right}{Spectral analysis of a class of L\'evy-type processes and connection with Interacting Particles Systems}

\begin{abstract}
We consider a class of L\'evy-type processes on which spectral analysis technics can be made to produce optimal results, in particular for the decay rate of their survival probability and for the spectral gap of their ground state transform. This class is defined by killed symmetric L\'evy processes under general random time-changes satisfying some integrability assumptions. Our results reveal a connection between those processes and a family of spin systems. This connection, where the free energy and correlations play an essential role, is, up to our knowledge, new, and relates some key properties of objects from the two families. When the underlying L\'evy process is a Brownian motion, the associated spin system turns out to have interactions of a rather nice form that are a natural alternative to the quadratic interactions from the lattice Gaussian Free Field. More general L\'evy processes give rise to more general interactions in the associated spin systems. 
\end{abstract}

{ \footnotesize
	\noindent{\slshape\bfseries MSC 2020.} Primary:\, 60G53, 60G51, 60J25, 60J35, 82B20 \ Secondary:\, 60J55, 82B31\\
	\noindent{\slshape\bfseries Keywords.} L\'evy-type processes, L\'evy processes with random time changes, spectral analysis, ground state, ground state transform, spectral gap, survival probability, spin systems, interface models, partition function, free energy, infinite-volume Gibbs states, correlations

\section{Introduction}

A fundamental problem about Markov processes is to determine the asymptotic behavior of their survival probability (when their life-time is finite) or the speed of convergence toward their stationary distribution (when they are ergodic). Obtaining such estimates for Markov processes is a well-studied issue on which spectral analysis methods offer great insight \cite{2005chen}, \cite{2005wang}, \cite{refId0}, \cite{GONG2006151}, \cite{GONG20145639}. However, even though those methods are extremely powerful, they unfortunately do not often provide explicit optimal estimates. This is especially true for Markov processes with jumps such as L\'evy-type processes (see for example Chapter 6.1 of \cite{levymatters3}). In particular, when the survival probability, or distance to the stationary distribution, of a Markov process decays exponentially fast, one can hope to establish the positivity of the decay rate, and sometimes to produce an interval that contains it, but it is unusual to obtain exact expressions of the decay rate. Unfortunately, there seem to be no hope to develop a methodology that would produce optimal estimates and exact expressions in full generality. It seems that a more reasonable approach is to isolate subclasses of sufficiently nice Markov processes and to develop optimal methodologies that are tailored for each class. For classes of Markov processes with jumps, this is already a challenging issue. 

The goal of the present paper is to isolate a non-trivial and fairly large class of L\'evy-type processes for which we can set up an adapted spectral analysis methodology that gets through, allowing to provide optimal estimates and useful exact expressions. The most natural and convenient way to represent this class is via randomly time-changed L\'evy processes (but see also \eqref{cbregeneral} below for an SDE representation). Several important classes of Markov processes admit representations via randomly time-changed L\'evy processes, which are natural and interesting objects. Let us mention in particular Positive self-similar Markov processes (pssMp's) that include Bessel processes \cite{Lamperti2}, \cite{5e0b08215cf94b4f9738d908edd90e6f}, \cite{pardosurvey}, Continuous-state Branching processes (CSBP's) \cite{Lamperti3}, \cite{Lambert2008PopulationDA}, some generalizations of CSBP's \cite{polcsbp}, \cite{gencsbp}, diffusions in random potentials (which can be represented as functions of randomly time-changed Brownian motions) \cite{Brox}, \cite{KawazuTanaka}, \cite{Singh}, \cite{psvech}, or also the skew-product representation for planar Brownian motion \cite{legallstfloor}, and many others. In our case, we consider a class of processes that are represented by killed symmetric L\'evy processes under rather general random time-changes. Let us mention that some of the questions we study on these processes may be reformulated in terms of integral functionals of L\'evy processes (see Section \ref{afortclp} below for more details and a brief account on that topic). The main specificity of the class we are interested in is the integrability assumptions on the characteristic exponent of the underlying L\'evy process and on a function appearing in the random time-change. Those assumptions seem to be what make nice the class of processes and allow our methodology to work efficiently. We show that several key properties of those processes are related to properties of a class of spin systems. In particular, we prove optimal estimates for the asymptotic of the survival probability of a process in our class and show that the decay rate can be expressed in terms of a normalized version of the free energy of a spin system whose interactions are determined by the potential density of the underlying L\'evy process. When the killing rate $r$ goes to $0$, the study of the spin system allows, under some conditions, to determine the exact asymptotic of the decay rate of the survival probability in terms of $r$. We also study the ground state transform of a process in our class and show that its spectral gap can be expressed in terms of the free energy and of the decay rate of correlations of the associated spin system. Further properties of the processes and of their ground state transforms are related to aspects of the spin systems such as infinite-volume Gibbs states. This surprising connection between our class of processes and equilibrium statistical mechanics is, up to our knowledge, new. While this connection turns out to be useful for understanding the processes we are interested in, it also opens a way for a deeper study of the class of spin systems, which is interesting on its own. In particular, when the underlying L\'evy process is a Brownian motion, the associated spin system turns out to have interactions of a rather nice form that are a natural alternative to the quadratic interactions from the lattice Gaussian Free Field (GFF), and more general L\'evy processes give rise to more general interactions in the associated spin system. 

An interesting aspect of our methodology and results is that they are available for rather general random time-changes while, as we can see in the end of Section \ref{afortclp}, several classical classes of Markov processes consist of functions of L\'evy processes with a specific random time-change. The ideas introduced in the present paper can be applied to more general settings. In particular we aim to apply, in a subsequent study, those ideas to a fairly large class of L\'evy-type processes on Lie groups. 

\subsection{A family of L\'evy-type processes} \label{afortclp}

Let $\xi$ be a real symmetric L\'evy process. Let $\psi_\xi(\cdot)$ be the characteristic exponent of $\xi$, i.e. for any $t \geq 0$ and $y \in \mathbb{R}$ we have $\mathbb{E}[e^{iy\xi(t)}]=e^{t\psi_\xi(y)}$. According to the L\'evy-Khintchine formula and the symmetry of $\xi$, $\psi_\xi(\cdot)$ can be expressed by 
\begin{align*}
\psi_\xi(y) = -\frac{A_{\xi}}{2} y^2 + \int_{\mathbb{R}} (e^{iyu}-1-yu\textbf{1}_{[-1,1]}(u)) \Pi_{\xi}(du), 
\end{align*}
where $A_{\xi}$ is a non-negative number and $\Pi_{\xi}$ is a symmetric measure on $\mathbb{R}$ satisfying $\int_{\mathbb{R}} (1 \wedge u^2) \Pi_{\xi}(du)<\infty$. $A_{\xi}$ and $\Pi_{\xi}$ are called respectively the Gaussian component and the L\'evy measure of $\xi$. 

We now prepare for the definition of a process $X^{m,\xi,r}_x$ as a time changed version of $x+\xi(\cdot)$, where $x\in \mathbb{R}$ is the starting position of the process. As for many classical processes, we consider a time change given by an integral functional of $\xi$, but we do not impose a specific form for the random time-change. More precisely, we consider a time change defined as the inverse of the function $(t \mapsto \int_0^t m(x+\xi(s)) ds)$, where $m$ is a general positive function satisfying some assumptions. We assume the function $m$ to be square-integrable on $\mathbb{R}$. This integrability assumption is not met by some classical families of Markov processes that can be represented via randomly time-changed L\'evy processes (see the end of this subsection). However, in our case, it will be essential to define and study key quantities and operators, and to draw a connection with a spin system. We also assume $m$ to be continuous on $\mathbb{R}$ and vanishing at $\infty$ and $-\infty$ (we denote this by $m \in \crz$). The assumptions on $m$ are gathered as follows: 
\begin{equation}\tag{Condition 1}\label{mheavytailed}
m \in \crz \cap L^2(\mathbb{R}), \ \forall x \in \mathbb{R}, m(x) > 0. 
\end{equation}
We will sometimes make the stronger assumption 
\begin{equation}\tag{Condition 1'}\label{mheavytailed1}
m \in \crz \cap L^1(\mathbb{R}), \ \forall x \in \mathbb{R}, m(x) > 0. 
\end{equation}
Note that \eqref{mheavytailed1} implies \eqref{mheavytailed}. 

Let $r>0$. The killed symmetric L\'evy process $\xi^r$ is defined by $\xi^r(s):=\xi(s)$ for $s<e_r$ and $\xi^r(s):=\dagger$ for $s\geq e_r$, where $e_r$ is an exponential random variable with parameter $r$, independent of $\xi$, and $\dagger$ is a cemetery state. For any $x \in \mathbb{R}$ we define a random time-change as follows: 
\begin{equation}\label{defchgttps}
A_x(s) := \int_0^{s} m(x+\xi(u))du. 
\end{equation}
Since $m$ is bounded, continuous, and positive on $\mathbb{R}$, $A_x(\cdot)$ is almost surely continuous and increasing on $[0,\infty)$. In particular it has an inverse $A_x^{-1}(\cdot)$ that is almost surely continuous and increasing on $[0,A_x(\infty))$. For $t \geq A_x(\infty)$ we set $A^{-1}_x(t):=\infty$ by convention. We refer to Lemma \ref{axinfty} for a condition for $A_x(\infty)$ to be infinite. For any $x \in \mathbb{R}$, we define the c\`ad-l\`ag process $X^{m,\xi,r}_x$ by $X^{m,\xi,r}_x(t) := x+\xi^r(A_x^{-1}(t))$, or more precisely, 
\begin{equation}\label{defX}
X^{m,\xi,r}_x(t) := \left\{
\begin{aligned}
& x+\xi(A_x^{-1}(t)) \ \text{if} \ A^{-1}_x(t) < e_r, \\
& \dagger \ \text{if} \ A^{-1}_x(t) \geq e_r. \end{aligned} \right. 
\end{equation}
Note that $X^{m,\xi,r}_x$ is almost surely killed in finite time. Indeed, let $\zeta_x := A_x(e_r) \in (0,\infty)$, then note from \eqref{defX} that $X^{m,\xi,r}_x(t) \in \mathbb{R}$ for $t <\zeta_x$ and that $X^{m,\xi,r}_x(t) =\dagger$ for $t \geq\zeta_x$. 
We write $X^{m,\xi,r}$ to refer to the process defined in \eqref{defX} without specification of a starting point. To $X^{m,\xi,r}$ we naturally associate a family $(P_t)_{t \geq 0}$ of linear operators, defined on the space $\mathcal{B}_b(\mathbb{R})$ of bounded measurable functions $f:\mathbb{R}\rightarrow \mathbb{C}$, by 
\begin{equation}\label{defsgx}
P_t.f(x):=\mathbb{E} \left [ f(x+\xi(A_x^{-1}(t))) \textbf{1}_{A_x^{-1}(t)<e_r} \right ]. 
\end{equation}
It is proved in Lemma \ref{markovianity} that $X^{m,\xi,r}$ is an homogeneous Markovian process, so that the family $(P_t)_{t \geq 0}$ is a semigroup. The following proposition establishes that $X^{m,\xi,r}$ is even a Feller process (or sub-Feller process, in the terminology of some authors). 
\begin{prop} \label{fellerity}
Assume that $r>0$ and \eqref{mheavytailed} holds, then the semigroup $(P_t)_{t \geq 0}$ defined by \eqref{defsgx} is a Feller semigroup in the sense of Definition 1.2 of \cite{levymatters3}. 
\end{prop}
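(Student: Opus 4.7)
The plan is to verify the two defining properties of a Feller semigroup: (a) $P_t$ sends $\crz$ into itself for every $t\geq 0$, and (b) $\|P_tf-f\|_\infty\to 0$ as $t\to 0^+$ for every $f\in\crz$. The sub-Markov contraction bound $\|P_tf\|_\infty\leq \|f\|_\infty$ is immediate from \eqref{defsgx}.

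For the vanishing-at-infinity part of (a), I would use the crude bound $|P_tf(x)|\leq \|f\|_\infty\mathbb{P}(A_x(e_r)>t)$ and show that $A_x(e_r)\to 0$ almost surely along any sequence $|x_n|\to\infty$: inside each outcome $\omega$ with $e_r(\omega)<\infty$, dominated convergence applied to $s\mapsto m(x_n+\xi(s,\omega))$ on the finite interval $[0,e_r(\omega)]$ gives $A_{x_n}(e_r)(\omega)\to 0$, the pointwise convergence to $0$ coming from $\xi(s,\omega)\in\mathbb{R}$ and $m\in\crz$, while $\|m\|_\infty$ dominates uniformly. For continuity of $P_tf$, fix $x_n\to x$. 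Continuity of $m$ and dominated convergence yield $A_{x_n}(s)\to A_x(s)$ a.s.\ for every $s\geq 0$; both sides being continuous and strictly increasing in $s$, the convergence is uniform on compacts, so $A_{x_n}^{-1}(t)\to A_x^{-1}(t)$ a.s. The limit $\tau:=A_x^{-1}(t)$ is a predictable stopping time in the natural filtration of $\xi$ (announcing sequence $\tau_k=\inf\{s:A_x(s)\geq t-1/k\}$), hence the total inaccessibility of L\'evy jump times gives $\Delta\xi(\tau)=0$ a.s., whence $\xi(A_{x_n}^{-1}(t))\to\xi(\tau)$ a.s. Moreover $\mathbb{P}(\tau=e_r)=0$ because $e_r$ is independent of $\xi$ with continuous density. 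Bounded convergence applied to \eqref{defsgx} then gives $P_tf(x_n)\to P_tf(x)$.

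For (b), pointwise continuity $P_tf(x)\to f(x)$ as $t\to 0^+$ follows from $A_x^{-1}(t)\to 0$ a.s.\ (thanks to $m(x)>0$), the stochastic continuity of $\xi$ at $0$, the fact that $e_r>0$ a.s., and bounded convergence. To upgrade to uniform convergence, I would split $\mathbb{R}=\{|x|\leq R\}\cup\{|x|>R\}$. On $\{|x|>R\}$ with $R$ large, $|f(x)|$ is small; using that $|\xi(A_x^{-1}(t))|\leq \sup_{s\leq e_r}|\xi(s)|$ on the integration domain of \eqref{defsgx} (a finite random variable independent of $x$ and $t$), one bounds $|P_tf(x)|\leq \varepsilon+\|f\|_\infty\mathbb{P}(\sup_{s\leq e_r}|\xi(s)|>|x|-M)$, which is uniformly small in $t$ when $|x|$ is large. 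On $\{|x|\leq R\}$, one localises $\xi$ on $\{\sup_{u\leq t_0}|\xi(u)|\leq K\}$ for suitable $t_0,K$: on this event $m(x+\xi(u))\geq\eta:=\inf_{|y|\leq R+K}m(y)>0$, so $A_x^{-1}(t)\leq t/\eta$ for small $t$, and the uniform continuity of $f\in\crz$ combined with the tightness of $\xi$ at small times yields uniform smallness of $|P_tf(x)-f(x)|$ on this compact region.

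The hardest step is the uniform convergence in (b): because $m$ vanishes at infinity, $A_x^{-1}(t)$ does not shrink uniformly in $x$ as $t\to 0^+$, so one must really exploit the decay of $f$ at infinity and the finite lifetime $e_r$ to handle the large-$|x|$ region, while relying on a pathwise localisation of $\xi$ for the bounded region. A secondary subtlety, handled above via total inaccessibility of L\'evy jump times, is ruling out that $\xi$ jumps exactly at $A_x^{-1}(t)$ in the continuity argument.
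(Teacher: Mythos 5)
Your proposal is correct and follows essentially the same route as the paper's own proof in Section 5.2. The key ingredients are identical: the paper's Lemma \ref{chgttps} does exactly what you outline for the continuity of $P_t f$ — convergence $A_{x_n}^{-1}(t)\to A_x^{-1}(t)$ a.s.\ (the paper uses a modulus-of-continuity bound $|A_x(T)-A_{x_n}(T)|\leq T\rho(|x-x_n|)$ where you invoke a P\'olya/Dini-type argument; both give the same thing), and then continuity of $\xi$ at $A_x^{-1}(t)$ via quasi-left-continuity (the paper's Proposition~I.7 of Bertoin, which you phrase equivalently as total inaccessibility of L\'evy jump times using the announcing sequence $A_x^{-1}(t-1/k)$), and the null event $\{A_x^{-1}(t)=e_r\}$ is noted in both. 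For strong continuity, the paper also splits $\mathbb{R}$ into $\{|x|\leq 2M\}$ and $\{|x|\geq 2M\}$ and localizes $\xi$ in exactly the way you describe (lower-bounding $m$ on a band around the starting point so that $A_x^{-1}(t)\leq\tilde\delta$ for $t<t_0$, then invoking uniform continuity of $f$ and tail bounds on $\sup_{s\leq e_r}|\xi(s)|$). The one organizational difference is that the paper conditions on $\{\sup_{s\in[0,e_r]}|\xi(s)|\leq M\}$ rather than on $\{\sup_{u\leq t_0}|\xi(u)|\leq K\}$ and a separate event for $e_r$, but this is cosmetic. You correctly identified both the hardest step (uniform strong continuity at $t\to 0^+$, where the lack of a uniform lower bound on $m$ forces the compact/far split) and the secondary subtlety (ruling out jumps of $\xi$ at $A_x^{-1}(t)$).
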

Proposition \ref{fellerity} is proved in Section \ref{5.2Markov-Feller}, together with some useful facts about $X^{m,\xi,r}$. Proposition \ref{fellerity} requires that $r>0$ and that \eqref{mheavytailed} holds. In the rest of this paper, we will always make those assumptions when dealing with $(P_t)_{t \geq 0}$ so that, in particular, $(P_t)_{t \geq 0}$ enjoys all the properties of Feller semigroups. 

Our primary goal is to characterize the decay rate of the survival probability $\mathbb{P}(\zeta_x > t)$ of $X^{m,\xi,r}_x$ via spectral analysis. In general, estimating the survival probability of a Markov process is not an easy issue. For pssMp's without positive jumps, several properties of their life-time are investigated in \cite{patie2012} via elaborated tools from complex analysis, yielding in particular the behavior of the survival probability of those processes. In our case, since $\zeta_x = A_x(e_r)\leq e_r \|m\|_{\infty}$, we have the trivial bound $\mathbb{P}(\zeta_x > t) \leq e^{-rt/\|m\|_{\infty}}$, but what can be said about the exact decay rate of $\mathbb{P}(\zeta_x > t)$? Roughly, the further away the process $X^{m,\xi,r}_x$ is from $0$, the faster time runs, so the faster killing occurs. Therefore, the behavior of $\mathbb{P}(\zeta_x > t)$ strongly depends on the behaviors of both the underlying L\'evy process $\xi$ and the function $m$ (that determines the random time-change). Note also that, since $\zeta_x = A_x(e_r)=\int_0^{e_r} m(x+\xi(s)) ds$, studying $\mathbb{P}(\zeta_x > t)$ amounts to studying the right distribution tail of the integral functional $\int_0^{e_r} m(x+\xi(s)) ds$, which is also not an easy issue. Integral functionals of L\'evy processes, or more generally of Markov processes, are well studied objects with a particular focus on determining conditions for finiteness/infiniteness of those functionals \cite{10.3150/19-BEJ1167}, \cite{10.3150/18-BEJ1034}; this has applications ranging in several domains such as asymptotic properties of Schr\"odinger semigroups \cite{batty1992} or properties of SDEs driven by stable L\'evy processes \cite{bagdoerkyp}. The moments of those functionals are studied in \cite{FITZSIMMONS1999117}. Our results show that $\mathbb{P}(\zeta_x > t)$ has order $K_{m,\xi,r}(x) e^{-t\gamma_{m,\xi,r}}$, where $\gamma_{m,\xi,r}$ and $K_{m,\xi,r}(x)$ are characterized uniquely in terms of a spin system. Under some conditions, we moreover determine the asymptotic of the coefficient $\gamma_{m,\xi,r}$ when $r$ is small, that is, when the killing occurs at a low rate, so that the behavior of $\xi$ has a strong influence. In this last case, it appears from our results that the trivial bound $\mathbb{P}(\zeta_x > t) \leq e^{-rt/\|m\|_{\infty}}$ is particularly bad. We also establish further spectral properties of the process $X^{m,\xi,r}$ and of its ground state transform, which draws more connections with the associated spin system. In particular, we express the spectral gap of the ground state transform of $X^{m,\xi,r}$ in terms of the associated spin system. 

The survival probabilities of other classes of Markov processes have already been studied. For example, the integral functionals expressing the life-time of pssMp's are the so-called \textit{exponential functionals of L\'evy processes}, which have been intensively studied \cite{Bertoinyor}, \cite{MR1648657}, \cite{blr}, \cite{pardo2013}, \cite{ref6patie}, \cite{patieref8}, \cite{bersteingamma}, \cite{foncexpovech}. Determining the asymptotic of the survival probability of pssMp's requires knowledge on the right distribution tail of exponential functionals, which have been studied in particular in \cite{rivero2005}, \cite{Maulik2006156}, \cite{10.230725464921}, \cite{rivero2012}. In the other way, non-trivial properties of exponential functionals of L\'evy processes are sometimes derived from the study of pssMp's \cite{pierre2009}, \cite{patie2012}. Many nice properties of exponential functionals come from the exponential function that allows to exploit the independence of increments of L\'evy processes. This includes the computation of moments of exponential functionals (see for example \cite{Bertoinyor}), and more generally a useful functional equation satisfied by their Mellin transforms \cite{MR1648657}, \cite{Maulik2006156}, \cite{Kuznetsov2013}, \cite{Patie2013393}. In our case, the exponential function is replaced by a general function $m$ satisfying \eqref{mheavytailed}, so we are deprived from those nice properties of exponential functionals (note also that our case excludes exponential functionals, since \eqref{mheavytailed} is not satisfied by the exponential function). 

In expressing $\gamma_{m,\xi,r}$, a key object that we use is the potential density of the killed L\'evy process $\xi^r$. It is defined as the density of the potential measure $V^r_\xi(dx) := \int_0^{\infty} e^{-rt} \mathbb{P} (\xi(t) \in dx) dt$. For the potential density to be well-defined we need to assume 
\begin{equation}\tag{Condition 2}\label{hypcaractexpol-1}
\int_{|y|>1} \frac1{|\psi_\xi(y)|} dy < \infty. 
\end{equation}
Since $\xi$ is symmetric, \eqref{hypcaractexpol-1} coincides with conditions (43.5) and (43.6) from \cite{Sato}. The later imply absolute continuity of potential measures (see Remark 43.6 in \cite{Sato}, see also Lemma \ref{linkpsizpotential} in the Appendix), which has important consequences in potential theory. In particular, it implies that any one point set is reached by $\xi$ with positive probability (see Theorems 43.3 and 43.5 in \cite{Sato}). 
Note also that \eqref{hypcaractexpol-1} implies that $\xi$ is of type C in the sense of Definition 11.9 of \cite{Sato}. Indeed, if $\xi$ was of type $A$ or $B$, then from Lemma 43.11 in \cite{Sato} we would have $|\psi_\xi(y)|= o(|y|)$ as $|y|$ is large, so \eqref{hypcaractexpol-1} would not be satisfied. Recall from Theorem 21.9 of \cite{Sato} that a L\'evy process is of type $C$ in the sense of Definition 11.9 of \cite{Sato} if and only if its sample paths on finite intervals have infinite variation almost surely. 
Under \eqref{hypcaractexpol-1}, for any $r>0$, we denote by $v^r_\xi(\cdot)$ the potential density of $\xi^r$ (see Lemma \ref{linkpsizpotential} for an expression and some properties). One can see that, under \eqref{hypcaractexpol-1}, $\mathbb{E}[\zeta_x] = \mathbb{E}[A_x(e_r)]=\int_{\mathbb{R}} v^r_\xi(y)m(x+y)dy$. This suggests that, if we assume \eqref{hypcaractexpol-1}, we may hope for an expression of $\gamma_{m,\xi,r}$ in term of $m$ and $v^r_\xi(\cdot)$ as well. In our case, the potential density plays an important role in our results and in our methodology, so we assume \eqref{hypcaractexpol-1}. According to Theorem 43.9 of \cite{Sato}, \eqref{hypcaractexpol-1} is never satisfied by L\'evy processes in $\mathbb{R}^d$ for $d>1$. This is why we restrict our study to the case of real valued processes. Finally, if $\xi$ is a symmetric $\alpha$-stable L\'evy process on $\mathbb{R}$ with $\alpha>1$, then $-\psi_\xi(y) = c |y|^{\alpha}$ for some $c>0$ (see for example Theorem 14.15 in \cite{Sato}) so \eqref{hypcaractexpol-1} is clearly satisfied. 

\begin{remark} [A parallel with CSBP's]
\eqref{hypcaractexpol-1} may seem familiar as it resembles Grey's condition for a CSBP, which involves the Laplace exponent of the underlying L\'evy process and is necessary and sufficient for extinction of the CSBP in finite time with positive probability \cite{grey}. Moreover, under the Grey condition, it is possible to derive exact expressions related to a CSBP, for example for the probability of non-extinction until time $t$ \cite{grey}. 
\end{remark}

In the literature CSBP's are sometimes represented via SDEs. It allows to define their extensions and to study them via stochastic analysis technics \cite{dawsonli}, \cite{gencsbp}, \cite{csbppardopalau}, \cite{csbplihexu}. In our case, we can also give an alternative definition of $X^{m,\xi,r}_x$ as follows: Let $(X(t))_{t \geq 0}$ be the solution of the SDE 
\begin{align}
X(t) = x + \int_0^t \sqrt{\frac{A_{\xi}}{m(X(s))}} dW(s) & + \int_0^t \int_{[-1,1]} \int_0^{\frac1{m(X(s-))}} z \tilde M_1(ds,dz,du) \label{cbregeneral} \\
& + \int_0^t \int_{\mathbb{R}\setminus [-1,1]} \int_0^{\frac1{m(X(s-))}} z M_2(ds,dz,du), \nonumber
\end{align}
where $W$, $\tilde M_1(ds,dz,du)$ and $M_2(ds,dz,du)$ are independent. $W$ is a standard Brownian motion. $M_1(ds,dz,du)$ (resp. $M_2(ds,dz,du)$) is a Poisson random measure on $[0,\infty) \times [-1,1] \times [0,\infty)$ (resp. $[0,\infty) \times (\mathbb{R}\setminus [-1,1]) \times [0,\infty)$) with intensity measure $\textbf{1}_{z \in [-1,1]} ds \times \Pi_{\xi}(dz) \times du$ (resp. $\textbf{1}_{z \notin [-1,1]} ds \times \Pi_{\xi}(dz) \times du$), and $\tilde M_1(ds,dz,du) := M_1(ds,dz,du) - \textbf{1}_{z \in [-1,1]}ds \times \Pi_{\xi}(dz) \times du$. Let $e_r$ be an independent exponential random variable with parameter $r$. Provided that the SDE \eqref{cbregeneral} is well-posed we set $X^{m,\xi,r}_x(t) := X(t)$ when $\int_0^t (1/m(X(s)))ds < e_r$ and $X^{m,\xi,r}_x(t) := \dagger$ when $\int_0^t (1/m(X(s)))ds \geq e_r$. The $X^{m,\xi,r}_x$ obtained by this procedure has the same law as the one defined by \eqref{defX}. We will not justify the well-posedness of the SDE \eqref{cbregeneral}, nor the equivalence of the two definitions. We only work with the definition \eqref{defX} of $X^{m,\xi,r}$ (via \eqref{defsgx}), which allows us to conveniently exploit properties of the L\'evy process $\xi$. 

\begin{remark}
If $\xi$ is a symmetric $\alpha$-stable L\'evy process on $\mathbb{R}$ with $\alpha>1$, then we see by the time-change method that the solution of \eqref{cbregeneral} is equal in law to the solution of the more compact SDE $dX(t) = m(X(t))^{-1/\alpha} d\xi(t)$. 
\end{remark}

$X^{m,\xi,r}$ is defined via \eqref{defX} and a choice of $\xi$, $m$ and $r$ satisfying "$\xi$ is symmetric", $r>0$, \eqref{mheavytailed} and \eqref{hypcaractexpol-1}. Some classical families of Markov processes can be represented via randomly time-changed L\'evy processes, as in \eqref{defX}, but with different assumptions on $\xi$, $m$ and $r$. Let us recall, for some of these classical families, which choices of $\xi$, $m$ and $r$ they correspond to. 
\begin{itemize}
\item \textit{pssMp's:} There is no restriction on the L\'evy process $\xi$, one takes $m(x):=e^{\alpha x}$, where $\alpha>0$ is the self-similarity index, and $r \geq 0$. The pssMp is then defined as the exponential of the resulting time-changed L\'evy process \cite{Lamperti2}, \cite{5e0b08215cf94b4f9738d908edd90e6f}, \cite{pardosurvey}. See also \cite{selfsimvech} for generalizations of pssMp's where the L\'evy process $\xi$ is replaced by more general processes involving exponential functionals of bivariate L\'evy processes. 
\item \textit{CSBP's:} $\xi$ has to be chosen as a L\'evy process with no negative jumps killed upon hitting $\{0\}$, one takes $m(x)=x^{-1}$ and $r=0$, see \cite{Lamperti3}, \cite{Lambert2008PopulationDA}. 
\item \textit{Generalizations of CSBP's:} One can see that the process considered in \cite{polcsbp} corresponds to taking $\xi$ and $r$ as for CSBP's but $m(x)=x^{-\theta}$ for some $\theta>0$. In \cite{gencsbp}, their process is parametrized by three functions $\gamma_0,\gamma_1,\gamma_2$. The particular case $\gamma_0=\gamma_1=\gamma_2$ corresponds to taking $\xi$ and $r$ as for CSBP's and $m(x)=1/\gamma_0(x)$. 
\end{itemize}

\subsection{Some spin systems} \label{interpartsyst}

We now step in a framework of equilibrium statistical mechanics and define the spin system that will be related to the process $X^{m,\xi,r}$. All the spin system-related objects mentioned in this subsection will later be seen to be related to properties of the process $X^{m,\xi,r}$ (or its ground state transform), and thus allow to draw a connection between the later and the spin system. To introduce the model we adopt (and recall) the terminology of equilibrium statistical mechanics that is used for example in \cite{friedlivelenik2017}. We fix $n \geq 2$ and consider a spin system on the one dimensional box $\Lambda_n:=\{1,\dots,n\} \subset \mathbb{Z}$. Each site $j \in \Lambda_n$ is viewed as a particle and has a \textit{spin} $\omega_j \in \mathbb{R}$. A configuration of the system is determined by the spins of all particles, that is, by a vector $(\omega_1,\dots,\omega_n) \in \mathbb{R}^{\Lambda_n}$. $\mathbb{R}^{\Lambda_n}$ is thus called the \textit{configuration space} of the system. To each configuration $(\omega_1,\dots,\omega_n) \in \mathbb{R}^{\Lambda_n}$ we associated an \textit{energy} $\mathcal{H}_{\Lambda_n}(\omega_1,\dots,\omega_n)$. We assume nearest-neighbors interactions in the system (with periodic boundary condition) that favors agreement of the spins of neighboring sites. For this, we more precisely assume that the contribution to the energy of the pair of neighboring sites $\{j,j+1\} \subset \Lambda_n$ is given by $\pot(\omega_{j+1}-\omega_j)$ where $\pot$ is assumed to be a continuous even function that converges to $\infty$ at $\infty$. Spin systems with this form of interactions fall into the category of \textit{gradient models}. Additionally, we penalize large values of the spins by assuming that each site $j \in \Lambda_n$ also adds a contribution $\pen(\omega_j)$ to the energy, where $\pen$ is assumed to be a continuous function that converges to $\infty$ at $\infty$. The energy of the configuration $(\omega_1,\dots,\omega_n) \in \mathbb{R}^{\Lambda_n}$ is thus given by 
\begin{align}
\mathcal{H}_{\Lambda_n}(\omega_1,\dots,\omega_n) = \pot(\omega_2-\omega_1) + \dots + \pot(\omega_{n}-\omega_{n-1}) + \pot(\omega_1-\omega_{n}) + \pen(\omega_1) + \dots + \pen(\omega_n). \label{hamilt} 
\end{align}
The function $\mathcal{H}_{\Lambda_n} : \mathbb{R}^{\Lambda_n} \rightarrow \mathbb{R}$ is classically called the \textit{Hamiltonian}. That function allows to define a probability measure called the \textit{Gibbs distribution} on the configuration space $\mathbb{R}^{\Lambda_n}$ via 
\begin{align}
\mu_{\Lambda_n}(A) = \frac1{\partfct_n} \int_{A} e^{-\mathcal{H}_{\Lambda_n}(\omega_1,\dots,\omega_n)} d\omega_1\dots d\omega_n, \label{gibbs}
\end{align}
for any Borel set $A \subset \mathbb{R}^{\Lambda_n}$. In the above expression, $\partfct_n$ denotes the \textit{partition function} of the system of $n$ particles and is defined by 
\begin{align}
\partfct_n := \int_{\mathbb{R}^{\Lambda_n}} e^{-\mathcal{H}_{\Lambda_n}(\omega_1,\dots,\omega_n)} d\omega_1\dots d\omega_n. \label{defzntrans} 
\end{align}
Of course, the Gibbs measure $\mu_{\Lambda_n}(\cdot)$ from \eqref{gibbs} is well-defined only if $\partfct_n<\infty$. We will work with particular choices of $\pot$ and $\pen$ that ensure $\partfct_n<\infty$ for all $n \geq 2$. The \textit{normalized free energy} of the system is defined by 
\begin{align}
\mathcal{E} := \lim_{n \rightarrow \infty} -\frac1{n} \log (\partfct_{n}), \label{deffreeenergytrans}
\end{align}
when the latter limit exists. That quantity is often related to some physical properties of spin systems. The model presented above ranges in the category of \textit{effective interface models}. Such models often play the role of approximations for interfaces occurring in more realistic models. A particularly famous example of effective interface model is the \textit{lattice Gaussian Free Field} (GFF). In particular, the lattice GFF on $\Lambda_n$ corresponds to the above model with the choice $\pot(x)=\beta x^2$ and $\pen(x) = \lambda x^2$, for some $\beta,\lambda \geq 0$ (see Chapter 8 of \cite{friedlivelenik2017}). 

Among objects of interest are the limit distributions of spins in fixed finite regions, as the size $n$ of the global system goes to infinity. The distribution of the spins in the region $\{1,\dots,k\}$ within the system of size $n$ is determined by the quantities 
\begin{align}
L^k_n(f_1,\dots,f_k):= \int_{\mathbb{R}^{\Lambda_n}} f_1(\omega_1)\dots f_k(\omega_k) \mu_{\Lambda_n}(d\omega_1\dots d\omega_n) = \frac1{\partfct_n} \int_{\mathbb{R}^{\Lambda_n}} f_1(\omega_1)\dots f_k(\omega_k) e^{-\mathcal{H}_{\Lambda_n}(\omega_1,\dots,\omega_n)} d\omega_1\dots d\omega_n, \label{defznf} 
\end{align}
for $f_1,\dots,f_k \in \mathcal{C}_b(\mathbb{R})$ (where $\mathcal{C}_b(\mathbb{R})$ denotes the space of bounded continuous functions $f : \mathbb{R} \rightarrow \mathbb{C}$). 
In equilibrium statistical mechanics, a function of the spins of a finite set of particles, such as $\prod_{j=1}^k f_j(\omega_j)$, is classically called a \textit{local observable} and quantities such as $L^k_n(f_1,\dots,f_k)$ are called \textit{finite volume thermal averages of local observables} (see for example \cite{quencorrfunct}). For any $k\geq 1$, $(L^k_n)_{n \geq 2 \vee k}$ defines a sequence of probability measures on $\mathbb{R}^k$. For any $n\geq 2$ we see that $L^1_n(d\omega_1)=l_1^n(\omega_1)d\omega_1$ where 
\begin{align}
l_1^n(\omega_1) := \frac{1}{\partfct_n} \int_{\mathbb{R}^{n-1}} e^{-\mathcal{H}_{\Lambda_n}(\omega_1,\dots,\omega_n)} d\omega_2\dots d\omega_n. \label{defdnesityln} 
\end{align}
Since we are interested in the infinite volume limit, we consider 
\begin{align}
\mathcal{L}_k(d\omega_1 \dots d\omega_k) := \lim_{n \rightarrow \infty} L^k_n(d\omega_1 \dots d\omega_k), \label{deflimstaeonetyppart}
\end{align}
when the latter limit exists for the convergence in distributions. The limit laws $\mathcal{L}_k$ are often called \textit{infinite-volume Gibbs states}. Determining such limits of marginals is closely related to the problem of identifying \textit{infinite volume Gibbs measures} (see Chapters 6 and 8 in \cite{friedlivelenik2017}). However, the study of infinite volume Gibbs measures is beyond the scope of this article so we will not develop that aspect. 

In equilibrium statistical mechanics, it is also of interest to study the decay rate of correlations of local observables, since it provides information on correlation lengths in the global system. Let us consider, in the infinite volume limit system, observables of the spins of two particles that are at distance $k$ from each other (for example the first and the $(k+1)^{th}$ particle). We assume that those observables are given by functions $f,g \in \mathcal{C}_b(\mathbb{R})$ of the spins of the two particles. Then their correlation is given by 
\begin{align}
C_k(f,g) := \int_{\mathbb{R}^{k+1}}f(\omega_1)g(\omega_{k+1})\mathcal{L}_{k+1}(d\omega_1 \dots d\omega_{k+1})-\left (\int_{\mathbb{R}}f(\omega)\mathcal{L}_{1}(d\omega) \right ) \left (\int_{\mathbb{R}}g(\omega)\mathcal{L}_{1}(d\omega) \right ). \label{defcorrel}
\end{align}
An important question is to determine the exact decay rate of those correlations as $k$ goes to infinity. 

Let us notice a convenient duality satisfied by our spin systems. 
\begin{remark} [Fourier duality for the partition function] \label{altexpr}
Let $m$ be a function satisfying \eqref{mheavytailed1} and let $\hat m$ denote the characteristic function of the measure $m(x) dx$, ie $\hat m(z) :=\int_{\mathbb{R}} e^{-izx}m(x)dx$. Fix $r>0$, and let $\xi$ be a symmetric L\'evy process satisfying \eqref{hypcaractexpol-1}. Then under the choice $\pot:=-\log(v^r_\xi(\cdot))$ and $\pen:=-\log(m(\cdot))$ we have that, for all $n\geq 2$, $\partfct_n$ (defined by \eqref{defzntrans}) is well-defined and satisfies $\partfct_n = \hat \partfct_n /(2\pi)^n$ where 
\begin{align}
\hat \partfct_n := \int_{\mathbb{R}^n} \frac{\hat m(z_2-z_1) \times \dots \times \hat m(z_{n}-z_{n-1}) \times \hat m(z_1-z_n)}{(r-\psi_\xi(z_1)) \times \dots \times (r-\psi_\xi(z_n))} dz_1\dots dz_n. \label{altexpr1}
\end{align}
This is justified in Appendix \ref{proofaltexpr}. In particular we have 
\begin{align}
\partfct_2 = \frac1{(2\pi)^2} \int_{\mathbb{R}^2} \frac{|\hat m(z_1-z_2)|^2}{(r-\psi_\xi(z_1)) \times (r-\psi_\xi(z_2))} dz_1 dz_2. \label{altexpr2}
\end{align}
\end{remark}
Remark \ref{altexpr} can be interpreted as a duality between a system and a dual system, obtained by taking the Fourier transforms of $e^{-\pen}$ and $e^{-\pot}$ and reversing their roles. Under some conditions of symmetry and positivity, the dual system can also satisfy the requirements of our definitions. In Section \ref{rgoesto0}, this duality will facilitate the study of the behavior of the normalized free energy $\mathcal{E}$ as $r$ goes to $0$, in order to prove Corollary \ref{asymptrgoto0} below. 

In the spin system we introduced above, a rather natural choice for the interaction function $\pot$ would be $\pot(x)=\beta |x|$, where the parameter $\beta$ is usually interpreted physically as the inverse of the temperature. A key observation is that this can be related to the potential density of a killed Brownian motion. Indeed, when $\xi^r$ is a standard Brownian motion killed at rate $r$ then $v^r_\xi(x)=e^{-\sqrt{2r} |x|}/\sqrt{2r}$ (see e.g. Example 30.11 of \cite{Sato}). This leads us to naturally wonder how the spin system with interaction function $\pot(x)=-\beta |x|$ can be related to a killed Brownian motion. More generally, this suggests that negative logarithms of potential densities of general symmetric L\'evy processes are good candidates to generalize classical interaction functions in spin systems, and motivates investigating the relation between 1) the systems with those general interaction functions and 2) the corresponding killed L\'evy processes. In the following subsection we state our main results that establish the connection between the two objects. To the best of our knowledge, such a connection between the two problems is new (both in the Brownian case and in the general case). Let us mention that a limited parallel can be made between the connection we just mentioned and the so-called \textit{random walk representations} for spin systems. Those representations connect spin systems to random walks on their lattices and are fundamental tools for the study of the lattice GFF in particular and of other spin systems. Good references on such representations are \cite{roberto1992}, \cite{rwrepres2000}, \cite{cmp/1103920749}, and Chapter 8 of \cite{friedlivelenik2017} for the particular case of the lattice GFF. The connection between a spin system and its associated random walk allows to get a hand on correlations in the system and usually involves the potential of the random walk (through its Green function). Those aspects bear some similarities with the connection we present in the following subsection. However, one of the limits to the analogy is that, in our case, the spin system is connected to a process on $\mathbb{R}$ instead of a process living on the same lattice. 

\subsection{Main results} \label{mainres}

The following result establishes the exponential decay of the survival probability $\mathbb{P}(\zeta_x > t)$ and relates in particular the decay rate with the normalized free energy of the spin system defined in Section \ref{interpartsyst}. 
\begin{theo} \label{encadrementtransfreeenergythtrans}
Assume that $r>0$ and \eqref{mheavytailed} and \eqref{hypcaractexpol-1} hold. There is a positive constant $\gamma_{m,\xi,r}$ and, for any $x\in \mathbb{R}$, a positive constant $K_{m,\xi,r}(x)$, such that 
\begin{align}
\mathbb{P}(\zeta_x > t) \underset{t \rightarrow \infty}{\sim} K_{m,\xi,r}(x) e^{-t\gamma_{m,\xi,r}}. \label{expodecaysurvproba}
\end{align}
Under the choice $\pot:=-\log(v^r_\xi(\cdot))$ and $\pen:=-\log(m(\cdot))$, the coefficients $(\partfct_n)_{n\geq 2}$ and the normalized free energy $\mathcal{E}$ (defined in respectively \eqref{defzntrans} and \eqref{deffreeenergytrans}) are well-defined and we have 
\begin{align}
\gamma_{m,\xi,r} = e^{\mathcal{E}}, \label{freeenergyth0trans}
\end{align}
and the infinite-volume Gibbs states $\mathcal{L}_k$ (defined in \eqref{defdnesityln}) are well-defined and absolutely continuous for all $k \geq 1$. Let us denote their densities by $\ell_k$. Then $\ell_1/m \in \crz \cap L^1(\mathbb{R})$, $\ell_1$ is positive on $\mathbb{R}$, $\ell_1$ is the point-wise limit of the density $l_1^n$ (defined in \eqref{defdnesityln}) as $n$ goes to infinity, and 
\begin{align}
K_{m,\xi,r}(x) = \sqrt{\frac{\ell_1(x)}{m(x)}} \int_{\mathbb{R}} \sqrt{m(z) \ell_1(z)} dz. \label{microstaedensity}
\end{align}
If moreover we assume that \eqref{mheavytailed1} holds then we additionally have the bound 
\begin{align}
\frac1{\|m\|_{L^1(\mathbb{R})}} \times \frac1{v^r_\xi(0)} \leq \gamma_{m,\xi,r} \leq \frac{\|m\|_{L^1(\mathbb{R})} \times v^r_\xi(0)}{\partfct_2}. \label{inegspecgaptrans}
\end{align}
\end{theo}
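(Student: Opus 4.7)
The plan is to carry out a spectral analysis of the symmetric integral operator $\mathfrak{T}$ acting on $L^2(\mathbb{R})$ via
\begin{equation*}
\mathfrak{T}f(x) := \sqrt{m(x)} \int_{\mathbb{R}} v^r_\xi(y-x)\sqrt{m(y)}\,f(y)\,dy,
\end{equation*}
which is the natural symmetrization of the Green operator of $X^{m,\xi,r}$. Indeed, using \eqref{defchgttps} and the defining property of $v^r_\xi$, one checks that $\int_0^\infty P_t f(x)\,dt = \int_{\mathbb{R}} v^r_\xi(y-x) m(y) f(y)\,dy$, so that conjugation by multiplication by $\sqrt{m}$ carries the formal generator $\mathcal{A} := m^{-1}(r-L_\xi)$ on $L^2(m)$ to an operator on $L^2(\mathbb{R})$ whose inverse is $\mathfrak{T}$. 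In particular $P_t = e^{-t\mathcal{A}}$, and the spectral analysis of $(P_t)$ is reduced to that of the explicit integral operator $\mathfrak{T}$.

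The first step is to show that $\mathfrak{T}$ is a positive compact self-adjoint operator. Self-adjointness and positivity are immediate from symmetry and pointwise positivity of the kernel. Compactness, which I expect to be the main technical obstacle under \eqref{mheavytailed} alone, should be obtained by truncating $m$ and $v^r_\xi$ to reduce to Hilbert--Schmidt approximations (the kernel is readily Hilbert--Schmidt under the stronger \eqref{mheavytailed1}, since $\iint m(x) m(y) v^r_\xi(y-x)^2 dx dy \le v^r_\xi(0)\|m\|_{L^1}\|m\|_\infty\cdot \|v^r_\xi\|_\infty$). A Perron--Frobenius / Jentzsch argument, leveraging the strict positivity of the kernel $\sqrt{m(x)m(y)}\,v^r_\xi(y-x)>0$ together with the irreducibility afforded by \eqref{hypcaractexpol-1} (which ensures that any point is reached by $\xi$ with positive probability), then yields simplicity of the top eigenvalue $\lambda_1 = \|\mathfrak{T}\|$ and the existence of a strictly positive normalized eigenfunction $\phi_1 \in L^2(\mathbb{R})$. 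Setting $\gamma_{m,\xi,r} := 1/\lambda_1$, the spectral decomposition $\mathfrak{T} = \sum_k \lambda_k \phi_k \otimes \phi_k$ gives the kernel representation
\begin{equation*}
P_t \mathbf{1}(x) = \sum_k e^{-t/\lambda_k}\,\frac{\phi_k(x)}{\sqrt{m(x)}}\int_{\mathbb{R}}\phi_k(y)\sqrt{m(y)}\,dy,
\end{equation*}
from which \eqref{expodecaysurvproba} follows by isolating the dominant $k=1$ term. Verifying finiteness of $\int \phi_1 \sqrt{m}$ (a posteriori the statement $\ell_1/m\in L^1(\mathbb{R})$) and negligibility of the remainder tail as $t\to\infty$ is a delicate point that I would address by bootstrapping regularity of $\phi_1$ from the fixed-point equation $\lambda_1\phi_1 = \mathfrak{T}\phi_1$ and smoothing properties of convolution with $v^r_\xi$.

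The link with the spin system rests on the observation that, when $\pot = -\log v^r_\xi$ and $\pen = -\log m$, the cyclic product $\prod_j m(\omega_j)$ exactly cancels the denominators produced by writing $v^r_\xi(\omega_{j+1}-\omega_j) = \mathfrak{T}(\omega_j,\omega_{j+1})/\sqrt{m(\omega_j)m(\omega_{j+1})}$, where $\mathfrak{T}(\cdot,\cdot)$ denotes the kernel of $\mathfrak{T}$. This yields the clean identities
\begin{equation*}
\partfct_n = \mathrm{Tr}(\mathfrak{T}^n) = \sum_k \lambda_k^n, \qquad \partfct_n\,l_1^n(x) = \mathfrak{T}^n(x,x) = \sum_k \lambda_k^n\,\phi_k(x)^2.
\end{equation*}
Finiteness of $\partfct_n$ for $n\ge 2$ follows from $\mathfrak{T}^n$ being trace-class; taking the $n$-th root of the first identity gives $\mathcal{E} = -\log\lambda_1$, i.e.\ \eqref{freeenergyth0trans}. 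Dividing and letting $n\to\infty$ forces $l_1^n(x)\to\phi_1(x)^2 =: \ell_1(x)$ pointwise, which combined with the spectral formula for $K_{m,\xi,r}$ established above yields \eqref{microstaedensity}. Repeating the same cyclic-cancellation manipulation inside \eqref{defznf} identifies the limiting integrand of $L^k_n$ with $\lambda_1^{-(k-1)}\phi_1(\omega_1)\mathfrak{T}(\omega_1,\omega_2)\cdots \mathfrak{T}(\omega_{k-1},\omega_k)\phi_1(\omega_k)$, producing existence, absolute continuity, and an explicit formula for the densities $\ell_k$ of the infinite-volume Gibbs states.

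Finally, under \eqref{mheavytailed1} the diagonal of the kernel equals $m(x) v^r_\xi(0)$ and is integrable, giving $\partfct_1 = \mathrm{Tr}(\mathfrak{T}) = v^r_\xi(0)\|m\|_{L^1(\mathbb{R})}$. The elementary inequalities $\lambda_1 \le \sum_k \lambda_k = \partfct_1$ and $\lambda_1 \ge (\sum_k \lambda_k^2)/(\sum_k \lambda_k) = \partfct_2/\partfct_1$ (the latter from $\lambda_k^2 \le \lambda_1 \lambda_k$) then invert into the two-sided bound \eqref{inegspecgaptrans}.
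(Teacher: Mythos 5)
Your proposal is correct in substance, and takes a genuinely different (arguably more direct) route than the paper. You diagonalize the kernel operator $\mathfrak{T}$ on $L^2(\mathbb{R})$ with kernel $\sqrt{m(x)}\,v^r_\xi(y-x)\,\sqrt{m(y)}$, obtained by $\sqrt{m}$-conjugation of the potential operator $U_0$; the paper instead works on $L^2((-1,1))$ with the operator $R_r = \int_{\mathbb{R}} m(y) P^r_y\,dy$ built from Fourier-side projections $P^r_y$ onto the vectors $\phi^r_y$. After unwinding the Gram identity $\langle R_r h,h\rangle = \int m(y)|\langle\phi^r_y,h\rangle|^2\,dy$, the paper's $R_r$ is nothing but $v^r_\xi(0)^{-1}\mathfrak{T}$ seen through a unitary, so the two are isospectral and the trace identities $\partfct_n = \mathrm{Tr}(\mathfrak{T}^n) = v^r_\xi(0)^n\mathrm{Tr}(R_r^n)$ coincide. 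Your route is more transparent about where the spin-system partition function comes from; the paper's Fourier-side formulation pays off by making the $\crz \cap L^2(\mathbb{R})$ regularity of eigenfunctions immediate (via Lemma \ref{opnormq=rsrdualnew}), which you would have to recover by hand from the fixed-point equation $\lambda_n\phi_n = \mathfrak{T}\phi_n$ and the convolution smoothing $v^r_\xi * (\cdot)$ with $v^r_\xi\in L^1(\mathbb{R})\cap\crz$.

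Three points where your sketch is looser than the paper, none fatal. (i) Compactness under \eqref{mheavytailed} alone is not the obstacle you expect: $\iint m(x)m(y)v^r_\xi(y-x)^2\,dx\,dy = \int v^r_\xi(z)^2\bigl(\int m(x)m(x+z)\,dx\bigr)dz \le \|m\nr^2\,\|v^r_\xi\nr^2 < \infty$ already gives Hilbert--Schmidt, since $v^r_\xi\in L^1\cap\crz\subset L^2$ by Lemma \ref{linkpsizpotential}; also your Hilbert--Schmidt bound under \eqref{mheavytailed1} should read $\|v^r_\xi\|_{L^1(\mathbb{R})}$, not $\|v^r_\xi\|_\infty$. (ii) Pointwise positivity of the kernel gives positivity-\emph{preserving}, not operator positivity; what you actually need is that $v^r_\xi$ is a positive-definite function (Fourier transform $(r-\psi_\xi)^{-1}\ge 0$), which follows from Lemma \ref{linkpsizpotential}. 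The Jentzsch argument for simplicity of $\lambda_1$ and strict positivity of $\phi_1$ is a legitimate replacement for the paper's probabilistic support lemma (Lemma \ref{suppxtisr}); it relies on $v^r_\xi>0$ on all of $\mathbb{R}$, which \eqref{hypcaractexpol-1} delivers through the hitting of single points. (iii) The step ``$P_t=e^{-t\mathcal{A}}$, hence the spectral analysis of $P_t$ reduces to that of $\mathfrak{T}$'' hides the crux: you must show that $q_n := \phi_n/\sqrt{m}$ actually lies in $\mathcal{D}(\mathcal{A}_{X^{m,\xi,r}})\subset\crz$ so that $P_t.q_n = e^{-t/\lambda_n}q_n$ holds as a Feller-semigroup identity and not merely in $L^2(m)$; and you must justify passing the expansion through to $P_t.\mathbf{1}$ when $\mathbf{1}\notin L^2(m)$. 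In the paper this is exactly the role of the duality (Proposition \ref{strongduality}), the membership $q_n\in\mathcal{D}(\mathcal{A}_{X^{m,\xi,r}})$ (Proposition \ref{bonfctpropres}), and the truncation $f_M\uparrow\mathbf{1}$ with monotone convergence (Proposition \ref{decompsgbon2}). Your bootstrapping plan from the fixed-point equation can supply these facts, but as written it is a placeholder rather than an argument. The final trace-class and trace-formula steps, and the two-sided bound \eqref{inegspecgaptrans} from $\lambda_1\le\sum_k\lambda_k=\partfct_1$ and $\partfct_2\le\lambda_1\partfct_1$, match the paper's Proposition \ref{inegspecgap} exactly.
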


\begin{remark} \label{perboundcond}
The periodic boundary condition in the spin system appears naturally from our study of $X^{m,\xi,r}$, however that boundary condition can be removed. Indeed, if for $n\geq 2$ we define 
\begin{align}
\mathcal{H}^f_{\Lambda_n}(\omega_1,\dots,\omega_n) = \pot(\omega_2-\omega_1) + \dots + \pot(\omega_{n}-\omega_{n-1}) + \pen(\omega_1) + \dots + \pen(\omega_n), \label{perboundcondexpr}
\end{align}
$\partfct_{n}^f := \int_{\mathbb{R}^{\Lambda_n}} e^{-\mathcal{H}^f_{\Lambda_n}(\omega_1,\dots,\omega_n)} d\omega_1\dots d\omega_n$ and $\mathcal{E}_f$ as the limit of $-\log (\partfct_{n}^f)/n$, then under the assumptions of Theorem \ref{encadrementtransfreeenergythtrans} and the choice $\pot:=-\log(v^r_\xi(\cdot))$ and $\pen:=-\log(m(\cdot))$, $\mathcal{E}_f$ is also well-defined and we have $\mathcal{E}_f=\mathcal{E}$. This is justified in Appendix \ref{proofremovboundcond}. 
\end{remark}

In the following result, we characterize the asymptotic behavior of $\gamma_{m,\xi,r}$ as $r$ goes to $0$ when the characteristic exponent $\psi_\xi(\cdot)$ is regular enough near zero. 
\begin{cor} [Behavior for small $r$] \label{asymptrgoto0}
Assume that \eqref{mheavytailed1} and \eqref{hypcaractexpol-1} hold. Assume moreover that there are $\alpha \geq 1$ and $c>0$ such that $-\psi_\xi(y) \sim c |y|^{\alpha}$ as $y$ goes to $0$. Then, for the coefficient $\gamma_{m,\xi,r}$ from Theorem \ref{encadrementtransfreeenergythtrans} we have 
\begin{align}
\gamma_{m,\xi,r} \underset{r \rightarrow 0}{\sim} \frac1{\|m\|_{L^1(\mathbb{R})}} \times \frac{c^{\frac1{\alpha}} \pi}{\int_0^{\infty} \frac{1}{1+u^{\alpha}} du} r^{\frac{\alpha-1}{\alpha}} \ \ \ & \text{if} \ \alpha > 1, \label{asymptrgoto0equivregvar} \\
\gamma_{m,\xi,r} \underset{r \rightarrow 0}{\sim} \frac1{\|m\|_{L^1(\mathbb{R})}} \times c \pi \left ( \log \left ( \frac1{r} \right ) \right )^{-1} \ \ \ & \text{if} \ \alpha = 1. \label{asymptrgoto0equivregvaralpha=1}
\end{align}
\end{cor}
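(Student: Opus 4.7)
The plan is to start from the sandwich bound
\[
\frac{1}{\|m\|_{L^1(\mathbb{R})}} \times \frac{1}{v^r_\xi(0)} \leq \gamma_{m,\xi,r} \leq \frac{\|m\|_{L^1(\mathbb{R})} \times v^r_\xi(0)}{\partfct_2}
\]
provided by Theorem \ref{encadrementtransfreeenergythtrans} under \eqref{mheavytailed1}, and to show that both ends share the same asymptotic as $r \to 0$. Writing $v^r_\xi(0)$ via Fourier inversion of the potential measure (Lemma \ref{linkpsizpotential}) as $v^r_\xi(0) = (2\pi)^{-1} \int_{\mathbb{R}} dy/(r-\psi_\xi(y))$, the matching will follow from (i) a sharp asymptotic of $v^r_\xi(0)$ driven by the behavior of $\psi_\xi$ near $0$, and (ii) the equivalent $\partfct_2 \sim \|m\|_{L^1(\mathbb{R})}^2 v^r_\xi(0)^2$ extracted from the duality \eqref{altexpr2}.

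For the asymptotic of $v^r_\xi(0)$, I would fix a small $\delta>0$ and split the integral at $|y|=\delta$. The tail $\int_{|y|>\delta} dy/(r-\psi_\xi(y))$ stays bounded as $r\to 0$ by the dominated convergence theorem and \eqref{hypcaractexpol-1}. For the main part over $|y|\leq \delta$, the equivalent $-\psi_\xi(y)\sim c|y|^{\alpha}$ gives, after the substitution $y = (r/c)^{1/\alpha} u$ when $\alpha>1$,
\[
\frac{1}{2\pi}\int_{|y|\leq \delta} \frac{dy}{r-\psi_\xi(y)} \underset{r\to 0}{\sim} \frac{r^{(1-\alpha)/\alpha}}{\pi c^{1/\alpha}} \int_0^{\infty} \frac{du}{1+u^\alpha},
\]
while for $\alpha=1$ direct integration of $(r+c|y|)^{-1}$ yields $(c\pi)^{-1}\log(1/r)$. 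Plugging this into the lower bound of the sandwich already reproduces the right-hand sides of \eqref{asymptrgoto0equivregvar} and \eqref{asymptrgoto0equivregvaralpha=1}.

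To close the sandwich I would show the upper bound has the same limit, i.e. $\partfct_2 / v^r_\xi(0)^2 \to \|m\|_{L^1(\mathbb{R})}^2$. Since $\hat m(0) = \|m\|_{L^1(\mathbb{R})}$, identity \eqref{altexpr2} gives
\[
\|m\|_{L^1(\mathbb{R})}^2 v^r_\xi(0)^2 - \partfct_2 = \frac{1}{(2\pi)^2}\int_{\mathbb{R}^2} \frac{\hat m(0)^2 - |\hat m(z_1-z_2)|^2}{(r-\psi_\xi(z_1))(r-\psi_\xi(z_2))}\, dz_1\, dz_2 \geq 0.
\]
Given $\epsilon>0$, by continuity of $\hat m$ at $0$ choose $\eta>0$ with $\hat m(0)^2 - |\hat m(z_1-z_2)|^2 \leq \epsilon$ whenever $|z_1-z_2|\leq \eta$. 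Then split the domain into $\{|z_1|,|z_2|\leq \eta/2\}$, on which the integrand is bounded by $\epsilon/((r-\psi_\xi(z_1))(r-\psi_\xi(z_2)))$ and the contribution is at most $\epsilon\, v^r_\xi(0)^2$, and its complement, on which at least one variable lies in $\{|z|>\eta/2\}$ so that $(r-\psi_\xi(z))^{-1}$ has bounded integral as $r\to 0$ (again by \eqref{hypcaractexpol-1}), yielding a contribution $O(v^r_\xi(0))$. Dividing by $v^r_\xi(0)^2$, which diverges by Step~1, first letting $r\to 0$ and then $\epsilon\to 0$ establishes $\partfct_2/v^r_\xi(0)^2 \to \|m\|_{L^1(\mathbb{R})}^2$, so the upper bound of the sandwich has the same asymptotic as the lower bound.

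The main technical obstacle is the second step: extracting the precise equivalent $\partfct_2 \sim \|m\|_{L^1(\mathbb{R})}^2 v^r_\xi(0)^2$ requires carefully localizing the 2D integral near the origin using the \emph{divergence} of $v^r_\xi(0)$, rather than a pointwise asymptotic of the integrand alone. The first step, by contrast, is a routine Laplace-type analysis once the tail contribution is controlled using \eqref{hypcaractexpol-1}.
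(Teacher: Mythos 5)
Your proposal is correct and follows essentially the same route as the paper: the sandwich \eqref{inegspecgaptrans}, the asymptotic of $v^r_\xi(0)$ obtained by splitting the Fourier integral at $|y|=\delta$ and rescaling (this is the paper's Lemma \ref{asymptdenspot0}), and the equivalent $\partfct_2 \sim \|m\|_{L^1(\mathbb{R})}^2 v^r_\xi(0)^2$ extracted from \eqref{altexpr2} via continuity of $\hat m$ at $0$. The only cosmetic difference is that you bound the difference $\|m\|_{L^1(\mathbb{R})}^2 v^r_\xi(0)^2 - \partfct_2$ by $\epsilon\, v^r_\xi(0)^2 + O(v^r_\xi(0))$ rather than sandwiching the ratio between $(1-\epsilon)q_\delta(r)^2$ and $v^r_\xi(0)^2$ as the paper does, which amounts to the same estimate.
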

Corollary \ref{asymptrgoto0} shows in particular that the bound $\gamma_{m,\xi,r} \geq r/\|m\|_{\infty}$ (resulting from the trivial inequality $\mathbb{P}(\zeta_x > t) \leq e^{-rt/\|m\|_{\infty}}$) is bad for small values of $r$. We give, in Section \ref{rgoesto0}, a proof of Corollary \ref{asymptrgoto0} that mainly relies on spin system-related objects. More precisely, it relies on the inequality \eqref{inegspecgaptrans} that bounds the free energy in terms of the partition function, and on the duality from Remark \ref{altexpr} that allows to determine the asymptotic of the partition function as $r$ goes to $0$. A byproduct of this is that \eqref{inegspecgaptrans} is optimal for small values of $r$. 

\begin{example} \label{casel2}
Assume that \eqref{mheavytailed1} and \eqref{hypcaractexpol-1} hold and that $\mathbb{E}[\xi(1)^2]<\infty$. In this case, since $\mathbb{E}[\xi(1)]=0$ by the symmetry of $\xi$, we have $-\psi_\xi(y) \sim_{y \rightarrow 0} \mathbb{E}[\xi(1)^2] y^2/2$. By Corollary \ref{asymptrgoto0} applied with $c=\mathbb{E}[\xi(1)^2]/2$ and $\alpha=2$ we get 
\begin{align*}
\gamma_{m,\xi,r} \underset{r \rightarrow 0}{\sim} \frac{\sqrt{2r\mathbb{E}[\xi(1)^2]}}{\|m\|_{L^1(\mathbb{R})}}. 
\end{align*}
\end{example}

\begin{example} \label{casestable}
Assume that \eqref{mheavytailed1} holds and that $\xi$ is a symmetric $\alpha$-stable L\'evy process on $\mathbb{R}$ with $\alpha>1$, then $-\psi_\xi(y) = c |y|^{\alpha}$ for some $c>0$ (see for example Theorem 14.15 in \cite{Sato}). In particular all the assumptions from Corollary \ref{asymptrgoto0} are satisfied (with this $\alpha$ and $c$) so $\gamma_{m,\xi,r}$ satisfies the estimate \eqref{asymptrgoto0equivregvar}. 
\end{example}

\begin{remark}
Under more general assumptions on the L\'evy process $\xi$, an equivalent for $\gamma_{m,\xi,r}$ as $r$ goes to $0$ will be provided in Remark \ref{corocaserec} from Section \ref{rgoesto0}. 
\end{remark}

The following theorem establishes further spectral properties of the semigroup of the process $X^{m,\xi,r}$. $L^2(m)$ denotes the space of measurable functions $f : \mathbb{R} \rightarrow \mathbb{C}$ such that $\int_{\mathbb{R}} |f(y)|^2 m(y) dy < \infty$. 
\begin{theo} \label{diagogenetsg}
Assume that $r>0$ and \eqref{mheavytailed} and \eqref{hypcaractexpol-1} hold. The following claims hold: 
\begin{itemize}
\item $(P_t)_{t \geq 0}$ extends uniquely to a strongly continuous contraction semigroup on $L^2(m)$ (we still denote this extension by $(P_t)_{t \geq 0}$) and, for $t>0$, $P_t(L^2(m))\subset \crz$. 
\item There is an orthonormal Hilbert basis $(h_n)_{n\geq 1}$ of $L^2(m)$ such that for any $n \geq 1$, $h_n \in \crz \cap L^2(\mathbb{R}) \subset L^2(m)$ and $P_t.h_n = e^{-t\lambda^X_{n}} h_n$ where $0 < \lambda^X_{1} \leq \lambda^X_{2} \leq ...$. For any $f \in L^2(m)$ and $t>0$ we have 
\begin{align}
\forall x \in \mathbb{R}, \ P_t.f(x) = \sum_{n \geq 1} e^{- t\lambda^X_{n}} \psl f, h_n \psrm h_n(x), \label{decompsgbonexpr0}
\end{align}
where the series of functions in the right-hand side converges absolutely in $(\crz, \| \cdot \|_{\infty})$. 
\item We have $\lambda^X_{1} < \lambda^X_{2}$ and 
\begin{align}
\lambda^X_{1} = \gamma_{m,\xi,r}, \qquad \qquad h_1(x) = \sqrt{\frac{\ell_1(x)}{m(x)}}, \label{ident1stvpandfp}
\end{align}
where $\gamma_{m,\xi,r}$ and $\ell_1$ are as in Theorem \ref{encadrementtransfreeenergythtrans}. 
\item If moreover we assume that \eqref{mheavytailed1} holds then $(P_t)_{t \geq 0}$ has the strong Feller property in the sense that, for any $t>0$, we have $P_t(\mathcal{B}_b(\mathbb{R})) \subset \crz$. 
\end{itemize}
\end{theo}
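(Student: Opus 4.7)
The main tool is the integral operator $T:L^2(m)\to L^2(m)$ defined by $(Tf)(x):=\int_{\mathbb{R}} v^r_\xi(x-y)m(y)f(y)\,dy$. My plan is to show that $T$ is compact, self-adjoint and strictly positive, and that its spectral data encode everything in the statement. Self-adjointness follows from $v^r_\xi$ being even (by symmetry of $\xi$). For the Hilbert--Schmidt property, on $L^2(m)$ the kernel is $(x,y)\mapsto v^r_\xi(x-y)$, so I need $\int\!\int v^r_\xi(x-y)^2 m(x)m(y)\,dx\,dy<\infty$; assumption \eqref{hypcaractexpol-1} forces $\widehat{v^r_\xi}=1/(r-\psi_\xi)\in L^1\cap L^2(\mathbb{R})$ and hence $v^r_\xi\in L^2(\mathbb{R})$, so Young's inequality bounds the double integral by $\|v^r_\xi\|_{L^2}^2\|m\|_{L^2}^2$, finite under \eqref{mheavytailed}. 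The spectral theorem then produces an orthonormal basis $(h_n)_{n\geq 1}$ of $L^2(m)$ of eigenfunctions of $T$, with eigenvalues $\mu_n\to 0$.

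The link with $(P_t)$ comes from the fact that convolution with $v^r_\xi$ is the resolvent $(r-\mathcal{L}_\xi)^{-1}$ of the underlying symmetric L\'evy process, so $Th_n=\mu_n h_n$ rewrites as $(r-\mathcal{L}_\xi)h_n=(1/\mu_n)\, m h_n$, i.e.\ the generator of $X^{m,\xi,r}$ (formally $\mathcal{L}=(1/m)(\mathcal{L}_\xi-r)$) satisfies $\mathcal{L} h_n=-\lambda_n^X h_n$ with $\lambda_n^X:=1/\mu_n$, hence $P_t h_n=e^{-t\lambda_n^X}h_n$. The inequality $r-\psi_\xi\geq r>0$ everywhere makes $T$ strictly positive, so $\lambda_n^X>0$. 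From $h_n=\lambda_n^X\, v^r_\xi*(mh_n)$, noting that $mh_n\in L^2(\mathbb{R})$ ($m$ bounded, $h_n\in L^2(m)$) and $v^r_\xi\in L^2(\mathbb{R})$, one gets $\widehat{h_n}\in L^1\cap L^2(\mathbb{R})$, whence $h_n\in\crz\cap L^2(\mathbb{R})$; Cauchy--Schwarz on the same representation yields $\|h_n\|_\infty\leq C\lambda_n^X$ for a constant $C$ depending only on $r$, $\xi$ and $m$. Since $\lambda_n^X\to\infty$, splitting $e^{-t\lambda_n^X}$ into halves and using $\sum_n e^{-s\lambda_n^X}\lambda_n^2<\infty$ for every $s>0$ gives absolute convergence of the series \eqref{decompsgbonexpr0} in $(\crz,\|\cdot\|_\infty)$. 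This simultaneously yields $P_t(L^2(m))\subset\crz$ and that $(P_t)$ is a strongly continuous self-adjoint contraction semigroup on $L^2(m)$ (self-adjointness is also directly visible from $\langle\mathcal{L} f,g\rangle_{L^2(m)}=\langle(\mathcal{L}_\xi-r)f,g\rangle_{L^2(\mathbb{R})}$, symmetric in $f,g$ since $\mathcal{L}_\xi$ is symmetric on $L^2(\mathbb{R})$).

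For the bottom of the spectrum, the kernel $(x,y)\mapsto v^r_\xi(x-y)m(y)$ is strictly positive on $\mathbb{R}^2$ under \eqref{hypcaractexpol-1} (giving $v^r_\xi>0$) and \eqref{mheavytailed} (giving $m>0$), so Krein--Rutman for positive-kernel integral operators gives that the largest eigenvalue $\mu_1$ is simple with strictly positive eigenfunction $h_1>0$, i.e.\ $\lambda_1^X<\lambda_2^X$. To identify $\lambda_1^X=\gamma_{m,\xi,r}$ and $h_1=\sqrt{\ell_1/m}$ I use the transfer-matrix reading of the partition function: under $\pot=-\log v^r_\xi$ and $\pen=-\log m$, expanding \eqref{defzntrans} and integrating successively gives $\partfct_n=\mathrm{Tr}(T^n)=\sum_k\mu_k^n$ (equivalently $\mathrm{Tr}(\tilde T^n)$ where $\tilde T$ on $L^2(\mathbb{R})$ has symmetric kernel $\sqrt{m(x)}\,v^r_\xi(x-y)\sqrt{m(y)}$, unitarily equivalent to $T$ via $f\mapsto f/\sqrt m$). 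Hence $\partfct_n\sim\mu_1^n$, so $\mathcal{E}=-\log\mu_1=\log\lambda_1^X$, and \eqref{freeenergyth0trans} of Theorem \ref{encadrementtransfreeenergythtrans} yields $\gamma_{m,\xi,r}=e^{\mathcal{E}}=\lambda_1^X$. The same transfer-matrix expansion applied to \eqref{defdnesityln} gives $l_1^n(\omega)=(m(\omega)/\partfct_n)\sum_k\mu_k^n h_k(\omega)^2\to m(\omega)h_1(\omega)^2$ pointwise, so the infinite-volume density is $\ell_1=h_1^2 m$, and the positive square root gives $h_1=\sqrt{\ell_1/m}$ (the normalization $\int h_1^2 m=1$ being consistent with $\int\ell_1=1$).

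The last bullet is immediate: under \eqref{mheavytailed1}, every $f\in\mathcal{B}_b(\mathbb{R})$ satisfies $\|f\|_{L^2(m)}^2\leq\|f\|_\infty^2\|m\|_{L^1(\mathbb{R})}<\infty$, hence $f\in L^2(m)$ and by the first bullet $P_tf\in\crz$. The main obstacle will be running all the operator theory rigorously in the correct Hilbert space $L^2(m)$ (not $L^2(\mathbb{R})$) under only the weak integrability \eqref{mheavytailed}: in particular, verifying rigorously that the spectral data of $T$ determine the \emph{whole} semigroup of $X^{m,\xi,r}$ (so that the abstract generator is indeed the self-adjoint operator associated with $T^{-1}$, not only formally), and justifying the trace identity $\partfct_n=\mathrm{Tr}(T^n)$ together with the dominant-term extraction under our sole assumptions.
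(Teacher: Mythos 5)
Your approach is genuinely different from the paper's, and it is the more direct one. You work with the Green operator $T f = v^r_\xi * (mf)$ on $L^2(m)$ (equivalently, after the unitary $f\mapsto f\sqrt m$, with $\tilde T$ on $L^2(\mathbb{R})$ having kernel $\sqrt{m(x)}\,v^r_\xi(x-y)\sqrt{m(y)}$). The paper instead builds a Fourier-side operator $R_r$ on $L^2((-1,1))$ from rank-one projections $P^r_y$ onto functions $\phi^r_y$, and transfers the spectral data back to $L^2(m)$ via the identity $q_n(y)=\langle\phi^r_y,a_n\rangle_{L^2((-1,1))}/\sqrt{\lambda^R_n}$. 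Writing $A:=M_{(-\psi_\xi(2\pi\cdot)+r)^{-1/2}}\circ\mathcal F\circ M_{\sqrt m}$, your $\tilde T$ is $A^*A$ while the paper's $R_r$ is (up to the isometry $J$ and the constant $v^r_\xi(0)$) $AA^*$; the two therefore share the same nonzero eigenvalues, your $h_n$ are essentially the paper's $q_n$, and $\mu_n=\lambda^R_n v^r_\xi(0)$. Your trace identity $\partfct_n=\mathrm{Tr}(T^n)$ for $n\geq 2$ matches the paper's $\mathrm{Tr}(R_r^n)=\partfct_n/v^r_\xi(0)^n$ (Proposition \ref{exproftracesrec}), and your Hilbert--Schmidt bound $\|v^r_\xi\|_{L^2(\mathbb{R})}^2\|m\|_{L^2(\mathbb{R})}^2$ is correct (it is Cauchy--Schwarz on $\int m(y+u)m(y)\,dy$, not Young, but the bound is right). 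What your route buys is transparency: you avoid the change of variable $\varphi_r$, the functions $\phi^r_y$, and the detour through $L^2((-1,1))$. What the paper's route buys is a self-contained construction of the eigenbasis $(q_n)$ together with the duality of Proposition \ref{strongduality}, which directly places $q_n$ in $\mathcal D(\mathcal A_{X^{m,\xi,r}})$; you must argue this separately (as you acknowledge), namely by observing that $h_n=\lambda_n^X\,V^r_\xi(mh_n)$ with $mh_n\in\crz$, invoking $V^r_\xi(\crz)\subset\mathcal D(\mathcal A_{\xi^r})$ and Lemma \ref{generatorxnew}, and then the differentiation rule for the semigroup to pass from $\mathcal A_{X^{m,\xi,r}}h_n=-\lambda_n^X h_n$ to $P_t.h_n=e^{-t\lambda_n^X}h_n$.

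The one genuine gap is your Krein--Rutman (Jentzsch) step for $\lambda^X_1<\lambda^X_2$ and $h_1>0$. You assert that the kernel $v^r_\xi(x-y)m(y)$ is strictly positive ``under \eqref{hypcaractexpol-1} (giving $v^r_\xi>0$)'', but Lemma \ref{linkpsizpotential} only gives $v^r_\xi\geq 0$ and continuous; strict positivity of $v^r_\xi$ on all of $\mathbb{R}$ is not stated anywhere in the paper and is not immediate. It does hold under \eqref{hypcaractexpol-1} --- one can cite Corollary II.18 in \cite{Bertoin}, which gives $v^r_\xi(x)=\mathbb{E}\bigl[e^{-rT_x}\bigr]v^r_\xi(0)$, combined with the fact (Theorems 43.3 and 43.5 in \cite{Sato}, invoked in the paper's Introduction) that under \eqref{hypcaractexpol-1} every single point is hit with positive probability, hence $\mathbb{E}[e^{-rT_x}]>0$ --- but you must say this; as written the positivity of the kernel is asserted without justification. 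The paper deliberately sidesteps this potential-theoretic fact and instead establishes the simplicity of $\lambda^X_1$ and the strict positivity of $h_1$ purely probabilistically, via the support statement $\mathrm{Supp}(X^{m,\xi,r}_x(t))=\mathbb{R}\cup\{\dagger\}$ (Lemma \ref{suppxtisr}) and the argument in Lemma \ref{q1posonr}, which never needs $v^r_\xi>0$. Either route can be made to work, but yours needs the missing citation. A smaller point: you invoke \eqref{freeenergyth0trans} from Theorem \ref{encadrementtransfreeenergythtrans} to pass from $\mathcal{E}=\log\lambda^X_1$ to $\gamma_{m,\xi,r}=\lambda^X_1$; this is fine if Theorem \ref{encadrementtransfreeenergythtrans} is taken as already established, but note that in the paper the implication runs the other way (Proposition \ref{decompsgbon3} proves $\gamma_{m,\xi,r}=\lambda^X_1$ first, and \eqref{freeenergyth0trans} is then a consequence), so do not present this as an independent derivation.
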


The function $h_1$ is sometimes referred to as the \textit{ground state} and the quantity $\lambda^X_{1}$ as the \textit{ground state energy}. Note from \eqref{ident1stvpandfp} and Theorem \ref{encadrementtransfreeenergythtrans} that $h_1$ is positive on $\mathbb{R}$ so, under the assumptions of that theorem, we can define a family of measures $(\tilde p_t(x,dy))_{t \geq 0, x \in \mathbb{R}}$ on $\mathbb{R}$ by 
\begin{align}\label{defsgxgstrans}
\tilde p_t(x,dy):= e^{t\lambda^X_{1}} \frac{h_1(y)}{h_1(x)} \mathbb{P}(X^{m,\xi,r}_x(t) \in dy). 
\end{align}
The following proposition justifies that a Markov process can be associated to $(\tilde p_t(x,dy))_{t \geq 0, x \in \mathbb{R}}$. Such a process is classically called a \textit{ground state transform}. 

\begin{prop} \label{fellerityofgroundstatetrans}
Assume that $r>0$ and \eqref{mheavytailed} and \eqref{hypcaractexpol-1} hold. There exists an homogeneous Markov process $\tilde X^{m,\xi,r}$ on $\mathbb{R}$ such that $\mathbb{P}(\tilde X^{m,\xi,r}_x(t) \in dy)=\tilde p_t(x,dy)$ for any $t\geq 0$ and $x \in \mathbb{R}$. This process $\tilde X^{m,\xi,r}$ is unique, up to equality in law. 
\end{prop}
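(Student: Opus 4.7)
The plan is to recognize $(\tilde p_t)_{t \geq 0, x \in \mathbb{R}}$ as the transition kernel of a Doob $h$-transform of $X^{m,\xi,r}$ by its ground state $h_1$, and to construct the associated Markov process via the Kolmogorov extension theorem.

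First I would verify that, for each $t \geq 0$ and $x \in \mathbb{R}$, $\tilde p_t(x, \cdot)$ is a probability measure. Non-negativity is immediate from the positivity of $h_1$ on $\mathbb{R}$ (guaranteed by \eqref{ident1stvpandfp} together with Theorem \ref{encadrementtransfreeenergythtrans}) and the fact that $\mathbb{P}(X^{m,\xi,r}_x(t) \in dy)$ is a sub-probability measure. For total mass one I would compute
\[
\tilde p_t(x, \mathbb{R}) \;=\; \frac{e^{t\lambda^X_1}}{h_1(x)} \, \mathbb{E}\!\left[h_1(X^{m,\xi,r}_x(t)) \mathbf{1}_{\zeta_x > t}\right] \;=\; \frac{e^{t\lambda^X_1}}{h_1(x)} \, P_t h_1(x),
\]
where $P_t h_1$ is the pointwise evaluation of the semigroup via \eqref{defsgx}. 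By Theorem \ref{diagogenetsg}, $P_t h_1 = e^{-t\lambda^X_1} h_1$ holds in $L^2(m)$, and since both sides lie in $\crz$ (the left-hand side by the smoothing property $P_t(L^2(m)) \subset \crz$, the right-hand side because $h_1 \in \crz$) and $m$ is strictly positive on $\mathbb{R}$, this identity upgrades to a pointwise equality, giving $\tilde p_t(x,\mathbb{R})=1$.

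Next I would verify Chapman--Kolmogorov. Writing $\tilde P_t f(x) := e^{t\lambda^X_1} h_1(x)^{-1} P_t(fh_1)(x)$ for bounded measurable $f$, the inclusion $fh_1 \in \mathcal{B}_b(\mathbb{R}) \cap L^2(m)$ (since $h_1 \in \crz \cap L^2(m)$ is bounded) ensures that $P_t(fh_1)$ is simultaneously defined pointwise by \eqref{defsgx} and matches the $L^2(m)$ extension. Then
\[
\tilde P_s \tilde P_t f(x) \;=\; \frac{e^{s\lambda^X_1}}{h_1(x)} P_s\!\left( h_1 \cdot \frac{e^{t\lambda^X_1}}{h_1} P_t(fh_1)\right)\!(x) \;=\; \frac{e^{(s+t)\lambda^X_1}}{h_1(x)} P_{s+t}(fh_1)(x) \;=\; \tilde P_{s+t} f(x),
\]
using the semigroup property of $(P_t)_{t \geq 0}$ from Lemma \ref{markovianity} and Proposition \ref{fellerity}. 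Specializing to $f = \mathbf{1}_E$ for Borel $E$ yields Chapman--Kolmogorov for $\tilde p_t$. Measurability of $x \mapsto \tilde p_t(x,E)$ is inherited from the corresponding measurability for the Markov kernel of $X^{m,\xi,r}$, and $\tilde p_0(x,\cdot)=\delta_x$ is trivial.

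Having assembled a Markov transition function $(\tilde p_t)_{t \geq 0}$ on $\mathbb{R}$, the Kolmogorov extension theorem provides, for each $x$, a probability measure on $\mathbb{R}^{[0,\infty)}$ supporting a process $\tilde X^{m,\xi,r}_x$ with these one-dimensional marginals and the Markov property; taking the family indexed by $x \in \mathbb{R}$ yields $\tilde X^{m,\xi,r}$. Uniqueness up to equality in law is immediate since any homogeneous Markov process with given transition kernel has its finite-dimensional distributions fully determined by that kernel and its initial state. The main delicacy, in my view, lies not in any individual step but in the bookkeeping between the pointwise definition of $P_t$ from \eqref{defsgx} and its $L^2(m)$ extension from Theorem \ref{diagogenetsg}: one must ensure that $P_t h_1$ and $P_t(fh_1)$ are genuinely given pointwise by \eqref{defsgx} and agree with their $L^2(m)$ counterparts, which ultimately follows from the inclusion $h_1, fh_1 \in \mathcal{B}_b(\mathbb{R}) \cap L^2(m)$ and the smoothing property $P_t(L^2(m)) \subset \crz$.
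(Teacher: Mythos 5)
Your proposal is correct and follows essentially the same route as the paper: check that $\tilde p_t(x,\cdot)$ is a probability measure via $P_t.h_1=e^{-t\lambda^X_1}h_1$ and the positivity of $h_1$, verify measurability and Chapman--Kolmogorov from the semigroup property, and invoke a standard construction of a Markov process from a transition function (the paper cites Revuz--Yor, Ch.~3, Thm.~1.5, which plays the role of your Kolmogorov extension step). Your extra care about reconciling the pointwise definition \eqref{defsgx} with the $L^2(m)$ extension is sound, though the paper gets the pointwise identity $P_t.h_1=e^{-t\lambda^X_1}h_1$ directly from Proposition \ref{bonfctpropres} and the differentiation rule for the semigroup.
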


We now state results that show how the spin system from Section \ref{interpartsyst} is related to the Markov process $\tilde X^{m,\xi,r}$. In particular, the following proposition shows that the infinite-volume Gibbs states $\mathcal{L}_{k}$ can be represented from the process $\tilde X^{m,\xi,r}$ taken at arrival times of a Poisson process. As mentioned in the end of Section \ref{interpartsyst}, that representation can be paralleled with the random walk representations for spin systems, or also with the fact that the lattice GFF in dimension one can be represented by a random walk with Gaussian increments (see Exercise 8.6 in \cite{friedlivelenik2017}). 
\begin{prop} \label{deeperconnectiongs-ips}
Assume that $r>0$ and \eqref{mheavytailed} and \eqref{hypcaractexpol-1} hold. Let the distributions $(\mathcal{L}_k)_{k \geq 1}$ be as in Theorem \ref{encadrementtransfreeenergythtrans}. Let $(T_j)_{j \geq 1}$ be the sequence of arrival times of a standard Poisson process with parameter $\lambda^X_{1}$, independent of $\tilde X^{m,\xi,r}$. Set $T_0:=0$ for convenience and denote $\tilde X^{m,\xi,r}_{\mathcal{L}_1}$ for the process $\tilde X^{m,\xi,r}$ with initial distribution $\mathcal{L}_1(dy)$. Then for any $k\geq 1$ the random vector $(\tilde X^{m,\xi,r}_{\mathcal{L}_1}(T_0),\dots,\tilde X^{m,\xi,r}_{\mathcal{L}_1}(T_k))$ has law $\mathcal{L}_{k+1}$. 
\end{prop}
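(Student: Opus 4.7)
The plan is to compute the joint density of each side of the equality and check they agree, via a common transfer operator on $L^2(m)$. The first ingredient is the Green kernel of $X^{m,\xi,r}$: applying the time-change $s = A_x(u)$ (so $ds = m(x+\xi(u))\,du$) in the representation $\mathbb{E}\left[\int_0^{\zeta_x} f(X^{m,\xi,r}_x(s))\,ds\right]$ and integrating out the independent exponential $e_r$, one obtains that the potential measure of $X^{m,\xi,r}$ admits the density
\begin{align*}
G(x,y) = m(y)\, v^r_\xi(y-x).
\end{align*}
Equivalently, the resolvent $R_0 := \int_0^\infty P_t\,dt$ coincides with the self-adjoint operator $\tilde K$ on $L^2(m)$ defined by $\tilde K f(x) = \int_{\mathbb{R}} v^r_\xi(y-x) f(y) m(y)\,dy$, which by Theorem \ref{diagogenetsg} is diagonalized in the basis $(h_n)$ with eigenvalues $1/\lambda^X_n$.

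By the Markov property of $\tilde X^{m,\xi,r}$ and the independence of the iid exponential inter-arrivals $T_j - T_{j-1}$ of rate $\lambda^X_1$, the vector $Y_j := \tilde X^{m,\xi,r}_{\mathcal{L}_1}(T_j)$ is a discrete-time Markov chain with initial density $\ell_1$ and transition
\begin{align*}
\int_0^\infty \lambda^X_1\, e^{-\lambda^X_1 s}\, \tilde p_s(x,dy)\,ds = \lambda^X_1\, \frac{h_1(y)}{h_1(x)}\, G(x,y)\,dy,
\end{align*}
the exponential weight cancelling the factor $e^{s\lambda^X_1}$ in \eqref{defsgxgstrans}. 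Combining with $\ell_1(x_0) = m(x_0) h_1(x_0)^2$ (from \eqref{ident1stvpandfp}) and telescoping the ratios $h_1(x_j)/h_1(x_{j-1})$, the joint density of $(Y_0,\dots,Y_k)$ equals
\begin{align*}
(\lambda^X_1)^k h_1(x_0) h_1(x_k) \prod_{j=0}^k m(x_j) \prod_{j=1}^k v^r_\xi(x_j - x_{j-1}).
\end{align*}

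It remains to derive the same formula for $\ell_{k+1}$. Integrating out $\omega_{k+2},\dots,\omega_n$ in the unnormalized density from \eqref{defznf} and recognizing the resulting inner integral as an $(n-k)$-fold iterate of $\tilde K$ yields
\begin{align*}
l^n_{k+1}(x_0,\dots,x_k) = \frac{\prod_{j=0}^k m(x_j) \prod_{j=0}^{k-1} v^r_\xi(x_{j+1}-x_j)}{\partfct_n}\cdot \tilde K^{n-k}(x_k,x_0),
\end{align*}
where $\tilde K^p(\cdot,\cdot)$ denotes the kernel of $\tilde K^p$ with respect to $m(y)dy$; a similar cyclic-trace computation gives $\partfct_n = \mathrm{Tr}(\tilde K^n) = \sum_j (1/\lambda^X_j)^n$. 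The spectral expansion $\tilde K^p(x,y) = \sum_j (1/\lambda^X_j)^p h_j(x) h_j(y)$ and the gap $\lambda^X_1 < \lambda^X_2$ then give, as $n\to\infty$, the pointwise limit $(\lambda^X_1)^k h_1(x_0) h_1(x_k) \prod_{j=0}^k m(x_j) \prod_{j=0}^{k-1} v^r_\xi(x_{j+1}-x_j)$, matching the joint density of $(Y_0,\dots,Y_k)$. The pointwise convergence $l^n_{k+1} \to \ell_{k+1}$ established in the proof of Theorem \ref{encadrementtransfreeenergythtrans} then concludes.

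The main obstacle is the rigorous justification of the spectral expansion of $\tilde K^p$ and of the cyclic-trace identity $\partfct_n = \mathrm{Tr}(\tilde K^n)$, which require $\tilde K$ or its iterates to be trace-class. This compactness should follow from Hilbert-Schmidt-type properties of $(P_t)$ already used to obtain the eigenbasis $(h_n)$ in Theorem \ref{diagogenetsg} under \eqref{mheavytailed} and \eqref{hypcaractexpol-1}.
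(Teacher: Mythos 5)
Your proposal is correct, and its first half is essentially the paper's proof: the paper likewise iterates the Markov property of $\tilde X^{m,\xi,r}$ over the exponential inter-arrival times, recognizes $\int_0^{\infty}\lambda^X_1 e^{-\lambda^X_1 t}\,\tilde p_t(x,dy)\,dt$ as $\lambda^X_1 h_1(y)h_1(x)^{-1}U_0(h_1\cdot)(x)$ with the resolvent kernel $U_0(x,dy)=v^r_\xi(y-x)m(y)\,dy$ (via \eqref{exprresolvante0} and Lemma \ref{linkpsizpotential}), and telescopes the ratios of $h_1$ to land on exactly the density you wrote. Where you diverge is the second half: the paper at this point simply quotes the explicit formula \eqref{stateonepartexprk} for $\mathcal{L}_{k+1}$, which it has already established as Proposition \ref{stateonepart} via traces of products $R_{r,1}\dots R_{r,k}R_r^{n-k}$ of operators on $L^2((-1,1))$; you re-derive that formula directly on $L^2(m)$ with the transfer operator $\tilde K=U_0$. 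The two are conjugate (through the isometry $J$ and the vectors $\phi^r_y$ of Section \ref{defopbasprop}; indeed $Tr(R_r^n)=\partfct_n/v^r_\xi(0)^n$ is Proposition \ref{exproftracesrec}, matching your $Tr(\tilde K^n)=\partfct_n$ via \eqref{relvpxandr}), so your route is a reorganization rather than a new idea, but it is arguably more transparent since it never leaves the natural Hilbert space of the process. The gap you flag is real but fillable with the paper's own estimates: the trace identities and the dominant-eigenvalue asymptotics are justified there for $n-k\geq 2$ using $\sum_j(\lambda^R_j)^2<\infty$ (equation \eqref{sumsquarrevpfinite}), the pointwise bilinear expansion of the iterated kernel follows from $\|h_j\|_{\infty}^2\leq v^r_\xi(0)\lambda^X_j$ together with Lemma \ref{sumofnorms}, and the simplicity of $\lambda^X_1$ (Lemma \ref{q1posonr}) is what isolates the single leading term. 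Two small points to tighten: Theorem \ref{encadrementtransfreeenergythtrans} only asserts pointwise convergence of $l_1^n$, not of $l^n_{k+1}$, so you should close the loop yourself — your pointwise limit is a probability density (being the density of $(Y_0,\dots,Y_k)$ you just computed), so Scheff\'e's lemma upgrades it to convergence in distribution of $L^{k+1}_n$, identifying the limit with $\mathcal{L}_{k+1}$; and the spectral expansion should carry $h_j(x)\overline{h_j(y)}$ since the $h_j$ for $j\geq 2$ are complex-valued.
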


The following corollary shows that the correlations (defined in \eqref{defcorrel}) in the spin system from Section \ref{interpartsyst} decay exponentially in $k$. 
\begin{cor} [Correlation decay] \label{cordecay}
Assume that $r>0$ and \eqref{mheavytailed} and \eqref{hypcaractexpol-1} hold. Under the choice $\pot:=-\log(v^r_\xi(\cdot))$ and $\pen:=-\log(m(\cdot))$ in the spin system from Section \ref{interpartsyst}, there exist $\mathcal{C}>0$ and a continuous non-zero bilinear form $B(\cdot,\cdot) : \mathcal{C}_b(\mathbb{R}) \times \mathcal{C}_b(\mathbb{R}) \rightarrow \mathbb{C}$ such that for any $f,g \in \mathcal{C}_b(\mathbb{R})$ we have 
\begin{align}
e^{k \mathcal{C}} C_k(f,g) \underset{k \rightarrow \infty}{\longrightarrow} B(f,g). \label{cordecayequiv}
\end{align}
\end{cor}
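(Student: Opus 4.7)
The plan is to combine the Markov-process representation of the Gibbs states from Proposition \ref{deeperconnectiongs-ips} with the spectral decomposition of the ground state transform semigroup. By Proposition \ref{deeperconnectiongs-ips}, the first and last marginals of $\mathcal{L}_{k+1}$ are the joint law of $(\tilde X^{m,\xi,r}_{\mathcal{L}_1}(0), \tilde X^{m,\xi,r}_{\mathcal{L}_1}(T_k))$, where $T_k$ is the $k$-th arrival time of a Poisson process of rate $\lambda^X_1$ independent of $\tilde X^{m,\xi,r}$. Once one verifies that $\mathcal{L}_1$ is invariant for $\tilde X^{m,\xi,r}$ (see below), the correlation \eqref{defcorrel} turns into the time covariance
\begin{align*}
C_k(f,g) = \mathbb{E}_{\mathcal{L}_1}\!\left[f(\tilde X^{m,\xi,r}(0))\, g(\tilde X^{m,\xi,r}(T_k))\right] - \left(\int_{\mathbb{R}} f\, d\mathcal{L}_1\right)\left(\int_{\mathbb{R}} g\, d\mathcal{L}_1\right).
\end{align*}

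The next step is to establish the spectral decomposition of the semigroup $\tilde P_t$ of $\tilde X^{m,\xi,r}$. From \eqref{defsgxgstrans} one derives the standard identity $\tilde P_t \varphi = e^{t\lambda^X_1}\, h_1^{-1}\, P_t(\varphi h_1)$, so the functions $\tilde h_n := h_n/h_1$ satisfy $\tilde P_t \tilde h_n = e^{-t(\lambda^X_n - \lambda^X_1)} \tilde h_n$. The identity $\ell_1 = h_1^2 m$ from \eqref{ident1stvpandfp} then shows that $(\tilde h_n)_{n\geq 1}$ is an orthonormal family in $L^2(\mathcal{L}_1)$ with $\tilde h_1 \equiv 1$; self-adjointness of $P_t$ on $L^2(m)$ (implied by Theorem \ref{diagogenetsg}) transfers to self-adjointness of $\tilde P_t$ on $L^2(\mathcal{L}_1)$, yielding invariance of $\mathcal{L}_1$. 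Expanding the covariance at fixed time $t$ and then averaging against the Gamma density of $T_k$, whose Laplace transform is $\mathbb{E}[e^{-sT_k}] = (\lambda^X_1/(\lambda^X_1+s))^k$, gives
\begin{align*}
C_k(f,g) = \sum_{n \geq 2} \left(\frac{\lambda^X_1}{\lambda^X_n}\right)^k \langle f, \tilde h_n\rangle_{L^2(\mathcal{L}_1)}\, \langle g, \tilde h_n\rangle_{L^2(\mathcal{L}_1)},
\end{align*}
where the interchange of sum and expectation is justified by the Cauchy--Schwarz bound $\sum_n |\langle f, \tilde h_n\rangle \langle g, \tilde h_n\rangle| \leq \|f\|_{L^2(\mathcal{L}_1)} \|g\|_{L^2(\mathcal{L}_1)}$.

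Setting $\mathcal{C} := \log(\lambda^X_2/\lambda^X_1)$, which is positive since $\lambda^X_1 < \lambda^X_2$ by Theorem \ref{diagogenetsg}, multiplying the above display by $e^{k\mathcal{C}}$ produces the series whose $n$-th term is $(\lambda^X_2/\lambda^X_n)^k \langle f, \tilde h_n\rangle \langle g, \tilde h_n\rangle$. This is dominated by $|\langle f, \tilde h_n\rangle \langle g, \tilde h_n\rangle|$ (summable as above) and tends pointwise, as $k\rightarrow\infty$, to $0$ if $\lambda^X_n > \lambda^X_2$ and to $\langle f, \tilde h_n\rangle \langle g, \tilde h_n\rangle$ if $\lambda^X_n = \lambda^X_2$. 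Dominated convergence then yields \eqref{cordecayequiv} with
\begin{align*}
B(f,g) := \sum_{n\, :\, \lambda^X_n = \lambda^X_2} \langle f, \tilde h_n\rangle_{L^2(\mathcal{L}_1)}\, \langle g, \tilde h_n\rangle_{L^2(\mathcal{L}_1)},
\end{align*}
a finite sum (since $\lambda^X_n \to \infty$) whose continuity on $\mathcal{C}_b(\mathbb{R})\times \mathcal{C}_b(\mathbb{R})$ follows from $|\langle f, \tilde h_n\rangle_{L^2(\mathcal{L}_1)}| = |\int f h_n h_1\, m\, dx| \leq \|f\|_\infty \|h_n\|_{L^2(m)} \|h_1\|_{L^2(m)}$.

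The main obstacle I anticipate is showing that $B$ is not the zero bilinear form. The eigenvalue $\lambda^X_2$ admits at least one eigenfunction $h_{n_0} \in \crz \cap L^2(\mathbb{R})$ by Theorem \ref{diagogenetsg}, and $h_{n_0} h_1 m$ is then a non-zero integrable function on $\mathbb{R}$; by density of $\mathcal{C}_c(\mathbb{R}) \subset \mathcal{C}_b(\mathbb{R})$ in $L^1(\mathbb{R})$ one can choose $f \in \mathcal{C}_b(\mathbb{R})$ with $\langle f, \tilde h_{n_0}\rangle_{L^2(\mathcal{L}_1)} = \int f h_{n_0} h_1\, m\, dx \neq 0$. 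Taking $g = f$ gives $B(f,f) \geq \langle f, \tilde h_{n_0}\rangle_{L^2(\mathcal{L}_1)}^2 > 0$, concluding the proof.
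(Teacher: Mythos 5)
Your proof is correct and follows essentially the same route as the paper's: Proposition \ref{deeperconnectiongs-ips}, the spectral expansion of the ground-state transform, and the identity $\mathbb{E}[e^{-sT_k}]=(\lambda^X_1/(\lambda^X_1+s))^k$ give $C_k(f,g)=\sum_{n\geq 2}(\lambda^X_1/\lambda^X_n)^k\langle f,\tilde h_n\rangle\langle g,\tilde h_n\rangle$, from which $\mathcal{C}=\log(\lambda^X_2/\lambda^X_1)$ and the same limiting form $B$ emerge by dominated convergence. The one point where you genuinely diverge is the non-degeneracy of $B$: the paper tests it against the truncations $f_Mq_2/q_1$, whereas you argue by $L^1$-duality; this works, but the step $B(f,f)\geq\langle f,\tilde h_{n_0}\rangle_{L^2(\mathcal{L}_1)}^2>0$ implicitly requires choosing a \emph{real} orthonormal basis of the $\lambda^X_2$-eigenspace and a real $f$ (otherwise the summands need not be nonnegative squares); such a basis exists because $P_t$ preserves real functions, exactly as in the proof of Lemma \ref{q1posonr}.
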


The following theorem shows that the rate of correlation decay in the spin system from Section \ref{interpartsyst} is related to the spectral gap of the process $\tilde X^{m,\xi,r}$. 
\begin{theo} \label{ergoofgroundstatetrans}
Assume that $r>0$ and \eqref{mheavytailed} and \eqref{hypcaractexpol-1} hold. $\tilde X^{m,\xi,r}$ is ergodic and its unique stationary distribution is $\mathcal{L}_1(dy)$ (where $\mathcal{L}_1(dy)$ is as in Theorem \ref{encadrementtransfreeenergythtrans}). For any $f \in \mathcal{C}_b(\mathbb{R})$ and $t \geq 0$ we have the following spectral gap inequality 
\begin{align}
\int_{\mathbb{R}} \left | \mathbb{E} \left [ f \left ( \tilde X^{m,\xi,r}_x(t) \right ) \right ] - \int_{\mathbb{R}} f(y) \mathcal{L}_1(dy) \right |^2 \mathcal{L}_1(dx) \leq e^{- 2tc} \int_{\mathbb{R}} \left | f(x) - \int_{\mathbb{R}} f(y) \mathcal{L}_1(dy) \right |^2 \mathcal{L}_1(dx), \label{specgapineg}
\end{align}
where $c:=e^{\mathcal{E}}(e^{\mathcal{C}}-1)$. Here $\mathcal{E}$ is the normalized free energy, as in Theorem \ref{encadrementtransfreeenergythtrans}, and $\mathcal{C}$ is the decay rate of correlations, as in Corollary \ref{cordecay}. Moreover, the choice $c=e^{\mathcal{E}}(e^{\mathcal{C}}-1)$ is the largest choice of $c$ such that \eqref{specgapineg} holds true for all $f \in \mathcal{C}_b(\mathbb{R})$. 
\end{theo}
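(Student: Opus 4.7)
My plan is to reduce the spectral analysis of the ground state transform $\tilde X^{m,\xi,r}$ to the $L^2(m)$ spectral picture provided by Theorem \ref{diagogenetsg}, via an explicit unitary conjugation, and then to identify the spectral gap in terms of $\mathcal{E}$ and $\mathcal{C}$ using Proposition \ref{deeperconnectiongs-ips} and Corollary \ref{cordecay}.

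Using $\mathcal{L}_1(dy)=\ell_1(y)dy=h_1(y)^2 m(y)dy$ and the positivity of $h_1$ on $\mathbb{R}$ (Theorem \ref{encadrementtransfreeenergythtrans} together with \eqref{ident1stvpandfp}), I define $U:L^2(\mathcal{L}_1)\to L^2(m)$ by $Uf:=fh_1$. Then $U$ is a unitary bijection, and a direct computation from the definition \eqref{defsgxgstrans} of $\tilde p_t$ gives $\tilde P_t=e^{t\lambda^X_1}U^{-1}P_t U$ on $L^2(\mathcal{L}_1)$. Consequently $\tilde P_t$ extends to a strongly continuous contraction semigroup on $L^2(\mathcal{L}_1)$, and setting $\tilde h_n:=h_n/h_1$, the family $(\tilde h_n)_{n\geq 1}$ is an orthonormal basis of $L^2(\mathcal{L}_1)$ with $\tilde h_1\equiv 1$ and $\tilde P_t\tilde h_n=e^{-t(\lambda^X_n-\lambda^X_1)}\tilde h_n$. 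Stationarity of $\mathcal{L}_1$ follows at once from $\tilde P_t\tilde h_1=\tilde h_1$, and since $\lambda^X_1<\lambda^X_2$ by Theorem \ref{diagogenetsg}, the eigenspace of $\tilde P_t$ for eigenvalue $1$ is spanned by constants, which yields ergodicity and uniqueness of the stationary distribution.

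The spectral gap inequality \eqref{specgapineg} is then immediate by Parseval: writing $\mu(f):=\int f\, d\mathcal{L}_1$ and expanding $f-\mu(f)=\sum_{n\geq 2}\langle f,\tilde h_n\rangle_{L^2(\mathcal{L}_1)}\tilde h_n$ for $f\in\mathcal{C}_b(\mathbb{R})\subset L^2(\mathcal{L}_1)$, one gets
\begin{equation*}
\|\tilde P_t f-\mu(f)\|_{L^2(\mathcal{L}_1)}^2=\sum_{n\geq 2}e^{-2t(\lambda^X_n-\lambda^X_1)}|\langle f,\tilde h_n\rangle_{L^2(\mathcal{L}_1)}|^2\leq e^{-2t(\lambda^X_2-\lambda^X_1)}\|f-\mu(f)\|_{L^2(\mathcal{L}_1)}^2.
\end{equation*}
Optimality of $c=\lambda^X_2-\lambda^X_1$ follows by applying the inequality to any $f\in\mathcal{C}_b(\mathbb{R})$ that is close enough to $\tilde h_2$ in $L^2(\mathcal{L}_1)$, so that the inequality becomes arbitrarily close to equality. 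It therefore remains to identify $\lambda^X_2-\lambda^X_1$ with $e^{\mathcal{E}}(e^{\mathcal{C}}-1)$.

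For this identification I use Proposition \ref{deeperconnectiongs-ips} together with stationarity of $\mathcal{L}_1$ to write, for $f,g\in\mathcal{C}_b(\mathbb{R})$,
\begin{equation*}
C_k(f,g)=\mathbb{E}\left[\langle f,\tilde P_{T_k}g\rangle_{L^2(\mathcal{L}_1)}\right]-\mu(f)\mu(g),
\end{equation*}
where $T_k$ is independent of $\tilde X^{m,\xi,r}$ and follows a Gamma distribution of shape $k$ and rate $\lambda^X_1$. Plugging in the spectral expansion of $\tilde P_{T_k}$ and using $\mathbb{E}[e^{-\theta T_k}]=(\lambda^X_1/(\lambda^X_1+\theta))^k$ gives
\begin{equation*}
C_k(f,g)=\sum_{n\geq 2}\left(\frac{\lambda^X_1}{\lambda^X_n}\right)^k\langle f,\tilde h_n\rangle_{L^2(\mathcal{L}_1)}\langle g,\tilde h_n\rangle_{L^2(\mathcal{L}_1)},
\end{equation*}
with absolute convergence of the series by Cauchy-Schwarz. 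Since $\mathcal{C}_b(\mathbb{R})$ is dense in $L^2(\mathcal{L}_1)$, one can pick $f,g\in\mathcal{C}_b(\mathbb{R})$ with $\langle f,\tilde h_2\rangle_{L^2(\mathcal{L}_1)}\langle g,\tilde h_2\rangle_{L^2(\mathcal{L}_1)}\neq 0$; for such a pair the exact rate of decay of $C_k(f,g)$ is $\lambda^X_1/\lambda^X_2$, so comparing with Corollary \ref{cordecay} yields $e^{\mathcal{C}}=\lambda^X_2/\lambda^X_1$ and hence $c=\lambda^X_1(e^{\mathcal{C}}-1)=e^{\mathcal{E}}(e^{\mathcal{C}}-1)$ as desired.

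The main obstacle is this last identification step: one must be sure that the sharp rate $\mathcal{C}$ delivered by Corollary \ref{cordecay} really reflects the second eigenvalue rather than being drowned out by cancellations or by higher-order terms. This is handled by the density argument above to produce test functions that load the $n=2$ eigenspace (robust to possible multiplicity of $\lambda^X_2$), together with the simple observation that the remaining terms in the series decay strictly faster. All other steps are routine consequences of the stated results and standard spectral theory.
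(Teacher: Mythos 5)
Your proposal is correct and follows essentially the same route as the paper: the unitary conjugation $Uf=fh_1$ is just a cleaner packaging of the paper's direct verification that $(q_n/q_1)_{n\geq 1}$ is an orthonormal basis of $L^2(\mathcal{L}_1)$, the Parseval argument and the optimality test functions approximating $\tilde h_2$ are the same, and your identification of $e^{\mathcal{C}}=\lambda^X_2/\lambda^X_1$ via Proposition \ref{deeperconnectiongs-ips} and the Gamma Laplace transform reproduces the computation the paper carries out in the proof of Corollary \ref{cordecay}.
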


Let us finish this section with two particular examples where several quantities and functions from the above theorems can be obtained explicitly. 
\begin{example} \label{exepl1}
Let $m(x):=1/(1+|x|)$, $\xi := B$, where $B$ denotes the standard Brownian motion, and $r=1/2$ (this corresponds to choosing $\pot(x)=|x|$ and $\pen(x)=\log(1+|x|)$ in the spin system from Section \ref{interpartsyst}). Then we can show that the ground state $h_1$ from Theorem \ref{diagogenetsg} has the simple expression $h_1(x)=\sqrt{2/3}(1+|x|)e^{-|x|}$ and that $\lambda^X_{1}=1$. This is justified in Section \ref{proofexepl1}. As a consequence, we get from \eqref{ident1stvpandfp}, \eqref{freeenergyth0trans} and \eqref{microstaedensity} that the quantities from Theorem \ref{encadrementtransfreeenergythtrans} have the following expressions: $\gamma_{m,B,1/2}=1$, $\mathcal{E}=0$, $\ell_1(x)=2(1+|x|)e^{-2|x|}/3$ and $K_{m,B,1/2}(x)=4(1+|x|)e^{-|x|}/3$. 
\end{example}

\begin{example} \label{exepl2}
Let $m(x):=(2x^2+6|x|+3)/(1+|x|)^4$, $\xi := B$, where $B$ denotes the standard Brownian motion, and $r=1/2$ (this corresponds to choosing $\pot(x)=|x|$ and $\pen(x)=4\log(1+|x|)-\log(2x^2+6|x|+3)$ in the spin system from Section \ref{interpartsyst}). Then we can show that the ground state $h_1$ from Theorem \ref{diagogenetsg} has the simple expression $h_1(x)=ce^{-x^2/(1+|x|)}$ where $c$ is a normalizing constant, and that $\lambda^X_{1}=1/2$. The justification of this is similar to the justification of Example \ref{exepl1} in Section \ref{proofexepl1}. As a consequence, we get from \eqref{ident1stvpandfp}, \eqref{freeenergyth0trans} and \eqref{microstaedensity} that the quantities from Theorem \ref{encadrementtransfreeenergythtrans} have the following expressions: $\gamma_{m,B,1/2}=1/2$, $\mathcal{E}=-\log(2)$, $\ell_1(x)=c^2(2x^2+6|x|+3)e^{-2x^2/(1+|x|)}/(1+|x|)^4$ and $K_{m,B,1/2}(x)=\tilde c e^{-x^2/(1+|x|)}$, for some constant $\tilde c>0$. 
\end{example}
The above examples illustrate in particular cases the fact that, whenever we are able to get information on the eigenfunctions and eigenvalues of the generator of $X^{m,\xi,r}$, this yields information on the associated spin system. More generally, the connections built in the above results between, on one hand the processes $X^{m,\xi,r}$ and $\tilde X^{m,\xi,r}$, and on the other hand the associated spin system, allow to transform information obtained on one of these two objects into information on the other. 

\subsection{Sketch of proofs and organization of the paper}

In Section \ref{optrdual} we study some linear operators that provide some understanding on the generator of $X^{m,\xi,r}$. More precisely, in Subsection \ref{defopbasprop} we introduce those operators and study their basic properties. In Subsection \ref{refren} we study the traces of the successive compositions of one of these operators. In Subsection \ref{dualwithx} we prove that a duality holds between that operator and the generator of $X^{m,\xi,r}$. 

In Section \ref{decompsg} we basically diagonalize the semigroup $(P_t)_{t \geq 0}$ of $X^{m,\xi,r}$ and deduce the asymptotic behavior of the survival probability $\mathbb{P}(\zeta_x > t)$. More precisely, in Subsection \ref{basiseigfctgen} we use the duality from Section \ref{optrdual} to construct and study a basis of eigenfunctions of the generator of $X^{m,\xi,r}$ on some natural Hilbert space, and to study the sequence of eigenvalues of that generator. In Subsection \ref{decompsgbasis} we deduce a decomposition of the semigroup $(P_t)_{t \geq 0}$ and we study the special properties of the first eigenvalue and its associated eigenfunction. In Subsection \ref{decompsurvprobandasympt} we apply the previous results to obtain a representation of the survival probability $\mathbb{P}(\zeta_x > t)$ and determine its asymptotic behavior. In Subsection \ref{relwithtrofr} we study how the first eigenvalue of the generator of $X^{m,\xi,r}$ is related to the spin system from Section \ref{interpartsyst}. In Subsection \ref{limdistribforlocalobs} we study the infinite-volume Gibbs states $\mathcal{L}_k$ defined in \eqref{deflimstaeonetyppart} and relate them to the first eigenfunction of the generator of $X^{m,\xi,r}$. 

In Section \ref{proofmainresults} we prove the main results on $X^{m,\xi,r}$ from Section \ref{mainres}. More precisely, in Subsection \ref{studysurvproba} we gather the pieces collected in Section \ref{decompsg} to prove Theorem \ref{encadrementtransfreeenergythtrans}. In Subsection \ref{rgoesto0} we prove Corollary \ref{asymptrgoto0} by studying the potential density of the killed L\'evy process $\xi$ at $0$ and using the duality from Remark \ref{altexpr} and the inequality \eqref{inegspecgaptrans}. In Subsection \ref{diagogenetsgproof} we prove Theorem \ref{diagogenetsg} from the results of Section \ref{decompsg}. In Subsection \ref{proofexepl1} we justify Example \ref{exepl1}. 

In Section \ref{gstxrelips} we prove the rest of the main results from Section \ref{mainres} that are about the ground state transform of $X^{m,\xi,r}$ and its relation with the spin system from Section \ref{interpartsyst}. More precisely, in Subsection \ref{exgst} we prove Proposition \ref{fellerityofgroundstatetrans}, in Subsection \ref{represlktildex} we prove Proposition \ref{deeperconnectiongs-ips}, in Subsection \ref{specgapgstandcorr} we prove Theorem \ref{ergoofgroundstatetrans}, and in Subsection \ref{seccorrel} we prove Corollary \ref{cordecay}. 

In Section \ref{factsaboutx} we establish properties of the process $X^{m,\xi,r}$ that are used all along the paper. More precisely, in Subsection \ref{5.1time-change} we study properties of the random time change $A_x^{-1}(\cdot)$. In Subsection \ref{5.2Markov-Feller} we proves the Markov property for $X^{m,\xi,r}$ and the Feller property (Proposition \ref{fellerity}). In Subsection \ref{5.3support} we study the support of $X^{m,\xi,r}$. In Subsection \ref{5.3generator} we study the generator of $X^{m,\xi,r}$. In Subsection \ref{5.4semigroup} we study some technical properties of the semigroup $(P_t)_{t \geq 0}$ of $X^{m,\xi,r}$. 

Appendix \ref{factsLP} contains some technical facts about the real symmetric L\'evy process $\xi$, Appendices \ref{proofaltexpr}, and \ref{proofremovboundcond} contain the proofs of Remarks \ref{altexpr} and \ref{perboundcond} respectively. 

\subsection{Another route for the survival probability} \label{anotherroute}

The spectral properties of the semigroup of $X^{m,\xi,r}$ are a main focus of this paper and the direct object of several of our results. This is why our approach is mainly spectral, in the sense that it relies substantially on establishing and using spectral properties of the semigroup of $X^{m,\xi,r}$. However, for the asymptotic of the survival probability of $X^{m,\xi,r}$ studied in Theorem \ref{encadrementtransfreeenergythtrans}, other approaches are also possible. One can start by noticing that the moments of the survival time $\zeta_x$ are related to the spin system from Section \ref{interpartsyst}. Indeed, recall from Section \ref{afortclp} that $\zeta_x=A_x(e_r)$ so, since $A_x(\cdot)$ is an additive functional of a Markov process, we can use formula (4) from \cite{FITZSIMMONS1999117} and get 
\begin{align}
\frac{\mathbb{E}[(\zeta_x)^n]}{n!}= \int_{\mathbb{R}^n} v^r_\xi(\omega_1-x) v^r_\xi(\omega_2-\omega_1) \dots v^r_\xi(\omega_{n}-\omega_{n-1}) m(\omega_1)\dots m(\omega_n) d\omega_1\dots d\omega_n. \label{momentexpression}
\end{align}
The right-hand side is quite similar to the expression \eqref{defzntrans} of the partition function $\partfct_n$ with the choice $\pot:=-\log(v^r_\xi(\cdot))$ and $\pen:=-\log(m(\cdot))$, but with a different boundary condition. Let us denote by $\tilde \partfct_n(x)$ the right-hand side of \eqref{momentexpression}. This suggests that the following alternative route may be used to prove \eqref{expodecaysurvproba}-\eqref{freeenergyth0trans} (and maybe \eqref{microstaedensity}) or results in the same spirit: 
\begin{enumerate}
\item Prove directly that $-\log (\tilde \partfct_{n}(x))/n$ converges to a limit that does not depend on $x$ and that the free energy $\mathcal{E}$ (defined in \eqref{deffreeenergytrans}) is well-defined, and identify the limit of $-\log (\tilde \partfct_{n}(x))/n$ with $\mathcal{E}$. 
\item Using $\mathbb{E}[(\zeta_x)^n]=n! \tilde \partfct_{n}(x)$, the previous point, and the Cauchy–Hadamard theorem, show that the Laplace transform of $\zeta_x$ has a pole at $e^{\mathcal{E}}$ and study relevant properties of this pole. 
\item Prove or use an appropriate Tauberian theorem to deduce an estimate similar to \eqref{expodecaysurvproba} for the right distribution tail of $\zeta_x$. \eqref{freeenergyth0trans} would then follow from the pole being located at $e^{\mathcal{E}}$. 
\item Prove directly the existence of the infinite-volume Gibbs state $\mathcal{L}_1$ (defined in \eqref{defdnesityln}) and of its positive density $\ell_1$, and identify the constant, in the equivalent of $\mathbb{P}(\zeta_x > t)$ found in the previous point, with $\sqrt{\ell_1(x)/m(x)} \int_{\mathbb{R}} \sqrt{m(z) \ell_1(z)} dz$. This would yield \eqref{microstaedensity}. 
\end{enumerate}
It is certainly not too difficult to carry out at least some steps of the above route by (or along with) using spectral properties of the semigroup of $X^{m,\xi,r}$, but in this case the approach that we use in this paper seems to be more direct. See Remark \ref{khasminskii} for a moment-based proof of the lower bound in \eqref{inegspecgaptrans}, provided that \eqref{expodecaysurvproba} has been proved. We are not sure how easy or difficult it would be to carry out each point of the route outlined above without making any use of spectral properties, and if further results like Corollary \ref{asymptrgoto0} can be obtained in this way. 

\subsection{Facts and notations} \label{notations}

Let $\mathcal{F}^{\xi}:=(\mathcal{F}^{\xi}_t)_{t \geq 0}$ and $\mathcal{F}^X:=(\mathcal{F}^X_t)_{t \geq 0}$ denote the right continuous filtrations generated by respectively the L\'evy process $\xi$ and the process $X^{m,\xi,r}_x$. 

Let $\crz$ denote the space of continuous functions $f : \mathbb{R} \rightarrow \mathbb{C}$ that converge to $0$ at $\infty$ and $-\infty$. $\crz$ is equipped with the norm $\| \cdot \|_{\infty}$, defined by $\| f \|_{\infty} := \sup_{x \in \mathbb{R}} |f(x)|$, which make it a Banach space. We sometimes denote $(\crz, \| \cdot \|_{\infty})$ to emphasis that we consider $\crz$ equipped with this norm. For $k \geq 1$, let $\mathcal{C}_0^k(\mathbb{R})$ denote the space of functions $f\in\crz$ that are $k$ times differentiable and whose successive derivatives of order $1,2,...,k$ belong to $\crz$. $L^1(\mathbb{R})$ denotes the space of measurable functions $f : \mathbb{R} \rightarrow \mathbb{C}$ such that $\|f\|_{L^1(\mathbb{R})} := \int_{\mathbb{R}} |f(y)| dy < \infty$. $\mathcal{B}_b(\mathbb{R})$ and $\mathcal{C}_b(\mathbb{R})$ denote respectively the spaces of bounded measurable functions and bounded continuous functions $f : \mathbb{R} \rightarrow \mathbb{C}$. 

$L^2(\mathbb{R})$ and $L^2(m)$ denote the Hilbert spaces of measurable functions $f : \mathbb{R} \rightarrow \mathbb{C}$ such that, respectively, $\int_{\mathbb{R}} |f(y)|^2 dy < \infty$ and $\int_{\mathbb{R}} |f(y)|^2 m(y) dy < \infty$. We denote by $\psl \cdot , \cdot \psrr$ and $\psl \cdot , \cdot \psrm$ respectively the natural inner products on these spaces. We also denote $\|f\nr := (\psl f,f \psrr)^{1/2}$. $L^2((-1,1))$ denotes the Hilbert space of measurable functions $g : (-1,1) \rightarrow \mathbb{C}$ such that $\int_{(-1,1)} |g(y)|^2 dy < \infty$. We denote by $\psl \cdot, \cdot \psri$ the natural inner product on $L^2((-1,1))$ and $\|g \nri := (\psl g,g \psri)^{1/2}$. 

For the Fourier transform we use the convention 
\begin{align}
(\mathcal{F} f)(x) := \int_{\mathbb{R}} f(y) e^{-2i\pi y x} dy, \ \ \ (\mathcal{F}^{-1} f)(x) := \int_{\mathbb{R}} f(y) e^{2i\pi y x} dy, \ \ \ \text{for} \ f \in L^1(\mathbb{R}). \label{convfouriertransform0}
\end{align}
According to Riemann-Lebesgue lemma, $\mathcal{F}$ and $\mathcal{F}^{-1}$ are contractions from $L^1(\mathbb{R})$ to $(\crz, \| \cdot \|_{\infty})$. The Fourier inversion theorem says that, if $f \in L^1(\mathbb{R})$ is such that $\mathcal{F} f \in L^1(\mathbb{R})$ (equivalently, $\mathcal{F}^{-1} f \in L^1(\mathbb{R})$) then $\mathcal{F}^{-1}(\mathcal{F} f)=\mathcal{F}(\mathcal{F}^{-1} f)=f$. With the convention \eqref{convfouriertransform0}, the extensions of $\mathcal{F}$ and $\mathcal{F}^{-1}$ to $L^2(\mathbb{R})$ (that we also denote by $\mathcal{F}$ and $\mathcal{F}^{-1}$) are isometric and satisfy Plancherel's identity, i.e. for any $f,g \in L^2(\mathbb{R})$ we have $\psl f,g \psrr = \psl \mathcal{F}f, \mathcal{F}g \psrr = \psl \mathcal{F}^{-1}f, \mathcal{F}^{-1}g \psrr$. 

We denote the null function by $\textbf{0}$. For any functional Banach space $\mathcal{U}$ equipped with a norm $\|\cdot\|_{\mathcal{U}}$, we denote the operator norm by $\||\cdot\||$, i.e. for any continuous operator $T : \mathcal{U} \rightarrow \mathcal{U}$, $\||T\|| := \sup_{f \in \mathcal{U} \setminus \{\textbf{0}\}} \|T.f\|_{\mathcal{U}}/\|f\|_{\mathcal{U}}$. We denote by $\mathcal{L}(\mathcal{U})$ the space of continuous operators of $\mathcal{U}$ equipped with the operator norm, which makes it a Banach space. For an operator $T$ and $n \geq 1$, $T^n$ denotes the operator obtained by $n$ successive compositions of $T$. 

According to Proposition \ref{fellerity}, $(P_t)_{t \geq 0}$ is a Feller semigroup under \eqref{mheavytailed} and $r>0$. As mentioned after Proposition \ref{fellerity}, we will always make those assumptions when dealing with $(P_t)_{t \geq 0}$ so that, in particular, $(P_t)_{t \geq 0}$ enjoys all the properties of Feller semigroups. We refer to \cite{levymatters3} for background on the theory of Feller semigroups and their generators. The domain $\mathcal{D}(\mathcal{A}_{X^{m,\xi,r}})$ is the subspace of $\crz$ defined by $\{ f \in \crz \ \text{s.t.} \ \lim_{t \rightarrow 0} t^{-1}(P_t.f-f) \ \text{exists in} \ (\crz, \| \cdot \|_{\infty}) \}$. The generator $\mathcal{A}_{X^{m,\xi,r}} : \mathcal{D}(\mathcal{A}_{X^{m,\xi,r}}) \rightarrow \crz$ is defined by $\mathcal{A}_{X^{m,\xi,r}}f := \lim_{t \rightarrow 0} t^{-1}(P_t.f-f)$ for $f \in \mathcal{D}(\mathcal{A}_{X^{m,\xi,r}})$. $\mathcal{D}(\mathcal{A}_{X^{m,\xi,r}})$ is dense in $(\crz, \| \cdot \|_{\infty})$ (see for example Theorem 3.2.6 in \cite{applebaum_2009}). 
It is known (see for example Theorem 3.2.6 in \cite{applebaum_2009}) that, if $g \in \mathcal{D}(\mathcal{A}_{X^{m,\xi,r}})$, then $P_t.g \in \mathcal{D}(\mathcal{A}_{X^{m,\xi,r}})$ for any $t\geq 0$ and $\frac{d}{dt} P_t.g = \mathcal{A}_{X^{m,\xi,r}}P_t.g=P_t.\mathcal{A}_{X^{m,\xi,r}}g$. We refer to this as the differentiation rule for the semigroup. It is important to note that the limit involved in the definitions of this derivative is a limit in $(\crz, \| \cdot \|_{\infty})$. 

It is classical that the un-killed (resp. killed) L\'evy process $\xi$ (resp. $\xi^r$) is also a Feller process, that is, its semigroup is Feller in the sense of Definition 1.2 of \cite{levymatters3}. We denote its generator by $\mathcal{A}_{\xi}$ (resp. $\mathcal{A}_{\xi^r}$) and the domain of its generator by $\mathcal{D}(\mathcal{A}_{\xi})$ (resp. $\mathcal{D}(\mathcal{A}_{\xi^r})$). Note that $\mathcal{D}(\mathcal{A}_{\xi})=\mathcal{D}(\mathcal{A}_{\xi^r})$ and that $\mathcal{A}_{\xi^r}f=\mathcal{A}_{\xi}f-rf$. For any $\alpha> 0$, the $\alpha$-potential measure of $\xi$ is defined by $V^{\alpha}_\xi(dx) := \int_0^{\infty} e^{-\alpha t} \mathbb{P} (\xi(t) \in dx) dt$. 
The resolvent operator at $\alpha$ associated with $\xi$ is defined by $V^{\alpha}_\xi f(x) := \int_{\mathbb{R}} f(x+y) V^{\alpha}_\xi(dy)$ for $f \in \crz$. 

\section{Some operators, their traces, and a duality} \label{optrdual}

\subsection{Definition of the operators and basic properties} \label{defopbasprop}

We now introduce some definitions. Assume that $r>0$ and \eqref{hypcaractexpol-1} holds and let 
\[ \varphi_{r}(x):= \frac{2 }{v^{r}_\xi(0)} \int_0^x \frac1{- \psi_\xi(2\pi y) + r} dy. \]
We see from Lemma \ref{linkpsizpotential} and the symmetry of $\psi_\xi$ that we have $\varphi_{r}(\infty)=1$ and $\varphi_{r}(-\infty)=-1$. Let $\varphi_{r}^{-1}: (-1,1) \rightarrow \mathbb{R}$ be the inverse function of $\varphi_{r}$. For any $y \in \mathbb{R}$, let $\phi^{r}_y:(-1,1) \rightarrow \mathbb{C}$ be the function defined by $\phi^{r}_y(x):=e^{-2i\pi y \varphi_{r}^{-1}(x)}/\sqrt{2}$. It can be seen using dominated convergence that $(y \mapsto \phi^{r}_y)$ is continuous from $\mathbb{R}$ to $L^2((-1,1))$. 

Let $Q_{r} : L^2(\mathbb{R}) \rightarrow L^2(\mathbb{R})$ be the operator defined via $Q_{r}.f := \mathcal{F}^{-1}((- \psi_\xi(2\pi \cdot) + r)^{-1/2} \times f)$. Note that $(- \psi_\xi(2\pi \cdot) + r)^{-1/2}$ is bounded by $r^{-1/2}$. Therefore, if $f \in L^2(\mathbb{R})$, $(- \psi_\xi(2\pi \cdot) + r)^{-1/2} \times f \in L^2(\mathbb{R})$ so $\mathcal{F}^{-1}((- \psi_\xi(2\pi \cdot) + r)^{-1/2} \times f) \in L^2(\mathbb{R})$, i.e. $Q_{r}.f \in L^2(\mathbb{R})$. $Q_{r}$ is thus well defined. Also, it is easy to check that $\mathsf{Ker}\ Q_{r} = \{\textbf{0}\}$. 

The following lemma states some properties of $Q_{r}$ and relates it with $\phi^{r}_y$. 
\begin{lemme} \label{opnormq=rsrdualnew}
Assume that $r>0$ and \eqref{hypcaractexpol-1} holds. 
\begin{enumerate}
\item $Q_{r}$ is a bounded operator and $\||Q_{r}\||^2 \leq 1/r$. 
\item $Q_{r}(L^2(\mathbb{R}))\subset \crz$. In particular, for any $f \in L^2(\mathbb{R})$ and $y \in \mathbb{R}$, $Q_{r}.f(y)$ is defined for every $y \in \mathbb{R}$ and is continuous. 
\item There is a bijective isometry $J : L^2(\mathbb{R}) \rightarrow L^2((-1,1))$ such that 
\begin{align}
\forall f \in L^2(\mathbb{R}), \forall y \in \mathbb{R}, \ Q_{r}.f(y) = \sqrt{v^{r}_\xi(0)} \psl J.f,\phi^{r}_y \psri. \label{normqfdansbase10new}
\end{align}
\end{enumerate}
\end{lemme}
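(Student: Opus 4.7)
The plan is to handle parts (1) and (2) by standard Plancherel and Riemann--Lebesgue arguments and then to identify the isometry $J$ in part (3) through an explicit change of variable driven by $\varphi_r$. For (1), the symmetry of $\xi$ gives $-\psi_\xi \geq 0$ on $\mathbb{R}$, so the multiplier $(-\psi_\xi(2\pi\cdot)+r)^{-1/2}$ is bounded by $r^{-1/2}$, and since $\mathcal{F}^{-1}$ is an $L^2$-isometry by Plancherel this gives $\|Q_r.f\nr \leq r^{-1/2}\|f\nr$. For (2), Cauchy--Schwarz yields
\begin{align*}
\int_{\mathbb{R}} \frac{|f(y)|}{\sqrt{-\psi_\xi(2\pi y)+r}}\,dy \leq \|f\nr \left(\int_{\mathbb{R}} \frac{dy}{-\psi_\xi(2\pi y)+r}\right)^{1/2},
\end{align*}
and the last integral is finite and equals $v^r_\xi(0)$ by Lemma \ref{linkpsizpotential} together with \eqref{hypcaractexpol-1} (this also matches the normalization that makes $\varphi_r(\pm\infty)=\pm 1$). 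Thus $(-\psi_\xi(2\pi\cdot)+r)^{-1/2} f \in L^1(\mathbb{R})$, so by Riemann--Lebesgue $Q_r.f \in \crz$, with the pointwise representation $Q_r.f(y) = \int_{\mathbb{R}} f(z)(-\psi_\xi(2\pi z)+r)^{-1/2} e^{2i\pi zy}\,dz$ valid for every $y \in \mathbb{R}$.

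For part (3), I would start from this pointwise formula and apply the substitution $u = \varphi_r(z)$. Since $\varphi_r'(z) = 2/(v^r_\xi(0)(-\psi_\xi(2\pi z)+r))$, one has $dz = (v^r_\xi(0)/2)(-\psi_\xi(2\pi\varphi_r^{-1}(u))+r)\,du$, hence
\begin{align*}
Q_r.f(y) = \frac{v^r_\xi(0)}{2}\int_{-1}^{1} f(\varphi_r^{-1}(u))\sqrt{-\psi_\xi(2\pi\varphi_r^{-1}(u))+r}\,e^{2i\pi\varphi_r^{-1}(u)y}\,du.
\end{align*}
Since $\overline{\phi^r_y(u)} = e^{2i\pi y\varphi_r^{-1}(u)}/\sqrt{2}$, comparing with \eqref{normqfdansbase10new} forces the definition
\begin{align*}
J.f(u) := \sqrt{\frac{v^r_\xi(0)}{2}}\,\sqrt{-\psi_\xi(2\pi\varphi_r^{-1}(u))+r}\,f(\varphi_r^{-1}(u)).
\end{align*}

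With this candidate, the isometry property $\|J.f\nri = \|f\nr$ is obtained by reversing the same substitution in $\int_{-1}^1 |J.f(u)|^2 du$: the weight $(v^r_\xi(0)/2)(-\psi_\xi(2\pi\varphi_r^{-1}(u))+r)\,du$ gives back $dz$, leaving only $\int_{\mathbb{R}} |f(z)|^2\,dz$. Bijectivity follows immediately from the explicit formula $J^{-1}.g(z) = \sqrt{2/v^r_\xi(0)}\,g(\varphi_r(z))/\sqrt{-\psi_\xi(2\pi z)+r}$. The only delicate point I foresee is to justify the change of variable leading to \eqref{normqfdansbase10new} for a general $f \in L^2(\mathbb{R})$ as a pointwise identity in $y$; I would handle this by first establishing everything on a dense subclass where the integrals converge absolutely (for instance $f$ bounded with compact support) and then extending by continuity, using that the left-hand side of \eqref{normqfdansbase10new} is continuous in $f$ at each fixed $y$ via part (2), while the right-hand side is continuous in $f$ as an inner product against $\phi^r_y \in L^2((-1,1))$ precomposed with the isometry $J$.
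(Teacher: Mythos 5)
Your proof is correct, and parts (1) and (2) follow the same Plancherel and Cauchy--Schwarz/Riemann--Lebesgue arguments as the paper. For part (3), however, your route is genuinely more direct: the paper constructs $J$ implicitly by declaring $J.e_n := g_n$ on the Hilbert basis $e_n(x) = (v^r_\xi(0))^{-1/2} e^{in\pi\varphi_r(x)}(-\psi_\xi(2\pi x)+r)^{-1/2}$, then proves the identity \eqref{normqfdansbase10new} by computing $Q_r.e_k$ and invoking Parseval, whereas you write down the explicit formula $J.f(u) = \sqrt{v^r_\xi(0)/2}\,\sqrt{-\psi_\xi(2\pi\varphi_r^{-1}(u))+r}\,f(\varphi_r^{-1}(u))$ and verify both the isometry and the identity by a single change of variable $u=\varphi_r(z)$. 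These two $J$'s coincide (one checks $J.e_n = g_n$ from your formula), so the underlying mechanism is the same, but your version avoids the detour through the basis expansion and Parseval's identity and produces an inversion formula for free. One small simplification you could make: the density argument at the end is not actually needed, since for every $f\in L^2(\mathbb{R})$ the integrand $(-\psi_\xi(2\pi\cdot)+r)^{-1/2}f$ is already in $L^1(\mathbb{R})$ by the Cauchy--Schwarz bound you used in part (2), and $\varphi_r$ is a $C^1$-diffeomorphism of $\mathbb{R}$ onto $(-1,1)$, so the change of variable yielding \eqref{normqfdansbase10new} is legitimate pointwise in $y$ for all of $L^2(\mathbb{R})$ at once.
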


An important point of the proof of the third point is to choose a suitable Hilbert basis of $L^2(\mathbb{R})$ on which $Q_{r}$ acts nicely. The action of $Q_{r}$ on the elements of the basis is then determined by noticing a duality \eqref{feksurspi=phiygknew} between the Fourier transforms of some functions built from the basis, and Fourier coefficients of the functions $\phi^{r}_y$. 

\begin{proof}[Proof of Lemma \ref{opnormq=rsrdualnew}]
Using the definition of $Q_{r}$ and Plancherel's identity we get
\begin{align}
\|Q_{r}.f\nr^2 & = \|\mathcal{F}^{-1}((- \psi_\xi(2\pi \cdot) + r)^{-1/2} \times f)\|_{L^2(\mathbb{R})}^2 = \|(- \psi_\xi(2\pi \cdot) + r)^{-1/2} \times f\|_{L^2(\mathbb{R})}^2 \nonumber \\
& \leq \|(- \psi_\xi(2\pi \cdot) + r)^{-1/2}\|_{\infty}^2.\|f\|_{L^2(\mathbb{R})}^2=\|f\|_{L^2(\mathbb{R})}^2/r. \label{bornaltqr}
\end{align}
This proves the first claim. 

Note from \eqref{hypcaractexpol-1} that $(- \psi_\xi(2\pi \cdot) + r)^{-1/2} \in L^2(\mathbb{R})$. Therefore, if $f \in L^2(\mathbb{R})$, $(- \psi_\xi(2\pi \cdot) + r)^{-1/2} \times f \in L^1(\mathbb{R})$ by Cauchy-Schwartz inequality so $Q_{r}.f=\mathcal{F}^{-1}((- \psi_\xi(2\pi \cdot) + r)^{-1/2} \times f) \in \crz$. This proves the second claim. 

Let us define 
\begin{align}
e_n(x) := \sqrt{\frac{1}{v^{r}_\xi(0)}} \times \frac{e^{i n \pi \varphi_{r}(x)}}{\sqrt{- \psi_\xi(2\pi x) + r}}, \ n \in \mathbb{Z}. \label{defen}
\end{align}
It is not difficult to check that $(e_n)_{n \in \mathbb{Z}}$ is an Hilbert basis of $L^2(\mathbb{R})$. Let $(g_n)_{n \in \mathbb{Z}}$ be the classical Hilbert basis of $L^2((-1,1))$, namely, $g_n(x):=e^{i n \pi x}/\sqrt{2}$ for $n \in \mathbb{Z}$. We define an isometry $J : L^2(\mathbb{R}) \rightarrow L^2((-1,1))$ via $J.e_n:=g_n$ for $n \in \mathbb{Z}$ and linear isometrical extension. Since $Q_{r}$ is continuous on $ L^2(\mathbb{R})$ we have for any $f \in L^2(\mathbb{R})$ that
\begin{align}
Q_{r}.f = \sum_{k \in \mathbb{Z}} \psl f,e_k \psrr Q_{r}.e_k. \label{normqfdansbasenew}
\end{align}
We have $(- \psi_\xi(2\pi \cdot) + r)^{-1/2} \times e_k \in L^2(\mathbb{R}) \cap L^1(\mathbb{R})$ so, for $k\in \mathbb{Z}$ and $y \in \mathbb{R}$ we have 
\begin{align}
Q_{r}.e_k(y) = \left (\mathcal{F}^{-1} \frac{e_k}{\sqrt{- \psi_\xi(2\pi \cdot) + r}} \right )(y) & = \sqrt{\frac{1}{v^{r}_\xi(0)}} \int_{\mathbb{R}} \frac{e^{i k \pi \varphi_{r}(x)}}{- \psi_\xi(2\pi x) + r} e^{2i\pi xy} dx \nonumber \\
& = \sqrt{v^{r}_\xi(0)} \int_{-1}^1 \frac{e^{2i\pi y \varphi_{r}^{-1}(u)}}{\sqrt{2}} \frac{e^{i k \pi u}}{\sqrt{2}} du \nonumber \\
& = \sqrt{v^{r}_\xi(0)} \times \overline{\psl \phi^{r}_y, g_{k} \psri}, \label{feksurspi=phiygknew}
\end{align}
where we have made the change of variable $u=\varphi_{r}(x)$. Since $\psl f,e_k \psrr = \psl J.f,J.e_k \psri = \psl J.f,g_{k} \psri$, we get for any $y \in \mathbb{R}$, 
\begin{align*}
\sum_{k \in \mathbb{Z}} \psl f,e_k \psrr Q_{r}.e_k(y) & = \sqrt{v^{r}_\xi(0)} \sum_{k \in \mathbb{Z}} \psl J.f,g_{k} \psri \overline{\psl \phi^{r}_y, g_{k} \psri} = \sqrt{v^{r}_\xi(0)} \psl J.f, \phi^{r}_y \psri, 
\end{align*}
where the last equality comes from Parseval's identity. Combining the above with \eqref{normqfdansbasenew} we get that \eqref{normqfdansbase10new} holds for a.e. $y \in \mathbb{R}$. Since $Q_{r}.f \in \crz$ and $(y \mapsto \phi^{r}_y)$ is continuous, both sides of the equality are continuous so \eqref{normqfdansbase10new} holds for every $y \in \mathbb{R}$. This proves the third claim. 
\end{proof}

\begin{lemme} \label{contintpsfphiy}
Assume that $r>0$ and \eqref{hypcaractexpol-1} holds. There is a constant $C>0$ such that for any $h \in L^2((-1,1))$ we have $\int_{\mathbb{R}} |\psl \phi^{r}_y,h \psri |^2 dy \leq C \|h \nri^2$. 
\end{lemme}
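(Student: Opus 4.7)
The plan is to reduce the bound directly to the structural statements collected in Lemma \ref{opnormq=rsrdualnew}. Given $h \in L^2((-1,1))$, since the map $J : L^2(\mathbb{R}) \to L^2((-1,1))$ constructed there is a bijective isometry, I would set $f := J^{-1}.h \in L^2(\mathbb{R})$, which satisfies $\|f\nr = \|h\nri$. Taking the complex conjugate of \eqref{normqfdansbase10new} applied to this $f$ then yields
\[
\psl \phi^{r}_y, h \psri \;=\; \overline{\psl J.f, \phi^{r}_y \psri} \;=\; \frac{\overline{Q_{r}.f(y)}}{\sqrt{v^{r}_\xi(0)}}
\]
for every $y \in \mathbb{R}$, since $h = J.f$. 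In particular $|\psl \phi^{r}_y, h \psri|^2 = |Q_{r}.f(y)|^2/v^{r}_\xi(0)$ pointwise in $y$.

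The next step is to integrate this identity over $y \in \mathbb{R}$. The first point of Lemma \ref{opnormq=rsrdualnew} asserts $Q_{r}.f \in L^2(\mathbb{R})$ with $\|Q_{r}.f\nr^2 \leq \|f\nr^2/r$, while its second point shows that $Q_{r}.f$ admits a continuous representative, so the pointwise integral $\int_{\mathbb{R}} |Q_{r}.f(y)|^2 dy$ coincides with $\|Q_{r}.f\nr^2$. Combining these inputs gives
\[
\int_{\mathbb{R}} |\psl \phi^{r}_y, h \psri|^2 dy \;=\; \frac{\|Q_{r}.f\nr^2}{v^{r}_\xi(0)} \;\leq\; \frac{\|f\nr^2}{r\,v^{r}_\xi(0)} \;=\; \frac{\|h\nri^2}{r\,v^{r}_\xi(0)},
\]
which is precisely the lemma with explicit constant $C := 1/(r\,v^{r}_\xi(0))$.

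I do not foresee a genuine obstacle here: the lemma is essentially a Bessel-type restatement of what \eqref{normqfdansbase10new} and the contraction bound $\||Q_{r}\||^2 \leq 1/r$ already provide, with the role of the family $(\phi^{r}_y)_{y \in \mathbb{R}}$ being that of a continuous ``basis'' dual to $J$. The only minor verification needed is the compatibility between the $L^2(\mathbb{R})$-norm of $Q_{r}.f$ (as a Lebesgue class) and the pointwise integral of its continuous representative, which is automatic. Should one prefer to avoid invoking $J$ altogether, the same estimate can be obtained directly: the change of variable $u = \varphi_{r}^{-1}(x)$ in $\psl \phi^{r}_y, h \psri$ rewrites it as the Fourier transform at $y$ of the $L^2(\mathbb{R})$ function $u \mapsto \overline{h(\varphi_{r}(u))}\,\varphi_{r}'(u)/\sqrt{2}$, and Plancherel's identity, combined with the pointwise bound $\varphi_{r}'(u) \leq 2/(r\,v^{r}_\xi(0))$ arising from $-\psi_\xi \geq 0$, gives the same constant $C$.
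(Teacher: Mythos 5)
Your proof is correct and matches the paper's approach exactly: the paper's one-line proof just invokes Lemma \ref{opnormq=rsrdualnew} with the constant $C=1/(r\,v^{r}_\xi(0))$, and you have spelled out precisely the computation (via $h=J.f$, \eqref{normqfdansbase10new}, and $\||Q_{r}\||^{2}\leq 1/r$) that the paper leaves implicit. The alternative Plancherel-based route you sketch is also valid and gives the same constant.
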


\begin{proof}
Lemma \ref{opnormq=rsrdualnew} shows that the claim holds with $C=1/r v^{r}_\xi(0)$. 
\end{proof}

For any $y \in \mathbb{R}$, let $P^{r}_y : L^2((-1,1)) \rightarrow L^2((-1,1))$ be defined by $P^{r}_y.f := \psl f,\phi^{r}_y \psri \phi^{r}_y$. Since $\| \phi^{r}_y \nri=1$, $P^{r}_y$ is the orthogonal projection on $Span \{ \phi^{r}_y \}$ and $\||P^{r}_y\|| = 1$. From the continuity of $y \mapsto \phi^{r}_y$ we can see that $y \mapsto P^{r}_y$ is continuous from $\mathbb{R}$ to $\mathcal{L}(L^2((-1,1)))$. 

The following lemma defines a linear operator $R_{r}$ and shows some of its useful properties. 
\begin{lemme} \label{oprwelldeefinedanddiag}
Assume that $r>0$ and \eqref{mheavytailed} and \eqref{hypcaractexpol-1} hold. 
\begin{enumerate}
\item For any $h \in L^2((-1,1))$ we have $\int_{\mathbb{R}} m(y) \| P^{r}_y.h \nri dy<\infty$. We can thus define an operator $R_{r}: L^2((-1,1)) \rightarrow L^2((-1,1))$ by $R_{r}.h:= \int_{\mathbb{R}} m(y) (P^{r}_y.h) dy$, where the integral is in the sense of Bochner. 
\item For any $f,g \in L^2((-1,1))$ we have $\psl R_{r}.f,g \psri = \int_{\mathbb{R}} m(y) \psl P^{r}_y.f,g \psri dy$. 
\item $R_{r}$ is a bounded operator, $\mathsf{Ker}\ R_{r} = \{\textbf{0}\}$, and $R_{r}$ is compact and self-adjoint in $L^2((-1,1))$. In particular there is a Hilbert basis of $L^2((-1,1))$ consisting of eigenfunctions of $R_{r}$. 
\item The eigenvalues of $R_{r}$ are real, positive, and form a sequence converging to $0$. Let $\lambda^R_{1} \geq \lambda^R_{2} \geq \dots$ be the ordered sequence of eigenvalues of $R_{r}$, repeated with their respective multiplicities. 
\end{enumerate}
\end{lemme}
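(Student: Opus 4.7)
My plan is to handle the four claims in order, reducing each to Lemmas \ref{opnormq=rsrdualnew} and \ref{contintpsfphiy} together with standard spectral theory for compact self-adjoint operators. For claims (1)--(2), since $\phi^{r}_y$ is a unit vector of $L^2((-1,1))$, the rank-one projection satisfies $\| P^{r}_y.h \nri = |\psl h, \phi^{r}_y \psri|$; Cauchy--Schwarz in $y$ together with \eqref{mheavytailed} and Lemma \ref{contintpsfphiy} then gives
\begin{equation*}
\int_{\mathbb{R}} m(y) \| P^{r}_y.h \nri \, dy \leq \|m\nr \left ( \int_{\mathbb{R}} |\psl h, \phi^{r}_y \psri|^2 dy \right )^{1/2} \leq \sqrt{C}\, \|m\nr\, \|h \nri,
\end{equation*}
and strong measurability of $y \mapsto m(y) P^{r}_y.h$ follows from the continuity of $y \mapsto P^{r}_y$. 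Thus the Bochner integral is well defined. Claim (2) is then the standard commutation of the bounded linear functional $\psl \cdot, g \psri$ with a Bochner integral.

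For claim (3), boundedness of $R_{r}$ is the estimate above. Self-adjointness follows from (2), since each $P^{r}_y$ is an orthogonal projection (hence self-adjoint) and $m$ is real-valued. For injectivity, if $R_{r}.h = \textbf{0}$ then $0 = \psl R_{r}.h, h \psri = \int_{\mathbb{R}} m(y) |\psl h, \phi^{r}_y \psri|^2 dy$; strict positivity of $m$ from \eqref{mheavytailed} combined with continuity of $y \mapsto \psl h, \phi^{r}_y \psri$ forces $\psl h, \phi^{r}_y \psri = 0$ for every $y \in \mathbb{R}$; plugging this into \eqref{normqfdansbase10new} yields $Q_{r}.(J^{-1}h) \equiv \textbf{0}$, and since $\mathsf{Ker}\ Q_{r} = \{\textbf{0}\}$ and $J$ is bijective, $h = \textbf{0}$.

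The main substantial step is compactness, which I would establish by exhibiting $R_{r}$ as the operator-norm limit of compact operators. For each $N \geq 1$ set $R_{r}^{(N)} := \int_{-N}^N m(y) P^{r}_y dy$. Since $y \mapsto m(y) P^{r}_y$ is continuous from $[-N,N]$ into $\mathcal{L}(L^2((-1,1)))$, this integral is approximated in operator norm by Riemann sums $\sum_k m(y_k) P^{r}_{y_k} \Delta y_k$, each of which is finite-rank because each $P^{r}_{y_k}$ has rank one; hence $R_{r}^{(N)}$ is compact. The delicate point is the convergence $R_{r}^{(N)} \to R_{r}$ in operator norm, for which I would estimate, for any $h$ with $\|h \nri \leq 1$,
\begin{equation*}
\|(R_{r} - R_{r}^{(N)}).h \nri \leq \int_{|y|>N} m(y) |\psl h, \phi^{r}_y \psri| dy \leq \left ( \int_{|y|>N} m(y)^2 dy \right )^{1/2} \sqrt{C},
\end{equation*}
using Cauchy--Schwarz and Lemma \ref{contintpsfphiy}. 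The right-hand side tends to $0$ as $N \to \infty$ since $m \in L^2(\mathbb{R})$ by \eqref{mheavytailed}, so $R_{r}^{(N)}$ converges to $R_{r}$ in operator norm and $R_{r}$ is compact. The spectral theorem for compact self-adjoint operators then furnishes the desired Hilbert basis of eigenfunctions.

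Claim (4) follows at once: the spectral theorem gives real eigenvalues forming a sequence tending to $0$; non-negativity of $\psl R_{r}.h, h \psri = \int_{\mathbb{R}} m(y) |\psl h, \phi^{r}_y \psri|^2 dy$ makes them $\geq 0$; and injectivity from (3) upgrades this to strict positivity, since otherwise a non-zero eigenfunction with eigenvalue $0$ would lie in $\mathsf{Ker}\ R_{r}$. Because $L^2((-1,1))$ is infinite-dimensional and $R_{r}$ injective, the ordered sequence $\lambda^R_{1} \geq \lambda^R_{2} \geq \dots$ is actually infinite and decreases to $0$.
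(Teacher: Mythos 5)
Your proof is correct and follows essentially the same route as the paper's: the same Cauchy--Schwarz reduction to Lemma \ref{contintpsfphiy} for well-definedness, injectivity via \eqref{normqfdansbase10new} and $\mathsf{Ker}\ Q_{r}=\{\textbf{0}\}$, and compactness by truncating $m$ to a compact interval and approximating by finite-rank combinations of the projections $P^{r}_y$. The only cosmetic difference is that you pass through Riemann sums of the continuous operator-valued map rather than invoking the general simple-function approximation of Bochner integrals, which changes nothing of substance.
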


\begin{proof}
Using the definition of $P^{r}_y$ we get 
\begin{align}
\forall h \in L^2((-1,1)), \ |\psl \phi^{r}_y,h \psri |^2=\psl P^{r}_y.h,h \psri =\| P^{r}_y.h \nri^2. \label{usefulid}
\end{align}
Therefore Lemma \ref{contintpsfphiy} can be re-written as $\int_{\mathbb{R}} \| P^{r}_y.h \nri^2 dy \leq C \|h \nri^2$. Combining this with $m \in L^2(\mathbb{R})$ (from \eqref{mheavytailed}) and Cauchy-Schwartz inequality we get 
\begin{align}
\int_{\mathbb{R}} m(y) \| P^{r}_y.h \nri dy \leq \sqrt{C} \|m\nr \|h \nri<\infty, \label{bochintwelldef}
\end{align}
which proves the first claim of the lemma. By properties of Bochner integrals, if $T$ is a continuous linear operator from $L^2((-1,1))$ to another Banach space, then the Bochner integral $\int_{\mathbb{R}} m(y) (T.P^{r}_y.h) dy$ is well-defined and we have $T.R_{r}.h=\int_{\mathbb{R}} m(y) (T.P^{r}_y.h) dy$. For any $g \in L^2((-1,1))$, we can apply this to the continuous linear operator $\psl \cdot,g \psri$ from $L^2((-1,1))$ to $\mathbb{R}$, which yields the second claim of the lemma. 

By properties of Bochner integrals we have 
\begin{align}
\| R_{r}.h \nri \leq \int_{\mathbb{R}} m(y) \| P^{r}_y.h \nri dy. \label{inegtriboch}
\end{align}
Combining with \eqref{bochintwelldef} we get that $R_{r}$ is a bounded operator. Using the second claim of the lemma, \eqref{usefulid} and \eqref{normqfdansbase10new} from Lemma \ref{opnormq=rsrdualnew} we get for any $h \in L^2((-1,1))$, 
\begin{align}
\psl R_{r}.h,h \psri = \int_{\mathbb{R}} m(y) |\psl \phi^{r}_y,h \psri |^2 dy = \frac1{v^{r}_\xi(0)} \int_{\mathbb{R}} m(y) |Q_{r}.J^{-1}.h(y)|^2 dy, \label{relrrqr}
\end{align}
where $J^{-1}$ denotes the inverse of the bijective isometry $J$ from Lemma \ref{opnormq=rsrdualnew}. Since $m$ is positive on $\mathbb{R}$ (by \eqref{mheavytailed}), $\mathsf{Ker}\ Q_{r} = \{\textbf{0}\}$, and $J^{-1}$ is bijective we get $h \in \mathsf{Ker}\ R_{r} \Rightarrow h=\textbf{0}$ so $\mathsf{Ker}\ R_{r} = \{\textbf{0}\}$. The second claim of the lemma and the fact that $P^{r}_y$ is self-adjoint for all $y \in \mathbb{R}$ show that $R_{r}$ is self-adjoint as well. For $M>0$, since $m \in \crz$ (by \eqref{mheavytailed}) we have $\int_{[-M,M]} m(y) \|| P^{r}_y \|| dy\leq 2M\|m\|_{\infty}<\infty$. We can thus define an operator $R_{r,M} \in \mathcal{L}(L^2((-1,1)))$ by $R_{r,M}:= \int_{[-M,M]} m(y) P^{r}_y dy$, where the integral is in the sense of Bochner. Using \eqref{inegtriboch} with $R_{r} - R_{r,M}$ in place of $R_{r}$ and \eqref{bochintwelldef} with $m\textbf{1}_{[-M,M]^c}$ in place of $m$ we get $\||R_{r} - R_{r,M}\|| \leq \sqrt{C} \|m \textbf{1}_{[-M,M]^c}\nr$ so $R_{r,M}$ converges to $R_{r}$ in $\mathcal{L}(L^2((-1,1)))$ as $M$ goes to infinity. By properties of Bochner integrals, $R_{r,M}$ (and therefore $R_{r}$) can be approximated by finite linear combinations of operators $P^{r}_y$. Since each operator $P^{r}_y$ has rank one, $R_{r}$ is the limit of a sequence of finite rank operators and is thus compact. The existence of a Hilbert basis of $L^2((-1,1))$ consisting of eigenfunctions of $R_{r}$ then follows from the spectral theorem. This concludes the proof of the third claim of the lemma. 

That the eigenvalues of $R_{r}$ are real and form a sequence converging to $0$ follow from $R_{r}$ being self-adjoint and from the spectral theorem. From \eqref{relrrqr} we get that each eigenvalue of $R_{r}$ is non-negative. Since $\mathsf{Ker}\ R_{r} = \{\textbf{0}\}$, the eigenvalues of $R_{r}$ are even positive. This concludes the proof of the fourth claim of the lemma. 
\end{proof}

\begin{remark}
The compactness of $R_{r}$ alternatively follows from $R_{r}$ being a Hilbert-Schmidt operator, which follows from the first part of Lemma \ref{traces2new} below. 
\end{remark}

\subsection{Traces of compositions of $R_{r}$} \label{refren}

The following lemma prepares the ground to work with the traces of compositions of $R_{r}$. 

\begin{lemme} \label{traces1}
Assume that $r>0$ and \eqref{hypcaractexpol-1} holds. Let $(u_k)_{k \geq 1}$ be a Hilbert basis of $L^2((-1,1))$. For any $y \in \mathbb{R}$ and any bounded operator $H$ we have 
\begin{align*}
\sum_{j \geq 1} |\psl H.P^{r}_{y}.u_j,u_j \psri | \leq \||H\|| \ \text{and} \ \sum_{j \geq 1} \psl H.P^{r}_{y}.u_j,u_j \psri = \psl H.\phi^{r}_{y},\phi^{r}_{y} \psri. 
\end{align*}
\end{lemme}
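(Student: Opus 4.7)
The plan is to expand everything directly from the definition $P^r_y.f = \psl f,\phi^r_y \psri \phi^r_y$ and then apply Cauchy-Schwarz and Parseval.

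First I would write, for each $j\geq 1$, the identity
\[
\psl H.P^{r}_{y}.u_j,u_j \psri = \psl u_j,\phi^{r}_{y} \psri \, \psl H.\phi^{r}_{y},u_j \psri,
\]
which follows from $P^r_y.u_j = \psl u_j,\phi^r_y \psri \phi^r_y$ and linearity of $H$ and of the inner product in the first argument. For the inequality, I would then apply the Cauchy-Schwarz inequality in $\ell^2(\mathbb{N})$ to the two sequences $(\psl u_j,\phi^r_y\psri)_j$ and $(\psl H.\phi^r_y, u_j\psri)_j$, yielding
\[
\sum_{j\geq 1} \bigl|\psl H.P^{r}_{y}.u_j,u_j \psri\bigr|
\leq \Bigl(\sum_{j\geq 1}|\psl u_j,\phi^{r}_{y}\psri|^2\Bigr)^{1/2}
\Bigl(\sum_{j\geq 1}|\psl H.\phi^{r}_{y},u_j\psri|^2\Bigr)^{1/2}.
\]
By Parseval's identity applied in the Hilbert basis $(u_j)_{j\geq 1}$ of $L^2((-1,1))$, both sums equal $\|\phi^r_y\nri^2=1$ and $\|H.\phi^r_y\nri^2$ respectively, and since $\|\phi^r_y\nri=1$ the right-hand side is bounded by $\||H\||$. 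This would give the first claim.

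For the equality, I would use that $\psl u_j,\phi^r_y\psri = \overline{\psl\phi^r_y,u_j\psri}$ and rewrite
\[
\sum_{j\geq 1}\psl H.P^{r}_{y}.u_j,u_j \psri
= \sum_{j\geq 1}\psl H.\phi^{r}_{y},u_j\psri \, \overline{\psl\phi^{r}_{y},u_j\psri}.
\]
The first part of the proof shows the series is absolutely convergent, so no convergence issue remains. By Parseval's identity, this sum equals $\psl H.\phi^r_y,\phi^r_y\psri$, which is the second claim.

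There is no real obstacle here: the statement is essentially the computation of the trace of the rank-one operator $H\circ P^r_y$ in an arbitrary orthonormal basis, together with the standard bound $|\mathrm{tr}(HP^r_y)|\leq\||H\||$ for a one-dimensional projection. The only mild care is to justify absolute summability before rearranging to apply Parseval, which is precisely what the Cauchy-Schwarz step provides.
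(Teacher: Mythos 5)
Your proof is correct and follows exactly the paper's argument: expand $\psl H.P^{r}_{y}.u_j,u_j \psri$ as $\psl H.\phi^{r}_{y},u_j \psri\,\overline{\psl \phi^{r}_{y},u_j \psri}$ via the rank-one structure of $P^r_y$, then use Cauchy--Schwarz for sums together with Parseval for the bound, and Parseval's identity for the inner product for the equality. No discrepancies.
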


\begin{proof}
By definition of $P^{r}_y$ we can see that we have $\psl H.P^{r}_{y}.u_j,u_j \psri = \psl H.\phi^{r}_{y},u_j \psri \times \overline{\psl \phi^{r}_{y},u_j \psri}$ so the first claim follows from Cauchy-Schwartz inequality for sums and Parseval's identity while the second claim follows from Parseval's identity for the inner product. 
\end{proof}

Recall that $R_{r}^n$ denotes the composition of $R_{r}$ by itself $n$ times. The following lemma shows that the traces of compositions of $R_{r}$ are well-defined and relate their asymptotic behaviors to $\lambda^R_{1}$. 
\begin{lemme} \label{traces2new}
Assume that $r>0$ and \eqref{mheavytailed} and \eqref{hypcaractexpol-1} hold. For any $n \geq 2$, and any Hilbert basis $(u_k)_{k \geq 1}$ of $L^2((-1,1))$ we have $\sum_{j \geq 1} |\psl R_{r}^n.u_j,u_j \psri | <\infty$ and the quantity $\sum_{j \geq 1} \psl R_{r}^n.u_j,u_j \psri$ does not depend on the choice of the Hilbert basis $(u_k)_{k \geq 1}$ so we denote it by $Tr(R_{r}^n)$. Moreover we have 
\begin{align}
Tr(R_{r}^n) \underset{n \rightarrow \infty}{\sim} N \times (\lambda_{1}^R)^n, \label{equivtraces}
\end{align}
where $N$ denotes the multiplicity of the eigenvalue $\lambda_{1}^R$ of $R_{r}$. 
\end{lemme}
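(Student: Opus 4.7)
The plan is to first establish that $R_{r}$ is a Hilbert-Schmidt operator. Granting this, for every $n \geq 2$, $R_{r}^n = R_{r}^{n-2}\circ R_{r}^2$ is trace-class by the ideal property of trace-class operators, and since $R_{r}$ is positive self-adjoint (Lemma \ref{oprwelldeefinedanddiag}) so is $R_{r}^n$. Standard trace-class theory then gives that $\sum_j \psl R_{r}^n.u_j,u_j\psri$ converges absolutely (the terms being non-negative), is independent of the orthonormal basis $(u_j)$, and equals $Tr(R_{r}^n)=\sum_{k\geq 1} (\lambda^R_k)^n$ (via diagonalization in the eigenbasis of $R_{r}$ provided by Lemma \ref{oprwelldeefinedanddiag}). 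Isolating the $N$ leading eigenvalues (all equal to $\lambda^R_1$) and bounding $\sum_{k>N}(\lambda^R_k)^n \leq (\lambda^R_{N+1})^{n-2} Tr(R_{r}^2)$, which is $o((\lambda^R_1)^n)$ as $n\to\infty$ since $\lambda^R_{N+1}<\lambda^R_1$, yields \eqref{equivtraces}.

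To prove $R_{r}$ is Hilbert-Schmidt, I would first explicitly compute $\psl \phi^{r}_{y_1}, \phi^{r}_{y_2} \psri$ via the change of variable $u=\varphi_{r}^{-1}(x)$, whose Jacobian $\varphi_{r}'(u)=2/(v^{r}_\xi(0)(r-\psi_\xi(2\pi u)))$ comes from the definition of $\varphi_{r}$, obtaining
\begin{align*}
\psl \phi^{r}_{y_1}, \phi^{r}_{y_2} \psri = \frac{1}{v^{r}_\xi(0)} (\mathcal{F}F)(y_1-y_2), \qquad F(u):=\frac{1}{r-\psi_\xi(2\pi u)}.
\end{align*}
By \eqref{hypcaractexpol-1} and the pointwise bound $F \leq 1/r$, $F \in L^1(\mathbb{R}) \cap L^\infty(\mathbb{R}) \subset L^2(\mathbb{R})$, and Plancherel gives $\mathcal{F}F \in L^2(\mathbb{R})$ with $\|\mathcal{F}F\nr=\|F\nr$. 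I would then compute $Tr(R_{r}^2)=\sum_j\|R_{r}.u_j\nri^2$ as a double integral; since $m$ is only $L^2$ and not $L^1$, the direct Fubini interchange is delicate, but a truncation argument with $m_N:=m\textbf{1}_{[-N,N]}$ (for which the analogous operator $R_{r}^{(N)}$ is handled by standard Fubini, and $R_{r}^{(N)} \to R_{r}$ in operator norm by $\|m-m_N\nr\to 0$ together with Lemma \ref{contintpsfphiy}), combined with Fatou's lemma, gives
\begin{align*}
Tr(R_{r}^2) \leq \int_{\mathbb{R}^2} m(y_1) m(y_2) \left| \psl \phi^{r}_{y_1}, \phi^{r}_{y_2} \psri \right|^2 dy_1 dy_2.
\end{align*}
The substitution $w=y_1-y_2$ (with $\tilde m(y):=m(-y)$) rewrites the right-hand side as $v^{r}_\xi(0)^{-2} \int_{\mathbb{R}} (m*\tilde m)(w) |(\mathcal{F}F)(w)|^2 dw \leq v^{r}_\xi(0)^{-2} \|m\nr^2 \|F\nr^2<\infty$, using $\|m*\tilde m\|_\infty\leq\|m\nr^2$ (Cauchy-Schwartz) and Plancherel.

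The main obstacle is proving the Hilbert-Schmidt property when $m$ is only $L^2(\mathbb{R})$ and not $L^1(\mathbb{R})$: the Fubini step in the computation of $Tr(R_{r}^2)$ is bypassed via the truncation argument above, while the quantitative bound relies crucially on the integrability \eqref{hypcaractexpol-1} of $1/\psi_\xi$, which gives $F \in L^1\cap L^\infty$ and hence $\|F\nr<\infty$, compensating via Plancherel for the weaker $L^2$-integrability of $m$.
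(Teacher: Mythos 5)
Your proposal is correct but follows a genuinely different route from the paper. The paper works entirely by hand: for each $n\geq 2$ it expands $\sum_j \psl R_r^n u_j,u_j\psri$ as an $n$-fold integral, bounds the summand via Cauchy--Schwarz and Parseval to reduce to the integral $\int m(y_1)\cdots m(y_n) v^r_\xi(y_2-y_1)\cdots v^r_\xi(y_n-y_{n-1})\,dy$, establishes its finiteness by an inductive claim (``$mf\in L^2 \Rightarrow \int m\,v^r_\xi(\cdot-z)f\,dz\in L^2$'', proved via Plancherel), uses Fubini to get basis-independence, and then reads off $Tr(R_r^n)=\sum_k(\lambda_k^R)^n$, deriving Hilbert--Schmidt-ness as a \emph{byproduct} for $n=2$. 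You instead make Hilbert--Schmidt-ness the starting point: you prove $Tr(R_r^2)<\infty$ via a streamlined one-shot convolution/Plancherel bound $\int (m*\tilde m)|\mathcal{F}F|^2\leq \|m\nr^2\|F\nr^2$, handle the Fubini subtlety by truncating $m$ and using Fatou (rather than proving the $n$-fold integrals finite by induction), and then invoke the ideal property of trace-class operators and the standard trace formula to dispose of all $n\geq 2$ at once. Both routes exploit the same key compensation -- $1/\psi_\xi\in L^1\cap L^\infty$ paired by Plancherel against $m\in L^2$ -- but yours compresses the analytic work to $n=2$ and leans on abstract operator theory, while the paper's direct $n$-fold estimates are not wasted effort: the explicit finiteness of those iterated integrals is re-used verbatim in Proposition~\ref{exproftracesrec} to show the partition functions $\partfct_n$ are well-defined, so if you adopted your proof you would still need something equivalent there. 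The asymptotic step at the end is the same in both.
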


\begin{remark} \label{mult1equiv}
The proof of Lemma \ref{q1posonr} below will show that the eigenvalue $\lambda_{1}^R$ of $R_{r}$ has multiplicity $1$. A consequence will be that we have actually $N=1$ in \eqref{equivtraces}. 
\end{remark}

\begin{proof} [Proof of Lemma \ref{traces2new}]
Fix $n \geq 2$. Using the second point of Lemma \ref{oprwelldeefinedanddiag} and Fubini's theorem we get 
\begin{align}
\sum_{j \geq 1} |\psl R_{r}^n.u_j,u_j \psri | \leq \int_{\mathbb{R}^{n}} m(y_1) \dots m(y_n) \left ( \sum_{j \geq 1} |\psl P^{r}_{y_{n}}\dots P^{r}_{y_1}.u_j,u_j \psri | \right ) dy_1 \dots dy_{n} =:I_n. \label{finitetracenew}
\end{align}
Using the definition of $P^{r}_y$ we get that $\psl P^{r}_{y_{n}}\dots P^{r}_{y_1}.u_j,u_j \psri $ equals 
\begin{align}
\psl u_j,\phi^{r}_{y_1} \psri \times \psl \phi^{r}_{y_1},\phi^{r}_{y_2} \psri \times \cdots \times \psl \phi^{r}_{y_{n-1}},\phi^{r}_{y_n} \psri \times \psl \phi^{r}_{y_n},u_j \psri. \label{prepfubinitraces1}
\end{align}
We now compute the terms $\psl \phi^{r}_{y_1},\phi^{r}_{y_2} \psri$. Using the definition of $\phi^{r}_y$, the substitution $x=\varphi_{r}^{-1}(u)$, and Lemma \ref{linkpsizpotential}, we get 
\begin{align}
\psl \phi^{r}_{y_1},\phi^{r}_{y_2} \psri & = \frac1{2} \int_{-1}^1 e^{2i\pi (y_2-y_1) \varphi_{r}^{-1}(u)} du = \frac{1}{v^{r}_\xi(0)} \int_{\mathbb{R}} \frac{e^{2i\pi (y_2-y_1) x}}{- \psi_\xi(2\pi x) + r} dx \label{exproftraces6} \\
& = \frac{1}{v^{r}_\xi(0)} \mathcal{F}^{-1} \left ( \frac1{- \psi_\xi(2\pi \cdot) + r} \right ) (y_2-y_1) = \frac{v^{r}_\xi(y_2-y_1)}{v^{r}_\xi(0)}. \nonumber
\end{align}
The combination of \eqref{prepfubinitraces1} and \eqref{exproftraces6} yields that $\sum_{j \geq 1} |\psl P^{r}_{y_{n}}\dots P^{r}_{y_1}.u_j,u_j \psri |$ is smaller than 
\begin{align*}
& \frac{v^{r}_\xi(y_2-y_1) \cdots v^{r}_\xi(y_n-y_{n-1})}{v^{r}_\xi(0)^{n-1}} \sum_{j \geq 1} |\psl u_j,\phi^{r}_{y_1} \psri| \times |\psl \phi^{r}_{y_n},u_j \psri| \\
\leq & \frac{v^{r}_\xi(y_2-y_1) \cdots v^{r}_\xi(y_n-y_{n-1})}{v^{r}_\xi(0)^{n-1}} \sqrt{\sum_{j \geq 1} |\psl u_j,\phi^{r}_{y_1} \psri|^2} \sqrt{\sum_{j \geq 1} |\psl \phi^{r}_{y_n},u_j \psri|^2} \\
= & \frac{v^{r}_\xi(y_2-y_1) \cdots v^{r}_\xi(y_n-y_{n-1})}{v^{r}_\xi(0)^{n-1}} \times \| \phi^{r}_{y_1} \nri \times \| \phi^{r}_{y_n} \nri = \frac{v^{r}_\xi(y_2-y_1) \cdots v^{r}_\xi(y_n-y_{n-1})}{v^{r}_\xi(0)^{n-1}}, 
\end{align*}
where we have used Cauchy-Schwartz inequality for sums, Parseval's identity, and $\| \phi^{r}_y \nri=1$. Plugging this in the definition of $I_n$ in \eqref{finitetracenew} we get 
\begin{align}
I_n & \leq \frac{1}{v^{r}_\xi(0)^{n-1}} \int_{\mathbb{R}^{n}} m(y_1) \dots m(y_n) v^{r}_\xi(y_2-y_1) \cdots v^{r}_\xi(y_n-y_{n-1}) dy_1 \dots dy_{n} \label{inegiteres} \\
& = \frac{1}{v^{r}_\xi(0)^{n-1}} \int_{\mathbb{R}} m(y_n) \left ( \dots \left ( \int_{\mathbb{R}} m(y_2) v^{r}_\xi(y_3-y_2) \left ( \int_{\mathbb{R}} m(y_1) v^{r}_\xi(y_2-y_1) dy_1 \right ) dy_2 \right ) \cdots \right ) dy_n \nonumber \\
& = \frac{1}{v^{r}_\xi(0)^{n-1}} \int_{\mathbb{R}} m(y) f_{n-1}(y) dy, \nonumber
\end{align}
where we have set $f_0(y):=1$ and $f_{k+1}(y):=\int_{\mathbb{R}} m(z) v^{r}_\xi(y-z) f_k(z) dz$ for $k \geq 0$. 

In order to prove that $I_n<\infty$, let us first show that for any function $f$, 
\begin{align}
m f \in L^2(\mathbb{R}) \Rightarrow \int_{\mathbb{R}} m(z) v^{r}_\xi(\cdot-z) f(z) dz \in L^2(\mathbb{R}). \label{claimforind}
\end{align}
Lemma \ref{linkpsizpotential} shows that $v^r_\xi(\cdot) \in L^1(\mathbb{R}) \cap \crz \subset L^2(\mathbb{R})$. Using Plancherel's identity we get 
\begin{align*}
\int_{\mathbb{R}} m(z) v^{r}_\xi(y-z) f(z) dz & =\int_{\mathbb{R}} \mathcal{F}^{-1}(m f)(x) \mathcal{F}^{-1}(v^{r}_\xi(y-\cdot))(x) dx =\int_{\mathbb{R}} e^{2i\pi yx} \mathcal{F}^{-1}(m f)(x) \mathcal{F}(v^{r}_\xi(\cdot))(x) dx. 
\end{align*}
By Plancherel's isometry we have $\mathcal{F}^{-1}(m f) \in L^2(\mathbb{R})$ and $\mathcal{F}(v^{r}_\xi(\cdot)) \in L^2(\mathbb{R})$ so the product of the two function is in $L^1(\mathbb{R})$ by Cauchy-Schwartz inequality. We can thus recognize the last term as the inverse Fourier transform of a function in $L^1(\mathbb{R})$ and get $\int_{\mathbb{R}} m(z) v^{r}_\xi(y-z) f(z) dz=\mathcal{F}^{-1}( \mathcal{F}^{-1}(m f) \times \mathcal{F}(v^{r}_\xi(\cdot)) )(y)$. 
Since, by Lemma \ref{linkpsizpotential}, $\mathcal{F}(v^{r}_\xi(\cdot))=(- \psi_\xi(2\pi \cdot) + r)^{-1}$ which is bounded by $1/r$ we get that the product $\mathcal{F}^{-1}(m f) \times \mathcal{F}(v^{r}_\xi(\cdot))$ is actually in $L^1(\mathbb{R}) \cap L^2(\mathbb{R})$. By Plancherel's isometry, its Fourier transform is in $L^2(\mathbb{R})$ and this concludes the proof of \eqref{claimforind}. 

Since $m \in \crz \cap L^2(\mathbb{R})$ by \eqref{mheavytailed}, \eqref{claimforind} shows that $f_1 \in L^2(\mathbb{R})$ and then shows by induction that $f_k \in L^2(\mathbb{R})$ for all $k \geq 1$. Therefore, $I_n<\infty$ follows from applying Cauchy-Schwartz inequality in the right-hand side of \eqref{inegiteres}. Combining this with \eqref{finitetracenew} yields the claim $\sum_{j \geq 1} |\psl R_{r}^n.u_j,u_j \psri | <\infty$. 

Then, using the second point of Lemma \ref{oprwelldeefinedanddiag}, Fubini's theorem, and Lemma \ref{traces1} we get 
\begin{align}
\sum_{j \geq 1} \psl R_{r}^n.u_j,u_j \psri & = \sum_{j \geq 1} \int_{\mathbb{R}^{n}} m(y_1) \dots m(y_{n}) \psl P^{r}_{y_{n}}\dots P^{r}_{y_1}.u_j,u_j \psri dy_1 \dots dy_{n} \label{exproftraces2new} \\
& = \int_{\mathbb{R}^{n}} m(y_1) \dots m(y_{n}) \left ( \sum_{j \geq 1} \psl P^{r}_{y_{n}}\dots P^{r}_{y_1}.u_j,u_j \psri \right ) dy_1 \dots dy_{n} \nonumber \\
& = \int_{\mathbb{R}^{n}} m(y_1) \dots m(y_{n}) \psl P^{r}_{y_{n}}\dots P^{r}_{y_2}.\phi^{r}_{y_1},\phi^{r}_{y_1} \psri dy_1 \dots dy_{n}, \nonumber
\end{align}
where the use of Fubini's theorem is allowed since $I_n<\infty$. This proves the independence with respect to the choice of the Hilbert basis $(u_k)_{k \geq 1}$. $Tr(R_{r}^n)$ is thus well-defined. 

Choosing, for $(u_k)_{k \geq 1}$, the Hilbert basis of eigenfunctions of $R_{r}$ we get $Tr(R_{r}^n) = \sum_{j \geq 1} (\lambda_{j}^{R})^{n}$ for all $n \geq 2$ so in particular, 
\begin{align}
\sum_{j \geq 1} (\lambda_{j}^{R})^{2}<\infty. \label{sumsquarrevpfinite}
\end{align}
For $n \geq 2$ we have 
\begin{align}
N \times (\lambda_{1}^{R})^{n} & \leq Tr(R_{r}^n) = \sum_{j \geq 1} (\lambda_{j}^{R})^{n} = (\lambda_{1}^{R})^{n} \left ( N + \sum_{j > N} \left (\frac{\lambda^{R}_{j}}{\lambda^{R}_{1}} \right )^{n} \right ) \leq (\lambda_{1}^{R})^{n} \left ( N + \frac1{(\lambda_{1}^{R})^{2}} \left ( \frac{\lambda_{N+1}^{R}}{\lambda_{1}^{R}} \right )^{n-2} \sum_{j > N} (\lambda_{j}^{R})^{2} \right ). \label{calcequivtrace}
\end{align}
Since $\lambda_{N+1}^{R}<\lambda_{1}^{R}$, the combination of \eqref{sumsquarrevpfinite} and \eqref{calcequivtrace} yields \eqref{equivtraces}. 
\end{proof}

The following lemma slightly improves Lemma \ref{traces2new} when the stronger assumption \eqref{mheavytailed1} holds. 
\begin{lemme} \label{traces2new1}
Assume that $r>0$ and \eqref{mheavytailed1} and \eqref{hypcaractexpol-1} hold. For any $n \geq 1$, and any Hilbert basis $(u_k)_{k \geq 1}$ of $L^2((-1,1))$ we have $\sum_{j \geq 1} |\psl R_{r}^n.u_j,u_j \psri | <\infty$ and the quantity $\sum_{j \geq 1} \psl R_{r}^n.u_j,u_j \psri$ does not depend on the choice of the Hilbert basis $(u_k)_{k \geq 1}$ so we denote it by $Tr(R_{r}^n)$. Moreover we have $Tr(R_{r}) =\|m\|_{L^1(\mathbb{R})}$ and for any $n \geq 1, Tr(R_{r}^n) = \sum_{j \geq 1} (\lambda_{j}^{R})^{n}<\infty$. 
\end{lemme}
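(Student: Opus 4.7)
The essential point is that Lemma \ref{traces2new1} extends Lemma \ref{traces2new} from $n\geq 2$ down to $n=1$, with the additional identification $Tr(R_{r})=\|m\|_{L^1(\mathbb{R})}$. For $n\geq 2$, Lemma \ref{traces2new} already gives absolute summability, independence of the basis, and the identity $Tr(R_r^n)=\sum_{j\geq 1}(\lambda_j^R)^n$ (note that its proof only used \eqref{mheavytailed} via $m\in L^2(\mathbb{R})$, which still holds under the stronger \eqref{mheavytailed1}). So the only genuinely new case is $n=1$.

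The plan for $n=1$ is the following. Fix any Hilbert basis $(u_k)_{k\geq 1}$ of $L^2((-1,1))$. By the second point of Lemma \ref{oprwelldeefinedanddiag} together with identity \eqref{usefulid},
\[
\psl R_{r}.u_j,u_j\psri \;=\; \int_{\mathbb{R}} m(y)\,\psl P^{r}_y.u_j,u_j\psri\,dy \;=\; \int_{\mathbb{R}} m(y)\,|\psl u_j,\phi^{r}_y\psri|^2\,dy \;\geq\; 0,
\]
so the absolute value on the left is redundant. Summing over $j$ and exchanging sum and integral via Tonelli's theorem (all integrands are non-negative), then using Parseval's identity in $L^2((-1,1))$ and $\|\phi^{r}_y\nri=1$,
\[
\sum_{j\geq 1}|\psl R_{r}.u_j,u_j\psri|\;=\;\int_{\mathbb{R}} m(y)\sum_{j\geq 1}|\psl u_j,\phi^{r}_y\psri|^2\,dy\;=\;\int_{\mathbb{R}} m(y)\,\|\phi^{r}_y\nri^2\,dy\;=\;\|m\|_{L^1(\mathbb{R})},
\]
which is finite thanks to \eqref{mheavytailed1}. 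This simultaneously proves the absolute convergence of $\sum_j\psl R_r u_j,u_j\psri$, its independence of the basis (the value $\|m\|_{L^1(\mathbb{R})}$ obtained does not involve $(u_k)$), and the evaluation $Tr(R_{r})=\|m\|_{L^1(\mathbb{R})}$.

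To conclude the remaining identity $Tr(R_r^n)=\sum_{j\geq 1}(\lambda_j^R)^n$ for $n=1$, specialize the preceding computation to the Hilbert basis of eigenfunctions of $R_{r}$ provided by Lemma \ref{oprwelldeefinedanddiag}: in that basis $\psl R_r u_j,u_j\psri = \lambda_j^R$, giving $\sum_j \lambda_j^R = \|m\|_{L^1(\mathbb{R})}$. Combined with the $n\geq 2$ case from Lemma \ref{traces2new}, this yields all assertions of the lemma. No real obstacle arises; the one point requiring care is justifying the interchange of sum and integral, which is handled cleanly by the non-negativity of each summand (making Tonelli applicable) rather than having to invoke the more delicate Cauchy--Schwartz and $L^2$-estimate chain used in Lemma \ref{traces2new} to treat the oscillatory inner products appearing for $n\geq 2$.
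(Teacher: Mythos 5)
Your proof is correct and follows essentially the same route as the paper: the paper also treats the trace via the identity $\psl P^r_y.u_j,u_j\psri=|\psl\phi^r_y,u_j\psri|^2$, an interchange of sum and integral, and Parseval's identity giving $\sum_j|\psl\phi^r_y,u_j\psri|^2=\|\phi^r_y\nri^2=1$, with the $n\ge2$ cases already covered by Lemma \ref{traces2new}. Your observation that for $n=1$ every summand is non-negative, so Tonelli applies directly, is a clean (and valid) shortcut past the paper's use of Lemma \ref{traces1} and Fubini.
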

Note that for $n\geq 2$ the result is a consequence of Lemma \ref{traces2new}. 
\begin{proof}
Fix $n\geq 1$. Note that for any $y \in \mathbb{R}$, $\||P^{r}_{y}\|| \leq 1$ because $P^{r}_{y}$ is an orthogonal projection. Proceeding as in \eqref{finitetracenew} and using Lemma \ref{traces1} and $m \in L^1(\mathbb{R})$ (from \eqref{mheavytailed1}) we get 
\begin{align*}
\sum_{j \geq 1} |\psl R_{r}^n.u_j,u_j \psri | & \leq \int_{\mathbb{R}^{n}} m(y_1) \dots m(y_n) \left ( \sum_{j \geq 1} |\psl P^{r}_{y_{n}}\dots P^{r}_{y_1}.u_j,u_j \psri | \right ) dy_1 \dots dy_{n} \\
& \leq \int_{\mathbb{R}^{n}} m(y_1) \dots m(y_n) dy_1 \dots dy_{n} = \|m\|_{L^1(\mathbb{R})}^n < \infty. 
\end{align*}
We thus get $\sum_{j \geq 1} |\psl R_{r}^n.u_j,u_j \psri | <\infty$. Moreover the above finiteness allows to use Fubini's theorem so \eqref{exproftraces2new} holds true in our case (for all $n\geq 1$). This proves the independence with respect to the choice of the Hilbert basis $(u_k)_{k \geq 1}$. $Tr(R_{r}^n)$ is thus well-defined. Choosing, for $(u_k)_{k \geq 1}$, the Hilbert basis of eigenfunctions of $R_{r}$ we get $Tr(R_{r}^n) = \sum_{j \geq 1} (\lambda_{j}^{R})^{n}<\infty$. 
Applying \eqref{exproftraces2new} with $n=1$ we get $Tr(R_{r}) = \int_{\mathbb{R}} m(y) \| \phi^{r}_{y} \nri^2 dy = \|m\|_{L^1(\mathbb{R})}$. 
\end{proof}

\subsection{A duality with $X^{m,\xi,r}$} \label{dualwithx}

\begin{prop} \label{strongduality}
Assume that $r>0$ and \eqref{mheavytailed} and \eqref{hypcaractexpol-1} hold. For any $h \in L^2((-1,1))$, 
\begin{align}
(y \mapsto \psl R_{r}.\phi^{r}_y,h \psri)\in \mathcal{D}(\mathcal{A}_{X^{m,\xi,r}}), \label{strongdualityin} \\
\forall y \in \mathbb{R}, \ \mathcal{A}_{X^{m,\xi,r}} \psl R_{r}.\phi^{r}_{\cdot},h \psri(y) = \frac{-1}{v^r_\xi(0)} \psl \phi^{r}_y,h \psri. \label{strongdualityexpr}
\end{align}
\end{prop}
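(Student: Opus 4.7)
The plan is to recognize $y \mapsto \psl R_{r}.\phi^{r}_y, h \psri$ as $V^{r}_\xi$ applied to an explicit function of $\crz$, and then combine this identification with the relationship between the generators of $\xi^{r}$ and $X^{m,\xi,r}$ stemming from the time-change. First I would expand $R_{r}.\phi^{r}_y$ via its Bochner-integral definition and apply the identity $\psl \phi^{r}_{y_1}, \phi^{r}_{y_2} \psri = v^{r}_\xi(y_2-y_1)/v^{r}_\xi(0)$ already established in \eqref{exproftraces6}. Combining this with the symmetry of $v^{r}_\xi$ yields
\begin{equation*}
g(y) := \psl R_{r}.\phi^{r}_y, h \psri = \frac{1}{v^{r}_\xi(0)} \int_{\mathbb{R}} m(z) v^{r}_\xi(y-z) \psl \phi^{r}_z, h \psri dz = \frac{1}{v^{r}_\xi(0)} V^{r}_\xi f(y),
\end{equation*}
where $f(z) := m(z) \psl \phi^{r}_z, h \psri$.

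Next I would verify $f \in \crz$. Continuity and boundedness of $z \mapsto \psl \phi^{r}_z, h \psri$ follow from Cauchy--Schwartz and from the continuity of $z \mapsto \phi^{r}_z$ into $L^2((-1,1))$ recorded at the start of Subsection \ref{defopbasprop}; vanishing at infinity can be extracted from Lemma \ref{opnormq=rsrdualnew} by noting (after taking a conjugate and using the bijectivity of $J$) that $\psl \phi^{r}_z, h \psri$ agrees, up to a multiplicative constant, with $\overline{Q_{r}(J^{-1}\bar h)(z)}$, which lies in $\crz$ by the second point of that lemma. Multiplying by $m \in \crz$ then yields $f \in \crz$, and the same argument shows the candidate right-hand side $z \mapsto -\psl \phi^{r}_z, h \psri/v^{r}_\xi(0)$ of \eqref{strongdualityexpr} also lies in $\crz$.

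Since $V^{r}_\xi$ is the resolvent at $0$ of the Feller semigroup of $\xi^{r}$, classical Feller theory then gives $V^{r}_\xi f \in \mathcal{D}(\mathcal{A}_{\xi^{r}})$ and $\mathcal{A}_{\xi^{r}} V^{r}_\xi f = -f$. Hence $g \in \mathcal{D}(\mathcal{A}_{\xi^{r}})$ and $\mathcal{A}_{\xi^{r}} g(y) = -m(y) \psl \phi^{r}_y, h \psri / v^{r}_\xi(0)$. Crucially, this stays proportional to $m$, so when I invoke the time-change relationship between the generators, namely $\mathcal{A}_{X^{m,\xi,r}} = (1/m) \mathcal{A}_{\xi^{r}}$ on the appropriate subspace (proved in Subsection \ref{5.3generator}), the factor $m$ cancels and I obtain $g \in \mathcal{D}(\mathcal{A}_{X^{m,\xi,r}})$ together with the target identity $\mathcal{A}_{X^{m,\xi,r}} g(y) = -\psl \phi^{r}_y, h \psri / v^{r}_\xi(0)$.

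The main obstacle is this last step. The Feller generator $\mathcal{A}_{X^{m,\xi,r}}$ is defined through $\| \cdot \|_\infty$-convergence in $\crz$, and since $m \in \crz$ vanishes at infinity, $1/m$ is unbounded and one cannot carelessly divide arbitrary elements of $\mathcal{D}(\mathcal{A}_{\xi^{r}})$ by $m$. The point is that the time-change rule has to be formulated at the Feller level so that, whenever $g \in \mathcal{D}(\mathcal{A}_{\xi^{r}})$ satisfies $\mathcal{A}_{\xi^{r}} g / m \in \crz$ (as is the case here, by design), one genuinely has $g \in \mathcal{D}(\mathcal{A}_{X^{m,\xi,r}})$ with the expected formula. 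Everything else -- the Bochner-integral manipulations, the resolvent identity, and the $\crz$-memberships -- is routine once this stage is set.
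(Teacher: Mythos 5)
Your proposal is correct and follows essentially the same route as the paper: identify $\psl R_{r}.\phi^{r}_{\cdot},h \psri$ as $V^{r}_\xi G_h / v^{r}_\xi(0)$ with $G_h = m\,\psl \phi^{r}_{\cdot},h \psri \in \crz$, apply the resolvent identity for $\xi^r$, and then conclude via the time-change rule of Lemma \ref{generatorxnew}, whose hypothesis $\frac1{m}\mathcal{A}_{\xi^r}g \in \crz$ is satisfied precisely because $\mathcal{A}_{\xi^r}g$ is proportional to $m$. The only (harmless) variation is that you deduce $(z \mapsto \psl \phi^{r}_z,h \psri) \in \crz$ by recycling Lemma \ref{opnormq=rsrdualnew} (modulo a conjugation bookkeeping slip, $J^{-1}h$ rather than $J^{-1}\bar h$), whereas the paper recomputes it directly as an inverse Fourier transform of an $L^1$ function.
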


\begin{proof}
Using the second point of Lemma \ref{oprwelldeefinedanddiag}, the definition of $P^{r}_z$, and \eqref{exproftraces6} we get 
\begin{align}
\psl R_{r}.\phi^{r}_y,h \psri & = \int_{\mathbb{R}} m(z) \psl P^{r}_z.\phi^{r}_y,h \psri dz = \int_{\mathbb{R}} m(z) \psl \phi^{r}_y,\phi^{r}_z \psri \psl \phi^{r}_z,h \psri dz \label{rphihegresgh} \\
& = \frac{1}{v^{r}_\xi(0)} \int_{\mathbb{R}} v^{r}_\xi(z-y) m(z) \psl \phi^{r}_z,h \psri dz = \frac{1}{v^{r}_\xi(0)} \int_{\mathbb{R}} v^{r}_\xi(u) G_h(y+u) du, \nonumber
\end{align}
where we have set $G_h(z):=m(z) \psl \phi^{r}_z,h \psri$. Let us prove that $(z \mapsto \psl \phi^{r}_z,h \psri) \in \crz$. We set $M_h(z):=h(\varphi_{r}(z))/(- \psi_\xi(2\pi z) + r)$. 
Since $h \in L^2((-1,1)) \subset L^1((-1,1))$ we have $\int_{\mathbb{R}} |M_h(u)| du = v^r_\xi(0) \int_{(-1,1)} |h(x)| dx/2 < \infty$ so $M_h \in L^1(\mathbb{R})$. Therefore $\mathcal{F}^{-1} M_h \in \crz$. Then we have 
\[ (\mathcal{F}^{-1} M_h)(z) = \int_{\mathbb{R}} e^{2i\pi u z} \frac{h(\varphi_{r}(u))}{- \psi_\xi(2\pi u) + r} du =  \frac{v^{r}_\xi(0)}{2} \int_{(-1,1)} h(x) e^{2i\pi z \varphi_{r}^{-1}(x)} dx = \frac{v^{r}_\xi(0)}{\sqrt{2}} \overline{\psl \phi^{r}_z,h \psri}. \]
Therefore $(z \mapsto \psl \phi^{r}_z,h \psri) \in \crz$. Since $m \in \crz$ (by \eqref{mheavytailed}) we get $G_h \in \crz$. $V^r_\xi G_h$ is thus well-defined (where $V^r_\xi$ is the resolvent operator at $r$, as in Section \ref{notations}) and, using Lemma \ref{linkpsizpotential}, we get 
\[ V^r_\xi G_h(y)=\int_{\mathbb{R}} G_h(y+u) V^{r}_\xi(du)=\int_{\mathbb{R}} v^{r}_\xi(u) G_h(y+u) du = v^{r}_\xi(0) \psl R_{r}.\phi^{r}_y,h \psri, \]
where we have used \eqref{rphihegresgh} for the last equality. Therefore $(y \mapsto \psl R_{r}.\phi^{r}_y,h \psri)\in V^r_\xi(\crz)$. According to Lemma 1.27 in \cite{levymatters3} we have $V^r_\xi(\crz) \subset \mathcal{D}(\mathcal{A}_{\xi})$ and for any $f \in \crz, \mathcal{A}_{\xi}V^r_\xi f-rV^r_\xi f=-f$. We thus get $(y \mapsto \psl R_{r}.\phi^{r}_y,h \psri)\in \mathcal{D}(\mathcal{A}_{\xi})=\mathcal{D}(\mathcal{A}_{\xi^r})$ and for all $y \in \mathbb{R}$, 
\begin{align}
\mathcal{A}_{\xi^r} \psl R_{r}.\phi^{r}_{\cdot},h \psri(y) = \frac{1}{v^r_\xi(0)} \left ( \mathcal{A}_{\xi} V^r_\xi G_h(y) - rV^r_\xi G_h(y) \right ) = -\frac{G_h(y)}{v^r_\xi(0)} = -\frac{m(y) \psl \phi^{r}_y,h \psri}{v^r_\xi(0)}. \label{axirphih}
\end{align}
Since $(z \mapsto \psl \phi^{r}_z,h \psri) \in \crz$ we have $\frac1{m}\mathcal{A}_{\xi^r} \psl R_{r}.\phi^{r}_{\cdot},h \psri \in \crz$ so, by Lemma \ref{generatorxnew}, we get \eqref{strongdualityin} and $\mathcal{A}_{X^{m,\xi,r}} \psl R_{r}.\phi^{r}_{\cdot},h \psri(y) = \frac{1}{m(y)} \mathcal{A}_{\xi^r} \psl R_{r}.\phi^{r}_{\cdot},h \psri(y)$. Combining this with \eqref{axirphih} we get \eqref{strongdualityexpr}. 
\end{proof}

\section{Decomposition of the survival probability of $X^{m,\xi,r}$} \label{decompsg}

\subsection{A basis of eigenfunctions of the generator} \label{basiseigfctgen}

According to Lemma \ref{oprwelldeefinedanddiag} we can, under the assumptions of that lemma, choose an orthonormal Hilbert basis $(a_n)_{n \geq 1}$ of $L^2((-1,1))$ consisting of eigenfunctions of $R_{r}$ such that $\lambda^R_{n}$ is the eigenvalue associated with $a_n$. For any $n \geq 1$ and $y \in \mathbb{R}$ let us define 
\begin{align}
q_n(y) := \frac1{\sqrt{\lambda^R_{n}}} \psl \phi^{r}_y,a_n \psri. \label{defqn}
\end{align}
The following result shows that the family $(q_n)_{n\geq 1}$ diagonalizes the generator $\mathcal{A}_{X^{m,\xi,r}}$. It is a consequence of the duality established in Proposition \ref{strongduality}. 
\begin{prop} \label{bonfctpropres}
Assume that $r>0$ and \eqref{mheavytailed} and \eqref{hypcaractexpol-1} hold. For any $n \geq 1$, $q_n \in \crz \cap L^2(\mathbb{R}) \subset L^2(m)$ and $\|q_n\nr\leq \sqrt{C/\lambda^R_{n}}$, where $C$ is as in Lemma \ref{contintpsfphiy}. $(q_n)_{n\geq 1}$ is an orthonormal Hilbert basis of $L^2(m)$. Moreover for any $n \geq 1$, $q_n \in \mathcal{D}(\mathcal{A}_{X^{m,\xi,r}})$ and 
\begin{align}
\mathcal{A}_{X^{m,\xi,r}} q_n = \frac{-1}{\lambda^R_{n} v^r_\xi(0)} q_n. \label{qnfctpropre}
\end{align}
\end{prop}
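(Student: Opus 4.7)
The plan is to build the required properties of $q_n$ in four layers: regularity and integrability, orthonormality, totality, and the eigenfunction identity. The first two follow quickly from the definition \eqref{defqn} and the machinery already in place, the third is where the main work lies, and the fourth is a direct consequence of Proposition \ref{strongduality} combined with the self-adjointness of $R_{r}$.

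For the regularity and the norm bound, observe that $q_n(y) = \overline{Q_{r}.(J^{-1}.a_n)(y)}/\sqrt{\lambda^R_{n} v^r_\xi(0)}$ by \eqref{normqfdansbase10new}; hence $q_n \in \crz$ by part (2) of Lemma \ref{opnormq=rsrdualnew}, and $\|q_n\nr^2 = (1/\lambda^R_{n}) \int_{\mathbb{R}} |\psl \phi^{r}_y, a_n \psri|^2 dy \leq C/\lambda^R_{n}$ by Lemma \ref{contintpsfphiy}. Since $m \in \crz$ is bounded we have $\crz \cap L^2(\mathbb{R}) \subset L^2(m)$. For orthonormality in $L^2(m)$, one expands
\[
\psl q_n, q_k \psrm = \frac{1}{\sqrt{\lambda^R_{n} \lambda^R_{k}}} \int_{\mathbb{R}} m(y) \psl P^{r}_y a_k, a_n \psri dy = \frac{\psl R_{r} a_k, a_n \psri}{\sqrt{\lambda^R_{n} \lambda^R_{k}}} = \delta_{kn},
\]
using the identity $\psl P^{r}_y a_k, a_n \psri = \psl a_k, \phi^{r}_y \psri \psl \phi^{r}_y, a_n \psri$ (which comes from the definition of $P^{r}_y$ together with conjugate symmetry of the inner product), part (2) of Lemma \ref{oprwelldeefinedanddiag}, $R_{r} a_k = \lambda^R_{k} a_k$, and orthonormality of $(a_n)_{n \geq 1}$ in $L^2((-1,1))$.

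Totality of $(q_n)_{n \geq 1}$ in $L^2(m)$ is the main obstacle. Take $f \in L^2(m)$ with $\psl f, q_n \psrm = 0$ for every $n$, and consider the continuous linear form $T_f : h \mapsto \int_{\mathbb{R}} f(y) \overline{\psl \phi^{r}_y, h \psri} m(y) dy$ on $L^2((-1,1))$. A Cauchy--Schwartz estimate with respect to the measure $m(y) dy$, combined with the identity $\int_{\mathbb{R}} m(y) |\psl \phi^{r}_y, h \psri|^2 dy = \psl R_{r} h, h \psri$ already used in \eqref{relrrqr}, gives $|T_f(h)| \leq \|f\|_{L^2(m)} \,\||R_{r}\||^{1/2} \|h\nri$, so $T_f$ is indeed continuous. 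By hypothesis $T_f(a_n) = 0$ for all $n$, so totality of $(a_n)$ in $L^2((-1,1))$ forces $T_f \equiv 0$. Specializing to $h = \phi^{r}_z$ and using \eqref{exproftraces6} for $\psl \phi^{r}_y, \phi^{r}_z \psri$ then yields $\int_{\mathbb{R}} f(y) v^r_\xi(z-y) m(y) dy = 0$ for every $z \in \mathbb{R}$. Setting $g := fm$, we note that $g \in L^2(\mathbb{R})$ since $m$ is bounded, and the previous display reads $g \ast v^r_\xi \equiv 0$; taking Fourier transforms and using $\mathcal{F} v^r_\xi = 1/(r - \psi_\xi(2\pi \cdot))$ from Lemma \ref{linkpsizpotential}, which never vanishes, forces $\mathcal{F} g = 0$, hence $g = 0$ a.e. and, since $m > 0$, $f = 0$ in $L^2(m)$.

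Finally, the eigenfunction identity is immediate from Proposition \ref{strongduality} applied with $h = a_n$: by self-adjointness of $R_{r}$ we have $\psl R_{r} \phi^{r}_y, a_n \psri = \psl \phi^{r}_y, R_{r} a_n \psri = \lambda^R_{n} \psl \phi^{r}_y, a_n \psri = (\lambda^R_{n})^{3/2} q_n(y)$, so the function $(\lambda^R_{n})^{3/2} q_n$ lies in $\mathcal{D}(\mathcal{A}_{X^{m,\xi,r}})$, whence so does $q_n$, and \eqref{strongdualityexpr} evaluated at $h = a_n$ becomes $(\lambda^R_{n})^{3/2} \mathcal{A}_{X^{m,\xi,r}} q_n(y) = -\sqrt{\lambda^R_{n}}\, q_n(y)/v^r_\xi(0)$, which rearranges to \eqref{qnfctpropre}.
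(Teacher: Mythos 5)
Your proof is correct, and for the norm bound, the inclusion in $L^2(m)$, the orthonormality computation, and the eigenfunction identity (via $\psl R_{r}.\phi^{r}_{\cdot},a_n \psri=(\lambda^R_{n})^{3/2}q_n$ and Proposition \ref{strongduality}) it coincides with the paper's argument. The one place where you genuinely diverge is the totality step, and your route is arguably cleaner. The paper introduces, for each $g \in L^2((-1,1))$, two continuous linear forms $A_g$ and $B_g$ on $L^2(m)$, shows they agree on the dense subspace $L^2(\mathbb{R})$, and concludes that $\mathcal{F}(\overline{f}m)\circ\varphi_{r}^{-1}$ vanishes, hence $\mathcal{F}(\overline{f}m)=\textbf{0}$. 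You instead fix $f$ orthogonal to all $q_n$, form the single continuous functional $T_f$ on $L^2((-1,1))$ (continuity via the identity $\int_{\mathbb{R}} m(y)|\psl \phi^{r}_y,h \psri|^2dy=\psl R_{r}.h,h\psri$ from \eqref{relrrqr}), kill it on the total family $(a_n)_{n\geq 1}$, and then specialize to $h=\phi^{r}_z$ to obtain the convolution identity $(fm)\ast v^r_\xi=\textbf{0}$; since $\mathcal{F}v^r_\xi=(r-\psi_\xi(2\pi\cdot))^{-1}$ never vanishes, $fm=\textbf{0}$. Both arguments are Fourier-uniqueness arguments in disguise, but yours avoids the density-of-$L^2(\mathbb{R})$-in-$L^2(m)$ detour and the change of variables through $\varphi_r^{-1}$, at the cost of invoking the convolution theorem for $L^2\ast L^1$. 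A further minor difference: you obtain $q_n\in\crz$ directly from $q_n=\overline{Q_{r}.(J^{-1}.a_n)}/\sqrt{\lambda^R_n v^r_\xi(0)}$ and Lemma \ref{opnormq=rsrdualnew}(2), whereas the paper gets it as a byproduct of $q_n\in\mathcal{D}(\mathcal{A}_{X^{m,\xi,r}})\subset\crz$; both are fine.
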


\begin{proof}
$q_n \in L^2(\mathbb{R})$ and the bound $\|q_n\nr\leq \sqrt{C/\lambda^R_{n}}$ are a consequence of \eqref{defqn} and Lemma \ref{contintpsfphiy}. Using \eqref{defqn}, that $a_n$ is an eigenfunction of $R_{r}$, and that $R_{r}$ is self-adjoint we get 
\begin{align}
\forall y \in \mathbb{R}, \ q_n(y) = \frac1{\sqrt{\lambda^R_{n}}} \psl \phi^{r}_y,a_n \psri = \frac1{(\lambda^R_{n})^{3/2}} \psl \phi^{r}_y,R_{r}.a_n \psri = \frac1{(\lambda^R_{n})^{3/2}} \psl R_{r}.\phi^{r}_y,a_n \psri. \label{qnegcterphih}
\end{align}
By \eqref{strongdualityin} we get $q_n \in \mathcal{D}(\mathcal{A}_{X^{m,\xi,r}}) \subset \crz$. Then the combination of \eqref{qnegcterphih}, \eqref{strongdualityexpr} and \eqref{defqn} yields 
\[ \mathcal{A}_{X^{m,\xi,r}} q_n = \frac1{(\lambda^R_{n})^{3/2}} \mathcal{A}_{X^{m,\xi,r}} \psl R_{r}.\phi^{r}_{\cdot},a_n \psri(y) = \frac{-1}{(\lambda^R_{n})^{3/2} v^r_\xi(0)} \psl \phi^{r}_y,a_n \psri = \frac{-1}{\lambda^R_{n} v^r_\xi(0)} q_n(y), \]
which is \eqref{qnfctpropre}. Then for any $m,n \geq 1$, using \eqref{defqn}, the definition of $P^{r}_y$, the second point of Lemma \ref{oprwelldeefinedanddiag}, and that $a_m$ is an eigenfunction of $R_{r}$ we get 
\begin{align*}
\sqrt{\lambda^R_{n} \lambda^R_{m}} \psl q_n,q_m \psrm & = \int_{\mathbb{R}} m(y) \psl \phi^{r}_y,a_n \psri \psl a_m,\phi^{r}_y \psri dy = \int_{\mathbb{R}} m(y) \psl P^{r}_y.a_m,a_n \psri dy \\
& = \psl R_{r}.a_m,a_n \psri = \lambda^R_{m} \psl a_m,a_n \psri. 
\end{align*}
Therefore $(q_n)_{n\geq 1}$ is an orthonormal family of $L^2(m)$. Now let $f \in L^2(m)$ be such that $\psl q_n,f \psrm=0$ for all $n \geq 1$ and let us prove that $f=0$. For any $g \in L^2((-1,1))$ we set $q_g(y) := \psl \phi^{r}_y,g \psri$. Note from Lemma \ref{contintpsfphiy} that $q_g \in L^2(\mathbb{R}) \subset L^2(m)$. Moreover, if $g \in Span < a_1,a_2,\dots >$ then $q_g$ is a linear combination of finitely many $q_n$'s so $\psl q_g,f \psrm=0$. For an arbitrary $g \in L^2((-1,1))$, let $(g_n)_{n \geq 1}$ a sequence in $Span < a_1,a_2,\dots >$ that converges to $g$ in $L^2((-1,1))$. Then, using Lemma \ref{contintpsfphiy}, we get 
\[ \|q_g - q_{g_n}\|_{L^2(m)}^2 \leq \| m \|_{\infty} \|q_g - q_{g_n}\nr^2 = \| m \|_{\infty} \int_{\mathbb{R}} |\psl \phi^{r}_y,g-g_n \psri |^2 dy \leq C \| m \|_{\infty} \|g-g_n \nri^2, \]
with $C$ as in Lemma \ref{contintpsfphiy}. Therefore $q_{g_n}$ converges to $q_g$ in $L^2(m)$ so $\psl q_g,f \psrm=0$. For any $g \in L^2((-1,1))$ let us define two linear forms from $L^2(m)$ to $\mathbb{R}$: 
\begin{align*}
\forall h \in L^2(m), \ A_g(h) := \psl q_g,h\psrm, \ B_g(h) := \frac1{\sqrt{2}} \psl \mathcal{F}(\overline{h}m)(\varphi_{r}^{-1}(\cdot)), g \psri. 
\end{align*}
$A_g$ is clearly continuous on $L^2(m)$. Then for any $h \in L^2(m)$ we have $\|\overline{h}m\nr^2 \leq \| m \|_{\infty} \|h\|_{L^2(m)}^2$ so $(h \mapsto \overline{h}m)$ is continuous from $L^2(m)$ to $L^2(\mathbb{R})$. $\mathcal{F}$ is continuous from $L^2(\mathbb{R})$ to $L^2(\mathbb{R})$. For any $k \in L^2(\mathbb{R})$, 
\[ \| k(\varphi_{r}^{-1}(\cdot)) \nri^2 \leq \frac1{r} \int_{(-1,1)} |k(\varphi_{r}^{-1}(x))|^2(- \psi_\xi(2\pi \varphi_{r}^{-1}(x)) + r) dx = \frac{2 }{r v^{r}_\xi(0)} \int_{\mathbb{R}} |k(y)|^2 dy = \frac{2 }{r v^{r}_\xi(0)} \|k\nr^2, \]
so $(k \mapsto k(\varphi_{r}^{-1}(\cdot)))$ is continuous from $L^2(\mathbb{R})$ to $L^2((-1,1))$. Therefore $B_g$ is continuous on $L^2(m)$. Let $h \in L^2(\mathbb{R})$. Using $m \in L^2(\mathbb{R})$ (from \eqref{mheavytailed}), two times Cauchy-Schwartz inequality, and $\| \phi^{r}_y \nri=1$ we get 
\[ \int_{\mathbb{R}} \int_{(-1,1)} m(y) \times |h(y)| \times |\phi^{r}_y(x)| \times |g(x)|dx dy \leq \|m\nr \times \|h\nr \times \| g \nri <\infty. \]
We can thus use Fubini's theorem and get 
\begin{align}
\int_{\mathbb{R}} m(y)\overline{h(y)} \left ( \int_{(-1,1)} \phi^{r}_y(x) \overline{g(x)} dx \right ) dy = \int_{(-1,1)} \left ( \int_{\mathbb{R}} m(y)\overline{h(y)} \phi^{r}_y(x) dy \right ) \overline{g(x)} dx. \label{coinconl2}
\end{align}
On the one hand, the left-hand side of \eqref{coinconl2} equals $\int_{\mathbb{R}} m(y)\overline{h(y)} \psl \phi^{r}_y,g \psri dy=\int_{\mathbb{R}} q_g(y)\overline{h(y)}m(y)dy=A_g(h)$. 
On the other hand, the right-hand side of \eqref{coinconl2} equals 
\[ \frac1{\sqrt{2}} \int_{(-1,1)} \left ( \int_{\mathbb{R}} m(y)\overline{h(y)} e^{-2i\pi y \varphi_{r}^{-1}(x)} dy \right ) \overline{g(x)} dx = \frac1{\sqrt{2}} \int_{(-1,1)} \mathcal{F}(\overline{h}m)(\varphi_{r}^{-1}(x)) \overline{g(x)} dx = B_g(h). \]
Therefore, for any $g \in L^2((-1,1))$, the linear forms $A_g$ and $B_g$ coincide on the subspace $L^2(\mathbb{R})$ that is dense in $L^2(m)$. Since $A_g$ and $B_g$ are continuous on $L^2(m)$ we get $A_g=B_g$. We have already shown that $A_g(f)=0$ for all $g \in L^2((-1,1))$ so $B_g(f)=0$ for all $g \in L^2((-1,1))$. We deduce that $\mathcal{F}(\overline{f}m)(\varphi_{r}^{-1}(\cdot))=0$ a.e. on $(-1,1)$ so $\mathcal{F}(\overline{f}m)=0$ a.e. on $\mathbb{R}$ so $\overline{f}m=0$ a.e. on $\mathbb{R}$ and since $m$ is positive on $\mathbb{R}$ (by \eqref{mheavytailed}), $f=0$ a.e. on $\mathbb{R}$. Therefore the orthonormal family $(q_n)_{n\geq 1}$ is total in $L^2(m)$ so it is an Hilbert basis. This concludes the proof. 
\end{proof}

Under the assumptions of Proposition \ref{bonfctpropres}, let $(\lambda^X_{n})_{n \geq 1}$ be the sequence defined by 
\begin{align}
\forall n \geq 1, \ \lambda^X_{n} := \frac{1}{\lambda^R_{n} v^r_\xi(0)}. \label{relvpxandr}
\end{align}
Proposition \ref{bonfctpropres} shows that $(\lambda^X_{n})_{n \geq 1}$ is the sequence of eigenvalues of $-\mathcal{A}_{X^{m,\xi,r}}$ associated with the eigenfunctions $(q_n)_{n\geq 1}$. 

\begin{lemme} \label{sumofnorms}
Assume that $r>0$ and \eqref{mheavytailed} and \eqref{hypcaractexpol-1} hold. For any $\alpha\geq 5/2$ we have 
\begin{align}
\sum_{n \geq 1} (\lambda^X_{n})^{-\alpha} (1 \vee \| q_n \|_{\infty}) < \infty. \label{sumofnorms1}
\end{align}
In particular, for any $\beta\geq 0$ and $t>0$ we have 
\begin{align}
\sum_{n \geq 1} (\lambda^X_{n})^{\beta} e^{- t\lambda^X_{n}} (1 \vee \| q_n \|_{\infty}) < \infty. \label{sumofnorms2}
\end{align}
\end{lemme}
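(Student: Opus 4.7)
The plan is to exploit the explicit formula \eqref{defqn} together with the Hilbert--Schmidt type estimate $\sum_n (\lambda^R_n)^2 < \infty$, which comes as a byproduct of Lemma \ref{traces2new} taken with $n=2$.

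First I would derive the sup-norm bound $\|q_n\|_\infty \leq (\lambda^R_n)^{-1/2}$: applying Cauchy--Schwarz directly to \eqref{defqn} gives $|q_n(y)| \leq (\lambda^R_n)^{-1/2} \|\phi^r_y\|_{L^2((-1,1))} \|a_n\|_{L^2((-1,1))}$, and both factors on the right equal one. Converting to $\lambda^X_n$ via \eqref{relvpxandr} then yields
\[
(\lambda^X_n)^{-\alpha}\bigl(1 \vee \|q_n\|_\infty\bigr) \leq v^r_\xi(0)^\alpha \bigl((\lambda^R_n)^{\alpha} + (\lambda^R_n)^{\alpha - 1/2}\bigr).
\]
The threshold $\alpha \geq 5/2$ is precisely what makes both exponents $\alpha$ and $\alpha - 1/2$ no smaller than $2$, so, using $\lambda^R_n \leq \lambda^R_1$ to absorb the excess powers, each term is dominated by a constant multiple of $(\lambda^R_n)^2$, and summability in $n$ follows from Lemma \ref{traces2new}. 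This gives \eqref{sumofnorms1}.

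For \eqref{sumofnorms2} I would note that $\lambda^R_n \to 0$ forces $\lambda^X_n \to \infty$, so that $x \mapsto x^\beta e^{-tx}$ decays faster than any negative power of $x$ as $x \to \infty$. In particular $(\lambda^X_n)^\beta e^{-t\lambda^X_n}$ is uniformly bounded by a constant multiple of $(\lambda^X_n)^{-5/2}$, and \eqref{sumofnorms2} reduces immediately to \eqref{sumofnorms1} with $\alpha = 5/2$.

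The argument presents no real obstacle; the only thing worth tracking is that the critical value $5/2$ arises because bounding $\|q_n\|_\infty$ by Cauchy--Schwarz costs a factor of $(\lambda^R_n)^{-1/2}$, which must be absorbed into the exponent of $\lambda^R_n$ before one may invoke the Hilbert--Schmidt bound $\sum_n (\lambda^R_n)^2 < \infty$.
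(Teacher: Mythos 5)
Your proof is correct and follows essentially the same route as the paper: Cauchy--Schwarz applied to \eqref{defqn} to get $\|q_n\|_\infty \leq (\lambda^R_n)^{-1/2}$, conversion via \eqref{relvpxandr}, and reduction to the Hilbert--Schmidt summability $\sum_n (\lambda^R_n)^2 < \infty$ from \eqref{sumsquarrevpfinite} (proved inside Lemma~\ref{traces2new}). The derivation of \eqref{sumofnorms2} from \eqref{sumofnorms1} is also the paper's argument, just spelled out more explicitly.
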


\begin{proof}
\eqref{sumofnorms2} easily follows from \eqref{sumofnorms1} so we only prove the later. From \eqref{defqn} and Cauchy-Schwartz inequality we see that for any $n \geq 1$ and $y \in \mathbb{R}$ we have 
\begin{align*}
|q_n(y)| \leq \frac1{\sqrt{\lambda^R_{n}}} \times \| \phi^{r}_y \nri \times \| a_n \nri = \frac{1}{\sqrt{\lambda^R_{n}}}, 
\end{align*}
where, for the last inequality, we have used that $(a_n)_{n \geq 1}$ is an orthonormal family and that $\| \phi^{r}_y \nri =1$, which is easily seen from the definition of $\phi^{r}_y$ in Section \ref{defopbasprop}. We thus get $\| q_n \|_{\infty} \leq 1/\sqrt{\lambda^R_{n}}$. We see from this and \eqref{relvpxandr} that we only need to show $\sum_{n \geq 1} (\lambda^X_{n})^{-2}< \infty$, but this is a consequence of \eqref{relvpxandr} and \eqref{sumsquarrevpfinite}. This concludes the proof. 
\end{proof}

\subsection{Decomposition of the semigroup in the basis, and the first eigenfunction} \label{decompsgbasis}

The following proposition provides a decomposition of $P_t.f$ in the basis $(q_n)_{n\geq 1}$ for $f \in \crz \cap L^2(\mathbb{R}) \subset L^2(m)$. 
\begin{prop} \label{decompsgbon1}
Assume that $r>0$ and \eqref{mheavytailed} and \eqref{hypcaractexpol-1} hold. For any $f \in \crz \cap L^2(\mathbb{R})$ and $t>0$ we have 
\begin{align}
\forall x \in \mathbb{R}, \ P_t.f(x) = \sum_{n \geq 1} e^{- t\lambda^X_{n}} \psl f, q_n \psrm q_n(x), \label{decompsgbonexpr}
\end{align}
where the series of functions in the right-hand side converges absolutely in $(\crz, \| \cdot \|_{\infty})$. 
\end{prop}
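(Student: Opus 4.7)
My plan is to establish \eqref{decompsgbonexpr} in three logical steps: (i) show the eigenfunction relation $P_t.q_n=e^{-t\lambda^X_n}q_n$ for each $n\geq 1$; (ii) prove absolute convergence of the right-hand side series in $(\crz,\|\cdot\|_\infty)$; (iii) identify the series with $P_t.f$ pointwise.

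Step (i) is immediate from the differentiation rule for Feller semigroups: since $q_n\in\mathcal{D}(\mathcal{A}_{X^{m,\xi,r}})$ with $\mathcal{A}_{X^{m,\xi,r}}q_n=-\lambda^X_n q_n$ by Proposition \ref{bonfctpropres}, the map $t\mapsto P_t.q_n$ is differentiable in $(\crz,\|\cdot\|_\infty)$ with derivative $-\lambda^X_n P_t.q_n$ and initial value $q_n$, so $P_t.q_n=e^{-t\lambda^X_n}q_n$ by uniqueness for the linear $\crz$-valued ODE. For step (ii), since $m\in\crz$ is bounded under \eqref{mheavytailed}, one has $f\in\crz\cap L^2(\mathbb{R})\subset L^2(m)$; orthonormality of $(q_n)$ in $L^2(m)$ (Proposition \ref{bonfctpropres}) and Cauchy-Schwartz give $|\psl f,q_n\psrm|\leq\|f\|_{L^2(m)}$, while Lemma \ref{sumofnorms} with $\beta=0$ yields $\sum_{n\geq 1}e^{-t\lambda^X_n}\|q_n\|_\infty<\infty$ for $t>0$; combining provides an absolutely convergent majorant uniform in $x$.

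Step (iii) is the main obstacle, and my plan is to exploit the $L^2(m)$-symmetry of $\mathcal{A}_{X^{m,\xi,r}}$. Since $\mathcal{A}_{X^{m,\xi,r}}=m^{-1}\mathcal{A}_{\xi^r}$ (from the description of the generator in Section \ref{5.3generator}) and $\mathcal{A}_{\xi^r}$ is $L^2(\mathbb{R})$-symmetric by the symmetry of $\xi$, a direct change of measure yields $\psl\mathcal{A}_{X^{m,\xi,r}}g,h\psrm=\psl g,\mathcal{A}_{X^{m,\xi,r}}h\psrm$ on the common domain. I would first argue, for $f\in\mathcal{D}(\mathcal{A}_{X^{m,\xi,r}})$, that the pairing $\psi(t):=\psl P_t.f,q_n\psrm$ --- well-defined since $q_n m\in L^1(\mathbb{R})$ by Cauchy-Schwartz (as $q_n,m\in L^2(\mathbb{R})$) and $P_t.f\in\crz$ is bounded --- can be differentiated under the integral via dominated convergence, yielding $\psi'(t)=\psl\mathcal{A}_{X^{m,\xi,r}}P_t.f,q_n\psrm$; the symmetry then moves the generator onto $q_n$ to give $\psi'(t)=-\lambda^X_n\psi(t)$, whence $\psi(t)=e^{-t\lambda^X_n}\psl f,q_n\psrm$. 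An approximation argument using density of $\mathcal{D}(\mathcal{A}_{X^{m,\xi,r}})$ in $\crz$, the sup-norm contraction of $P_t$, and the estimate $\bigl|\int g\,\overline{q_n}m\,dx\bigr|\leq\|g\|_\infty\|q_n\nr\|m\nr$ extends this identity to all $f\in\crz\cap L^2(\mathbb{R})$.

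Denoting $S_t.f:=\sum_{n\geq 1}e^{-t\lambda^X_n}\psl f,q_n\psrm q_n$, which belongs to $L^2(m)$ by Parseval with $L^2(m)$-Fourier coefficients $e^{-t\lambda^X_n}\psl f,q_n\psrm$ against $(q_n)$, the identifications above show that $P_t.f-S_t.f$ pairs to zero against every $q_n$ under $m\,dx$. Continuity of both $P_t.f$ and $S_t.f$ as elements of $\crz$, totality of $(q_n)$ in $L^2(m)$, and the positivity of $m$ on $\mathbb{R}$ then yield pointwise equality $P_t.f(x)=S_t.f(x)$ for every $x\in\mathbb{R}$. The most delicate part will be this final density argument, since under \eqref{mheavytailed} alone $P_t.f$ is not a priori in $L^2(m)$; extracting pointwise equality from the orthogonality relation must be done carefully, either by first establishing $L^2(m)$-contractivity of $P_t$ from the semigroup-level $m$-symmetry, or by testing against a suitable dense family of compactly supported functions and invoking the continuity of $P_t.f-S_t.f$ in $\crz$.
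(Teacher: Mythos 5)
Your steps (i) and (ii) match the paper's proof exactly. The trouble is step (iii), where two genuine gaps appear and the paper uses a different route.

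First, the claimed $L^2(m)$-symmetry of $\mathcal{A}_{X^{m,\xi,r}}$ rests on the identity $\mathcal{A}_{X^{m,\xi,r}}=m^{-1}\mathcal{A}_{\xi^r}$ holding on the full domain, but Section~\ref{5.3generator} (Lemma~\ref{generatorxnew}) only provides a one-way implication with a side condition: \emph{if} $g\in\mathcal{D}(\mathcal{A}_{\xi^r})$ and $\frac1{m}\mathcal{A}_{\xi^r}g\in\crz$, \emph{then} $g\in\mathcal{D}(\mathcal{A}_{X^{m,\xi,r}})$ with that formula. It does not say that an arbitrary element of $\mathcal{D}(\mathcal{A}_{X^{m,\xi,r}})$, and in particular $P_t.f$, lies in $\mathcal{D}(\mathcal{A}_{\xi^r})$ and satisfies $\mathcal{A}_{X^{m,\xi,r}}(P_t.f)=m^{-1}\mathcal{A}_{\xi^r}(P_t.f)$. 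Without that, you cannot move the generator onto $q_n$ in $\psl\mathcal{A}_{X^{m,\xi,r}}P_t.f,q_n\psrm$ by reducing to $L^2(\mathbb{R})$-symmetry of $\mathcal{A}_{\xi^r}$ (whose symmetry on the Feller domain is itself left implicit). The paper deliberately avoids this generator-level manipulation: it establishes $L^2(m)$-self-adjointness \emph{at the semigroup level} via a probabilistic time-reversal argument on the resolvent (Lemma~\ref{resselfadj}), upgraded to the semigroup by Yosida approximation (Lemma~\ref{ptautoadj}), and then just writes $\psl P_t.f,q_n\psrm=\psl f,P_t.q_n\psrm=e^{-t\lambda^X_n}\psl f,q_n\psrm$ with no ODE or generator symmetry needed.

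Second, you correctly flag that the final Parseval argument requires $P_t.f\in L^2(m)$, but you leave this unresolved; under \eqref{mheavytailed} alone, $m$ need not be integrable, so a bounded continuous function is not automatically in $L^2(m)$, and the alternative you sketch (testing against compactly supported functions and passing to the limit along finite linear combinations of $q_n$) still needs $P_t.f\in L^2(m)$ to run the Cauchy--Schwartz estimate. The paper supplies exactly this missing fact as Lemma~\ref{imptinl2m}, proving $P_t(\crz)\subset L^2(\mathbb{R})\subset L^2(m)$ for $t>0$ by a direct estimate (Markov plus Jensen on $A_x(e_r)$). Without that lemma or a substitute, the orthogonality of $P_t.f-S_t.f$ against the $q_n$'s does not yield the vanishing of $P_t.f-S_t.f$.
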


\begin{proof}
Let $f \in L^2(m)$ and $t>0$. According to Proposition \ref{bonfctpropres}, $(q_n)_{n\geq 1}$ is an orthonormal family so, by Cauchy-Schwartz inequality, we have $|\psl f, q_n \psrm| \leq \|f\|_{L^2(m)}$ for any $n\geq 1$. Moreover Proposition \ref{bonfctpropres} shows that $q_n \in \crz$ for any $n\geq 1$. Combining with Lemma \ref{sumofnorms} we get that the series of functions in the right-hand side of \eqref{decompsgbonexpr} converges absolutely in $(\crz, \| \cdot \|_{\infty})$. In particular the right-hand side of \eqref{decompsgbonexpr} is continuous in $x$, let us denote it by $S_t.f(x)$. Note that there is a constant $C_t>0$ such that $\| S_t.f \|_{\infty} \leq C_t \|f\|_{L^2(m)}$. 

By Proposition \ref{bonfctpropres} we have $\|q_n\nr\leq \sqrt{C/\lambda^R_{n}}$ for some $C>0$. Combining this with $|\psl f, q_n \psrm| \leq \|f\|_{L^2(m)}$ and Lemma \ref{sumofnorms} (together with \eqref{relvpxandr}) we get that $S_t.f \in L^2(\mathbb{R}) \subset L^2(m)$. We now compute $\psl S_t.f, q_n \psrm$ for $n \geq 1$. By Proposition \ref{bonfctpropres}, $q_n \in L^2(\mathbb{R})$ so, by Cauchy-Schwartz inequality, $\overline{q_n} m\in L^1(\mathbb{R})$. Combining with the absolute convergence in $(\crz, \| \cdot \|_{\infty})$ of the series of functions defining $S_t.f$ and using that $(q_m)_{m\geq 1}$ is an orthonormal Hilbert basis of $L^2(m)$ by Proposition \ref{bonfctpropres} we get for any $n \geq 1$, 
\begin{align}
\psl S_t.f, q_n \psrm = \int_{\mathbb{R}} S_t.f(x) \overline{q_n(x)}m(x)dx = \sum_{m \geq 1} e^{- t\lambda^X_{m}} \psl f, q_m \psrm \psl q_m, q_n \psrm = e^{- t\lambda^X_{n}} \psl f, q_n \psrm. \label{calccoeffseries}
\end{align}

We now make the restriction $f \in \crz \cap L^2(\mathbb{R})$ (and recall that $\crz \cap L^2(\mathbb{R}) \subset L^2(m)$). According to Proposition \ref{fellerity} the semigroup is Feller so we have $P_t.f \in \crz$. By Lemma \ref{imptinl2m} we have $P_t.f \in L^2(m)$. We now compute $\psl P_t.f, q_n \psrm$ for $n \geq 1$. By Proposition \ref{bonfctpropres}, $q_n \in \crz \cap L^2(\mathbb{R})$. Therefore, using Lemma \ref{ptautoadj} we get for any $n \geq 1$, 
\[ \psl P_t.f, q_n \psrm = \psl f, P_t.q_n \psrm = e^{- t\lambda^X_{n}} \psl f, q_n \psrm, \]
where the last equality comes from $P_t.q_n=e^{- t\lambda^X_{n}}q_n$, which follows from Proposition \ref{bonfctpropres}, \eqref{relvpxandr} and the differentiation rule for the semigroup (see Section \ref{notations}). 

We thus get $\psl P_t.f, q_n \psrm=\psl S_t.f, q_n \psrm$ for all $n \geq 1$. Since $(q_n)_{n\geq 1}$ is an orthonormal Hilbert basis of $L^2(m)$ we get $P_t.f = S_t.f$ in $L^2(m)$ so, since $m$ is positive on $\mathbb{R}$ by \eqref{mheavytailed}, $P_t.f(x) = S_t.f(x)$ for almost every $x \in \mathbb{R}$. As both functions are continuous in $x$, \eqref{decompsgbonexpr} follows. 
\end{proof}

\begin{remark} \label{extensionrmk}
Let $t>0$. Combining \eqref{calccoeffseries} with Parseval's identity we get $\|S_t.f\|_{L^2(m)} \leq \|f\|_{L^2(m)}$ so the operator $S_t$ from the above proof is a contraction of $L^2(m)$. Also, it satisfies $S_t(L^2(m)) \subset \crz$ and it is continuous from $L^2(m)$ to $(\crz, \| \cdot \|_{\infty})$. Moreover, $S_t$ coincides with $P_t$ on the subspace $\crz \cap L^2(\mathbb{R})$ (in the sense that, for $f \in \crz \cap L^2(\mathbb{R})$, we have $P_t.f(x)=S_t.f(x)$ for all $x \in \mathbb{R}$). 
\end{remark}

The first eigenfunction is commonly called \textit{ground state} and enjoys special properties. They are gathered in the following lemma that builds on Lemma \ref{suppxtisr} from Section \ref{5.3support} and they will allow to show the positivity of the constant $K_{m,\xi,r}(x)$ in Theorem \ref{encadrementtransfreeenergythtrans}, and to relate that constant with the infinite-volume Gibbs states of the spin system from Section \ref{interpartsyst}. 

\begin{lemme} \label{q1posonr}
Assume that $r>0$ and \eqref{mheavytailed} and \eqref{hypcaractexpol-1} hold. There is $\theta \in [0,2\pi)$ such that $q_1=e^{i\theta} |q_1|$ and we have $|q_1(x)|>0$ for all $x \in \mathbb{R}$. Moreover, the eigenvalue $\lambda^X_{1}$ of $-\mathcal{A}_{X^{m,\xi,r}}$ has multiplicity $1$ (in other words, $\lambda^X_{2}>\lambda^X_{1}$). 
\end{lemme}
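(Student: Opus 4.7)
The plan is to establish a Perron--Frobenius-type statement using three ingredients: the positivity of the kernel $p_t(x,dy):=\mathbb{P}(X^{m,\xi,r}_x(t)\in dy)$, which gives the pointwise inequality $|P_t f(x)|\leq P_t|f|(x)$ for any $f\in L^2(m)$; the spectral expansion from Proposition \ref{decompsgbon1}, which yields $\psl P_t g,g\psrm\leq e^{-t\lambda^X_{1}}\|g\|_{L^2(m)}^2$ for $g\in\crz\cap L^2(\mathbb{R})$ with equality iff $g$ lies in the $\lambda^X_{1}$-eigenspace; and Lemma \ref{suppxtisr}, which identifies the support of $p_t(x,\cdot)$ as $\mathbb{R}$, so that any continuous nonnegative function $f$ satisfying $P_t f(x_0)=0$ must vanish identically.

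First, I would prove that $|q_1|$ is itself an eigenfunction at $\lambda^X_{1}$. Since $q_1\in\crz\cap L^2(\mathbb{R})$, so is $|q_1|$, with unit $L^2(m)$-norm. Combining the pointwise bound $P_t|q_1|\geq e^{-t\lambda^X_{1}}|q_1|$ (obtained from the triangle inequality for integrals against $p_t(x,\cdot)$) with the spectral identity
\[
\psl P_t|q_1|,|q_1|\psrm=\sum_{n\geq 1}e^{-t\lambda^X_{n}}\,\bigl|\psl|q_1|,q_n\psrm\bigr|^2\leq e^{-t\lambda^X_{1}}
\]
forces equality throughout, so $\psl|q_1|,q_n\psrm=0$ for every $n$ with $\lambda^X_{n}>\lambda^X_{1}$, i.e.\ $|q_1|$ belongs to the $\lambda^X_{1}$-eigenspace. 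By continuity of both sides, $P_t|q_1|(x)=e^{-t\lambda^X_{1}}|q_1|(x)$ for every $x\in\mathbb{R}$ and $t>0$, and the same argument applies to any eigenfunction of $P_t$ at $\lambda^X_{1}$.

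Next I would show that any nonzero real eigenfunction $u\in\crz\cap L^2(\mathbb{R})$ at $\lambda^X_{1}$ is strictly of one sign. By the preceding step, both $u^\pm=(|u|\pm u)/2$ are nonnegative eigenfunctions in $\crz$. If $u^+(x_0)=0$ for some $x_0\in\mathbb{R}$, then $P_t u^+(x_0)=e^{-t\lambda^X_{1}}u^+(x_0)=0$; the continuity of $u^+$ together with Lemma \ref{suppxtisr} then forces $u^+\equiv 0$. Hence each of $u^\pm$ is either identically zero or strictly positive on $\mathbb{R}$; since $u^+u^-=0$, exactly one vanishes, and $u$ has strict constant sign. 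It follows that the real eigenspace at $\lambda^X_{1}$ is one-dimensional: if $u_1,u_2>0$ on $\mathbb{R}$ were two real eigenfunctions, then $u_1-(u_1(x_0)/u_2(x_0))u_2$ would be a real eigenfunction vanishing at $x_0$, hence identically zero.

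Finally, since $P_t$ acts by integration against a real-valued subprobability kernel, it commutes with complex conjugation, so $\mathrm{Re}(q_1)$ and $\mathrm{Im}(q_1)$ are both real eigenfunctions at $\lambda^X_{1}$. By the one-dimensionality of the real eigenspace, both are real multiples of a common $h\in\crz$ that is positive on $\mathbb{R}$, whence $q_1=(a+ib)h$ for some $a,b\in\mathbb{R}$. This yields the representation $q_1=e^{i\theta}|q_1|$ with $\theta=\arg(a+ib)$ and $|q_1|=|a+ib|\,h>0$ on $\mathbb{R}$. Applying the same reasoning to any other eigenfunction of $P_t$ at $\lambda^X_{1}$ shows it must lie in $\mathbb{C}h$, so the complex eigenspace is one-dimensional and $\lambda^X_{2}>\lambda^X_{1}$. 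The main technical point is the passage from the spectral inequality to eigenspace membership, combined with the use of Lemma \ref{suppxtisr} to promote pointwise vanishing into global vanishing; everything else reduces to the positivity of the kernel and elementary linear algebra.
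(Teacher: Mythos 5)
Your proof is correct and follows essentially the same strategy as the paper's: use the spectral expansion together with the pointwise bound $P_t|u|\geq e^{-t\lambda^X_1}|u|$ to show that $|u|$ is an eigenfunction at $\lambda^X_1$, then invoke Lemma \ref{suppxtisr} and continuity to promote nonnegativity of an eigenfunction into strict positivity, and finish with a linear-algebra argument to force one-dimensionality. The only cosmetic differences are that the paper works with an orthonormalized real basis $\{\tilde q_k\}$ and the auxiliary function $\tilde q_k+|\tilde q_k|$ (which equals $2u^+$ in your notation), and deduces $N=1$ from the impossibility of orthogonality of two positive functions in $L^2(m)$, whereas you separate into $u^\pm$ and argue dimension one by subtracting a multiple so as to create a zero.
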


\begin{proof}
Since $R_{r}$ is a compact operator by Lemma \ref{oprwelldeefinedanddiag}, its largest eigenvalue $\lambda^R_{1}$ has a finite multiplicity that we denote by $N$. From \eqref{relvpxandr} we get $\lambda^X_{1}=\cdots=\lambda^X_{N}$ and $\lambda^X_{k}>\lambda^X_{1}$ for $k>N$. 

Let us first show that $Span \{q_1,\dots,q_N\}$ admits an orthonormal basis (for the inner product in $L^2(m)$) made of real eigenfunctions of operators $P_t$. For any $k \in \{1,\dots,N\}$ we have $q_k \in \crz \cap L^2(\mathbb{R}) \subset L^2(m)$ by Proposition \ref{bonfctpropres} so $Re(q_k), Im(q_k) \in \crz \cap L^2(\mathbb{R}) \subset L^2(m)$ and, by \eqref{defsgx}, $P_t.Re(q_k)$ and $P_t.Im(q_k)$ are real functions for any $t>0$. We have $P_t.q_k=e^{- t\lambda^X_{1}}q_k$ from Proposition \ref{bonfctpropres}, \eqref{relvpxandr} and the differentiation rule for the semigroup (see Section \ref{notations}). We thus get that for any $t>0$, $P_t.Re(q_k)+iP_t.Im(q_k)=P_t.q_k=e^{- t\lambda^X_{1}}q_k=e^{- t\lambda^X_{1}}Re(q_k)+ie^{- t\lambda^X_{1}}Im(q_k)$ so $P_t.Re(q_k)=e^{- t\lambda^X_{1}}Re(q_k)$ and $P_t.Im(q_k)=e^{- t\lambda^X_{1}}Im(q_k)$. Therefore $Re(q_k), Im(q_k) \in Span \{q_1,\dots,q_N\}$ and $\{Re(q_1),Im(q_1),\dots,Re(q_N),Im(q_N)\}$ is a spanning family of $Span \{q_1,\dots,q_N\}$ that only contains real eigenfunctions of operators $P_t$. Extracting a linearly independent family and applying Gram–Schmidt process we obtain an orthonormal basis $\{\tilde q_1,\dots,\tilde q_N\}$ of $Span \{q_1,\dots,q_N\}$, where each $\tilde q_k$ lies in $\crz \cap L^2(\mathbb{R}) \subset L^2(m)$ and is a real eigenfunction of all operators $P_t$. Each $\tilde q_k$ is not identically $0$ so, replacing $\tilde q_k$ by $-\tilde q_k$ if necessary, we can assume that 
\begin{align}
\forall k \in \{1,\dots,N\}, \ \exists x_k \in \mathbb{R} \ \text{s.t.} \ \tilde q_k(x_k)>0. \label{posatapoint}
\end{align}

Let us fix $k \in \{1,\dots,N\}$. We have clearly $|\tilde q_k| \in \crz \cap L^2(\mathbb{R}) \subset L^2(m)$ so Proposition \ref{decompsgbon1} yields that for all $t>0$ and $x \in \mathbb{R}$, $P_t.|\tilde q_k|(x) = \sum_{n \geq 1} e^{- t\lambda^X_{n}} \psl |\tilde q_k|, q_n \psrm q_n(x)$ where the series of functions converges absolutely in $(\crz, \| \cdot \|_{\infty})$. Proceeding as in \eqref{calccoeffseries} we get $\psl P_t.|\tilde q_k|, q_n \psrm=e^{- t\lambda^X_{n}} \psl |\tilde q_k|, q_n \psrm$ for any $n \geq 1$. Therefore, by Parseval's identity we get 
\begin{align}
\|P_t.|\tilde q_k|\|_{L^2(m)}^2=\sum_{n \geq 1} e^{- 2t\lambda^X_{n}} |\psl |\tilde q_k|, q_n \psrm|^2. \label{normabsfctprop}
\end{align}
Now note that for any $x \in \mathbb{R}$, $|P_t.\tilde q_k(x)| \leq P_t.|\tilde q_k|(x)$ so $\|P_t.\tilde q_k\|_{L^2(m)} \leq \|P_t.|\tilde q_k|\|_{L^2(m)}$. Using $\|\tilde q_k\|_{L^2(m)}=1$, $P_t.\tilde q_k=e^{- t\lambda^X_{1}}\tilde q_k$, $\|P_t.\tilde q_k\|_{L^2(m)} \leq \|P_t.|\tilde q_k|\|_{L^2(m)}$ and \eqref{normabsfctprop} we get 
\[ e^{- 2t\lambda^X_{1}} = e^{- 2t\lambda^X_{1}} \|\tilde q_k\|_{L^2(m)}^2 = \|P_t.\tilde q_k\|_{L^2(m)}^2 \leq \|P_t.|\tilde q_k|\|_{L^2(m)}^2 =\sum_{n \geq 1} e^{- 2t\lambda^X_{n}} |\psl |\tilde q_k|, q_n \psrm|^2. \]
Since $\||\tilde q_k|\|_{L^2(m)}=\|\tilde q_k\|_{L^2(m)}=1$ we have $\sum_{n \geq 1} |\psl |\tilde q_k|, q_n \psrm|^2=1$. The above thus implies $\psl |\tilde q_k|, q_n \psrm=0$ for $n>N$. Therefore, $|\tilde q_k| \in Span \{q_1,\dots,q_N\}$ so $|\tilde q_k|$ is an eigenfunction of $P_t$ with eigenvalue $e^{- t\lambda^X_{1}}$, and so is $\tilde q_k+|\tilde q_k|$. By \eqref{posatapoint} and by continuity there is an open set $\mathcal{U} \subset \mathbb{R}$ such that $\tilde q_k(x)>0$ for all $x \in \mathcal{U}$. Lemma \ref{suppxtisr} shows that $\mathbb{P}(X^{m,\xi,r}_x(1)\in \mathcal{U})>0$ for all $x \in \mathbb{R}$ so, since $\tilde q_k+|\tilde q_k|$ is positive on $\mathcal{U}$ and non-negative on $\mathbb{R}\setminus \mathcal{U}$, $P_1.(\tilde q_k+|\tilde q_k|)(x)>0$. For any $x \in \mathbb{R}$ we have $\tilde q_k(x)+|\tilde q_k(x)|=e^{\lambda^X_{1}}P_1.(\tilde q_k+|\tilde q_k|)(x)$. We thus get $\tilde q_k(x)+|\tilde q_k(x)|>0$ for all $x \in \mathbb{R}$ so there is no $x \in \mathbb{R}$ such that $\tilde q_k(x)<0$. Therefore, $\tilde q_k$ is positive on $\mathcal{U}$ and non-negative on $\mathbb{R}\setminus \mathcal{U}$ so the same argument again shows that $\tilde q_k(x)>0$ for all $x \in \mathbb{R}$. This and the orthonormality of $(\tilde q_k)_{k \in \{1,\dots,N\}}$ implies $N=1$ so $\lambda^X_{2}>\lambda^X_{1}$ and $q_1=e^{i\theta}\tilde q_1$ for some $\theta \in [0,2\pi)$. This concludes the proof. 
\end{proof}

\subsection{Decomposition of the survival probability of $X^{m,\xi,r}$} \label{decompsurvprobandasympt}

The following result is based on the decomposition from Proposition \ref{decompsgbon1} and provides a representation for $\mathbb{P}(\zeta_x > t)$. 
\begin{prop} \label{decompsgbon2}
Assume that $r>0$ and \eqref{mheavytailed} and \eqref{hypcaractexpol-1} hold. We have for any $t>0$, 
\begin{align}
\forall x \in \mathbb{R}, \ \mathbb{P}(\zeta_x > t) = \sum_{n \geq 1} e^{- t\lambda^X_{n}} \psl m,q_n\psrr q_n(x), \label{decompsg1bonexpr}
\end{align}
where the series of functions in the right-hand side converges absolutely in $(\crz, \| \cdot \|_{\infty})$. 
\end{prop}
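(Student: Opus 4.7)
The plan is to recognize $\mathbb{P}(\zeta_x > t)$ as $P_t.\mathbf{1}(x)$ and then transfer the decomposition of Proposition \ref{decompsgbon1} from approximants $f_k \in \crz \cap L^2(\mathbb{R})$ to the constant function $\mathbf{1}$. Since $\mathbf{1}$ is not in $\crz \cap L^2(\mathbb{R})$ (and need not even lie in $L^2(m)$ under \eqref{mheavytailed}), a direct application is impossible and one must pass to the limit carefully, controlling both the coefficients and the tails of the eigenfunction series.

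First I would observe, using \eqref{defsgx} together with $\zeta_x = A_x(e_r)$ and the monotonicity of $A_x(\cdot)$, that $P_t.\mathbf{1}(x)=\mathbb{P}(A_x^{-1}(t)<e_r)=\mathbb{P}(\zeta_x>t)$. Then I would choose a sequence $(f_k)_{k\geq 1}$ in $\crz\cap L^2(\mathbb{R})$ with $0\leq f_k\leq 1$ and $f_k\to\mathbf{1}$ pointwise (for instance $f_k(y):=e^{-y^2/k^2}$). For each $k$, Proposition \ref{decompsgbon1} gives
\[
P_t.f_k(x)=\sum_{n\geq 1} e^{-t\lambda^X_n}\psl f_k,q_n\psrm q_n(x),
\]
with absolute convergence in $(\crz,\|\cdot\|_\infty)$. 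By dominated convergence in the probabilistic expectation \eqref{defsgx} (with integrand bounded by $\mathbf{1}_{A_x^{-1}(t)<e_r}$), the left-hand side converges pointwise to $\mathbb{P}(\zeta_x>t)$ as $k\to\infty$.

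Next I would handle the right-hand side. For each $n$, since $q_n\in L^2(\mathbb{R})$ and $m\in L^2(\mathbb{R})$, the product $q_n m$ lies in $L^1(\mathbb{R})$ by Cauchy--Schwarz, so dominated convergence (with dominator $|q_n|m$) yields
\[
\psl f_k,q_n\psrm=\int_{\mathbb{R}} f_k(y)\overline{q_n(y)}m(y)\,dy\underset{k\to\infty}{\longrightarrow}\int_{\mathbb{R}} m(y)\overline{q_n(y)}\,dy=\psl m,q_n\psrr.
\]
To exchange limit and sum I need a $k$-uniform summable dominator. Using $\|f_k\|_\infty\leq 1$ and Cauchy--Schwarz in $L^2(\mathbb{R})$, $|\psl f_k,q_n\psrm|\leq \|m\nr\,\|q_n\nr$, and Proposition \ref{bonfctpropres} together with \eqref{relvpxandr} gives $\|q_n\nr\leq\sqrt{C/\lambda^R_n}=\sqrt{Cv^r_\xi(0)\lambda^X_n}$. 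Therefore
\[
\sum_{n\geq 1}e^{-t\lambda^X_n}|\psl f_k,q_n\psrm|\,\|q_n\|_\infty\leq\|m\nr\sqrt{Cv^r_\xi(0)}\sum_{n\geq 1}e^{-t\lambda^X_n}\sqrt{\lambda^X_n}\,\|q_n\|_\infty,
\]
which is finite by Lemma \ref{sumofnorms} applied with $\beta=1/2$. The same bound with $|\psl m,q_n\psrr|$ in place of $|\psl f_k,q_n\psrm|$ (it still satisfies $|\psl m,q_n\psrr|\leq\|m\nr\,\|q_n\nr$) shows absolute convergence in $(\crz,\|\cdot\|_\infty)$ of the target series. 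By dominated convergence for series applied to the pointwise convergence of the coefficients, the right-hand side converges, for every fixed $x\in\mathbb{R}$, to $\sum_{n\geq 1}e^{-t\lambda^X_n}\psl m,q_n\psrr q_n(x)$. Combining the two limits yields \eqref{decompsg1bonexpr}.

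The only technically delicate point is the uniform-in-$k$ control of the series tail needed to justify the exchange of limit and sum; once one has the right $L^2(\mathbb{R})$--Cauchy--Schwarz bound on the coefficients (which crucially uses $m\in L^2(\mathbb{R})$ from \eqref{mheavytailed}, not $m\in L^1$), the finiteness statement of Lemma \ref{sumofnorms} closes the argument.
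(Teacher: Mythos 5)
Your proposal is correct and follows essentially the same route as the paper's proof: approximate $\mathbf{1}$ by a sequence in $\crz\cap L^2(\mathbb{R})$ (the paper uses piecewise-linear cutoffs $f_M$ rather than Gaussians, and monotone rather than dominated convergence on the left-hand side), apply Proposition \ref{decompsgbon1}, and pass to the limit in the series using the $k$-uniform bound $|\psl f_k,q_n\psrm|\leq\|m\nr\|q_n\nr\leq\|m\nr\sqrt{C/\lambda^R_n}$ together with Lemma \ref{sumofnorms}. All steps check out.
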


\begin{proof}
Let us fix $t >0$. For $M>0$ let 
\begin{align} \label{truncfct}
f_M(x) := \left\{
\begin{aligned}
& 1 \ \text{if} \ x \in [-M,M], \\
& 1-(|x|-M) \ \text{if} \ x \in (M,M+1) \cup (-M-1,-M), \\
& 0 \ \text{if} \ |x| \geq M+1. \end{aligned} \right. 
\end{align}
We have clearly $f_M \in \crz \cap L^2(\mathbb{R})$ so, by Proposition \ref{decompsgbon1}, we get 
\begin{align}
\forall x \in \mathbb{R}, \ P_t.f_M(x) = \sum_{n \geq 1} e^{- t\lambda^X_{n}} \psl f_M, q_n \psrm q_n(x). \label{decompsgbonexprfm}
\end{align}
For any $n \geq 1$, $\psl f_M, q_n \psrm=\int_{\mathbb{R}}f_M(x) \overline{q_n(x)}m(x)dx$. For every $x \in \mathbb{R}$, $f_M(x)$ converges to $1$ as $M$ goes to infinity. Moreover $|f_M(x)|\leq 1$ and $\overline{q_n} m\in L^1(\mathbb{R})$ by Cauchy-Schwartz inequality since $q_n \in L^2(\mathbb{R})$ by Proposition \ref{bonfctpropres} and $m \in L^2(\mathbb{R})$ by \eqref{mheavytailed}. We thus get by dominated convergence that $\psl f_M, q_n \psrm$ converges to $\psl m,q_n\psrr$ as $M$ goes to infinity. Then, using Cauchy-Schwartz inequality and Proposition \ref{bonfctpropres}, $|\psl f_M, q_n \psrm|\leq \int_{\mathbb{R}}|q_n(x)|m(x)dx \leq \|q_n\nr \times \|m\nr \leq \|m\nr \sqrt{C/\lambda^R_{n}}$ for some $C>0$. This and Lemma \ref{sumofnorms} (together with \eqref{relvpxandr}) allow to use dominated convergence to show that the series in the right-hand side of \eqref{decompsgbonexprfm} converges to the right-hand side of \eqref{decompsg1bonexpr} as $M$ goes to infinity. Then, for any $x \in \mathbb{R}$ we see from \eqref{defsgx} and monotone convergence that the left-hand side of \eqref{decompsgbonexprfm} converges to $\mathbb{P}(\zeta_x > t)$ as $M$ goes to infinity. \eqref{decompsg1bonexpr} follows. Since $q_n \in \crz$ (by Proposition \ref{bonfctpropres}) the claim about absolute convergence in $(\crz, \| \cdot \|_{\infty})$ follows from $|\psl q_n,m\psrr|\leq \|m\nr \sqrt{C/\lambda^R_{n}}$ and Lemma \ref{sumofnorms} (together with \eqref{relvpxandr}). 
\end{proof}

The following proposition provides an equivalent for $\mathbb{P}(\zeta_x > t)$ as $t$ goes to infinity. 
\begin{prop} \label{decompsgbon3}
Assume that $r>0$ and \eqref{mheavytailed} and \eqref{hypcaractexpol-1} hold. For every $x \in \mathbb{R}$, we have $\psl m,q_1\psrr q_1(x)>0$ and 
\begin{align}
\mathbb{P}(\zeta_x > t) \underset{t \rightarrow \infty}{\sim} \psl m,q_1\psrr q_1(x) e^{- t\lambda^X_{1}}. 
\end{align}
\end{prop}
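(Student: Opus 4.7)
The plan is to start from the series representation
\[ \mathbb{P}(\zeta_x > t) = \sum_{n \geq 1} e^{- t\lambda^X_{n}} \langle m,q_n\rangle_{L^2(\mathbb{R})} q_n(x) \]
given by Proposition \ref{decompsgbon2}, factor out the leading term $e^{-t\lambda^X_1}$, and show that the remainder is negligible. More precisely, I will establish that
\[ e^{t\lambda^X_{1}}\mathbb{P}(\zeta_x > t) - \langle m,q_1\rangle_{L^2(\mathbb{R})} q_1(x) = \sum_{n \geq 2} e^{- t(\lambda^X_{n}-\lambda^X_1)} \langle m,q_n\rangle_{L^2(\mathbb{R})} q_n(x) \]
tends to $0$ as $t \to \infty$. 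The key inputs are the strict spectral gap $\lambda^X_2 > \lambda^X_1$ provided by Lemma \ref{q1posonr}, together with the absolute convergence in $(\crz,\|\cdot\|_\infty)$ stated in Proposition \ref{decompsgbon2} (which, for any fixed $t_0>0$, implies $\sum_{n\geq 2}e^{-t_0\lambda^X_n}|\langle m,q_n\rangle_{L^2(\mathbb{R})}| \|q_n\|_\infty < \infty$).

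Concretely, I will pick some fixed $t_0>0$ and, for $t\geq t_0$, use the bound
\[ e^{-t(\lambda^X_n-\lambda^X_1)} \leq e^{-(t-t_0)(\lambda^X_2-\lambda^X_1)}\, e^{t_0\lambda^X_1}\, e^{-t_0\lambda^X_n} \qquad (n\geq 2), \]
which follows from $\lambda^X_n\geq \lambda^X_2>\lambda^X_1$. Bounding $|q_n(x)|\leq \|q_n\|_\infty$ and plugging this into the remainder gives
\[ \left| e^{t\lambda^X_{1}}\mathbb{P}(\zeta_x > t) - \langle m,q_1\rangle_{L^2(\mathbb{R})} q_1(x) \right| \leq e^{t_0\lambda^X_1} e^{-(t-t_0)(\lambda^X_2-\lambda^X_1)} \sum_{n\geq 2} e^{-t_0\lambda^X_n}|\langle m,q_n\rangle_{L^2(\mathbb{R})}|\, \|q_n\|_\infty, \]
and the rightmost sum is finite by Proposition \ref{decompsgbon2} (or directly by Lemma \ref{sumofnorms} combined with Cauchy–Schwarz and the bound $\|q_n\|_{L^2(\mathbb{R})}\leq \sqrt{C/\lambda^R_n}$ from Proposition \ref{bonfctpropres}). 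Letting $t\to \infty$ proves the equivalent, provided we already know that the prefactor $\langle m,q_1\rangle_{L^2(\mathbb{R})} q_1(x)$ is nonzero.

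For the positivity of $\langle m,q_1\rangle_{L^2(\mathbb{R})} q_1(x)$, I will invoke the structural information on the ground state from Lemma \ref{q1posonr}: there is $\theta\in [0,2\pi)$ with $q_1 = e^{i\theta}|q_1|$ and $|q_1|>0$ on $\mathbb{R}$. Since $m$ is real-valued and strictly positive on $\mathbb{R}$ by \eqref{mheavytailed}, a direct computation gives
\[ \langle m,q_1\rangle_{L^2(\mathbb{R})} q_1(x) = \left(\int_{\mathbb{R}} m(y)\, e^{-i\theta}|q_1(y)|\, dy\right) e^{i\theta}|q_1(x)| = |q_1(x)| \int_{\mathbb{R}} m(y)|q_1(y)|\, dy, \]
which is strictly positive because both factors are positive. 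This simultaneously shows that the limiting coefficient is real and positive, so the asymptotic equivalence derived above is non-degenerate.

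The only delicate point is controlling the tail of the eigenfunction expansion uniformly enough to extract a true equivalent (as opposed to just an upper bound); however, this is entirely taken care of by the spectral gap from Lemma \ref{q1posonr} and the summability already packaged in Proposition \ref{decompsgbon2}, so no new technical ingredient is required. All the substantial work has been done in the preceding subsections, and this proof is essentially a bookkeeping argument.
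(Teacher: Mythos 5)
Your proof is correct and follows essentially the same route as the paper: start from the series representation in Proposition~\ref{decompsgbon2}, factor out $e^{-t\lambda^X_1}$, use the spectral gap $\lambda^X_2>\lambda^X_1$ from Lemma~\ref{q1posonr} to kill the remainder, and then establish positivity of the prefactor. Your positivity argument (computing $\psl m,q_1\psrr q_1(x)=|q_1(x)|\int_{\mathbb{R}} m|q_1|$ directly from the polar decomposition $q_1=e^{i\theta}|q_1|$ with $|q_1|>0$) is actually slightly more direct than the paper's, which instead compares $P_t.|q_1|$ with $P_t.1$ and passes to the limit $t\to\infty$; your remainder bound also gives an explicit geometric rate where the paper invokes dominated convergence, but these are only cosmetic variations on the same argument.
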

The proof builds on the representation of $\mathbb{P}(\zeta_x > t)$ from Proposition \ref{decompsgbon2} and on Lemma \ref{q1posonr}. 
\begin{proof}
From Proposition \ref{decompsgbon2} we get 
\begin{align}
\forall x \in \mathbb{R}, \ e^{t\lambda^X_{1}} \mathbb{P}(\zeta_x > t) - \psl m,q_1\psrr q_1(x) = \sum_{n>1} e^{- t(\lambda^X_{n}-\lambda^X_{1})} \psl m,q_n\psrr q_n(x). \label{decompsg1bonequiv}
\end{align}
By Lemma \ref{q1posonr} we have $\lambda^X_{n}-\lambda^X_{1}>0$ for all $n>1$ so each term in the above series converges to $0$ as $t$ goes to infinity. Since $\lambda^R_{n}$ converges to $0$ as $n$ goes to infinity (see Lemma \ref{oprwelldeefinedanddiag}) we see from \eqref{relvpxandr} that $\lambda^X_{n} \sim \lambda^X_{n}-\lambda^X_{1}$ as $n$ goes to infinity. Therefore, reasoning as in the proof of Lemma \ref{sumofnorms} we get $\sum_{n\geq 1} (\lambda^X_{n})^{1/2} e^{- (\lambda^X_{n}-\lambda^X_{1})} (\| q_n \|_{\infty}\vee 1) < \infty$. Since moreover $|\psl m,q_n\psrr|\leq \|m\nr (C v^r_\xi(0) \lambda^X_{n})^{1/2}$ (by Cauchy-Schwartz inequality, Proposition \ref{bonfctpropres} and \eqref{relvpxandr}), we get by dominated convergence that the right-hand side of \eqref{decompsg1bonequiv} converges to $0$ as $t$ goes to infinity. Therefore $e^{t\lambda^X_{1}} \mathbb{P}(\zeta_x > t)$ converges to $\psl m,q_1\psrr q_1(x)$ as $t$ goes to infinity. We are only left to show that $\psl m,q_1\psrr q_1(x) > 0$. For this recall that, by Lemma \ref{q1posonr} and the differentiation rule for the semigroup (see Section \ref{notations}), $|q_1|$ is an eigenfunction of $P_t$ associated with the eigenvalue $e^{-t\lambda^X_{1}}$ and is positive. We thus get 
\begin{align*}
0 < |q_1(x)|= e^{t\lambda^X_{1}} P_t.|q_1|(x) \leq \| q_1 \|_{\infty} e^{t\lambda^X_{1}} P_t.1(x) = \| q_1 \|_{\infty} e^{t\lambda^X_{1}} \mathbb{P}(\zeta_x > t) \underset{t \rightarrow \infty}{\longrightarrow} \| q_1 \|_{\infty} \psl m,q_1\psrr q_1(x). 
\end{align*}
This proves that $\psl m,q_1\psrr q_1(x) > 0$, concluding the proof. 
\end{proof}

\subsection{Relation with traces of compositions of $R_{r}$} \label{relwithtrofr}

The following proposition connects the traces of compositions of $R_{r}$ with the partition function $\partfct_{n}$. 
\begin{prop} \label{exproftracesrec}
Assume that $r>0$ and \eqref{mheavytailed} and \eqref{hypcaractexpol-1} hold. Let $\partfct_{n}$ be the partition function defined by \eqref{defzntrans} with the choice $\pot:=-\log(v^r_\xi(\cdot))$ and $\pen:=-\log(m(\cdot))$ and let $\mathcal{E}$ be the associated normalized free energy (defined in \eqref{deffreeenergytrans}). For any $n \geq 2$, $\partfct_{n}$ is well-defined and we have $Tr(R_{r}^n)= \partfct_{n}/v^{r}_\xi(0)^n$. Moreover, $\mathcal{E}$ is well-defined and we have $\mathcal{E}=\log(\lambda^X_{1})$. 
\end{prop}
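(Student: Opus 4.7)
The plan is to directly compute $Tr(R_r^n)$ using the formula \eqref{exproftraces2new} obtained in the proof of Lemma \ref{traces2new}, together with the explicit inner product calculation \eqref{exproftraces6}, and recognize the partition function $\partfct_n$ in the resulting expression. The asymptotic statement will then follow by taking logarithms of the equivalent \eqref{equivtraces}.

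First, I would unravel the iterated projection. Since $P^r_y$ is the orthogonal projection onto $\mathrm{Span}\{\phi^r_y\}$, i.e.\ $P^r_y.f=\langle f,\phi^r_y\psri \phi^r_y$, a direct induction on the composition yields, for any $n\geq 2$ and any $y_1,\dots,y_n \in \mathbb{R}$,
\begin{align*}
P^{r}_{y_{n}}\cdots P^{r}_{y_2}.\phi^{r}_{y_1} = \psl \phi^{r}_{y_1},\phi^{r}_{y_2} \psri \cdots \psl \phi^{r}_{y_{n-1}},\phi^{r}_{y_n} \psri \phi^{r}_{y_n},
\end{align*}
and hence
\begin{align*}
\psl P^{r}_{y_{n}}\cdots P^{r}_{y_2}.\phi^{r}_{y_1},\phi^{r}_{y_1} \psri = \psl \phi^{r}_{y_1},\phi^{r}_{y_2} \psri \cdots \psl \phi^{r}_{y_{n-1}},\phi^{r}_{y_n} \psri \psl \phi^{r}_{y_n},\phi^{r}_{y_1} \psri.
\end{align*}
Plugging the identity \eqref{exproftraces6} into each factor, this equals
\begin{align*}
\frac{1}{v^r_\xi(0)^n}\, v^r_\xi(y_2-y_1)\cdots v^r_\xi(y_n-y_{n-1})\, v^r_\xi(y_1-y_n).
\end{align*}

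Next I would substitute this back into the expression of $Tr(R_r^n)$ given by \eqref{exproftraces2new}. This yields
\begin{align*}
Tr(R_r^n) = \frac{1}{v^r_\xi(0)^n}\int_{\mathbb{R}^n} m(y_1)\cdots m(y_n)\, v^r_\xi(y_2-y_1)\cdots v^r_\xi(y_n-y_{n-1})\, v^r_\xi(y_1-y_n)\, dy_1\cdots dy_n.
\end{align*}
Under the choice $\pot:=-\log(v^r_\xi(\cdot))$ and $\pen:=-\log(m(\cdot))$, the integral on the right is precisely the definition \eqref{defzntrans} of $\partfct_n$, with the periodic boundary term $v^r_\xi(y_1-y_n)$ accounting for the wrap-around interaction $\pot(\omega_1-\omega_n)$ in the Hamiltonian \eqref{hamilt}. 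Since Lemma \ref{traces2new} guarantees that $Tr(R_r^n)<\infty$ and the integrand is nonnegative, the partition function $\partfct_n$ is well-defined (finite) and satisfies $\partfct_n = v^r_\xi(0)^n\, Tr(R_r^n)$.

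For the second claim, I would take the logarithm of the just-established identity and exploit the asymptotic $Tr(R_r^n)\sim N(\lambda_1^R)^n$ from \eqref{equivtraces}. One obtains
\begin{align*}
-\frac{1}{n}\log(\partfct_n) = -\log(v^r_\xi(0)) - \frac{1}{n}\log(Tr(R_r^n)) \underset{n\to\infty}{\longrightarrow} -\log(v^r_\xi(0)) - \log(\lambda_1^R) = \log\!\left(\frac{1}{v^r_\xi(0)\lambda_1^R}\right),
\end{align*}
which, in view of \eqref{relvpxandr}, equals $\log(\lambda_1^X)$. In particular the limit defining $\mathcal{E}$ in \eqref{deffreeenergytrans} exists and $\mathcal{E}=\log(\lambda_1^X)$, as claimed. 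I do not expect any serious obstacle: the only subtlety is keeping track of the cyclic boundary factor $v^r_\xi(y_1-y_n)$ produced by the iterated projection, which matches the periodic boundary condition of the spin system exactly.
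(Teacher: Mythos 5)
Your proposal is correct and follows essentially the same approach as the paper: it uses \eqref{exproftraces2new}, unwinds the iterated projections via \eqref{exproftraces6}, identifies the resulting integral with $\partfct_n/v^r_\xi(0)^n$, and then applies \eqref{equivtraces} and \eqref{relvpxandr} to pass to the free energy. The only cosmetic difference is how finiteness of $\partfct_n$ is phrased (you invoke $Tr(R_r^n)<\infty$ plus nonnegativity of the integrand, the paper cites finiteness of the bound in \eqref{inegiteres}), but both ultimately rest on the same Fubini/finiteness argument from Lemma \ref{traces2new}.
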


\begin{proof}
Fix $n \geq 2$. The finiteness of the integral defining $\partfct_{n}$ in \eqref{defzntrans} is a consequence of the finiteness of the integrals in \eqref{inegiteres}, in the proof of Lemma \ref{traces2new}. $\partfct_{n}$ is thus well-defined. Recall that the well-definedness of $Tr(R_{r}^n)$ is proved by Lemma \ref{traces2new}. Using \eqref{exproftraces2new} from the proof of Lemma \ref{traces2new} and the definition of $P^{r}_y$, we get 
\begin{align}
Tr(R_{r}^n) = \int_{\mathbb{R}^{n}} m(y_1) \dots m(y_{n}) & \psl \phi^{r}_{y_1},\phi^{r}_{y_2} \psri \times \dots \times \psl \phi^{r}_{y_{n-1}},\phi^{r}_{y_{n}} \psri \label{exproftraces3} \\
\times & \psl \phi^{r}_{y_{n}},\phi^{r}_{y_1} \psri dy_1 \dots dy_{n}. \nonumber
\end{align}
Plugging \eqref{exproftraces6} into \eqref{exproftraces3} and comparing with \eqref{defzntrans} yields that $Tr(R_{r}^n)= \partfct_{n}/v^{r}_\xi(0)^n$ for each $n \geq 2$. We have clearly $\partfct_{n} > 0$ for any $n \geq 2$. Then, combining $Tr(R_{r}^n)= \partfct_{n}/v^{r}_\xi(0)^n$ with \eqref{equivtraces} we get that $\mathcal{E}$ exists and equals $-\log(v^{r}_\xi(0) \lambda^R_{1})$. Combining with \eqref{relvpxandr} we obtain $\mathcal{E}=\log(\lambda^X_{1})$. This concludes the proof. 
\end{proof}

The following proposition will be the main ingredient in the proof of \eqref{inegspecgaptrans}. 
\begin{prop} \label{inegspecgap}
Assume that $r>0$ and \eqref{mheavytailed1} and \eqref{hypcaractexpol-1} hold. We have $1/\|m\|_{L^1(\mathbb{R})} v^r_\xi(0) \leq \lambda^X_{1} \leq \|m\|_{L^1(\mathbb{R})} v^r_\xi(0)/\partfct_2$, with $\partfct_n$ as in Proposition \ref{exproftracesrec}. 
\end{prop}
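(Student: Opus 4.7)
The plan is to convert the desired bounds on $\lambda^X_1$ into bounds on the largest eigenvalue $\lambda^R_1$ of the operator $R_r$ via the identity $\lambda^X_1 = 1/(\lambda^R_1 v^r_\xi(0))$ from \eqref{relvpxandr}, and then to squeeze $\lambda^R_1$ between two trace quantities that we already know explicitly. Concretely, the target inequalities are equivalent to
\[ \frac{\partfct_2}{v^r_\xi(0)^2 \,\|m\|_{L^1(\mathbb{R})}} \;\leq\; \lambda^R_1 \;\leq\; \|m\|_{L^1(\mathbb{R})}. \]

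The key inputs are Lemma \ref{traces2new1}, which (under \eqref{mheavytailed1}) gives $Tr(R_r^n) = \sum_{j \geq 1} (\lambda^R_j)^n < \infty$ for every $n \geq 1$ together with the exact value $Tr(R_r) = \|m\|_{L^1(\mathbb{R})}$, and Proposition \ref{exproftracesrec}, which identifies $Tr(R_r^n) = \partfct_n / v^r_\xi(0)^n$ for $n \geq 2$. Since by Lemma \ref{oprwelldeefinedanddiag} all $\lambda^R_j$ are positive, the upper bound on $\lambda^R_1$ is immediate from
\[ \lambda^R_1 \;\leq\; \sum_{j \geq 1} \lambda^R_j \;=\; Tr(R_r) \;=\; \|m\|_{L^1(\mathbb{R})}, \]
and translating through $\lambda^X_1 = 1/(\lambda^R_1 v^r_\xi(0))$ yields the announced lower bound $\lambda^X_1 \geq 1/(\|m\|_{L^1(\mathbb{R})} v^r_\xi(0))$.

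For the lower bound on $\lambda^R_1$, I would use the elementary comparison
\[ Tr(R_r^2) \;=\; \sum_{j \geq 1} (\lambda^R_j)^2 \;\leq\; \lambda^R_1 \sum_{j \geq 1} \lambda^R_j \;=\; \lambda^R_1 \, Tr(R_r), \]
which is valid because $\lambda^R_j \leq \lambda^R_1$ for every $j$ and all terms are nonnegative. Substituting $Tr(R_r^2) = \partfct_2/v^r_\xi(0)^2$ and $Tr(R_r) = \|m\|_{L^1(\mathbb{R})}$ gives $\lambda^R_1 \geq \partfct_2/(v^r_\xi(0)^2 \|m\|_{L^1(\mathbb{R})})$, and one more application of \eqref{relvpxandr} delivers the stated upper bound on $\lambda^X_1$.

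There is essentially no obstacle: the proposition is a direct consequence of the trace identities already established. The only minor point to verify is that Lemma \ref{traces2new1} genuinely applies (which requires \eqref{mheavytailed1}, not just \eqref{mheavytailed}, and this is exactly what the hypothesis provides) so that both $Tr(R_r)$ and the eigenvalue-sum representation are at our disposal for $n=1$; the case $n=2$ is already covered by the weaker Lemma \ref{traces2new}. Once these are in place the argument is a two-line Cauchy–Schwarz-type squeeze.
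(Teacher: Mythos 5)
Your proposal is correct and follows essentially the same route as the paper: both arguments exploit the trace identities $Tr(R_r)=\|m\|_{L^1(\mathbb{R})}$ and $Tr(R_r^2)=\partfct_2/v^r_\xi(0)^2$ together with the positivity of the $\lambda^R_j$'s, bounding $\lambda^R_1$ above by $\sum_j \lambda^R_j$ and below by $\sum_j (\lambda^R_j)^2 \big/ \sum_j \lambda^R_j$, and then transferring to $\lambda^X_1$ via \eqref{relvpxandr}. The paper merely packages the two steps into a single chain of inequalities; your separation into an upper and a lower bound on $\lambda^R_1$ is the same argument presented slightly differently (one small terminological quibble: the lower-bound step uses the elementary comparison $\lambda^R_j\le\lambda^R_1$ rather than Cauchy--Schwarz, but this does not affect correctness).
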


\begin{proof}
Using Lemma \ref{traces2new1} and that all eigenvalues of $R_{r}$ are positive (from Lemma \ref{oprwelldeefinedanddiag}) we get 
\begin{align*}
\|m\|_{L^1(\mathbb{R})} = Tr(R_{r}) = \sum_{k \geq 1} \lambda_{k}^{R} \geq \lambda^R_{1} = \lambda^R_{1} \times \frac{Tr(R_{r})}{\|m\|_{L^1(\mathbb{R})}} & = \frac{1}{\|m\|_{L^1(\mathbb{R})}} \sum_{k \geq 1} \lambda^R_{1} \lambda_{k}^{R} \\
& \geq \frac{1}{\|m\|_{L^1(\mathbb{R})}} \sum_{k \geq 1} (\lambda_{k}^{R})^2 = \frac{Tr(R_{r}^2)}{\|m\|_{L^1(\mathbb{R})}} = \frac{\partfct_2}{\|m\|_{L^1(\mathbb{R})} \times v^r_\xi(0)^2}, 
\end{align*}
where the last equality comes from Proposition \ref{exproftracesrec}. Combining the bound $\|m\|_{L^1(\mathbb{R})} \geq \lambda^R_{1} \geq \partfct_2/\|m\|_{L^1(\mathbb{R})} v^r_\xi(0)^2$ with \eqref{relvpxandr} we get $1/\|m\|_{L^1(\mathbb{R})} v^r_\xi(0) \leq \lambda^X_{1} \leq \|m\|_{L^1(\mathbb{R})} v^r_\xi(0)/\partfct_2$. 
\end{proof}

\subsection{Infinite-volume Gibbs states} \label{limdistribforlocalobs}

Recall the sequence of probability measures $(L^k_n)_{n \geq 2 \vee k}$ defined in Section \ref{interpartsyst}.
The following proposition relates the first eigenfunction of the semigroup $(P_t)_{t \geq 0}$ with the infinite-volume Gibbs states of the spin system from Section \ref{interpartsyst}. 
\begin{prop} \label{stateonepart}
Assume that $r>0$ and \eqref{mheavytailed} and \eqref{hypcaractexpol-1} hold. Under the choice $\pot:=-\log(v^r_\xi(\cdot))$ and $\pen:=-\log(m(\cdot))$, the infinite-volume Gibbs states $\mathcal{L}_k$ defined in \eqref{deflimstaeonetyppart} is well-defined for any $k \geq 1$ and we have 
\begin{align}
\mathcal{L}_k(dy_1 \dots dy_k) = (\lambda^X_{1})^{k-1} \times m(y_1) \dots m(y_k) \times |q_1(y_1)| v^r_\xi(y_2-y_1) \dots v^r_\xi(y_k-y_{k-1}) |q_1(y_k)|dy_1 \dots dy_k. \label{stateonepartexprk}
\end{align}
In particular we have 
\begin{align}
\mathcal{L}_1(dy) = m(y) | q_1(y) |^2 dy. \label{stateonepartexpr}
\end{align}
Moreover, for any $y \in \mathbb{R}$ we have 
\begin{align}
l_1^n(y) \underset{n \rightarrow \infty}{\longrightarrow} m(y) | q_1(y) |^2. \label{cvdensitytypicalparticle}
\end{align}
\end{prop}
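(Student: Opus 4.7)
The plan is to lift the partition function and its marginals into the operator framework from Section \ref{optrdual}. The central identity is \eqref{exproftraces6}, which encodes each factor $v^r_\xi(y_2-y_1)$ as $v^r_\xi(0)\psl\phi^r_{y_1},\phi^r_{y_2}\psri$, combined with Proposition \ref{exproftracesrec}, which gives $\partfct_n=v^r_\xi(0)^n\,Tr(R_r^n)$. Together with Lemma \ref{traces2new} and the simplicity of $\lambda_1^R$ from Lemma \ref{q1posonr} (so that $Tr(R_r^n)\sim(\lambda_1^R)^n$), this reduces the proof to extracting the contribution of the top eigenfunction $a_1$.

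I would first prove the pointwise limit \eqref{cvdensitytypicalparticle}, which simultaneously yields \eqref{stateonepartexpr}. Substituting \eqref{exproftraces6} into the definition \eqref{defdnesityln} of $l_1^n$ and integrating out $\omega_2,\dots,\omega_n$ via $\int m(y)P^r_y\,dy=R_r$ should give
\begin{align}
l_1^n(\omega)=\frac{m(\omega)\,\psl R_r^{n-1}\phi^r_\omega,\phi^r_\omega\psri}{Tr(R_r^n)}. \nonumber
\end{align}
Expanding the numerator in the orthonormal eigenbasis $(a_j)_{j\geq 1}$ of $R_r$ and using \eqref{defqn} to recognize $|\psl\phi^r_\omega,a_j\psri|^2=\lambda_j^R|q_j(\omega)|^2$ gives $\psl R_r^{n-1}\phi^r_\omega,\phi^r_\omega\psri=\sum_{j\geq 1}(\lambda_j^R)^n|q_j(\omega)|^2$. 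The simplicity of $\lambda_1^R$, combined with the pointwise bound $|q_j(\omega)|^2\leq 1/\lambda_j^R$ (as in the proof of Lemma \ref{sumofnorms}) and the summability $\sum_j(\lambda_j^R)^2<\infty$ from \eqref{sumsquarrevpfinite}, then delivers a dominated-convergence argument showing $(\lambda_1^R)^{-n}\psl R_r^{n-1}\phi^r_\omega,\phi^r_\omega\psri\to|q_1(\omega)|^2$; dividing by $Tr(R_r^n)/(\lambda_1^R)^n\to 1$ gives $l_1^n(\omega)\to m(\omega)|q_1(\omega)|^2$.

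For general $k$, I would follow the same lifting but keep the test functions $f_j$. Introducing the auxiliary operators $R_r^{[f]}h:=\int f(y)m(y)\psl h,\phi^r_y\psri\phi^r_y\,dy$ (a bounded perturbation of $R_r$ for bounded $f$), substituting \eqref{exproftraces6} into \eqref{defznf}, integrating out the $n-k$ passive variables (each producing a factor of $R_r$) and the $k-1$ active variables $\omega_2,\dots,\omega_k$ (each producing a factor $R_r^{[f_j]}$), and closing the cycle via the identity $\psl A\phi^r_\omega,\phi^r_\omega\psri=Tr(A\,P^r_\omega)$ together with cyclicity of the trace, should give for $n>k$
\begin{align}
L_n^k(f_1,\dots,f_k)=\frac{Tr\bigl(R_r^{n-k}R_r^{[f_k]}\cdots R_r^{[f_1]}\bigr)}{Tr(R_r^n)}. \nonumber
\end{align}
Spectral decomposition of $R_r^{n-k}$ in the basis $(a_j)$ and dominated convergence in the spirit of Lemma \ref{traces2new} then produce the limit $\psl R_r^{[f_k]}\cdots R_r^{[f_1]}a_1,a_1\psri/(\lambda_1^R)^k$. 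Unfolding this inner product recursively using $\psl\phi^r_y,a_1\psri=\sqrt{\lambda_1^R}\,\overline{q_1(y)}$ and \eqref{exproftraces6} gives a $k$-fold integral whose boundary factor is $\overline{q_1(\omega_1)}q_1(\omega_k)=|q_1(\omega_1)||q_1(\omega_k)|$ (phases cancel by Lemma \ref{q1posonr}) and whose prefactor simplifies via \eqref{relvpxandr} to $(\lambda_1^R v^r_\xi(0))^{-(k-1)}=(\lambda_1^X)^{k-1}$, yielding exactly \eqref{stateonepartexprk}. That the resulting candidate is a probability measure (so that the convergence on products $f_1(\omega_1)\cdots f_k(\omega_k)$ upgrades to weak convergence of the $k$-point marginal) follows by iterating the resolvent form $\int v^r_\xi(y-x)m(y)|q_1(y)|\,dy=|q_1(x)|/\lambda_1^X$ of the eigenvalue equation $-\mathcal{A}_{X^{m,\xi,r}}q_1=\lambda_1^Xq_1$: the $k-1$ successive integrations telescope and leave $\int m(y)|q_1(y)|^2\,dy=\|q_1\|_{L^2(m)}^2=1$.

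The main obstacle lies in the trace-class and integrability manipulations for the general $k$ case: the operators $R_r^{[f_j]}$ are bounded but not self-adjoint, so to legitimate Fubini and cyclicity of the trace I would need to factor $R_r^{n-k}R_r^{[f_k]}\cdots R_r^{[f_1]}$ through $R_r^2$ (which is trace class, as noted in the remark following Lemma \ref{oprwelldeefinedanddiag}) and to produce uniform-in-$n$ estimates of the kind used in \eqref{finitetracenew}--\eqref{inegiteres}, with $\|f_j\|_\infty$ factors absorbed. Once these technical bounds are secured, the rest of the argument reorganizes spectral asymptotics already developed in Sections \ref{optrdual} and \ref{decompsg}.
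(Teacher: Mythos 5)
Your proposal follows essentially the same route as the paper: lift the marginals into the operator framework via \eqref{exproftraces6}, express them as ratios of traces of $R_r$ composed with auxiliary operators $R_{r,j}=\int f_j m P^r_y\,dy$, then extract the dominant spectral contribution of $a_1$ using simplicity of $\lambda_1^R$ and the summability \eqref{sumsquarrevpfinite}. The one technical concern you flag — that $R_r^{[f_j]}$ is ``not self-adjoint'' and cyclicity of the trace must be justified — is in fact a non-issue here: the paper restricts the test functions $f_j$ to be real and non-negative (since weak convergence is determined on such $f_j$), so each $R_{r,j}$ is self-adjoint and all the Fubini bounds from Lemma \ref{traces2new} transfer verbatim without invoking abstract trace-class cyclicity; also, your claimed identity $\psl\phi^r_y,a_1\psri=\sqrt{\lambda_1^R}\overline{q_1(y)}$ has a spurious conjugate compared to \eqref{defqn}, which cancels only because the two boundary factors pair up and Lemma \ref{q1posonr} kills the phase. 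Your ordering — proving the pointwise density limit \eqref{cvdensitytypicalparticle} first via $l_1^n(\omega)=m(\omega)\psl R_r^{n-1}\phi^r_\omega,\phi^r_\omega\psri/Tr(R_r^n)$, and establishing that the limit for $k>1$ is a probability measure directly from the resolvent form $\int v^r_\xi(z-y)m(z)|q_1(z)|\,dz=|q_1(y)|/\lambda_1^X$ rather than by deferring to Proposition \ref{deeperconnectiongs-ips} — is a small organizational improvement that makes this proposition more self-contained, but the mathematical content is the same.
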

\eqref{stateonepartexpr} says that, if we consider a fixed particle in the infinite volume limit of the spin system from Section \ref{interpartsyst}, its interactions with the infinitely many other particles in the system are coded in the factor $| q_1(y) |^2$. 
\begin{proof}
Let $k \geq 1$ and $f_1,\dots,f_k \in \mathcal{C}_b(\mathbb{R})$ be real non-negative functions. Proposition \ref{exproftracesrec} shows that, for $n \geq 2\vee k$, $\partfct_{n}$ is finite so, in particular, the probability measures $(L^k_n)_{n \geq 2 \vee k}$ are well-defined. We study the convergence of the quantity $L^k_n(f_1,\dots,f_k)$ defined in \eqref{defznf}. For this, when $j \in \{1,\dots,k\}$, let us define the operator $R_{r,j}: L^2((-1,1)) \rightarrow L^2((-1,1))$ by $R_{r,j}.h:= \int_{\mathbb{R}} f_j(y)m(y) P^{r}_y.h dy$, where the integral is in the sense of Bochner. By \eqref{mheavytailed} we have $mf_j \in \crz \cap L^2(\mathbb{R})$ so the operator $R_{r,j}$ also satisfies the properties established for $R_{r}$ in the first two points of Lemma \ref{oprwelldeefinedanddiag} (in particular it is well-defined). The reasoning from the proof of Lemma \ref{traces2new} can be repeated and yields that for any $n \geq 2\vee k$, and any Hilbert basis $(u_j)_{j \geq 1}$ of $L^2((-1,1))$ we have $\sum_{j \geq 1} |\psl R_{r,1}\dots R_{r,k}.R_{r}^{n-k}.u_j,u_j \psri | <\infty$ and the quantity $\sum_{j \geq 1} \psl R_{r,1}\dots R_{r,k}.R_{r}^{n-k}.u_j,u_j \psri$ does not depend on the choice of the Hilbert basis $(u_j)_{j \geq 1}$ so we denote it by $Tr(R_{r,1}\dots R_{r,k}.R_{r}^{n-k})$. Recall that $(a_j)_{j \geq 1}$ is the orthonormal Hilbert basis of $L^2((-1,1))$ consisting of eigenfunctions of $R_{r}$, chosen a little before \eqref{defqn}. Since $|\psl R_{r,1}\dots R_{r,k}.a_j,a_j \psri| \leq \|| R_{r,1}\dots R_{r,k} \||$ for all $j \geq 1$, the reasoning from the proof of Lemma \ref{traces2new}, together with Remark \ref{mult1equiv}, yields that 
\begin{align}
\frac{Tr(R_{r,1}\dots R_{r,k}.R_{r}^{n-k})}{(\lambda_{1}^{R})^{n-k}} \underset{n \rightarrow \infty}{\longrightarrow} \psl R_{r,1}\dots R_{r,k}.a_1,a_1 \psri. \label{equivnewtr}
\end{align}
Proceeding as in \eqref{exproftraces2new} from the proof of Lemma \ref{traces2new} we get for any $n \geq 2\vee k$, 
\begin{align*}
Tr(R_{r,1}\dots R_{r,k}.R_{r}^{n-k}) = \int_{\mathbb{R}^{n}} f_1(y_1) \dots f_k(y_k) m(y_1) \dots m(y_{n}) \psl P^{r}_{y_{n}}\dots P^{r}_{y_2}.\phi^{r}_{y_1},\phi^{r}_{y_1} \psri dy_1 \dots dy_{n}. 
\end{align*}
Then using the definition of $P^{r}_y$ and \eqref{exproftraces6} we obtain 
\begin{align*}
Tr(R_{r,1}\dots R_{r,k}.R_{r}^{n-k}) = v^{r}_\xi(0)^{-n} \int_{\mathbb{R}^{n}} f_1(y_1) \dots f_k(y_k) m(y_1) \dots m(y_{n}) & v^{r}_\xi(y_2-y_1) \times \dots \times v^{r}_\xi(y_n-y_{n-1}) \\
\times & v^{r}_\xi(y_1-y_n) dy_1 \dots dy_{n}. 
\end{align*}
Comparing with \eqref{defznf} we get that for any $n \geq 2\vee k$, 
\begin{align}
Tr(R_{r,1}\dots R_{r,k}.R_{r}^{n-k})= \partfct_n \times L_n^k(f_1,\dots,f_k)/v^{r}_\xi(0)^n. \label{exprnewtr}
\end{align}
Then the combination of \eqref{exprnewtr}, \eqref{equivnewtr}, Proposition \ref{exproftracesrec}, Lemma \ref{traces2new} and Remark \ref{mult1equiv} yields 
\begin{align}
L^k_n(f_1,\dots,f_k) \underset{n \rightarrow \infty}{\longrightarrow} \frac1{(\lambda_{1}^{R})^k} \psl R_{r,1}\dots R_{r,k}.a_1,a_1 \psri. \label{limln}
\end{align}
Using the second point of Lemma \ref{oprwelldeefinedanddiag} (but for the $R_{r,j}$'s in place of $R_{r}$), the identities \eqref{prepfubinitraces1}, \eqref{exproftraces6} and \eqref{defqn}, Lemma \ref{q1posonr}, and \eqref{relvpxandr} we get that the right-hand side of \eqref{limln} equals 
\begin{align}
& \frac1{(\lambda_{1}^{R})^k} \int_{\mathbb{R}^k} (f_1m)(y_1)\dots (f_km)(y_k) \psl P^{r}_{y_1} \dots P^{r}_{y_k}.a_1,a_1 \psri dy_1 \dots dy_k \label{calclim} \\
= & \frac{1}{(\lambda_{1}^{R})^k} \int_{\mathbb{R}^k} (f_1m)(y_1)\dots (f_km)(y_k) (\lambda_{1}^{R})^{1/2} \overline{q_1(y_1)} \frac{v^r_\xi(y_2-y_1)}{v^r_\xi(0)} \dots \frac{v^r_\xi(y_k-y_{k-1})}{v^r_\xi(0)} (\lambda_{1}^{R})^{1/2} q_1(y_k) dy_1 \dots dy_k \nonumber \\
= & (\lambda^X_{1})^{k-1} \int_{\mathbb{R}^k} (f_1m)(y_1)\dots (f_km)(y_k) |q_1(y_1)| v^r_\xi(y_2-y_1) \dots v^r_\xi(y_k-y_{k-1}) |q_1(y_k)| dy_1 \dots dy_k. \nonumber
\end{align}
Combining with \eqref{limln} we get 
\begin{align}
L^k_n(f_1,\dots,f_k) \underset{n \rightarrow \infty}{\longrightarrow} (\lambda^X_{1})^{k-1} \int_{\mathbb{R}^k} (f_1m)(y_1)\dots (f_km)(y_k) |q_1(y_1)| v^r_\xi(y_2-y_1) \dots v^r_\xi(y_k-y_{k-1}) |q_1(y_k)| dy_1 \dots dy_k. \label{limlnexpl}
\end{align}
According to Proposition \ref{bonfctpropres}, $(q_j)_{j\geq 1}$ is an orthonormal family of $L^2(m)$, so $\int_{\mathbb{R}} m(y) | q_1(y) |^2 dy=1$. The measure appearing in the right-hand side of \eqref{limlnexpl} is thus a probability measure when $k=1$. For $k>1$, the calculation in the proof of Proposition \ref{deeperconnectiongs-ips} below will show that the measure in the right-hand side of \eqref{limlnexpl} is the law of a random vector, so in particular it is a probability measure. This and \eqref{limlnexpl} thus imply that for each $k>1$ (resp. $k=1$), we have the convergence in law of $L^k_n$ toward a limit $\mathcal{L}_k$ with expression specified in \eqref{stateonepartexprk} (resp. \eqref{stateonepartexpr}). This concludes the proof of the first part of the proposition. To prove \eqref{cvdensitytypicalparticle}, fix $y \in \mathbb{R}$ and repeat the above argument leading to \eqref{limlnexpl} (with $k=1$), but with $L^1_n(f_1)$ replaced by $l_1^n(y)$ and $R_{r,1}$ replaced by $\tilde R_{r}:= m(y) P^{r}_y$ (formally, this amounts to replacing the function $f_1$ by the distribution $\delta_y$). This concludes the proof. 
\end{proof}

\section{Proof of main results on $X^{m,\xi,r}$ from Section \ref{mainres}} \label{proofmainresults}

\subsection{Asymptotic of survival probability: Proof of Theorem \ref{encadrementtransfreeenergythtrans}} \label{studysurvproba}

Proposition \ref{decompsgbon3} shows that \eqref{expodecaysurvproba} holds with $\gamma_{m,\xi,r}=\lambda^X_{1}$ and $K_{m,\xi,r}(x)=\psl m,q_1\psrr q_1(x)$, and that $K_{m,\xi,r}(x)>0$. We recall that the positivity of $\lambda^X_{1}$ comes from the combination of Lemma \ref{oprwelldeefinedanddiag} with \eqref{relvpxandr}. Proposition \ref{exproftracesrec} (together with $\gamma_{m,\xi,r}=\lambda^X_{1}$) yields the well-definedness of the partition function $\partfct_{n}$ and of the normalized free energy $\mathcal{E}$, and \eqref{freeenergyth0trans}. Using Lemma \ref{q1posonr} we see that we have $K_{m,\xi,r}(x)=\psl m,|q_1|\psrr |q_1(x)|$ so \eqref{microstaedensity} and $\ell_1$ being the point-wise limit of $(l_1^n)_{n\geq 2}$ both follow from Proposition \ref{stateonepart}. In particular, $\ell_1/m=|q_1|^2$ so the claims that $\ell_1/m \in \crz \cap L^1(\mathbb{R})$ and that $\ell_1$ is positive on $\mathbb{R}$ follow respectively from Proposition \ref{bonfctpropres} and Lemma \ref{q1posonr}. Under \eqref{mheavytailed1}, Proposition \ref{inegspecgap} (together with $\gamma_{m,\xi,r}=\lambda^X_{1}$) shows that \eqref{inegspecgaptrans} holds. This completes the proof. 

\begin{remark} \label{khasminskii}
Once \eqref{expodecaysurvproba} has been established, the moment approach discussed in Section \ref{anotherroute} yields an alternative proof of the lower bound in \eqref{inegspecgaptrans}. Indeed, using $\zeta_x=A_x(e_r)$ and Khas'minskii's condition from \cite{FITZSIMMONS1999117} we get $\mathbb{E}[e^{\theta \zeta_x}]<\infty$ for all $\theta<1/\| G_m \|_{\infty}$ where $G_m(y):=\int_{\mathbb{R}} v^r_\xi(y-z)m(z)dz$. By Lemma \ref{linkpsizpotential} and \eqref{mheavytailed1} we get $\| G_m \|_{\infty}\leq \|m\|_{L^1(\mathbb{R})} v^r_\xi(0)$. Therefore, by Chernoff's inequality we get $\mathbb{P}(\zeta_x > t)<<e^{-\theta t}$ for all $\theta<1/\|m\|_{L^1(\mathbb{R})} v^r_\xi(0)$. Combining with \eqref{expodecaysurvproba} we obtain $1/\|m\|_{L^1(\mathbb{R})} v^r_\xi(0) \leq \gamma_{m,\xi,r}$, which is the lower bound in \eqref{inegspecgaptrans}. 
\end{remark}

\subsection{Asymptotic behavior of $\gamma_{m,\xi,r}$: Proof of Corollary \ref{asymptrgoto0}} \label{rgoesto0}

We first justify the following lemma that determines the asymptotic behavior of $v^r_\xi(0)$ when $r$ goes to $0$. 
\begin{lemme} \label{asymptdenspot0}
Assume that \eqref{hypcaractexpol-1} holds, so that the potential density $v^r_\xi(\cdot)$ exists (see Lemma \ref{linkpsizpotential}). We assume that there are $\alpha \geq 1$ and $c>0$ such that $-\psi_\xi(y) \sim c |y|^{\alpha}$ as $y$ goes to $0$. Then we have 
\begin{align}
& v^r_\xi(0) \underset{r \rightarrow 0}{\sim} r^{-\frac{\alpha-1}{\alpha}} \frac{1}{\pi c^{\frac1{\alpha}}} \int_0^{\infty} \frac{1}{1+u^{\alpha}} du \ \ \ \text{if} \ \alpha > 1, \label{asympvrzregvar} \\
& v^r_\xi(0) \underset{r \rightarrow 0}{\sim} \frac{1}{c \pi} \log \left ( \frac1{r} \right ) \ \ \ \text{if} \ \alpha = 1. \label{asympvrzregslow}
\end{align}
\end{lemme}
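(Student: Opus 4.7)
The plan is to start from the Fourier representation of the potential density. Lemma \ref{linkpsizpotential} (used repeatedly in the paper, e.g.\ at \eqref{exproftraces6}) gives $\mathcal{F}(v^r_\xi)(y) = 1/(-\psi_\xi(2\pi y)+r)$, and under \eqref{hypcaractexpol-1} this function lies in $L^1(\mathbb{R})$, so Fourier inversion at $x=0$ yields
\[
v^r_\xi(0) \;=\; \int_{\mathbb{R}} \frac{dy}{-\psi_\xi(2\pi y)+r} \;=\; \frac{1}{2\pi}\int_{\mathbb{R}} \frac{dz}{-\psi_\xi(z)+r}.
\]
All further analysis then reduces to a one-variable asymptotic problem for this integral as $r\to 0$.

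Next I would split the integral at some small fixed $\delta>0$, chosen so that the asymptotic $-\psi_\xi(z)=c|z|^\alpha(1+o(1))$ holds uniformly on $[-\delta,\delta]$. Writing $v^r_\xi(0) = I_1(r)+I_2(r)$ with $I_1$ over $|z|\leq\delta$ and $I_2$ over $|z|>\delta$, the tail $I_2(r)$ is controlled by $(2\pi)^{-1}\int_{|z|>\delta}\frac{dz}{-\psi_\xi(z)}$, which is finite by \eqref{hypcaractexpol-1} together with the fact that $-\psi_\xi$ is continuous and strictly positive away from $0$ (by symmetry and the type~C classification discussed after \eqref{hypcaractexpol-1}). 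Hence $I_2(r)=O(1)$ as $r\to 0$, and it will be subdominant in both cases.

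For the main term $I_1(r)$, I would perform the scaling substitution $z = (r/c)^{1/\alpha}u$. Using the uniform approximation on $[-\delta,\delta]$ and the symmetry of $\psi_\xi$, this gives
\[
I_1(r) \;=\; \frac{r^{(1-\alpha)/\alpha}}{\pi c^{1/\alpha}} \int_0^{(c/r)^{1/\alpha}\delta} \frac{du}{u^\alpha + 1 + \varepsilon_r(u)},
\]
where $\varepsilon_r(u)\to 0$ pointwise as $r\to 0$ on any fixed bounded set of $u$. When $\alpha>1$, the integrand on $\mathbb{R}_+$ has an integrable dominating function (e.g.\ $2/(1+u^\alpha)$ for $r$ small enough), so dominated convergence yields $I_1(r)\sim r^{-(\alpha-1)/\alpha}(\pi c^{1/\alpha})^{-1}\int_0^\infty \frac{du}{1+u^\alpha}$, which diverges and hence dominates $I_2(r)$; this gives \eqref{asympvrzregvar}. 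When $\alpha=1$, the integral $\int_0^{c\delta/r}\frac{du}{1+u}=\log(1+c\delta/r)\sim \log(1/r)$ diverges logarithmically, and a standard argument (splitting further at $u=M$ and letting $M\to\infty$ after $r\to 0$) handles the perturbation $\varepsilon_r(u)$; combined with $I_2(r)=O(1)$ this yields \eqref{asympvrzregslow}.

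The main technical obstacle I anticipate is making the error $\varepsilon_r(u)$ harmless in the critical case $\alpha=1$: since the integral just barely diverges, one must ensure that $\int_0^{c\delta/r} \varepsilon_r(u)/(1+u)^2\,du = o(\log(1/r))$, which follows by a two-step truncation (use uniform smallness of $\varepsilon_r$ on $[0,M]$ and the crude bound $-\psi_\xi(z)\geq c'|z|$ on a whole neighborhood of $0$ for large $u$). All other steps are essentially scaling plus dominated convergence.
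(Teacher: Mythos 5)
Your plan follows the same route as the paper: write $v^r_\xi(0)=\frac{1}{2\pi}\int_{\mathbb{R}}\frac{dz}{r-\psi_\xi(z)}$, split at a fixed small $\delta$, observe the tail is $O(1)$, and rescale $z=(r/c)^{1/\alpha}u$ in the inner integral. For $\alpha>1$ your use of dominated convergence (with dominating function $\lesssim 1/(1+u^\alpha)$ after choosing $\delta$ so that $B:=(-\psi_\xi)/(c|\cdot|^\alpha)\geq 1/2$ on $[0,\delta]$) is actually a little cleaner than the paper's version, which keeps a $1\pm\epsilon$ sandwich all the way through and then invokes monotone convergence as $\epsilon\to0$. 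Both are correct.

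For $\alpha=1$, however, the ``two-step truncation'' as you describe it does not close the argument. Splitting at $u=M$ and letting $r\to0$ makes the inner part $[0,M]$ converge to $\log(1+M)=O(1)$, which is harmless; but on the outer part $[M,c\delta/r]$ the perturbation $B((r/c)u)-1$ does \emph{not} tend to $0$ --- it is only bounded in modulus by $\eta_\delta:=\sup_{(0,\delta]}|B-1|$, a fixed positive number once $\delta$ is fixed. Consequently the contribution of the outer part is bracketed between $\frac{1}{1+\eta_\delta}\log(1/r)+O(1)$ and $\frac{1}{1-\eta_\delta}\log(1/r)+O(1)$, and the ``crude bound'' $-\psi_\xi(z)\geq c'|z|$ with $c'<c$ gives the same kind of $\Theta(\log(1/r))$ estimate with a wrong constant. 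Sending $M\to\infty$ does not remove this slack. To obtain the sharp constant $1/(c\pi)$ you need an additional limit: either shrink $\delta$ (so $\eta_\delta\to0$), or, as the paper does, carry the $1\pm\epsilon$ bracket, take $\limsup/\liminf$ in $r$, and only then let $\epsilon\to0$. This is a standard fix, but as written your step claiming $\int_0^{c\delta/r}|\varepsilon_r(u)|/(1+u)\,du=o(\log(1/r))$ is not justified for fixed $\delta$.

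Finally, a small point on the tail: $I_2(r)=O(1)$ requires both \eqref{hypcaractexpol-1} (for $|z|>1$) and $\inf_{\delta\leq|z|\leq1}(-\psi_\xi(z))>0$; the latter holds because $-\psi_\xi$ is continuous, nonnegative, and cannot vanish at any $z\neq0$ since otherwise $\xi(s)$ would be lattice-supported, contradicting $\mathrm{Supp}(\xi(s))=\mathbb{R}$ (Lemma~\ref{suppxiisr}). Your appeal to the type-C classification is in the right spirit but this is the cleanest way to make it rigorous.
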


\begin{remark} 
If $\xi$ is a symmetric $\alpha$-stable L\'evy process on $\mathbb{R}$ with $\alpha>1$ then $-\psi_\xi(y) = c |y|^{\alpha}$ for some $c>0$ (see for example Theorem 14.15 in \cite{Sato}) and $v^r_\xi(0) = r^{-(\alpha-1)/\alpha} v^1_\xi(0)$ (see for example the discussion after Theorem II.19 in \cite{Bertoin}) so \eqref{asympvrzregvar} is even an equality and Lemma \ref{asymptdenspot0} allows to identify the constant $v^1_\xi(0)$ with $(\pi c^{1/\alpha})^{-1} \int_0^{\infty} 1/(1+u^{\alpha}) du$. 
\end{remark}

\begin{proof} [Proof of Lemma \ref{asymptdenspot0}]
For $\delta>0$ let 
\begin{align}
q_{\delta}(r) := \frac1{2\pi} \int_{-\delta}^{\delta} \frac{1}{r-\psi_\xi(z)} dz, \ \ \ \text{and} \ \ \ C_{\delta} := \frac1{2\pi} \int_{[-\delta,\delta]^c} \frac{1}{-\psi_\xi(z)} dz. \label{defqdelta}
\end{align}
Note that \eqref{hypcaractexpol-1} ensures that $C_{\delta}<\infty$. From Lemma \ref{linkpsizpotential} we see that, for any $\delta > 0$ and $r>0$, $q_{\delta}(r) \leq v^r_\xi(0) \leq q_{\delta}(r) + C_{\delta}$. From the assumption $-\psi_\xi(y) \sim c |y|^{\alpha}$, $q_{\delta}(r)$ and $v^r_\xi(0)$ converge to infinity as $r$ goes to zero. We thus have that for any $\delta>0$, 
\begin{align} \label{equivvzr}
v^r_\xi(0) \underset{r \rightarrow 0}{\sim} q_{\delta}(r). 
\end{align}

Since $-\psi_\xi(y) \sim c |y|^{\alpha}$, we can write $-\psi_\xi(y) = c |y|^{\alpha}B(y)$ where $B(y)$ converges to $1$ as $y$ goes to $0$. Let us fix $\epsilon \in(0,1)$. Then, there is $\delta>0$ such that $B(y) \in (1-\epsilon,1+\epsilon)$ for all $y \in (0,\delta]$. From now onward we fix such a $\delta>0$. Using \eqref{equivvzr}, the symmetry of $\psi_\xi(\cdot)$, and $-\psi_\xi(y) = c |y|^{\alpha}B(y)$, we get 
\begin{align} \label{equivvzr2}
v^r_\xi(0) \underset{r \rightarrow 0}{\sim} \frac1{\pi} \int_{0}^{\delta} \frac{1}{r+c y^{\alpha}B(y)} dy. 
\end{align}
Then, 
\[ r^{\frac{\alpha-1}{\alpha}} \int_{0}^{\delta} \frac{1}{r+cy^{\alpha}B(y)} dy = r^{-\frac1{\alpha}} \int_0^{\delta} \frac{1}{1+\frac{c}{r}y^{\alpha}B(y)} dy = c^{-\frac1{\alpha}} \int_0^{\delta (c/r)^{\frac1{\alpha}}} \frac{1}{1+u^{\alpha}B((r/c)^{\frac1{\alpha}}u)} du. \]
We thus get that 
\[ c^{-\frac1{\alpha}} \int_0^{\delta (c/r)^{\frac1{\alpha}}} \frac{1}{1+(1+\epsilon)u^{\alpha}} du \leq r^{\frac{\alpha-1}{\alpha}} \int_{0}^{\delta} \frac{1}{r+cy^{\alpha}B(y)} dy \leq c^{-\frac1{\alpha}} \int_0^{\delta (c/r)^{\frac1{\alpha}}} \frac{1}{1+(1-\epsilon)u^{\alpha}} du. \]
Let us now assume that $\alpha>1$. Combining the above with \eqref{equivvzr2} we get 
\[ \frac{c^{-\frac1{\alpha}}}{\pi} \int_0^{\infty} \frac{1}{1+(1+\epsilon)u^{\alpha}} du \leq \underset{r \rightarrow 0}{\liminf} \ r^{\frac{\alpha-1}{\alpha}} v^r_\xi(0) \leq \underset{r \rightarrow 0}{\limsup} \ r^{\frac{\alpha-1}{\alpha}} v^r_\xi(0) \leq \frac{c^{-\frac1{\alpha}}}{\pi} \int_0^{\infty} \frac{1}{1+(1-\epsilon)u^{\alpha}} du. \]
Letting $\epsilon$  go to $0$ and using monotone convergence we get \eqref{asympvrzregvar}. If $\alpha=1$ then a similar reasoning yields \eqref{asympvrzregslow}. 
\end{proof}

We now assume that the assumptions from Corollary \ref{asymptrgoto0} hold and prove the corollary. We first show that, under the choice $\pot:=-\log(v^r_\xi(\cdot))$ and $\pen:=-\log(m(\cdot))$, we have  
\begin{align}
\partfct_2 \underset{r \rightarrow 0}{\sim} \|m\|_{L^1(\mathbb{R})}^2 \times v^r_\xi(0)^2. \label{asymptrgoto0equiv1}
\end{align}
Recall from Remark \ref{altexpr} that $\hat m$ denotes the characteristic function of $m(x) dx$. Let us fix $\epsilon \in(0,1)$. Since $\hat m(0)=\|m\|_{L^1(\mathbb{R})}$ and $\hat m$ is continuous at $0$, there is $\delta>0$ such that $|\hat m(x)|^2/\|m\|_{L^1(\mathbb{R})}^2 \in (1-\epsilon,1]$ for all $x \in [-2\delta,2\delta]$. According to \eqref{altexpr2} we have 
\begin{align} \label{lowerboudzr2}
\frac{\partfct_2}{\|m\|_{L^1(\mathbb{R})}^2} \geq \frac1{(2\pi)^2} \int_{-\delta}^{\delta} \int_{-\delta}^{\delta} \frac{1-\epsilon}{(r-\psi_\xi(z_1)) \times (r-\psi_\xi(z_2))} dz_1 dz_2 = (1-\epsilon) q_{\delta}(r)^2, 
\end{align}
where $q_{\delta}(r)$ is defined as in \eqref{defqdelta}. Since $|\hat m(x)|^2 \leq \|m\|_{L^1(\mathbb{R})}^2$ for all $x$, 
\begin{align} 
\frac{\partfct_2}{\|m\|_{L^1(\mathbb{R})}^2} & \leq \frac1{(2\pi)^2} \int_{\mathbb{R}^2} \frac{1}{(r-\psi_\xi(z_1)) \times (r-\psi_\xi(z_2))} dz_1 dz_2 = v^r_\xi(0)^2. \label{upperboudzr2}
\end{align}
Since $q_{\delta}(r)$ and $v^r_\xi(0)$ converges to infinity as $r$ goes to infinity we get from \eqref{lowerboudzr2}, \eqref{upperboudzr2} and \eqref{equivvzr} that 
\[ 1-\epsilon \leq \underset{r \rightarrow 0}{\liminf} \frac{\partfct_2}{\|m\|_{L^1(\mathbb{R})}^2 \times v^r_\xi(0)^2} \leq \underset{r \rightarrow 0}{\limsup} \frac{\partfct_2}{\|m\|_{L^1(\mathbb{R})}^2 \times v^r_\xi(0)^2} \leq 1. \]
Since $\epsilon$ can be chosen arbitrary small we get \eqref{asymptrgoto0equiv1}. 

The combination of \eqref{asymptrgoto0equiv1} with \eqref{inegspecgaptrans} yields 
\begin{align}
\gamma_{m,\xi,r} \underset{r \rightarrow 0}{\sim} \frac1{\|m\|_{L^1(\mathbb{R})}} \times \frac1{v^r_\xi(0)}. \label{asymptrgoto0equiv}
\end{align}
The combination of \eqref{asymptrgoto0equiv} with Lemma \ref{asymptdenspot0} yields \eqref{asymptrgoto0equivregvar} and \eqref{asymptrgoto0equivregvaralpha=1}, completing the proof. 

\begin{remark} \label{corocaserec}
The assumption "there are $\alpha \geq 1$ and $c>0$ such that $-\psi_\xi(y) \sim c |y|^{\alpha}$ as $y$ goes to $0$" implies that the un-killed L\'evy process $\xi$ is recurrent (see for example Theorem 37.5 in \cite{Sato}). If we only assume that \eqref{mheavytailed1} and \eqref{hypcaractexpol-1} hold and that $\xi$ is recurrent (which occurs, in particular, if $\mathbb{E}[|\xi(1)|]<\infty$, see Theorem 36.7 in \cite{Sato}), then \eqref{asymptrgoto0equiv} is still satisfied, which can be seen as a weak version of Corollary \ref{asymptrgoto0}. Indeed, Theorem 37.5 in \cite{Sato} shows that $q_{\delta}(r)$ converges to infinity as $r$ goes to zero so \eqref{equivvzr} still holds true. Then the above proof of \eqref{asymptrgoto0equiv1} still holds and we get that \eqref{asymptrgoto0equiv} holds true. 
\end{remark}

\subsection{Further spectral properties of $X^{m,\xi,r}$: Proof of Theorem \ref{diagogenetsg}} \label{diagogenetsgproof}

For $t>0$, let $S_t$ be the operator from Remark \ref{extensionrmk}. According to that remark, $S_t$ is a contraction of $L^2(m)$ that satisfies $S_t(L^2(m)) \subset \crz$ and that is continuous from $L^2(m)$ to $(\crz, \| \cdot \|_{\infty})$. For any $x \in \mathbb{R}$ we define the linear forms $p_{t,x}:\mathcal{B}_b(\mathbb{R}) \rightarrow \mathbb{C}$ and $s_{t,x}:L^2(m) \rightarrow \mathbb{C}$ by $p_{t,x}.f=P_t.f(x)$ and $s_{t,x}.f=S_t.f(x)$. By \eqref{defsgx}, $p_{t,x}$ corresponds to the restriction to $\mathbb{R}$ of the finite measure $\mathbb{P}(X^{m,\xi,r}_x(t) \in dz)$. Since $s_{t,x}$ is continuous, by Riesz representation theorem, there is $h_{t,x} \in L^2(m)$ such that, for $f\in L^2(m)$, $s_{t,x}.f=\psl f, h_{t,x} \psrm$. According to Remark \ref{extensionrmk}, $s_{t,x}$ coincides with $p_{t,x}$ on $\crz \cap L^2(\mathbb{R})$ so it is a positive linear form on that subspace. Since any non-negative function of $L^2(m)$ can be approximated by a sequence of non-negative functions from $\crz \cap L^2(\mathbb{R})$, $s_{t,x}$ is a positive linear form on $L^2(m)$ so the function $mh_{t,x}$ is real and almost everywhere non-negative. Moreover, by \eqref{mheavytailed}, for any compact set $K \subset \mathbb{R}$ we have $\textbf{1}_K \in L^2(m)$ so $\int_K h_{t,x}(z)m(z)dz=\psl \textbf{1}_K, h_{t,x} \psrm <\infty$. Therefore, $s_{t,x}$ corresponds to a sigma-finite measure on $\mathbb{R}$. Since $p_{t,x}$ and $s_{t,x}$ coincide on the space $\crz \cap L^2(\mathbb{R})$, the corresponding measures on $\mathbb{R}$ are equal. Therefore, for any $f \in \mathcal{B}_b(\mathbb{R}) \cap L^2(m)$ and $x \in \mathbb{R}$ we have $P_t.f(x)=S_t.f(x)$. This shows that $S_t$ is indeed an extension of $P_t$ on $L^2(m)$. Since $\mathcal{B}_b(\mathbb{R}) \cap L^2(m)$ is dense in $L^2(m)$, such a contractile extension is unique. Using \eqref{calccoeffseries} and Parseval's identity we get 
\begin{align*}
\|S_t.f-f\|_{L^2(m)}^2=\sum_{n \geq 1} (1-e^{- t\lambda^X_{n}})^2 |\psl f, q_n \psrm|^2. 
\end{align*}
Since $\sum_{n \geq 1} |\psl f, q_n \psrm|^2 = \|f\|_{L^2(m)}^2<\infty$ we get by dominated convergence that the right-hand side converges to $0$ as $t$ goes to $0$. This shows that $S_t$ is strongly continuous in $L^2(m)$, concluding the proof of the first claim. 

Let $h_n:=q_n$ for $n\geq 2$ and $h_1:=|q_1|$ where $(q_n)_{n\geq 1}$ is the family of functions provided by Proposition \ref{bonfctpropres}. Note from Lemma \ref{q1posonr} that $h_1=e^{-i\theta} q_1$ for some $\theta \in [0,2\pi)$. By Proposition \ref{bonfctpropres}, $(h_n)_{n\geq 1}$ is an orthonormal Hilbert basis of $L^2(m)$ such that for any $n \geq 1$, $h_n \in \crz \cap L^2(\mathbb{R}) \subset L^2(m)$. Proposition \ref{bonfctpropres} and \eqref{relvpxandr}, together with the differentiation rule for the semigroup (see Section \ref{notations}) yield that $P_t.h_n = e^{-t\lambda^X_{n}} h_n$ for any $n \geq 1$. The combination of Lemma \ref{oprwelldeefinedanddiag} and \eqref{relvpxandr} yields $\lambda^X_{1} >0$ and that $(\lambda^X_{n})_{n \geq 1}$ is non-decreasing. \eqref{decompsgbonexpr0} follows from the definition of $S_t.f(x)$ in the proof of Proposition \ref{decompsgbon1} and from the fact, shown above, that the unique contractile extension of $P_t$ to $L^2(m)$ equals $S_t$. This proves the second claim. Lemma \ref{q1posonr} yields $\lambda^X_{2}>\lambda^X_{1}$. Finally, the two equalities from \eqref{ident1stvpandfp} have already been proved in Section \ref{studysurvproba}. This proves the third claim. Under \eqref{mheavytailed1} we have $m \in L^1(\mathbb{R})$ so $\mathcal{B}_b(\mathbb{R}) \subset L^2(m)$ and the strong Feller property follows from the first claim. 

\subsection{Proof of Example \ref{exepl1}} \label{proofexepl1}

Let us set $f(x):=\sqrt{2/3}(1+|x|)e^{-|x|}$ then we have clearly $f \in \crz \cap L^2(\mathbb{R}) \subset L^2(m)$ and $\|f\|_{L^2(m)}=1$. One can also see that $f\in \mathcal{C}_0^2(\mathbb{R})$ (one can check that $f$ and $f'$ have right and left derivatives at $0$ and that those right and left derivatives at $0$ coincide). By Theorem 31.5 of \cite{Sato} we get $f \in \mathcal{D}(\mathcal{A}_{\xi})$ and $\mathcal{A}_{\xi}f=f''/2$. Since $\mathcal{D}(\mathcal{A}_{\xi})=\mathcal{D}(\mathcal{A}_{\xi^r})$ and $\mathcal{A}_{\xi^r}f=\mathcal{A}_{\xi}f-rf$ we get $f\in \mathcal{D}(\mathcal{A}_{\xi^r})$ and, after a straightforward calculation, $\frac1{m}\mathcal{A}_{\xi^r}f=-f$. In particular we have $\frac1{m}\mathcal{A}_{\xi^r}f \in \crz$ so by Lemma \ref{generatorxnew} we get $f \in \mathcal{D}(\mathcal{A}_{X^{m,\xi,r}})$ and $\mathcal{A}_{X^{m,\xi,r}}f =-f$. By the differentiation rule for the semigroup (see Section \ref{notations}) we get $P_t.f = e^{-t} f$ for all $t \geq 0$. We have $P_t.h_1 = e^{-t\lambda^X_{1}} h_1$ from Theorem \ref{diagogenetsg}. If we had $\lambda^X_{1}\neq 1$ then $f$ and $h_1$ would be eigenfunctions of $P_t$ associated with different eigenvalues so, applying Lemma \ref{ptautoadj} with $f$ and $h_1$, we would get $\psl f, h_1 \psrm=0$. Since $f$ and $h_1$ are both positive on $\mathbb{R}$ (for $h_1$ this comes from \eqref{ident1stvpandfp} and Theorem \ref{encadrementtransfreeenergythtrans}) we have $\psl f, h_1 \psrm>0$. Therefore $\lambda^X_{1}=1$. By Theorem \ref{diagogenetsg}, the eigenspace of $\lambda^X_{1}$ has dimension $1$. Since both $f$ and $h_1$ lie in this eigenspace, $\|f\|_{L^2(m)}=1=\|h_1\|_{L^2(m)}$ and $f$ and $h_1$ are both positive, we get $f=h_1$. 

\section{Ground state transform of $X^{m,\xi,r}$ and relation with the spin system} \label{gstxrelips}

\subsection{Existence and uniqueness of $\tilde X^{m,\xi,r}$: Proof of Proposition \ref{fellerityofgroundstatetrans}} \label{exgst}
Let us first justify that $(\tilde p_t(x,dy))_{t \geq 0, x \in \mathbb{R}}$ defines a transition function on $\mathbb{R}$. From Theorem \ref{diagogenetsg} we have $h_1 \in \crz$ and $P_t.h_1=e^{- t\lambda^X_{1}}h_1$. We thus deduce from \eqref{defsgxgstrans} that, for any $t\geq 0$ and $x \in \mathbb{R}$, $\tilde p_t(x,dy)$ is a probability measure on $\mathbb{R}$. Since $h_1$ is continuous on $\mathbb{R}$ and $X^{m,\xi,r}$ is a Feller process, we easily get that for any Borel set $B\subset \mathbb{R}$, the function $(x \mapsto \tilde p_t(x,B))$ is measurable for all $t \geq 0$. Finally, the semigroup property for $(P_t)_{t \geq 0}$ (proved in Lemma \ref{markovianity}) easily leads to the Chapman-Kolmogorov relation $\int_{\mathbb{R}} \tilde p_t(x,dz) \tilde p_s(z,dy) = \tilde p_{t+s}(x,dy)$, for any $t,s\geq 0$ and $x\in \mathbb{R}$. Therefore, $(\tilde p_t(x,dy))_{t \geq 0, x \in \mathbb{R}}$ defines a transition function on $\mathbb{R}$. By Theorem 1.5 in Chapter 3 of \cite{RevYor} we deduce the existence and uniqueness of $\tilde X^{m,\xi,r}$. 

\subsection{Representation of $\mathcal{L}_{k}$ in terms of $\tilde X^{m,\xi,r}$: Proof of Proposition \ref{deeperconnectiongs-ips}} \label{represlktildex}
Let $f_0,\dots,f_k \in \mathcal{C}_b(\mathbb{R})$ be real non-negative functions. Using \eqref{defsgxgstrans}, the combination of \eqref{exprresolvante0} and Lemma \ref{linkpsizpotential}, and Fubini's theorem 
\begin{align}
& \mathbb{E} \left [ f_0(\tilde X^{m,\xi,r}_{\mathcal{L}_1}(T_0))\dots f_k(\tilde X^{m,\xi,r}_{\mathcal{L}_1}(T_k)) \right ] \label{calctXarrpoiss} \\ 
= & \mathbb{E} \left [ f_0(\tilde X^{m,\xi,r}_{\mathcal{L}_1}(T_0))\dots f_{k-1}(\tilde X^{m,\xi,r}_{\mathcal{L}_1}(T_{k-1})) \frac{\lambda^X_{1}}{h_1(\tilde X^{m,\xi,r}_{\mathcal{L}_1}(T_{k-1}))} \int_0^{\infty} P_t.(h_1f_k)(\tilde X^{m,\xi,r}_{\mathcal{L}_1}(T_{k-1})) dt \right ] \nonumber \\
= & \mathbb{E} \left [ f_0(\tilde X^{m,\xi,r}_{\mathcal{L}_1}(T_0))\dots f_{k-1}(\tilde X^{m,\xi,r}_{\mathcal{L}_1}(T_{k-1})) \frac{\lambda^X_{1}}{h_1(\tilde X^{m,\xi,r}_{\mathcal{L}_1}(T_{k-1}))} U_0(h_1 f_k)(\tilde X^{m,\xi,r}_{\mathcal{L}_1}(T_{k-1})) \right ] \nonumber \\
= & \mathbb{E} \left [ f_0(\tilde X^{m,\xi,r}_{\mathcal{L}_1}(T_0))\dots f_{k-1}(\tilde X^{m,\xi,r}_{\mathcal{L}_1}(T_{k-1})) \frac{\lambda^X_{1}}{h_1(\tilde X^{m,\xi,r}_{\mathcal{L}_1}(T_{k-1}))} \int_{\mathbb{R}} (f_kh_1m)(\tilde X^{m,\xi,r}_{\mathcal{L}_1}(T_{k-1})+y) v^r_\xi(y) dy \right ] \nonumber \\
= & \lambda^X_{1} \int_{\mathbb{R}} \mathbb{E} \left [ f_0(\tilde X^{m,\xi,r}_{\mathcal{L}_1}(T_0))\dots f_{k-1}(\tilde X^{m,\xi,r}_{\mathcal{L}_1}(T_{k-1})) \frac{v^r_\xi(y_k-\tilde X^{m,\xi,r}_{\mathcal{L}_1}(T_{k-1}))}{h_1(\tilde X^{m,\xi,r}_{\mathcal{L}_1}(T_{k-1}))} \right ] f_k(y_k)h_1(y_k)m(y_k) dy_k. \nonumber 
\end{align}
Iterating this calculation we get that the left-hand side of \eqref{calctXarrpoiss} equals 
\begin{align*}
& (\lambda^X_{1})^k \int_{\mathbb{R}^k} \mathbb{E} \left [ f_0(\tilde X^{m,\xi,r}_{\mathcal{L}_1}(T_0)) \frac{v^r_\xi(y_1-\tilde X^{m,\xi,r}_{\mathcal{L}_1}(T_0))}{h_1(\tilde X^{m,\xi,r}_{\mathcal{L}_1}(T_0))} \right ] \nonumber \\
& \qquad \qquad \qquad \qquad \qquad \qquad \times v^r_\xi(y_2-y_1) \dots v^r_\xi(y_k-y_{k-1}) h_1(y_k) (f_1m)(y_1)\dots (f_km)(y_k) dy_1 \dots dy_k \\
= & (\lambda^X_{1})^k \int_{\mathbb{R}^k} h_1(y_0) v^r_\xi(y_1-y_0) \dots v^r_\xi(y_k-y_{k-1}) h_1(y_k) (f_0m)(y_0)\dots (f_km)(y_k) dy_0 \dots dy_k, \\
= & \int_{\mathbb{R}^{k+1}} f_0(y_0)\dots f_k(y_k) \mathcal{L}_{k+1}(dy_0 \dots dy_k), 
\end{align*}
where we have used $h_1=|q_1|$ (by definition of $h_1$), $\tilde X^{m,\xi,r}_{\mathcal{L}_1}(T_0) \sim \mathcal{L}_1$, and \eqref{stateonepartexpr} for the penultimate equality, and \eqref{stateonepartexprk} for the last equality. 

\subsection{Spectral gap of $\tilde X^{m,\xi,r}$ and correlation decay in the spin system: Proof of Theorem \ref{ergoofgroundstatetrans}} \label{specgapgstandcorr}
Let $f \in \mathcal{C}_b(\mathbb{R})$. We have $q_1f \in \crz \cap L^2(\mathbb{R})$ thanks to Proposition \ref{bonfctpropres}. Recall also from the proof of Theorem \ref{diagogenetsg} that $h_1$ is a multiple of $q_1$. Therefore, combining \eqref{defsgxgstrans} with Proposition \ref{decompsgbon1} we get that for any $t>0$ and $x \in \mathbb{R}$, 
\begin{align}
F_t(x):=\mathbb{E} \left [ f \left ( \tilde X^{m,\xi,r}_x(t) \right ) \right ] = \frac{e^{t\lambda^X_{1}}}{q_1(x)} P_t.(q_1f)(x) = \psl q_1 f, q_1 \psrm + \frac1{q_1(x)} \sum_{n \geq 2} e^{- t(\lambda^X_{n}-\lambda^X_{1})} \psl q_1 f, q_n \psrm q_n(x), \label{decompsggroundstatetrans}
\end{align}
where the series of functions in the right-hand side converges absolutely in $(\crz, \| \cdot \|_{\infty})$. For any $x \in \mathbb{R}$, we get by dominated convergence that the series of functions in the right-hand side of \eqref{decompsggroundstatetrans} converges to $0$ as $t$ goes to infinity. We thus get that for any $x \in \mathbb{R}$, 
\[ \mathbb{E} \left [ f \left ( \tilde X^{m,\xi,r}_x(t) \right ) \right ] \underset{t \rightarrow \infty}{\longrightarrow} \psl q_1 f, q_1 \psrm = \int_{\mathbb{R}} f(y) \mathcal{L}_1(dy), \]
where the last equality comes from \eqref{stateonepartexpr}. This proves that $\tilde X^{m,\xi,r}$ is ergodic with stationary distribution $\mathcal{L}_1(dy)$. 

It is easily seen from \eqref{stateonepartexpr} and Proposition \ref{bonfctpropres} that $(q_n/q_1)_{n\geq 1}$ is an orthonormal family of $L^2(\mathcal{L}_1)$. Now let $g \in L^2(\mathcal{L}_1)$ such that $g$ is orthogonal to $(q_n/q_1)_{n\geq 1}$ in $L^2(\mathcal{L}_1)$. Then, by \eqref{stateonepartexpr}, $gq_1 \in L^2(m)$ and $gq_1$ is orthogonal to $(q_n)_{n\geq 1}$ in $L^2(m)$. Proposition \ref{bonfctpropres} yields $gq_1=0$ and, by Lemma \ref{q1posonr}, we deduce that $g=0$. Therefore, $(q_n/q_1)_{n\geq 1}$ is an orthonormal Hilbert basis of $L^2(\mathcal{L}_1)$. For our fixed $f \in \mathcal{C}_b(\mathbb{R})$ and $n\geq 2$ we have $\|q_n/q_1 \|_{L^2(\mathcal{L}_1)}=1$ and $|\psl q_1 f, q_n \psrm| \leq \|f\|_{\infty} \psl |q_1|, |q_n| \psrm \leq \|f\|_{\infty}$. Combining with Lemma \ref{sumofnorms} we get that the right-hand side of \eqref{decompsggroundstatetrans} is an absolutely convergent series in $L^2(\mathcal{L}_1)$. In particular $F_t \in L^2(\mathcal{L}_1)$, and proceeding as in \eqref{calccoeffseries} we get for any $n \geq 1$, 
\[ \psl F_t, q_n/q_1 \psrll = \psl q_1 F_t, q_n \psrm=e^{- t(\lambda^X_{n}-\lambda^X_{1})} \psl q_1 f, q_n \psrm =e^{- t(\lambda^X_{n}-\lambda^X_{1})} \psl f, q_n/q_1 \psrll. \]
By Parseval's identity we thus get 
\begin{align}
\| F_t-\psl f, 1 \psrll \|_{L^2(\mathcal{L}_1)}^2 = \sum_{n \geq 2} e^{- 2t(\lambda^X_{n}-\lambda^X_{1})} |\psl f, q_n/q_1 \psrll |^2 \leq e^{- 2t(\lambda^X_{2}-\lambda^X_{1})} \| f-\psl f, 1 \psrll \|_{L^2(\mathcal{L}_1)}^2. \label{specgapineg1}
\end{align}
We get \eqref{specgapineg} with $c:=\lambda^X_{2}-\lambda^X_{1}$. To show the optimality of this choice of $c$, take $f_M$ as in \eqref{truncfct} and note that $f_Mq_2/q_1 \in \mathcal{C}_b(\mathbb{R})$. For $M$ large enough we have $|\psl f_Mq_2/q_1, q_2/q_1 \psrll |^2>0$ (since the latter converges to $1$ as $M$ goes to infinity). Applying \eqref{specgapineg1} with the choice $f:=f_Mq_2/q_1$ we thus get $\| F_t-\psl f, 1 \psrll \|_{L^2(\mathcal{L}_1)}^2 \geq e^{- 2t(\lambda^X_{2}-\lambda^X_{1})} |\psl f, q_2/q_1 \psrll |^2$ with $|\psl f, q_2/q_1 \psrll |^2>0$, so \eqref{specgapineg} can hold for all $f \in \mathcal{C}_b(\mathbb{R})$ only if $c\leq \lambda^X_{2}-\lambda^X_{1}$. Finally, it will be proved in Section \ref{seccorrel} below that the coefficient $\mathcal{C}$ from Corollary \ref{cordecay} exists and equals $\log(\lambda_{2}^X/\lambda_{1}^X)$. The combination of this with Proposition \ref{exproftracesrec} will yield $c=e^{\mathcal{E}}(e^{\mathcal{C}}-1)$, completing the proof. 

\subsection{Correlation decay in the spin system: Proof of Corollary \ref{cordecay}} \label{seccorrel}
Let us fix $k \geq 3$. Using Proposition \ref{deeperconnectiongs-ips}, the decomposition \eqref{decompsggroundstatetrans} and \eqref{stateonepartexpr} we get 
\begin{align}
C_k(f,g) & = \mathbb{E} \left [ f(\tilde X^{m,\xi,r}_{\mathcal{L}_1}(0)) \left ( g(\tilde X^{m,\xi,r}_{\mathcal{L}_1}(T_k)) - \int_{\mathbb{R}} g(y) \mathcal{L}_1(dy) \right ) \right ] \nonumber \\
& = \int_{\mathbb{R}} f(x) \int_0^{\infty} \left ( \mathbb{E} \left [g(\tilde X^{m,\xi,r}_x(t))\right ] - \int_{\mathbb{R}} g(y) \mathcal{L}_1(dy) \right ) \mathbb{P}(T_k \in dt) \mathcal{L}_1(dx) \nonumber \\
& = \int_{\mathbb{R}} m(x) f(x) \overline{q_1(x)} \int_0^{\infty} \left ( \sum_{n \geq 2} e^{- t(\lambda^X_{n}-\lambda^X_{1})} \psl q_1 g, q_n \psrm q_n(x) \right ) \mathbb{P}(T_k \in dt) dx. \label{corelldecompsg1}
\end{align}
Then, using that $|\psl q_1 g, q_n \psrm| \leq \|g\|_{\infty} \psl |q_1|, |q_n| \psrm \leq \|g\|_{\infty}$ we get 
\begin{align}
\int_0^{\infty} \left ( \sum_{n \geq 2} e^{- t(\lambda^X_{n}-\lambda^X_{1})} \left | \psl q_1 g, q_n \psrm q_n(x) \right | \right ) \mathbb{P}(T_k \in dt) & = \sum_{n \geq 2} \left ( \frac{\lambda^X_{1}}{\lambda^X_{n}} \right )^k \left | \psl q_1 g, q_n \psrm q_n(x) \right | \label{corelldecompsgfub} \\
& \leq \|g\|_{\infty} \sum_{n \geq 2} \left ( \frac{\lambda^X_{1}}{\lambda^X_{n}} \right )^k \| q_n \|_{\infty}. \nonumber
\end{align}
Since $k \geq 3$, Lemma \ref{sumofnorms} shows that the right-hand side is finite. We can thus use Fubini's theorem in \eqref{corelldecompsg1} and get 
\begin{align}
C_k(f,g) & = \int_{\mathbb{R}} m(x) f(x) \overline{q_1(x)} \left ( \sum_{n \geq 2} \left ( \frac{\lambda^X_{1}}{\lambda^X_{n}} \right )^k \psl q_1 g, q_n \psrm q_n(x) \right ) dx. \label{corelldecompsg2}
\end{align}
We have $m \in L^2(\mathbb{R})$ by \eqref{mheavytailed}, $q_1 \in L^2(\mathbb{R})$ by Proposition \ref{bonfctpropres} and $f \in \mathcal{C}_b(\mathbb{R})$ so $mf\overline{q_1} \in L^1(\mathbb{R})$ by Cauchy-Schwartz inequality. Also, \eqref{corelldecompsgfub} shows that the convergence is the series in the above expression is uniform in $x$. We can thus intervene the sum and the series in \eqref{corelldecompsg2} and get 
\begin{align*}
C_k(f,g) & = \sum_{n \geq 2} \left ( \frac{\lambda^X_{1}}{\lambda^X_{n}} \right )^k \overline{\psl q_1 \overline{f}, q_n \psrm} \psl q_1 g, q_n \psrm \\
& = \left ( \frac{\lambda^X_{1}}{\lambda^X_{2}} \right )^k \left ( \sum_{j=2}^{\multip_{2}+1} \overline{\psl q_1 \overline{f}, q_j \psrm} \psl q_1 g, q_j \psrm + \sum_{n > \multip_{2}+1} \left ( \frac{\lambda^X_{2}}{\lambda^X_{n}} \right )^k \overline{\psl q_1 \overline{f}, q_n \psrm} \psl q_1 g, q_n \psrm \right ), 
\end{align*}
where $\multip_{2}$ denotes the multiplicity of the eigenvalue $\lambda_{2}^X$ (ie the number of indices $j$ such that $\lambda_{j}^X=\lambda_{2}^X$). Note that $|\psl q_1 f, q_n \psrm \psl q_1 g, q_n \psrm| \leq \|f\|_{\infty} \|g\|_{\infty}$ so, by dominated convergence, we get that the term $\sum_{n > \multip_{2}+1} \dots$ converges to $0$ as $k$ goes to infinity. This yields that \eqref{cordecayequiv} holds with $\mathcal{C}=\log(\lambda_{2}^X/\lambda_{1}^X)$ and $B(f,g)=\sum_{j=2}^{\multip_{2}+1} \overline{\psl q_1 \overline{f}, q_j \psrm} \psl q_1 g, q_j \psrm$. We have $|B(f,g)| \leq \multip_{2} \|f\|_{\infty} \|g\|_{\infty}$ so $B(\cdot,\cdot)$ is continuous. To see that $B(\cdot,\cdot)$ is non-zero, take $f_M$ as in \eqref{truncfct} and note that $f_Mq_2/q_1 \in \mathcal{C}_b(\mathbb{R})$. Then it is easy to see that $B(f_M\overline{q_2}/\overline{q_1},f_Mq_2/q_1)$ converges to $1$ as $M$ goes to infinity, so $B(\cdot,\cdot)$ is non-zero. 

\section{Some facts about $X^{m,\xi,r}$, its semigroup, and its generator} \label{factsaboutx}

\subsection{Properties of the random time-change} \label{5.1time-change}

In this subsection we prove some properties of the random time-change $A_x^{-1}(\cdot)$ defined in \eqref{defchgttps}. 

\begin{lemme} \label{axinfty}
Assume that \eqref{mheavytailed} holds. If $\xi$ is recurrent (which occurs, in particular, if $\mathbb{E}[|\xi(1)|]<\infty$, see Theorem 36.7 in \cite{Sato}) then $A_x(\infty)=\infty$ almost surely. 
\end{lemme}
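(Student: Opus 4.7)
The plan is to reduce $A_x(\infty)=\infty$ to the fact that, under recurrence, $\xi$ spends an infinite amount of time in any bounded interval, and then to establish the latter by a strong Markov / Borel--Cantelli argument.

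First I would fix any $K>0$ and use that $m$ is continuous and strictly positive on $\mathbb{R}$ (by \eqref{mheavytailed}) to obtain $c_{K,x}:=\inf_{|y|\leq K} m(x+y)>0$. This gives the deterministic lower bound
\[ A_x(\infty) \geq c_{K,x}\int_0^{\infty} \mathbf{1}_{\{|\xi(u)|\leq K\}}\,du, \]
so it suffices to show that the occupation time $\int_0^{\infty} \mathbf{1}_{\{|\xi(u)|\leq K\}}du$ is almost surely infinite.

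For this, I would pick some $K'\in(0,K)$ and construct inductively $T_0:=0$ and $T_{n+1}:=\inf\{t\geq T_n+1 : \xi(t)\in[-K',K']\}$. Recurrence, together with the strong Markov property at the stopping time $T_n+1$ (which recasts the continuation of $\xi$ as an independent copy of $\xi$ shifted by $\xi(T_n+1)$, and hence forced into every non-empty open interval almost surely), makes each $T_n$ finite a.s. Applying the strong Markov property at $T_n$, the processes $\tilde\xi^{(n)}(t):=\xi(T_n+t)-\xi(T_n)$ are i.i.d.\ Lévy processes distributed as $\xi$. Setting $\sigma_n:=\inf\{t>0 : |\tilde\xi^{(n)}(t)|\geq K-K'\}$, the bound $\xi(T_n)\in[-K',K']$ yields $\xi(T_n+t)\in[-K,K]$ for all $t<\sigma_n$. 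Right-continuity at $0$ of a Lévy process ensures $\sigma_n>0$ a.s., so there exists $\eta\in(0,1]$ with $\mathbb{P}(\sigma_1\geq\eta)>0$. The second Borel--Cantelli lemma then yields $\sigma_n\geq\eta$ for infinitely many $n$ almost surely, and since $\eta\leq 1$ the intervals $[T_n,T_n+\eta]$ are pairwise disjoint, so
\[ \int_0^{\infty}\mathbf{1}_{\{|\xi(u)|\leq K\}}\,du \;\geq\; \eta\cdot\#\{n : \sigma_n\geq\eta\} \;=\; \infty \quad \text{a.s.} \]

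The main delicate point will be justifying almost-sure finiteness of the return times $T_n$ purely from the recurrence hypothesis, and the clean invocation of the strong Markov property at both $T_n$ and $T_n+1$ to extract an i.i.d.\ sequence of positive sojourn times; once this reduction is in place, the remainder is an elementary combination of the continuity/positivity of $m$ with Borel--Cantelli.
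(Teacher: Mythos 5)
Your opening reduction — bounding $A_x(\infty)$ from below by $c_{K,x}\int_0^\infty\mathbf{1}_{\{|\xi(u)|\le K\}}\,du$ using the continuity and strict positivity of $m$ — is the same as the paper's first step. After that the paper simply invokes Theorem 35.4 of \cite{Sato}, which under recurrence gives $\int_0^\infty\mathbf{1}_{\{|\xi(s)|\le a\}}\,ds=\infty$ almost surely for every $a>0$, and is done. You instead re-derive this occupation-time fact through a Borel--Cantelli construction; this is a valid alternative route, essentially a proof of (a case of) the cited theorem, but two of your justifications need repair.

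First, the step you yourself flag as delicate, finiteness of the $T_n$, is justified incorrectly: the parenthetical claim that the post-$T_n+1$ path, being a shifted independent copy of $\xi$, is ``forced into every non-empty open interval almost surely'' is false under the lemma's hypotheses, which do \emph{not} include \eqref{hypcaractexpol-1} — a symmetric recurrent compound Poisson process with jumps in a lattice $a\mathbb{Z}$ never enters an open interval disjoint from $a\mathbb{Z}$. The conclusion $T_{n+1}<\infty$ is nonetheless correct and needs no strong Markov argument at all: recurrence in Sato's sense is $\liminf_{t\to\infty}|\xi(t)|=0$ a.s., so almost surely there are arbitrarily large $t$ with $\xi(t)\in(-K',K')$, in particular some $t>T_n+1$, which gives $T_{n+1}\le t<\infty$. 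Second, the processes $\tilde\xi^{(n)}$ are not i.i.d.\ (each $\tilde\xi^{(n+1)}$ is a time-shifted sub-path of $\tilde\xi^{(n)}$); what actually drives the Borel--Cantelli step is that $\{\sigma_n\ge\eta\}$ with $\eta\le 1$ depends only on $\xi$ over $[T_n,T_n+\eta]\subset[T_n,T_{n+1}]$, while the strong Markov property at $T_{n+1}$ makes $\tilde\xi^{(n+1)}$ — and hence $\{\sigma_{n+1}\ge\eta\}$ — independent of the path up to $T_{n+1}$, so the \emph{events} $\{\sigma_n\ge\eta\}$ are independent and identically distributed. With these two corrections your argument is sound and reaches the same conclusion as the paper's one-line citation.
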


\begin{proof}
Since $m$ is continuous and positive (by \eqref{mheavytailed}) we have $\inf_{y \in [-1,1]} m(y)>0$. Note that $A_x(\infty) \geq (\inf_{y \in [-1,1]} m(y)) \int_0^{\infty} \textbf{1}_{|\xi(s)| \leq 1} ds$. If $\xi$ is recurrent then by Theorem 35.4 in \cite{Sato} we have almost surely $\int_0^{\infty} \textbf{1}_{|\xi(s)| \leq 1} ds=\infty$, from which we conclude that $A_x(\infty)=\infty$. 
\end{proof}

\begin{lemme} \label{chgttps}
Assume that \eqref{mheavytailed} holds. 
\begin{enumerate}
\item For any $x \in \mathbb{R}$, $A_x(\cdot)$ and $A_x^{-1}(\cdot)$ are almost surely continuous and increasing on the intervals $[0,\infty)$ and $[0,A_x(\infty))$ respectively. 
\item Let $t \geq 0$, $x \in \mathbb{R}$, and $(x_n)_{n\geq 1}$ be a sequence converging to $x$. Then $A_{x_n}^{-1}(t)$ converges to $A_{x}^{-1}(t)$ almost surely (regardless of $A_{x}^{-1}(t)<\infty$ or $A_{x}^{-1}(t)=\infty$). 
\item Let $t \geq 0$ and $x \in \mathbb{R}$. Then $\xi(\cdot)$ is continuous at $A_x^{-1}(t)$ almost surely on $\{A_{x}^{-1}(t)<\infty\}$. 
\end{enumerate}
\end{lemme}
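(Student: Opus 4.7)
The plan is to handle the three claims of Lemma \ref{chgttps} separately, each by a short argument exploiting the regularity of $m$ (bounded, strictly positive, continuous, by \eqref{mheavytailed}) and the c\`adl\`ag paths of $\xi$. For the first claim, since $m \in \crz$ is bounded and strictly positive and $\xi$ is c\`adl\`ag, the integrand $u \mapsto m(x + \xi(u))$ is bounded and c\`adl\`ag; its indefinite integral $A_x(\cdot)$ is therefore absolutely continuous on $[0,\infty)$, and strict positivity of $m$ forces strict monotonicity, so $A_x$ is a homeomorphism from $[0,\infty)$ onto $[0, A_x(\infty))$ whose inverse $A_x^{-1}$ is continuous and strictly increasing on $[0, A_x(\infty))$.

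For the second claim I would fix a sample path of $\xi$ and apply bounded convergence. Continuity of $m$ gives $m(x_n + \xi(u)) \to m(x+\xi(u))$ pointwise in $u$, and the sequence is uniformly bounded by $\|m\|_\infty$; hence $A_{x_n}(s) \to A_x(s)$ for every $s \geq 0$. The transfer to the inverses is a direct sandwich argument. If $A_x^{-1}(t) = s_0 < \infty$ and $\epsilon > 0$, strict monotonicity of $A_x$ gives $A_x(s_0-\epsilon) < t < A_x(s_0+\epsilon)$, and the pointwise convergence places $A_{x_n}^{-1}(t) \in (s_0-\epsilon, s_0+\epsilon)$ eventually. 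If instead $A_x^{-1}(t) = \infty$, then for every $M > 0$ strict monotonicity gives $A_x(M) < A_x(\infty) \leq t$, so eventually $A_{x_n}(M) < t$ and $A_{x_n}^{-1}(t) > M$, yielding $A_{x_n}^{-1}(t) \to \infty$.

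For the third claim the plan is to recognize $\tau_t := A_x^{-1}(t)$ as a predictable stopping time of $\mathcal{F}^\xi$ and invoke quasi-left-continuity of $\xi$. Since $A_x(\cdot)$ is continuous and $\mathcal{F}^\xi$-adapted, $\tau_t = \inf\{s \geq 0 : A_x(s) \geq t\}$ is a stopping time; for $t > 0$, the announcing sequence $\tau_n := \inf\{s \geq 0 : A_x(s) \geq t - 1/n\}$ consists of stopping times strictly less than $\tau_t$ (by strict monotonicity of $A_x$) that increase up to $\tau_t$ (by continuity), so $\tau_t$ is predictable (the case $t = 0$ is trivial since $\tau_0 = 0$). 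A L\'evy process, being a Feller and hence a Hunt process, is quasi-left-continuous, so $\xi(\tau_t) = \xi(\tau_t-)$ almost surely on $\{\tau_t < \infty\}$, which is exactly the desired continuity of $\xi$ at $\tau_t$. The main delicacy lies precisely here, in identifying the structural fact that $\tau_t$ is predictable because $A_x$ is continuous and strictly increasing: without this observation one would be forced to analyse directly whether the random, path-dependent time $\tau_t$ can coincide with any jump time of $\xi$, which is awkward because the jumps of $\xi$ form a random countable set with no helpful independence from $A_x$.
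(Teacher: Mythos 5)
Your proposal is correct and follows essentially the same route as the paper: continuity and strict monotonicity of $A_x$ for the first claim, convergence of $A_{x_n}(s)\to A_x(s)$ at fixed times combined with strict monotonicity to control the inverses for the second (you organize this as a direct sandwich via dominated convergence where the paper argues by contradiction via the modulus of continuity of $m$, an immaterial difference), and the announcing sequence $A_x^{-1}(t-1/n)\nearrow A_x^{-1}(t)$ together with quasi-left-continuity of the L\'evy process for the third, exactly as in the paper's appeal to Proposition I.7 of Bertoin.
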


\begin{proof}
The first point was justified a little after \eqref{defchgttps}. Let us prove the second point. Let $t\geq 0$, $x \in \mathbb{R}$, and $(x_n)_{n\geq 1}$ be a sequence converging to $x$. We fix a realization of $\xi$ such that $A_x(\cdot)$ is continuous and increasing on $[0,\infty)$. Assume first that $A_{x}^{-1}(t)<\infty$ and that there is $\epsilon>0$ such that $A_{x_n}^{-1}(t)-A_{x}^{-1}(t)>\epsilon$ for infinitely many indices $n$ (this includes indices $n$ such that $A_{x_n}^{-1}(t)=\infty$). Let $(n(k))_{k \geq 1}$ be the enumeration of those indices. For any $k \geq 1$ we have $t\geq A_{x_{n(k)}}(A_{x}^{-1}(t)+\epsilon)$. Let $\rho(\cdot)$ denote the modulus of continuity of $m$, ie $\rho(\delta):=\sup \{ |m(y)-m(z)|, y,z \in \mathbb{R} \ \text{s.t.} \ |y-z|\leq \delta \}$. Since $m \in \crz$ (by \eqref{mheavytailed}), $m$ is uniformly continuous on $\mathbb{R}$ so $\rho(\delta)$ converges to $0$ as $\delta$ goes to $0$. Note from \eqref{defchgttps} that for any $T \geq 0$, $|A_x(T)-A_{x_{n(k)}}(T)|\leq T \rho(|x-x_{n(k)}|)$, so $A_{x_{n(k)}}(A_{x}^{-1}(t)+\epsilon)$ converges to $A_{x}(A_{x}^{-1}(t)+\epsilon)$ as $k$ goes to infinity. We thus get $t\geq A_{x}(A_{x}^{-1}(t)+\epsilon)$. Since $A_x(\cdot)$ is increasing on $[0,\infty)$ and $A_{x}^{-1}(t)<\infty$ we get $A_{x}(A_{x}^{-1}(t)+\epsilon)>A_{x}(A_{x}^{-1}(t))=t$, which is a contradiction. Still in the case $A_{x}^{-1}(t)<\infty$, if there is $\epsilon>0$ such that $A_{x_n}^{-1}(t)-A_{x}^{-1}(t)<-\epsilon$ for infinitely many indices $n$ we obtain a similar contradiction. In conclusion, if $A_{x}^{-1}(t)<\infty$, then for any $\epsilon >0$ we have for all large $n$ that $A_{x_n}^{-1}(t)<\infty$ and $|A_{x_n}^{-1}(t)-A_{x}^{-1}(t)|\leq \epsilon$. This proves the claimed result for any fixed realization of $\xi$ such that $A_x(\cdot)$ is continuous and increasing on $[0,\infty)$, and $A_{x}^{-1}(t)<\infty$. Now assume that $A_{x}^{-1}(t)=\infty$ and let $M>0$. We have $A_{x}(M+1)\leq t$ so $A_{x}(M)< t$. Using $|A_x(M)-A_{x_n}(M)|\leq M \rho(|x-x_n|)$ we get that for all large $n$, $A_{x_n}(M)< t$ so $A_{x_n}^{-1}(t)\geq M$. Therefore the claimed result is now also proved for any fixed realization of $\xi$ such that $A_x(\cdot)$ is continuous and increasing on $[0,\infty)$, and $A_{x}^{-1}(t)=\infty$. Combining with the first point of the lemma, we get the second point. 

We now prove the third point. Recall that $A_x^{-1}(t)$ is a $\mathcal{F}^{\xi}$-stopping time, where $\mathcal{F}^{\xi}$ is the filtration defined in Section \ref{notations}. By the first point, on $\{A_x^{-1}(t)<\infty\}$, we have almost surely that the sequence of $\mathcal{F}^{\xi}$-stopping times $(A_x^{-1}(t-1/n))_{n \geq 1}$ increases to $A_x^{-1}(t)$ and that $A_x^{-1}(t-1/n)<A_x^{-1}(t)$ for any $n\geq 1$. By Proposition I.7 of \cite{Bertoin} we get that $\xi$ is continuous at $A_x^{-1}(t)$ almost surely on $\{A_x^{-1}(t)<\infty\}$. This proves the third point. 
\end{proof}

\subsection{Markov property, Feller property} \label{5.2Markov-Feller}

\begin{lemme} \label{markovianity}
Assume that $r>0$ and \eqref{mheavytailed} holds. The process $X^{m,\xi,r}$ defined by \eqref{defX} is an homogeneous Markovian process. In particular the family $(P_t)_{t \geq 0}$ (defined by \eqref{defsgx}) is a semigroup (i.e. it satisfies $P_{t+s}.f=P_s.P_t.f$ for any bounded measurable $f$ and $t,s \geq 0$). 
\end{lemme}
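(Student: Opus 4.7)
The plan is to prove the Markov property via the strong Markov property of $\xi$ applied at the stopping time $\tau := A_x^{-1}(t)$, combined with the memoryless property of the exponential killing time $e_r$. The time-change structure of $X^{m,\xi,r}$ is tailor-made for this: killing aside, the process $X^{m,\xi,r}_x$ depends on $\xi$ only through its trajectory, and the time change $A_x(\cdot)$ is an additive functional of $\xi$, so shifts of $\xi$ will produce shifts of $A_x$.

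First I would check that $\tau = A_x^{-1}(t)$ is an $\mathcal{F}^\xi$-stopping time. Since $A_x(\cdot)$ is almost surely continuous and increasing on $[0,\infty)$ by Lemma \ref{chgttps}, we have $\{\tau \le s\} = \{A_x(s) \ge t\}$ (with the usual convention on $\{\tau = \infty\}$), and $A_x(s) = \int_0^s m(x+\xi(u))\,du$ is $\mathcal{F}^\xi_s$-measurable. Then the strong Markov property of the Lévy process $\xi$ applied at $\tau$ gives that, on $\{\tau < \infty\}$, the shifted process $\tilde\xi(u) := \xi(\tau+u) - \xi(\tau)$ is a Lévy process with the same law as $\xi$ and is independent of $\mathcal{F}^\xi_\tau$.

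Second, I would decompose the time change using this shift. A direct computation shows, for $u\ge 0$,
\begin{align*}
A_x(\tau+u) - A_x(\tau) = \int_0^u m(x+\xi(\tau)+\tilde\xi(w))\,dw =: \tilde A_{x+\xi(\tau)}(u),
\end{align*}
where $\tilde A_y$ is the analogue of \eqref{defchgttps} built from $\tilde\xi$ in place of $\xi$. Since $A_x(\tau) = t$ on $\{\tau<\infty\}$, this yields the composition identity $A_x^{-1}(t+s) = \tau + \tilde A_{x+\xi(\tau)}^{-1}(s)$ on that event. Meanwhile, since $e_r$ is independent of $\xi$, the event $\{\tau < e_r\}$ (equivalently $\{t<\zeta_x\}$) together with the memoryless property ensures that, conditionally on $\mathcal{F}^\xi_\tau$ and on survival, $\tilde e_r := e_r - \tau$ is an $\exp(r)$ random variable independent of $(\mathcal{F}^\xi_\tau, \tilde\xi)$. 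Putting these ingredients together, on $\{t<\zeta_x\}$ we get
\begin{align*}
X^{m,\xi,r}_x(t+s) = (x+\xi(\tau)) + \tilde\xi\bigl(\tilde A^{-1}_{x+\xi(\tau)}(s)\bigr) \text{ on } \bigl\{\tilde A^{-1}_{x+\xi(\tau)}(s) < \tilde e_r\bigr\},
\end{align*}
and $X^{m,\xi,r}_x(t+s) = \dagger$ otherwise. This is exactly the law of $X^{m,\tilde\xi,r}_y(s)$ at $y = x+\xi(\tau) = X^{m,\xi,r}_x(t)$, which gives both the Markov property and time-homogeneity, and hence the semigroup identity for $(P_t)_{t\ge 0}$.

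The main obstacle I expect is bookkeeping rather than deep difficulty: one must carefully handle the cemetery state (the identities above apply on the survival event, and on $\{t\ge\zeta_x\}$ the Markov property is trivial since $\dagger$ is absorbing); one must use the appropriate augmentation of $\mathcal{F}^X_t$ (typically that generated by $\mathcal{F}^\xi_\tau$ together with $\{e_r > \tau\}$, relying on the second and third points of Lemma \ref{chgttps} to ensure the measurability of $X^{m,\xi,r}_x(t)$); and the memoryless argument for $e_r - \tau$ conditional on $\{\tau < e_r\}$ needs to be stated with $\tau$ being a stopping time of $\xi$ alone, independent of $e_r$. Once these measurability and independence points are spelled out, the Markov and semigroup properties follow directly.
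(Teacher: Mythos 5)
Your proposal is correct and follows essentially the same route as the paper: apply the strong Markov property of $\xi$ at the $\mathcal{F}^\xi$-stopping time $A_x^{-1}(t)$, decompose the additive functional to obtain $A_x^{-1}(t+s) = A_x^{-1}(t) + \hat A_{X^{m,\xi,r}_x(t)}^{-1}(s)$, and use the memorylessness of $e_r$ (the paper sets $\hat e_r := e_r - A_x^{-1}(t)$ exactly as you do with $\tilde e_r$). The bookkeeping concerns you flag are the ones the paper addresses.
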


\begin{proof}
We fix $x \in \mathbb{R}$. Recall the filtrations $\mathcal{F}^{\xi}$ and $\mathcal{F}^X$ defined in Section \ref{notations}. Note that for any $t \geq 0$, $A_x^{-1}(t)$ is a (possibly infinite) $\mathcal{F}^{\xi}$-stopping time and that $\mathcal{F}^X_t = \mathcal{F}^{\xi}_{A_x^{-1}(t)}$. Let us fix $t \geq 0$ and prove the Markov property for $X^{m,\xi,r}_x$ at time $t$. We have clearly $X^{m,\xi,r}(t+s)=\dagger$ for any $s \geq 0$ if we are on $\{X^{m,\xi,r}_x(t)=\dagger\}=\{A_x^{-1}(t)\geq e_r\}$ so let assume we are on $\{X^{m,\xi,r}_x(t)\neq \dagger\}=\{A_x^{-1}(t)< e_r\}$. By the strong Markov property for $\xi$, the process $\hat \xi:=\xi(A_x^{-1}(t)+\cdot)-\xi(A_x^{-1}(t))$ has same law as $\xi$ and is independent of $\mathcal{F}^{\xi}_{A_x^{-1}(t)}=\mathcal{F}^X_t$. Let also $\hat e_r := e_r-A_x^{-1}(t)$. For $y \in \mathbb{R}$, let $\hat A_y$ and $\hat X^{m,\xi,r}_y$ be defined by \eqref{defchgttps} and \eqref{defX}, but with $\xi$ and $e_r$ replaced by $\hat \xi$ and $\hat e_r$ respectively. We see that, conditionally on $\mathcal{F}^X_t$ and on $\{X^{m,\xi,r}_x(t)\neq \dagger\}$, $\hat X^{m,\xi,r}_y$ has the same law as $X^{m,\xi,r}_y$. Let us choose $y=X^{m,\xi,r}_x(t)=x+\xi(A_x^{-1}(t))$. For any $s \in [0, \hat A_y(\infty))$, 
\begin{align*}
s & = \int_{0}^{\hat A^{-1}_y(s)} m(y+\hat \xi(u)) du = \int_{0}^{\hat A^{-1}_y(s)} m(x+\xi(A_x^{-1}(t)+u)) du \\
& = \int_{A_x^{-1}(t)}^{A_x^{-1}(t)+\hat A^{-1}_y(s)} m(x+\xi(u)) du = A_x(A_x^{-1}(t)+\hat A^{-1}_y(s))-t. 
\end{align*}
We thus get $A_x^{-1}(t+s)<\infty$ and $A_x^{-1}(t+s)=A_x^{-1}(t)+\hat A^{-1}_y(s)$. For $s \geq \hat A_y(\infty)$, we have $s >\int_{0}^{M} m(y+\hat \xi(u)) du$ for any $M>0$, thanks to the positivity of $m$. Proceeding as above we get $s>A_x(A_x^{-1}(t)+M)-t$ for any $M>0$ so $A_x^{-1}(t+s)=\infty$. In conclusion $A_x^{-1}(t+s)< \infty \Leftrightarrow s < \hat A_y(\infty) \Leftrightarrow \hat A^{-1}_y(s) < \infty$ and in that case we have $A_x^{-1}(t+s)< e_r \Leftrightarrow \hat A^{-1}_y(s) < \hat e_r$. Therefore, 
\begin{align*}
X^{m,\xi,r}_x(t+s) & = x+\xi(A_x^{-1}(t+s)) = x+\xi(A_x^{-1}(t)+\hat A^{-1}_y(s)) = y + \hat \xi(\hat A^{-1}_y(s)) = \hat X^{m,\xi,r}_y(s). 
\end{align*}
We have thus obtained that $X^{m,\xi,r}_x(t+\cdot)$ equals $\hat X^{m,\xi,r}_y$ with $y=X^{m,\xi,r}_x(t)$ and we know that, conditionally on $\mathcal{F}^X_t$ and on $\{X^{m,\xi,r}_x(t)\neq \dagger\}$, the later is distributed as a version of $X^{m,\xi,r}$ starting at $X^{m,\xi,r}_x(t)$. This shows that $X^{m,\xi,r}$ is Markovian and homogeneous. The semigroup property for $(P_t)_{t \geq 0}$ trivially follows. 
\end{proof}

We now prove the Feller property for $X^{m,\xi,r}$. 

\begin{proof}[Proof of Proposition \ref{fellerity}]
From \eqref{defsgx} and Lemma \ref{markovianity} we see that $(P_t)_{t \geq 0}$ is a sub-Markov semigroup in the sense of Definition 1.1 of \cite{levymatters3}. In order to check that $(P_t)_{t \geq 0}$ satisfies Definition 1.2 of \cite{levymatters3} we need to show that $P_t.f \in \crz$ for any $f \in \crz$ and $t >0$ (Feller property) and that $\|P_t.f-f\|_{\infty}$ converges to $0$ as $t$ goes to $0$ for any $f \in \crz$ (strong continuity). 

We first prove the strong continuity. Let $f \in \crz$, $\epsilon>0$ and let us prove the existence of $t_0>0$ such that $t < t_0 \Rightarrow \|P_t.f-f\|_{\infty} \leq \epsilon$. If $f$ is the null function the claim trivially holds so we assume that $f$ is not identically $0$. Since $f \in \crz$ is uniformly continuous, there is $\delta>0$ such that $|x-y|\leq \delta \Rightarrow |f(x)-f(y)|\leq \epsilon/4$. Let us fix $\tilde \delta>0$ small enough and $M>0$ large enough such that 
\begin{align} \label{condtdeltam}
\mathbb{P} \left (e_r \leq \tilde \delta \right) \leq \frac{\epsilon}{8 \|f\|_{\infty}}, \ \mathbb{P} \left (\sup_{s \in [0,\tilde \delta]} |\xi(s)| > \delta \right) \leq \frac{\epsilon}{8 \|f\|_{\infty}}, \ \mathbb{P} \left (\sup_{s \in [0,e_r]} |\xi(s)| > M \right) \leq \frac{\epsilon}{8 \|f\|_{\infty}}, \ \sup_{|x| \geq M} |f(x)| \leq \frac{\epsilon}{3}. 
\end{align}
We set $t_0 := \tilde \delta \times \inf_{z\in [-3M,3M]} m(z)$. Since $m$ is positive and continuous by \eqref{mheavytailed} we have $t_0>0$. We now prove that $\|P_t.f-f\|_{\infty} \leq \epsilon$ for all $t\in (0,t_0)$. 
First let $t\in (0,t_0)$ and $|x| \leq 2M$. Note from \eqref{defsgx} that 
\begin{align} \label{boundptf-f1}
|P_t.f(x)-f(x)| \leq \mathbb{E} \left [ \left | f(x+\xi(A_x^{-1}(t))) - f(x) \right | \textbf{1}_{A_x^{-1}(t)<e_r} + \left | f(x) \right | \textbf{1}_{A_x^{-1}(t)\geq e_r} \right ]. 
\end{align}
If $\sup_{s \in [0,e_r]} |\xi(s)| \leq M$ and $e_r > \tilde \delta$ then using \eqref{defchgttps} and the definition of $t_0$, 
\[ A_x(\tilde \delta) = \int_0^{\tilde \delta} m(x+\xi(u))du \geq \tilde \delta \times \inf_{u \in [0,\tilde \delta]} m(x+\xi(u)) \geq \tilde \delta \times \inf_{u \in [0,e_r]} m(x+\xi(u)) \geq \tilde \delta \times \inf_{z\in [-3M,3M]} m(z) = t_0 > t. \]
Therefore $A_x^{-1}(t)\leq \tilde \delta < e_r$. If moreover $\sup_{s \in [0,\tilde \delta]} |\xi(s)| \leq \delta$ then $|\xi(A_x^{-1}(t))|\leq \delta$ so $| f(x+\xi(A_x^{-1}(t))) - f(x)|\leq \epsilon/4$. Plugging into \eqref{boundptf-f1} and then using \eqref{condtdeltam} we get that for $t\in (0,t_0)$ and $|x| \leq 2M$, 
\begin{align}
|P_t.f(x)-f(x)| & \leq \epsilon/4 + 2 \|f\|_{\infty} \left ( \mathbb{P} \left (\sup_{s \in [0,e_r]} |\xi(s)| > M \right) + \mathbb{P} \left (e_r \leq \tilde \delta \right) + \mathbb{P} \left (\sup_{s \in [0,\tilde \delta]} |\xi(s)| > \delta \right) \right ) \label{boundptf-f2} \leq \epsilon. 
\end{align}
Now let $t >0$ and $|x| \geq 2M$. We have from \eqref{defsgx} that
\begin{align*}
|P_t.f(x)| \leq \mathbb{E} \left [ \left | f(x+\xi(A_x^{-1}(t))) \right | \textbf{1}_{A_x^{-1}(t)<e_r} \right ]. 
\end{align*}
If $\sup_{s \in [0,e_r]} |\xi(s)| \leq M$, we have $|x+\xi(A_x^{-1}(t))|\geq M$ so, by \eqref{condtdeltam}, $|f(x+\xi(A_x^{-1}(t)))| \leq \epsilon/3$. Therefore, for any $t >0$ and $|x| \geq 2M$, 
\begin{align}
|P_t.f(x)| \leq \epsilon/3 + \|f\|_{\infty} \times \mathbb{P} \left (\sup_{s \in [0,e_r]} |\xi(s)| > M \right) \leq 2\epsilon/3, \label{boundptf-f3}
\end{align}
where the last equality comes from \eqref{condtdeltam}. We also have $|f(x)| \leq \epsilon/3$ by \eqref{condtdeltam}. Combining with \eqref{boundptf-f3} we get $|P_t.f(x)-f(x)| \leq \epsilon$ for all $t >0$ and $|x| \geq 2M$. The combination of this with \eqref{boundptf-f2} yields $\|P_t.f-f\|_{\infty} \leq \epsilon$ for all $t\in (0,t_0)$, proving the strong continuity. 

Let us fix $f \in \crz$, $t>0$ and now prove that $P_t.f \in \crz$. Let $x \in \mathbb{R}$ and $(x_n)_{n\geq 1}$ be a sequence converging to $x$. Recall from \eqref{defsgx} that $P_t.f(x_n)=\mathbb{E}[f(x_n+\xi(A_{x_n}^{-1}(t))) \textbf{1}_{A_{x_n}^{-1}(t)<e_r}]$. Using the second and third points of Lemma \ref{chgttps} and $f \in \crz$ we see that what is inside the expectation converges almost surely to $f(x+\xi(A_{x}^{-1}(t))) \textbf{1}_{A_{x}^{-1}(t)<e_r}$ (the convergence may not hold on $\{A_{x}^{-1}(t)=e_r\}$ but that event has probability $0$). Moreover, what is inside the expectation is bounded by $\|f\|_{\infty}$ so, by dominated convergence, we get that $P_t.f(x_n)$ converges to $P_t.f(x)$. It follows that $P_t.f$ is continuous. Then fix $\epsilon >0$ and note from \eqref{boundptf-f3} that there is $M>0$ such that $|x| \geq 2M \Rightarrow |P_t.f(x)| \leq \epsilon$. Therefore $P_t.f(x) \longrightarrow_{|x| \rightarrow \infty} 0$, so $P_t.f \in \crz$. This concludes the proof. 
\end{proof}

\subsection{Support of $X^{m,\xi,r}$} \label{5.3support}

The following lemma is crucial to prove Lemma \ref{q1posonr} which plays an important role in showing the positivity of the constant $K_{m,\xi,r}(x)$ in Theorem \ref{encadrementtransfreeenergythtrans}, and in relating it with the infinite-volume Gibbs states of the spin system from Section \ref{interpartsyst}. 
\begin{lemme} \label{suppxtisr}
Assume that $r>0$ and \eqref{mheavytailed} and \eqref{hypcaractexpol-1} hold. For any $t>0$ and $x \in \mathbb{R}$ we have $Supp(X^{m,\xi,r}_x(t))=\mathbb{R} \cup \{\dagger\}$. 
\end{lemme}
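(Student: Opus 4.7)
The plan is to prove separately that $\dagger$ and every $y\in\mathbb{R}$ lie in the support of $X^{m,\xi,r}_x(t)$. The first part is easy: since $m$ is bounded and $\zeta_x=A_x(e_r)\leq e_r\|m\|_\infty$, we have
\[
\mathbb{P}(X^{m,\xi,r}_x(t)=\dagger)=\mathbb{P}(\zeta_x\leq t)\geq \mathbb{P}(e_r\leq t/\|m\|_\infty)>0.
\]

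For the second part, fix $y\in\mathbb{R}$ and $\epsilon>0$. By the independence of $e_r$ and $\xi$,
\[
\mathbb{P}(X^{m,\xi,r}_x(t)\in(y-\epsilon,y+\epsilon)) = \mathbb{E}\!\left[e^{-rA_x^{-1}(t)}\mathbf{1}_{A_x^{-1}(t)<\infty}\mathbf{1}_{|\xi(A_x^{-1}(t))-(y-x)|<\epsilon}\right],
\]
so it suffices to exhibit an event on $\xi$ of positive probability on which $A_x^{-1}(t)$ is bounded by a deterministic constant and $|\xi(A_x^{-1}(t))-(y-x)|<\epsilon$. For this, I would set $c:=\inf_{|z-(y-x)|\leq\epsilon/2}m(z)$, which is strictly positive by \eqref{mheavytailed}, and choose $s_1:=t/(2\|m\|_\infty)$ and $s_2:=s_1+2t/c$. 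Let
\[
E:=\{\xi(s)\in(y-x-\epsilon/2,y-x+\epsilon/2)\ \text{for all } s\in[s_1,s_2]\}.
\]
On $E$, $A_x(s_1)\leq s_1\|m\|_\infty<t$ and $A_x(s_2)\geq A_x(s_1)+c(s_2-s_1)>t$, so that $A_x^{-1}(t)\in(s_1,s_2)$ and $\xi(A_x^{-1}(t))$ lies in the required ball. Consequently
\[
\mathbb{P}(X^{m,\xi,r}_x(t)\in(y-\epsilon,y+\epsilon))\geq e^{-rs_2}\mathbb{P}(E),
\]
and the problem reduces to proving $\mathbb{P}(E)>0$.

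To get $\mathbb{P}(E)>0$ I would apply the Markov property of $\xi$ at time $s_1$ to write
\[
\mathbb{P}(E)\geq \mathbb{P}(|\xi(s_1)-(y-x)|<\epsilon/4)\cdot\mathbb{P}\!\left(\sup_{u\in[0,s_2-s_1]}|\xi(u)|\leq \epsilon/4\right).
\]
The second factor (a tube probability) is positive by a standard argument: with probability $e^{-(s_2-s_1)\Pi_\xi(\{|z|>\epsilon/8\})}>0$ no jump of size exceeding $\epsilon/8$ occurs on $[0,s_2-s_1]$; conditionally on this, $\xi$ is a symmetric martingale with jumps bounded by $\epsilon/8$, and splitting $[0,s_2-s_1]$ into sufficiently many small pieces and iterating the Markov property together with Doob's inequality yields positivity.

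The main obstacle, and the step that genuinely uses \eqref{hypcaractexpol-1}, is showing that the first factor is positive, i.e. that the distribution of $\xi(s_1)$ has full support $\mathbb{R}$. Under \eqref{hypcaractexpol-1} we have $e^{s_1\psi_\xi(\cdot)}\in L^1(\mathbb{R})$ (since $\psi_\xi\leq 0$ is real and $|\psi_\xi(y)|\to\infty$ as $|y|\to\infty$), so by Fourier inversion $\xi(s_1)$ admits a continuous density $p_{s_1}$ with $p_{s_1}(0)=\int_\mathbb{R} e^{s_1\psi_\xi(2\pi y)}dy>0$. By continuity $p_{s_1}>0$ on some neighborhood $(-\delta,\delta)$ of $0$. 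Using the convolution semigroup structure $p_{s_1}=p_{s_1/n}^{*n}$ together with the symmetry of $\xi$ (which gives $p_{s_1/n}(z)=p_{s_1/n}(-z)$ and $p_{2s_1/n}(0)>0$), one iterates this observation to see that $p_{s_1}>0$ on $(-n\delta,n\delta)$ for every $n$, hence everywhere on $\mathbb{R}$. This is a classical property of symmetric Lévy processes of type C; once it is in hand, the whole argument goes through and completes the proof.
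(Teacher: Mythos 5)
Your proof follows the same overall strategy as the paper's: build an event on which $\xi$ reaches a small neighbourhood of $y-x$ quickly and then stays there long enough for the clock $A_x$ to exceed $t$, then factor out the independent exponential killing, and reduce to two ingredients — full support of $\xi$ at a single positive time and a positive tube probability. The paper handles these via Lemma~\ref{suppxiisr} (citing Theorem~24.10 of \cite{Sato}) and Lemma~\ref{xismallonint}, respectively; your direct sketches of both are in the same spirit.

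Two points deserve attention. First, a small but real bookkeeping error: you set $c:=\inf_{|z-(y-x)|\leq\epsilon/2}m(z)$, but on the event $E$ the values entering the clock are $x+\xi(u)\in(y-\epsilon/2,y+\epsilon/2)$, so the infimum of $m$ that controls $A_x(s_2)-A_x(s_1)$ should be taken over $|z-y|\leq\epsilon/2$, not $|z-(y-x)|\leq\epsilon/2$. Easy to fix, but as written $A_x(s_2)>t$ is not guaranteed.

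Second, and this is the genuine gap: the iteration you propose for full support of $p_{s_1}$ does not go through. You correctly note that under \eqref{hypcaractexpol-1} the inequality $e^{-a}\leq (ea)^{-1}$ gives $e^{s\psi_\xi(2\pi\cdot)}\in L^1(\mathbb{R})$ for every $s>0$, so each $p_s$ is continuous with $p_s(0)>0$, hence positive on some symmetric interval $(-\delta_s,\delta_s)$. From $p_{s_1}=p_{s_1/n}^{*n}$ and symmetry one can indeed deduce $p_{s_1}>0$ on $(-n\delta_{s_1/n},\,n\delta_{s_1/n})$, but $\delta_{s_1/n}$ depends on $n$ and may shrink as $n\to\infty$, so one cannot conclude $p_{s_1}>0$ on $(-n\delta,n\delta)$ with a fixed $\delta$. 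Convolution enlarges the support as time increases; it gives no control when you cut the time interval into smaller pieces. You do remark that full support of $\xi(s_1)$ is ``a classical property of symmetric L\'evy processes of type C,'' which is exactly what should be invoked: under \eqref{hypcaractexpol-1} the process is of type C (see the discussion after \eqref{hypcaractexpol-1}), and Theorem~24.10 of \cite{Sato} then yields $\mathrm{Supp}(\xi(s_1))=\mathbb{R}$ for any $s_1>0$. Replacing the convolution iteration by that citation closes the argument and brings it in line with the paper's Lemma~\ref{suppxiisr}.
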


\begin{proof}
Let us fix $t>0$ and $x\in \mathbb{R}$. To show that $\mathbb{R}\subset Supp(X^{m,\xi,r}_x(t))$, we prove that for any $z\in \mathbb{R}$ and $\epsilon>0$ we have $\mathbb{P}(X^{m,\xi,r}_x(t)\in [z-\epsilon,z+\epsilon])>0$. Let us fix $\delta \in (0,t/\| m \|_{\infty})$ and $T>\delta+t/\min_{y \in [z-x-\epsilon,z-x+\epsilon]} m(y)$. We define the event 
\[ \mathcal{E} := \{ e_r>T \} \cap \left \{ \xi(\delta) \in (z-x-\epsilon/2,z-x+\epsilon/2) \right \} \cap \left \{ \sup_{s \in [\delta,T]}|\xi(s)-\xi(\delta)|<\epsilon/2 \right \}. \]
From \eqref{defchgttps} we see that on this event we have 
\[ A_x(\delta) \leq \delta \| m \|_{\infty} < t < (T-\delta) \times \min_{y \in [z-x-\epsilon,z-x+\epsilon]} m(y) \leq A_x(T) < A_x(e_r), \]
and $x+\xi^r(s) \in [z-\epsilon,z+\epsilon]$ for all $s \in [\delta,T]$. In conclusion, we have 
\[ \mathcal{E} \subset \left \{ \delta < A_x^{-1}(t) < T < e_r \right \} \cap \left \{ \forall s \in [\delta,T], \ x+\xi^r(s) \in [z-\epsilon,z+\epsilon] \right \} \subset \left \{ X^{m,\xi,r}_x(t) \in [z-\epsilon,z+\epsilon] \right \}, \]
where the last inclusion comes from the definition of $X^{m,\xi,r}_x(t)$ in \eqref{defX}. 

We are now left to prove that $\mathbb{P}(\mathcal{E})>0$. Using that $e_r$ is independent of $\xi$ and that $\xi(\delta)$ and $(\xi(s)-\xi(\delta))_{s \geq \delta}$ are independent we get 
\begin{align}
\mathbb{P}(\mathcal{E}) = e^{-rT} \times \mathbb{P} \left ( \xi(\delta) \in (z-x-\epsilon/2,z-x+\epsilon/2) \right ) \times \mathbb{P} \left ( \sup_{s \in [\delta,T]}|\xi(s)-\xi(\delta)|<\epsilon/2 \right ). \label{probaspecialevent}
\end{align}
By Lemma \ref{suppxiisr}, $Supp(\xi(\delta))=\mathbb{R}$, so the second factor in \eqref{probaspecialevent} is positive. Lemma \ref{xismallonint} shows that $\mathbb{P}(\sup_{s \in [0,T-\delta]}|\xi(s)|<\epsilon/2)>0$. By the Markov property at time $\delta$, we get that the third factor in \eqref{probaspecialevent} is positive. This concludes the proof of $\mathbb{R}\subset Supp(X^{m,\xi,r}_x(t))$. 

To show that $\dagger \in Supp(X^{m,\xi,r}_x(t))$, note that the event $\{e_r<t/\| m \|_{\infty}\}$ has positive probability and is included into $\{A^{-1}_x(t) \geq e_r\}=\{X^{m,\xi,r}_x(t)=\dagger\}$. Therefore $\dagger \in Supp(X^{m,\xi,r}_x(t))$, so the proof is complete. 
\end{proof}

\subsection{Properties of the generator} \label{5.3generator}

\begin{lemme} \label{generatorxnew}
Assume that $r>0$ and \eqref{mheavytailed} holds true. If $f \in \mathcal{D}(\mathcal{A}_{\xi^r})$ is such that $\frac1{m}\mathcal{A}_{\xi^r} f \in \crz$ then $f \in \mathcal{D}(\mathcal{A}_{X^{m,\xi,r}})$ and we have $\mathcal{A}_{X^{m,\xi,r}}f(x) = \frac1{m(x)} \mathcal{A}_{\xi^r} f(x)$. 
\end{lemme}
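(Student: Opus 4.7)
The plan is to establish, for every $t\geq 0$ and $x \in \mathbb{R}$, the identity
\[ P_t f(x) - f(x) = \int_0^t P_u g(x) \, du, \qquad \text{with} \qquad g := \frac{1}{m}\mathcal{A}_{\xi^r}f \in \crz. \]
Once this is obtained, the strong continuity of the Feller semigroup $(P_u)$ on $\crz$ (Proposition \ref{fellerity}) together with $g \in \crz$ ensures that $u \mapsto P_u g$ is continuous at $u=0$ with value $g$, so that $t^{-1}(P_t f - f) \to g$ in $(\crz, \|\cdot\|_\infty)$ as $t \to 0^+$. This is exactly the statement that $f \in \mathcal{D}(\mathcal{A}_{X^{m,\xi,r}})$ with $\mathcal{A}_{X^{m,\xi,r}}f = g$.

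To prove the identity, I will start from Dynkin's formula for $\xi^r$: extending $f$ and $\mathcal{A}_{\xi^r}f$ to the cemetery by $0$, the process $f(x+\xi^r(s)) - f(x) - \int_0^s \mathcal{A}_{\xi^r}f(x+\xi^r(u))\,du$ is a martingale in the enlarged filtration $\mathcal{G}_s := \sigma(\xi(u), \textbf{1}_{e_r\leq u} : u\leq s)$. Since $A_x^{-1}(t)$ is $\xi$-measurable, $\tau_n := A_x^{-1}(t)\wedge n$ is a bounded $\mathcal{G}$-stopping time. Optional stopping gives $\mathbb{E}[f(x+\xi^r(\tau_n))] - f(x) = \mathbb{E}[\int_0^{\tau_n} \mathcal{A}_{\xi^r}f(x+\xi^r(u))\,du]$. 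Letting $n\to\infty$ via dominated convergence (with dominating bounds $\|f\|_\infty$ and $\|\mathcal{A}_{\xi^r}f\|_\infty \cdot e_r$, the latter integrable because $\mathbb{E}[e_r]=1/r<\infty$), and using that $\xi^r(u)=\xi(u)$ on $\{u<e_r\}$ together with the conventions $f(\dagger)=\mathcal{A}_{\xi^r}f(\dagger)=0$, yields
\[ P_t f(x) - f(x) = \mathbb{E}\left[\int_0^{A_x^{-1}(t)\wedge e_r} \mathcal{A}_{\xi^r}f(x+\xi(s))\,ds\right]. \]

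The second step is the change of variables $s=A_x^{-1}(u)$ in the inner integral. Because $m$ is positive and bounded and the paths of $\xi$ are c\`adl\`ag, $A_x$ is absolutely continuous and strictly increasing with $A_x'(s)=m(x+\xi(s))$, and its inverse $A_x^{-1}$ is therefore absolutely continuous with derivative $1/m(x+\xi(A_x^{-1}(\cdot)))$, so that the change-of-variables formula applies to any bounded measurable integrand. Combining this with the identity $A_x(A_x^{-1}(t)\wedge e_r)=t\wedge\zeta_x$ (immediate from $\zeta_x = A_x(e_r)$), the definition of $X^{m,\xi,r}_x$ in \eqref{defX}, and the convention $g(\dagger)=0$, the inner integral transforms into $\int_0^t g(X^{m,\xi,r}_x(u))\,du$. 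Since $g$ is bounded, Fubini's theorem then produces the target identity.

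The main technical points are the passage to the limit in optional stopping (needed because $A_x^{-1}(t)$ can equal $\infty$ when $A_x(\infty)\leq t$) and the absolute continuity of $A_x^{-1}$ underlying the change of variables; both unlock readily from the boundedness, continuity, and positivity of $m$ given by \eqref{mheavytailed} combined with the integrability of $e_r$.
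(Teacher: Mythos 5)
Your proof follows essentially the same route as the paper's: the Dynkin martingale for $\xi^r$, optional stopping at the bounded stopping times $A_x^{-1}(t)\wedge n$, dominated convergence in $n$, and the time-change substitution $u=A_x(s)$ are all exactly the steps taken there (the paper phrases the martingale as an $\mathcal{F}^\xi$-martingale, but as you correctly observe, the relevant filtration must also carry $e_r$, so your $\mathcal{G}$ is the right one; $A_x^{-1}(t)\wedge n$ remains a $\mathcal{G}$-stopping time since $\mathcal{F}^\xi_s\subset\mathcal{G}_s$). The only variation is the endgame: the paper stops at the pathwise identity $t^{-1}(P_t.f(x)-f(x)) = \mathbb{E}[t^{-1}\int_0^{t\wedge \zeta_x} g(X^{m,\xi,r}_x(s))\,ds]$, proves pointwise convergence to $g(x)$ via right-continuity of $\xi$ and dominated convergence, and then appeals to Theorem 1.33 of the cited Feller-theory reference to conclude membership in the domain; you instead apply Fubini to obtain the integral identity $P_t.f - f = \int_0^t P_u.g\,du$ and close directly using strong continuity of the Feller semigroup, which is a cleaner, more self-contained finish but not a materially different method.
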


\begin{proof}
Let $f \in \mathcal{D}(\mathcal{A}_{\xi^r})$ and $g(x) := \frac1{m(x)} \mathcal{A}_{\xi^r} f(x)$. It is assumed that $g \in \crz$ so, according to Theorem 1.33 in \cite{levymatters3}, we only need to prove 
\begin{align}
\forall x \in \mathbb{R}, \ t^{-1}(P_t.f(x)-f(x)) \underset{t \rightarrow 0}{\longrightarrow} g(x). \label{pointwise}
\end{align}
For this, let us proceed similarly as for the proof of Corollary 4.2 in \cite{levymatters3}. This requires some carefulness since we are not in their setting, as $1/m$ is not a bounded function. Since $f \in \mathcal{D}(\mathcal{A}_{\xi^r})$ we have that $(M^f_t)_{t \geq 0}$ defined by $M^f_t := f(x+\xi(t))\textbf{1}_{t<e_r}-f(x)-\int_0^{t\wedge e_r} \mathcal{A}_{\xi^r}f(x+\xi(s))ds$ is a $\mathcal{F}^{\xi}$-martingale (see for example Theorem 1.36 in \cite{levymatters3}), where $\mathcal{F}^{\xi}$ is the filtration defined in Section \ref{notations}. Therefore, using that $dA_x(u)=m(x+\xi(u))du$, the definition of $g$, and making the substitution $s=A_x(u)$ we get that for any $t \geq 0$ and $n \geq 1$, 
\begin{align*}
& f(x+\xi(A_x^{-1}(t)\wedge n))\textbf{1}_{A_x^{-1}(t)\wedge n<e_r}-f(x)-M^f_{A_x^{-1}(t)\wedge n} =\int_0^{A_x^{-1}(t)\wedge n \wedge e_r} \mathcal{A}_{\xi^r}f(x+\xi(u))du \\
= & \int_0^{A_x^{-1}(t)\wedge n\wedge e_r} g(x+\xi(u)) dA_x(u) =\int_0^{t \wedge A_x(n)\wedge A_x(e_r)} g(x+\xi(A_x^{-1}(s))) ds. 
\end{align*}
Since $A_x^{-1}(t)\wedge n$ is a bounded $\mathcal{F}^{\xi}$-stopping time, we get from the optional stopping theorem 
\begin{align}
\mathbb{E}[f(x+\xi(A_x^{-1}(t)\wedge n))\textbf{1}_{A_x^{-1}(t)\wedge n<e_r}]-f(x) = \mathbb{E} \left [ \int_0^{t \wedge A_x(n) \wedge A_x(e_r)} g(x+\xi(A_x^{-1}(s))) ds \right ]. \label{generatorxcalc}
\end{align}
From the third point of Lemma \ref{chgttps} and the continuity of $f$, what is inside the expectation in the left-hand side of \eqref{generatorxcalc} converges to $f(x+\xi(A_x^{-1}(t)))\textbf{1}_{A_x^{-1}(t)<e_r}$ as $n$ increases to infinity. Since $f$ and $g$ are bounded functions, we get by dominated convergence that 
\begin{align*}
\mathbb{E}[f(x+\xi(A_x^{-1}(t)))\textbf{1}_{A_x^{-1}(t)<e_r}]-f(x) = \mathbb{E} \left [ \int_0^{t \wedge A_x(e_r)} g(x+\xi(A_x^{-1}(s))) ds \right ]. 
\end{align*}
By \eqref{defsgx}, this translates into 
\begin{align}
t^{-1}(P_t.f(x)-f(x)) = \mathbb{E} \left [ t^{-1}\int_0^{t \wedge A_x(e_r)} g(x+\xi(A_x^{-1}(s))) ds \right ]. \label{dersgx}
\end{align}
From the first point of Lemma \ref{chgttps}, $A_x^{-1}(\cdot)$ is almost surely continuous on $[0,A_x(e_r))$ and $A_x(e_r)$ is almost surely positive. Moreover $\xi$ is almost surely right-continuous by definition of a L\'evy process and $g$ is continuous. We deduce that what is inside the expectation in the right-hand side of \eqref{dersgx} converges almost surely to $g(x)$ as $t$ goes to $0$. Moreover it is bounded by $\|g\|_{\infty}$. By dominated convergence we obtain \eqref{pointwise}, which concludes the proof. 
\end{proof}

\subsection{Properties of the semigroup and resolvent} \label{5.4semigroup}

\begin{lemme} \label{imptinl2m}
Assume that $r>0$ and \eqref{mheavytailed} holds. We have $P_t(\crz)\subset L^2(\mathbb{R}) \subset L^2(m)$ for all $t>0$. 
\end{lemme}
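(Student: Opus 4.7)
The second inclusion $L^2(\mathbb{R})\subset L^2(m)$ is immediate: since $m\in\crz$ by \eqref{mheavytailed}, we have $\|m\|_\infty<\infty$, and for any $g\in L^2(\mathbb{R})$ we get $\|g\|_{L^2(m)}^2\leq \|m\|_\infty \|g\nr^2$. So the real task is to prove $P_t(\crz)\subset L^2(\mathbb{R})$.

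For $f\in\crz$ and $x\in \mathbb{R}$, the definition \eqref{defsgx} together with $X^{m,\xi,r}_x(t)=\dagger$ exactly when $A_x^{-1}(t)\geq e_r$, i.e.\ when $\zeta_x\leq t$, yields the elementary bound
\[
|P_t.f(x)|^2\leq \|f\|_\infty^2\, \mathbb{P}(\zeta_x>t)^2\leq \|f\|_\infty^2\, \mathbb{P}(\zeta_x>t).
\]
Thus it suffices to establish that $x\mapsto \mathbb{P}(\zeta_x>t)$ is in $L^1(\mathbb{R})$ for every $t>0$. I will obtain this from a second-moment bound on $\zeta_x$ via Chebyshev's inequality: $\mathbb{P}(\zeta_x>t)\leq t^{-2}\mathbb{E}[\zeta_x^2]$. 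A naive first-moment bound does not suffice, since a Fubini computation shows $\int_{\mathbb{R}}\mathbb{E}[\zeta_x]\,dx=\|m\|_{L^1(\mathbb{R})}/r$, and under the weak assumption \eqref{mheavytailed} alone the function $m$ need not lie in $L^1(\mathbb{R})$. This is the step that requires care.

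The key observation that rescues the argument is that, in the second-moment calculation, we encounter a \emph{product} of two copies of $m$, which can be handled via translation-invariance of Lebesgue measure and Cauchy--Schwarz, using only $m\in L^2(\mathbb{R})$. Concretely, write $\zeta_x=\int_0^{e_r}m(x+\xi(s))\,ds$, expand the square, and use Tonelli's theorem to exchange the $dx$-integral with the expectation and the $ds_1\,ds_2$-integrals:
\begin{align*}
\int_{\mathbb{R}}\mathbb{E}[\zeta_x^2]\,dx
&=\mathbb{E}\int_0^{e_r}\!\!\int_0^{e_r}\!\!\int_{\mathbb{R}} m(x+\xi(s_1))\,m(x+\xi(s_2))\,dx\,ds_1\,ds_2\\
&\leq \mathbb{E}\int_0^{e_r}\!\!\int_0^{e_r}\|m\nr^2\,ds_1\,ds_2
=\|m\nr^2\,\mathbb{E}[e_r^2]=\frac{2\|m\nr^2}{r^2}<\infty,
\end{align*}
where the inner integral is bounded by $\|m\nr^2$ thanks to the change of variable $y=x+\xi(s_1)$ and Cauchy--Schwarz. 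Combining the above steps gives $\int_{\mathbb{R}}|P_t.f(x)|^2\,dx\leq 2\|f\|_\infty^2\|m\nr^2/(r^2t^2)$, so $P_t.f\in L^2(\mathbb{R})$. The second inclusion then concludes the proof. The only non-trivial point in the whole argument is the replacement of the (possibly infinite) first moment by a second moment whose integral over $x$ reduces, via Cauchy--Schwarz, to the square of the $L^2$-norm of $m$, which is finite by \eqref{mheavytailed}.
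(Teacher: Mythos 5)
Your proof is correct and follows essentially the same route as the paper: bound $|P_t.f(x)|$ by $\|f\|_\infty$ times the survival probability $\mathbb{P}(\zeta_x>t)$, control the latter by the second moment of $\zeta_x=A_x(e_r)$, and integrate over $x$ using Fubini, translation invariance of Lebesgue measure and $m\in L^2(\mathbb{R})$, arriving at the same bound $2\|f\|_\infty^2\|m\nr^2/(r^2t^2)$. The only (cosmetic) difference is that the paper keeps $\mathbb{P}(\zeta_x>t)^2$, applies Markov to the first moment and then Jensen twice to reach $\mathbb{E}[e_r\int_0^{e_r}m(x+\xi(u))^2du]$, whereas you drop one power of the probability, apply Chebyshev directly and expand $\zeta_x^2$ as a double time-integral, applying Cauchy--Schwarz in the $x$-variable instead.
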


\begin{proof}
Let $t>0$ and $f \in \crz$. By \eqref{defsgx}, Markov inequality, two times Jensen inequality, \eqref{defchgttps}, Fubini's theorem and $m \in L^2(\mathbb{R})$ (from \eqref{mheavytailed}) we get 
\begin{align*}
\int_{\mathbb{R}} |P_t.f(x)|^2 dx & \leq \| f \|_{\infty}^2 \int_{\mathbb{R}} \mathbb{P} ( A_x^{-1}(t)<e_r)^2 dx = \| f \|_{\infty}^2 \int_{\mathbb{R}} \mathbb{P} ( A_x(e_r) > t)^2 dx \leq \frac{\| f \|_{\infty}^2}{t^2} \int_{\mathbb{R}} \mathbb{E} [A_x(e_r)]^2 dx \\
& \leq \frac{\| f \|_{\infty}^2}{t^2} \int_{\mathbb{R}} \mathbb{E} \left [A_x(e_r)^2 \right ] dx \leq \frac{\| f \|_{\infty}^2}{t^2} \int_{\mathbb{R}} \mathbb{E} \left [e_r \int_0^{e_r} m(x+\xi(u))^2du \right ] dx \\
& = \frac{\| f \|_{\infty}^2}{t^2} \mathbb{E} \left [e_r \int_0^{e_r} \int_{\mathbb{R}} m(x+\xi(u))^2 dx du \right ] = \frac{\| f \|_{\infty}^2 \|m\nr^2}{t^2} \mathbb{E} [e_r^2] = \frac{2 \| f \|_{\infty}^2 \|m\nr^2}{r^2 t^2} < \infty.  
\end{align*}
\end{proof}
The above proof even shows that the operator $P_t$ is continuous from $(\crz, \| \cdot \|_{\infty})$ to $L^2(\mathbb{R})$ and $L^2(m)$. 
\begin{remark} \label{survprobinl2}
Applying the above proof to the constant function equal to $1$, instead of $f \in \crz$, one gets that $P_t.1=(x \mapsto \mathbb{P}(\zeta_x > t)) \in L^2(\mathbb{R}) \subset L^2(m)$. 
\end{remark}

Before proving the next lemma, let us recall that for any $\alpha\geq 0$, the resolvent operator at $\alpha$ associated with the semigroup $(P_t)_{t \geq 0}$ is defined by $U_{\alpha}f := \int_0^{\infty} e^{-\alpha t} P_t.f dt$ for $f \in \crz$ (see Definition 1.21 in \cite{levymatters3}). Assume that $r>0$ and \eqref{mheavytailed} holds. Using \eqref{defsgx}, Fubini's theorem, the change of variable $s=A_x^{-1}(t)$, and again Fubini's theorem, we get that for any $x \in \mathbb{R}$, 
\begin{align}
U_{\alpha}f(x) & = \mathbb{E} \left [ \int_0^{A_x(e_r)} e^{-\alpha t} f(x+\xi(A_x^{-1}(t))) dt \right ] = \mathbb{E} \left [ \int_0^{e_r} e^{-\alpha A_x(s)} (fm)(x+\xi(s)) ds \right ] \label{exprresolvante0} \\
& = \mathbb{E} \left [ \int_0^{\infty} \textbf{1}_{s \leq e_r} e^{-\alpha A_x(s)} (fm)(x+\xi(s)) ds \right ] = \int_0^{\infty} e^{-rs} \mathbb{E} \left [ e^{-\alpha A_x(s)} (fm)(x+\xi(s)) \right ] ds. \nonumber
\end{align}

\begin{lemme} \label{resselfadj}
Assume that $r>0$ and \eqref{mheavytailed} holds. For any $\alpha>0$ we have $U_{\alpha}(\crz)\subset \crz \cap L^2(\mathbb{R}) \subset L^2(m)$. Moreover, we have $\psl U_{\alpha}f, g \psrm=\psl f, U_{\alpha}g \cdot \psrm$ for all $\alpha>0$ and $f,g \in \crz \cap L^2(\mathbb{R})$. 
\end{lemme}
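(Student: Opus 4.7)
The claim splits into the inclusions $U_\alpha(\crz)\subset\crz\cap L^2(\mathbb{R})$ and the self-adjointness identity on $L^2(m)$; I plan to dispatch the inclusions first, then concentrate on the symmetry, which is the main content. For $U_\alpha(\crz)\subset\crz$, I would appeal to the fact that $(P_t)_{t\geq 0}$ is a Feller semigroup (Proposition~\ref{fellerity}): general Feller theory (e.g.\ Theorem~3.2.6 of \cite{applebaum_2009}) then gives $U_\alpha(\crz)\subset\mathcal{D}(\mathcal{A}_{X^{m,\xi,r}})\subset\crz$. For $U_\alpha(\crz)\subset L^2(\mathbb{R})$, I would start from the alternative expression \eqref{exprresolvante0} to get the pointwise bound $|U_\alpha f(x)|\leq \|f\|_\infty\int_0^\infty e^{-rs}\mathbb{E}[m(x+\xi(s))]\,ds$, and apply Minkowski's integral inequality together with the translation invariance of Lebesgue measure (to bound $\|\mathbb{E}[m(\cdot+\xi(s))]\nr\leq \|m\nr$) to conclude $\|U_\alpha f\nr\leq\|f\|_\infty\|m\nr/r<\infty$.

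For the symmetry, the strategy is to begin with \eqref{exprresolvante0} and pull the $x$-integral inside the expectation via Fubini; this is justified by $e^{-\alpha A_x(s)}\leq 1$ combined with a Cauchy--Schwartz bound using $fm,gm\in L^2(\mathbb{R})$ (translation invariance gives $\int_\mathbb{R}|(fm)(x+\xi(s))||(gm)(x)|\,dx\leq\|fm\nr\|gm\nr$) and the integrability of $e^{-rs}$. This yields
\begin{equation*}
\psl U_\alpha f,g\psrm=\int_0^\infty e^{-rs}\,\mathbb{E}\!\left[\int_{\mathbb{R}}e^{-\alpha A_x(s)}(fm)(x+\xi(s))\,\overline{(gm)(x)}\,dx\right]ds.
\end{equation*}
I would next perform the deterministic change of variable $y=x+\xi(s)$ in the inner integral: the factor $\overline{(gm)(x)}$ becomes $\overline{(gm)(y-\xi(s))}$, while the random time change rewrites as $A_{y-\xi(s)}(s)=\int_0^s m(y-(\xi(s)-\xi(u)))\,du$.

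The key step is then to invoke, under the expectation, the time-reversal identity for L\'evy paths on $[0,s]$: the reversed process $(\xi(s)-\xi((s-u)^-))_{u\in[0,s]}$ has the same law as $(\xi(u))_{u\in[0,s]}$. This replaces $\int_0^s m(y-(\xi(s)-\xi(u)))\,du$ by $\int_0^s m(y-\xi(v))\,dv$ in distribution (the null set of jump times does not affect the Lebesgue time integral) while $\xi(s)$ itself is preserved in law. A subsequent application of the symmetry $-\xi\overset{\mathrm{law}}{=}\xi$ turns $m(y-\xi(\cdot))$ into $m(y+\xi(\cdot))$, so that the bracketed expectation becomes $\int_{\mathbb{R}}(fm)(y)\,\mathbb{E}[e^{-\alpha A_y(s)}(\bar g m)(y+\xi(s))]\,dy$. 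Swapping the $ds$- and $dy$-integrals back and invoking \eqref{exprresolvante0} to recognise $\overline{U_\alpha g(y)}$ yields $\psl f,U_\alpha g\psrm$. I expect the main obstacle to be this time-reversal step: one has to be careful about the c\`adl\`ag nature of $\xi$ at $u=s$, and verify that the reversal preserves not only the joint law of the pair $(A_{y-\xi(s)}(s),\xi(s))$ but also the Lebesgue-a.e.\ equality of path functionals inside the time integral; the remaining Fubini, change-of-variables, and $\xi\mapsto-\xi$ manipulations are routine.
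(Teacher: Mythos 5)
Your proposal is correct and follows essentially the same route as the paper: Feller resolvent theory for $U_\alpha(\crz)\subset\crz$, an $L^2$-estimate from \eqref{exprresolvante0} for the second inclusion (you use Minkowski where the paper uses Jensen, but these are equivalent here), and for the symmetry the identical key idea of rewriting $A_x(s)$ via time-reversal of $\xi$ on $[0,s]$ combined with the symmetry $\xi\overset{\mathrm{law}}{=}-\xi$. The paper packages the time-reversal and negation into a single auxiliary process $\xi^s(u):=\xi((s-u)^-)-\xi(s)$ and cites Lemma~II.2 of \cite{Bertoin}, but this is the same argument you give in two steps, and your care about the null set of jump times and the joint law of $(A_{y-\xi(s)}(s),\xi(s))$ is exactly the right point to check.
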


\begin{proof}
From Lemma 1.27 in \cite{levymatters3} we get $U_{\alpha}(\crz)\subset \mathcal{D}(\mathcal{A}_{X^{m,\xi,r}}) \subset \crz$. Let $f \in \crz$. Using \eqref{exprresolvante0}, two times Jensen's inequality, Fubini's theorem, and $m \in L^2(\mathbb{R})$ (from \eqref{mheavytailed}) we get 
\begin{align*}
\int_{\mathbb{R}} \left | U_{\alpha}f(x) \right |^2 dx & \leq \frac1{r} \int_{\mathbb{R}} \int_0^{\infty} e^{-rs} \mathbb{E} \left [ \left | e^{-\alpha A_x(s)} (fm)(x+\xi(s)) \right |^2 \right ] ds dx \\
& \leq \frac{\| f \|_{\infty}^2}{r} \int_0^{\infty} e^{-rs} \mathbb{E} \left [ \int_{\mathbb{R}} \left | m(x+\xi(s)) \right |^2 dx \right ] ds = \frac{\| f \|_{\infty}^2 \|m\nr^2}{r^2} <\infty. 
\end{align*}
Therefore $U_{\alpha}(\crz)\subset L^2(\mathbb{R})$. 

Now let $f,g \in \crz \cap L^2(\mathbb{R})$. Therefore $U_{\alpha}f,U_{\alpha}g,f$ and $g$ are all in $L^2(\mathbb{R}) \subset L^2(m)$. Using \eqref{exprresolvante0} we have 
\begin{align}
\psl U_{\alpha}f, g \psrm=\int_{\mathbb{R}} U_{\alpha}f(x)\overline{g(x)}m(x)dx=\int_{\mathbb{R}} \int_0^{\infty} e^{-rs} \mathbb{E} \left [ e^{-\alpha A_x(s)} (fm)(x+\xi(s))(\overline{g}m)(x) \right ] ds dx. \label{resselfadj1}
\end{align}
By $m \in L^2(\mathbb{R})$ and Cauchy-Schwartz inequality, $\overline{g}m \in L^1(\mathbb{R})$, so $\int_{\mathbb{R}} \int_0^{\infty} e^{-rs} \mathbb{E} [ |e^{-\alpha A_x(s)} (fm)(x+\xi(s))(\overline{g}m)(x)| ] ds dx \leq \| f \|_{\infty} \|m\|_{\infty} \|\overline{g}m\|_{L^1(\mathbb{R})}/r<\infty$. We can thus use Fubini's theorem in \eqref{resselfadj1} and get 
\begin{align}
\psl U_{\alpha}f, g \psrm=\int_0^{\infty} e^{-rs} \mathbb{E} \left [ \int_{\mathbb{R}} e^{-\alpha A_x(s)} (fm)(x+\xi(s))(\overline{g}m)(x) dx \right ] ds. \label{resselfadj2}
\end{align}
For any $s>0$ let us define the process $\xi^s$ on $[0,s]$ by $\xi^s(s):=-\xi(s)$ and $\xi^s(u):=\xi((s-u)-)-\xi(s)$ when $u \in [0,s)$. By Lemma II.2 of \cite{Bertoin}, $(\xi^s(u))_{u \in [0,s]}$ is a L\'evy process equal in law to $(-\xi(u))_{u \in [0,s]}$ which, by the symmetry of $\xi$, is equal in law to $(\xi(u))_{u \in [0,s]}$. Using \eqref{defchgttps}, the change of variable $v=s-u$, that $\xi$ is continuous at almost every time, and the definition of $\xi^s$, we see that we have a.s. for all $x \in \mathbb{R}$, 
\[ A_x(s) = \int_0^{s} m(x+\xi(s)+\xi((s-u)-)-\xi(s))du = \int_0^{s} m(x-\xi^s(s)+\xi^s(u))du = \tilde A_{x-\xi^s(s)}(s), \]
where we have set $\tilde A_{y}(t):= \int_0^{t} m(y+\xi^s(u))du$ for any $y \in \mathbb{R}$ and $t \in [0,s]$. Plugging this in \eqref{resselfadj2} we get 
\begin{align}
\psl U_{\alpha}f, g \psrm & =\int_0^{\infty} e^{-rs} \mathbb{E} \left [ \int_{\mathbb{R}} e^{-\alpha \tilde A_{x-\xi^s(s)}(s)} (fm)(x-\xi^s(s))(\overline{g}m)(x) dx \right ] ds \label{resselfadj3} \\
& =\int_0^{\infty} e^{-rs} \mathbb{E} \left [ \int_{\mathbb{R}} e^{-\alpha \tilde A_{y}(s)} (fm)(y)(\overline{g}m)(y+\xi^s(s)) dy \right ] ds. \nonumber
\end{align}
Since $(\xi^s(u))_{u \in [0,s]}$ is equal in law to $(\xi(u))_{u \in [0,s]}$ we have $\mathbb{E}[F(\tilde A_{\cdot}(s),\xi^s(s))]=\mathbb{E}[F(A_{\cdot}(s),\xi(s))]$ for any integrable function $F$. \eqref{resselfadj3} thus becomes 
\begin{align*}
\psl U_{\alpha}f, g \psrm =\int_0^{\infty} e^{-rs} \mathbb{E} \left [ \int_{\mathbb{R}} e^{-\alpha A_{y}(s)} (fm)(y)(\overline{g}m)(y+\xi(s)) dx \right ] ds = \psl f, U_{\alpha}g \psrm, 
\end{align*}
where, for the last equality, we have used \eqref{resselfadj2} but where the roles of $f$ and $g$ are switched. 
\end{proof}

The following lemma shows that $P_t$ is self-adjoint for $\psl \cdot , \cdot \psrm$ on $\crz \cap L^2(\mathbb{R}) \subset L^2(m)$. 
\begin{lemme} \label{ptautoadj}
Assume that $r>0$ and \eqref{mheavytailed} holds. We have $\psl P_t.f, g \psrm=\psl f, P_t.g \cdot \psrm$ for all $t\geq 0$ and $f,g \in \crz \cap L^2(\mathbb{R})$. 
\end{lemme}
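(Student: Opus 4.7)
Proof proposal for Lemma \ref{ptautoadj}:

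The plan is to transfer the self-adjointness from the resolvent operators $U_{\alpha}$ (already established in Lemma \ref{resselfadj}) to the semigroup operators $P_t$, using Laplace inversion. Fix $f,g \in \crz \cap L^2(\mathbb{R})$ and set $\phi(t):=\psl P_t.f,g\psrm - \psl f,P_t.g\psrm$ for $t\geq 0$. My goal is to show that $\phi$ is continuous on $[0,\infty)$, has Laplace transform identically zero on $(0,\infty)$, hence is itself identically zero.

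First I would establish the continuity of $t\mapsto \psl P_t.f,g\psrm$ on $[0,\infty)$. Since $(P_t)_{t\geq 0}$ is a Feller semigroup (Proposition \ref{fellerity}), strong continuity at $0$ together with the contraction property gives $\|P_{t+h}.f-P_t.f\|_\infty \leq \|P_{|h|}.f-f\|_\infty\to 0$ as $h\to 0$, so $P_{t+h}.f\to P_t.f$ pointwise with the uniform bound $|P_{t+h}.f(x)|\leq \|f\|_\infty$. Combined with $|g(x)|m(x)\in L^1(\mathbb{R})$ (Cauchy--Schwartz, since $g,m\in L^2(\mathbb{R})$ by hypothesis and \eqref{mheavytailed}), dominated convergence yields the continuity of $t\mapsto \psl P_t.f,g\psrm$, and symmetrically for $\psl f,P_t.g\psrm$. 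Therefore $\phi$ is continuous and bounded on $[0,\infty)$ by $2\|f\|_\infty\|gm\|_{L^1(\mathbb{R})}$.

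Next I would compute the Laplace transform of $\phi$. For $\alpha>0$, the bound
\[
\int_0^\infty e^{-\alpha t}\int_{\mathbb{R}} |P_t.f(x)|\,|g(x)|\,m(x)\,dx\,dt \;\leq\; \frac{\|f\|_\infty\|gm\|_{L^1(\mathbb{R})}}{\alpha} <\infty
\]
allows Fubini's theorem to exchange the order of integration in
\[
\psl U_\alpha f,g\psrm = \int_{\mathbb{R}} \left(\int_0^\infty e^{-\alpha t} P_t.f(x)\,dt\right)\overline{g(x)}\,m(x)\,dx = \int_0^\infty e^{-\alpha t}\psl P_t.f,g\psrm\,dt,
\]
and similarly $\psl f,U_\alpha g\psrm = \int_0^\infty e^{-\alpha t}\psl f,P_t.g\psrm\,dt$. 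Lemma \ref{resselfadj} gives $\psl U_\alpha f,g\psrm=\psl f,U_\alpha g\psrm$, whence $\int_0^\infty e^{-\alpha t}\phi(t)\,dt=0$ for every $\alpha>0$.

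Finally, since $\phi$ is continuous and bounded on $[0,\infty)$ and its Laplace transform vanishes on $(0,\infty)$, uniqueness of the Laplace transform (for instance, via Stone--Weierstrass applied to the algebra $\{e^{-\alpha t}\}_{\alpha>0}$ on $[0,\infty)$, or via an elementary argument showing that the measure $\phi(t)\,dt$ has vanishing Laplace transform and is therefore zero) forces $\phi\equiv 0$, which is the desired identity. The argument for $t=0$ is trivial since $P_0$ is the identity. I do not anticipate any serious obstacle here: the Fubini and dominated convergence steps both rest on the simple observation that $gm\in L^1(\mathbb{R})$, which compensates for the fact that $m$ itself need not be integrable under \eqref{mheavytailed}.
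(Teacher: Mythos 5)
Your proof is correct, and it takes a genuinely different route from the paper's while resting on the same key ingredient, Lemma \ref{resselfadj}. The paper transfers self-adjointness from the resolvent to the semigroup via the Yosida (backward-Euler) approximation: iterating Lemma \ref{resselfadj} gives $\psl (\tfrac{n}{t}U_{n/t})^n f, g\psrm = \psl f, (\tfrac{n}{t}U_{n/t})^n \overline{g}\psrm$, and then formula (1.31) of \cite{levymatters3} supplies the uniform convergence $(\tfrac{n}{t}U_{n/t})^n h \to P_t.h$ in $\crz$ for $h\in\crz$, so one passes to the limit directly for each fixed $t>0$ using $fm,gm\in L^1(\mathbb{R})$. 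You instead set $\phi(t):=\psl P_t.f,g\psrm-\psl f,P_t.g\psrm$, establish that $\phi$ is bounded and continuous on $[0,\infty)$ (strong continuity plus contraction plus $gm\in L^1(\mathbb{R})$), show by Fubini that its Laplace transform equals $\psl U_\alpha f,g\psrm-\psl f,U_\alpha g\psrm=0$ for all $\alpha>0$, and invoke uniqueness of the Laplace transform. Your route is somewhat more elementary and self-contained, trading the reliance on the cited Yosida-approximation formula for a standard Lerch-type uniqueness theorem; the paper's route is more direct in that it produces the identity at each fixed $t$ by an explicit limit rather than as a consequence of a global inversion argument. The supporting estimates (pointwise domination, $gm\in L^1(\mathbb{R})$, the $\leq \|f\|_\infty\|gm\|_{L^1(\mathbb{R})}/\alpha$ bound) are all handled correctly.
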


\begin{proof}
The claim is obvious for $t=0$ so let us assume $t>0$ and fix $f,g \in \crz \cap L^2(\mathbb{R})$. By Lemma \ref{resselfadj} we get that for any $n \geq 1$, 
\begin{align}
\int_{\mathbb{R}} \left ( \left (\frac{n}{t} U_{n/t} \right )^nf \right )(x)\overline{g(x)}m(x)dx=\int_{\mathbb{R}} f(x)\left ( \left (\frac{n}{t} U_{n/t} \right )^n\overline{g} \right )(x)m(x)dx. \label{autoadjapproxsg}
\end{align}
Since $f$ and $\overline{g}$ are in $\crz$, by (1.31) in \cite{levymatters3} we have that $(\frac{n}{t} U_{n/t})^nf$ and $(\frac{n}{t} U_{n/t})^n\overline{g}$ converge to respectively $P_t.f$ and $P_t.\overline{g}$ in $(\crz, \| \cdot \|_{\infty})$. Since, by $m \in L^2(\mathbb{R})$ (from \eqref{mheavytailed}) and Cauchy-Schwartz inequality, $\overline{g}m$ and $fm$ are in $L^1(\mathbb{R})$ we can let $n$ go to infinity on both sides of \eqref{autoadjapproxsg} and get $\int_{\mathbb{R}} (P_t.f)(x)\overline{g(x)}m(x)dx=\int_{\mathbb{R}} f(x)(P_t.\overline{g})(x)m(x)dx$. By Lemma \ref{imptinl2m}, $P_t.f$ and $P_t.g$ are in $L^2(m)$ and, by assumption $f$ and $g$ are in $L^2(\mathbb{R}) \subset L^2(m)$. Therefore the last equality can be re-written as $\psl P_t.f, g \psrm=\psl f, P_t.g \cdot \psrm$. 
\end{proof}

\begin{appendix}
\section{Some facts about the L\'evy process $\xi$} \label{factsLP}

Recall that $\xi$ is a real symmetric L\'evy process and that $\psi_\xi(\cdot)$ denotes its characteristic exponent. We prove below some properties of $\xi$, sometimes assuming that \eqref{hypcaractexpol-1} is satisfied. 

\begin{lemme} \label{linkpsizpotential}
Under \eqref{hypcaractexpol-1} we have that, for any $r>0$, the potential measure $V^r_\xi(dx)$ (defined in Section \ref{notations}) has a continuous density $v^r_\xi(\cdot) \in L^1(\mathbb{R}) \cap \crz$, and for any $x \in \mathbb{R}$, 
\begin{align}
v^r_\xi(x) = \frac1{2\pi} \int_{\mathbb{R}} \frac{e^{ixy}}{r-\psi_\xi(y)} dy = \mathcal{F}^{-1} \left ( \frac1{r-\psi_\xi(2\pi \cdot)} \right ) (x). \label{linkpsizpotentialq}
\end{align}
In particular, for any $x \in \mathbb{R}$ we have $0 \leq v^r_\xi(x) \leq \|(- \psi_\xi(2\pi \cdot) + r)^{-1}\|_{L^1(\mathbb{R})}=v^{r}_\xi(0)$. 
\end{lemme}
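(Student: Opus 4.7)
The plan is to compute the characteristic function of the potential measure $V^r_\xi$, show that it is integrable on $\mathbb{R}$ thanks to \eqref{hypcaractexpol-1}, and then apply Fourier inversion to obtain the density together with all the claimed regularity and bounds.

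First I would show that the characteristic function of the finite measure $V^r_\xi(dx)$ equals $(r-\psi_\xi(\cdot))^{-1}$. By Fubini's theorem, for any $y \in \mathbb{R}$,
\[
\int_{\mathbb{R}} e^{iyx} V^r_\xi(dx) = \int_0^{\infty} e^{-rt} \mathbb{E}[e^{iy\xi(t)}] dt = \int_0^{\infty} e^{-rt} e^{t\psi_\xi(y)} dt = \frac{1}{r-\psi_\xi(y)},
\]
where the last equality uses that $\psi_\xi(y) \leq 0$ (which follows from the symmetry of $\xi$, making $\psi_\xi$ real, together with $|\mathbb{E}[e^{iy\xi(t)}]|\leq 1$). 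In particular $V^r_\xi$ is a finite measure of total mass $1/r$.

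Next I would check that $(r-\psi_\xi(\cdot))^{-1}\in L^1(\mathbb{R})$. Continuity of $\psi_\xi$ on $[-1,1]$ together with $r-\psi_\xi(y)\geq r>0$ handles the integrability on $[-1,1]$, while on $|y|>1$ the bound $|r-\psi_\xi(y)|\geq |\psi_\xi(y)|$ (again by $\psi_\xi$ real non-positive) combined with \eqref{hypcaractexpol-1} gives the integrability at infinity. Since $(r-\psi_\xi(\cdot))^{-1}$ is integrable, its inverse Fourier transform
\[
w(x):=\frac{1}{2\pi}\int_{\mathbb{R}}\frac{e^{ixy}}{r-\psi_\xi(y)}dy
\]
is well-defined and, by Riemann--Lebesgue, lies in $\crz$. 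Changing variables $y\mapsto 2\pi y$ also shows $w=\mathcal{F}^{-1}((r-\psi_\xi(2\pi\cdot))^{-1})$, with the convention \eqref{convfouriertransform0}.

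The third step is to identify $w$ with the density of $V^r_\xi$. Both $V^r_\xi$ and the finite complex measure $w(x)\,dx$ (finite because $w$ is bounded and we will check $w\in L^1$ in a moment) have the same characteristic function: for $V^r_\xi$ this was computed above, while for $w(x)\,dx$ it follows from the Fourier inversion theorem applied to the $L^1$ function $(r-\psi_\xi(\cdot))^{-1}$. By uniqueness of the Fourier transform on finite measures, $V^r_\xi(dx)=w(x)\,dx$, which proves \eqref{linkpsizpotentialq} with $v^r_\xi:=w\in\crz$. Since $V^r_\xi$ has mass $1/r<\infty$, $v^r_\xi$ is in fact in $L^1(\mathbb{R})$ with $\|v^r_\xi\|_{L^1(\mathbb{R})}=1/r$.

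Finally, the bounds follow easily. Non-negativity of $v^r_\xi$ holds pointwise by continuity since $v^r_\xi$ is (a.e.\ equal to) the density of a non-negative measure. For the upper bound, taking modulus inside \eqref{linkpsizpotentialq} and using that $r-\psi_\xi(y)>0$ is real gives, for any $x\in\mathbb{R}$,
\[
v^r_\xi(x)\leq\frac{1}{2\pi}\int_{\mathbb{R}}\frac{dy}{r-\psi_\xi(y)}=\|(- \psi_\xi(2\pi \cdot) + r)^{-1}\|_{L^1(\mathbb{R})},
\]
after the substitution $y\mapsto 2\pi y$. Specializing \eqref{linkpsizpotentialq} at $x=0$ shows that this common upper bound is exactly $v^r_\xi(0)$, giving the claimed inequality. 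The only delicate point is the $L^1$ integrability of $(r-\psi_\xi(\cdot))^{-1}$ near $\pm\infty$, which is precisely where \eqref{hypcaractexpol-1} enters; everything else is a routine application of Fourier inversion and Fubini.
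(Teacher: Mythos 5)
Your proposal takes essentially the same route as the paper: obtain the characteristic function of the potential measure $V^r_\xi$, observe that \eqref{hypcaractexpol-1} makes it integrable, invert Fourier to get the continuous density, and read off the formula and the pointwise bounds. The only difference is that the paper cites Proposition~37.4 of \cite{Sato} for the identity $\int e^{izx}V^r_\xi(dz)=(r-\psi_\xi(x))^{-1}$, whereas you re-derive it via Fubini and the definitions; both are fine, and your version is marginally more self-contained.

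One step of your argument is slightly circular as stated. You want to conclude $V^r_\xi(dx)=w(x)\,dx$ by ``uniqueness of the Fourier transform on finite measures,'' which presupposes that $w(x)\,dx$ is itself a finite measure, i.e.\ that $w\in L^1(\mathbb{R})$ --- yet you only establish $w\in L^1$ afterwards, as a consequence of the identification. Likewise, ``Fourier inversion applied to the $L^1$ function $(r-\psi_\xi(\cdot))^{-1}$'' gives you that $w=\mathcal F^{-1}\bigl((r-\psi_\xi(2\pi\cdot))^{-1}\bigr)\in\crz$, but not directly that $\mathcal F w=(r-\psi_\xi(2\pi\cdot))^{-1}$; the classical inversion theorem requires $w\in L^1$ to draw that conclusion. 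The clean way to phrase what both you and the paper are using is the inversion theorem for finite measures: a finite (positive) measure whose characteristic function is in $L^1(\mathbb{R})$ admits a bounded continuous density, equal to the (inverse) Fourier transform of that characteristic function; that density is then automatically nonnegative and in $L^1$ with $L^1$ norm equal to the total mass $1/r$. If you invoke this directly in place of the ``uniqueness'' argument, everything else in your write-up --- the Fubini computation, the splitting of the integrability check at $|y|=1$ using continuity near the origin and \eqref{hypcaractexpol-1} at infinity, the use of evenness of $\psi_\xi$ to ignore the sign in the exponent, and the bound $0\le v^r_\xi(x)\le v^r_\xi(0)=\|(-\psi_\xi(2\pi\cdot)+r)^{-1}\|_{L^1(\mathbb{R})}$ --- is correct and matches the paper.
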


As mentioned in the Introduction, the existence, under \eqref{hypcaractexpol-1}, of a continuous density for the potential measure is a consequence of the combination of Remark 43.6 and Theorem 43.5 from \cite{Sato}. However, let us provide a short and direct proof of Lemma \ref{linkpsizpotential} for the sake of completeness. 

\begin{proof}[Proof of Lemma \ref{linkpsizpotential}]
According to Proposition 37.4 of \cite{Sato} we have for any $r>0$ and $x \in \mathbb{R}$, 
\begin{align}
\int_{\mathbb{R}} e^{izx} V^r_\xi(dz) = \frac1{r-\psi_\xi(x)}. \label{fourierqpot}
\end{align}
Note that the convention used in \cite{Sato} for the Fourier transform is different from the one we use here, which is why we do not state \eqref{fourierqpot} in term of $\mathcal{F}$. From \eqref{hypcaractexpol-1} we see that for any $r>0$ the right hand side of \eqref{fourierqpot} is in $L^1(\mathbb{R})$ so $V^r_\xi$ has a continuous density $v^r_\xi(\cdot)$ that satisfies 
\[ v^r_\xi(x) = \frac1{2\pi} \int_{\mathbb{R}} \frac{e^{ixy}}{r-\psi_\xi(y)} dy = \int_{\mathbb{R}} \frac{e^{2i\pi xz}}{r-\psi_\xi(2\pi z)} dz = \mathcal{F}^{-1} \left ( \frac1{r-\psi_\xi(2\pi \cdot)} \right ) (x), \]
where we have used the symmetry of $\xi$. This is precisely \eqref{linkpsizpotentialq}. Finally, since $v^r_\xi(\cdot)$ is the density of a finite measure we have $v^r_\xi(\cdot) \in L^1(\mathbb{R})$ and, by \eqref{linkpsizpotentialq}, $v^r_\xi(\cdot) \in \mathcal{F}^{-1}(L^1(\mathbb{R}))$ so $v^r_\xi(\cdot) \in \crz$. 
\end{proof}

\begin{lemme} \label{suppxiisr}
Under \eqref{hypcaractexpol-1} we have $Supp(\xi(s))=\mathbb{R}$ for any $s>0$. 
\end{lemme}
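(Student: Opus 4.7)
The plan is a two-step argument. First, I would show that $\xi(s)$ admits a continuous density on $\mathbb{R}$; second, I would combine this with symmetry and the classification of supports of one-dimensional infinitely divisible distributions to conclude.

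For the first step, note that by symmetry the characteristic exponent $\psi_\xi$ is real-valued and non-positive, so the characteristic function of $\xi(s)$ equals $\phi_s(y) = e^{s\psi_\xi(y)} = e^{-s|\psi_\xi(y)|}$. Using the elementary inequality $e^{-a} \leq 1/a$ for $a>0$ together with \eqref{hypcaractexpol-1}, one gets $|\phi_s(y)| \leq (s|\psi_\xi(y)|)^{-1}$ on $\{\psi_\xi \neq 0\}$, so that $\phi_s \in L^1(\mathbb{R})$ (integrability on $\{|y|\leq 1\}$ being automatic from $|\phi_s| \leq 1$). Fourier inversion then yields a bounded continuous density $p_s$ for the law $\mu_s := \mathcal{L}(\xi(s))$, so $\mu_s$ is in particular absolutely continuous with respect to Lebesgue measure. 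In addition, $p_s(0) = (2\pi)^{-1} \int_{\mathbb{R}} e^{-s|\psi_\xi(y)|} dy > 0$, so $p_s$ is positive on some open neighborhood of the origin.

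For the second step, $\mu_s$ is an infinitely divisible distribution on $\mathbb{R}$ which is symmetric around $0$. The classification of supports of one-dimensional infinitely divisible distributions (see Chapter 5 of \cite{Sato}) asserts that $\mathrm{Supp}(\mu_s)$ must be one of: $\mathbb{R}$, a closed half-line $[a,\infty)$ or $(-\infty, a]$, a singleton, or a (one-sided or two-sided) translate of a lattice $a\mathbb{Z}$. Absolute continuity of $\mu_s$ rules out the singleton and the lattice cases, since the corresponding supports have zero Lebesgue measure. Symmetry rules out the two half-line cases, since a closed half-line invariant under $x \mapsto -x$ must coincide with $\mathbb{R}$. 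Therefore $\mathrm{Supp}(\mu_s) = \mathbb{R}$, as desired.

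The only nontrivial content is the $L^1$ integrability bound in the first step, and even there the only mild care required is to handle the possible vanishing of $\psi_\xi$ near the origin (which is resolved by using the trivial bound $|\phi_s|\leq 1$ on a neighborhood of $0$). The second step is then essentially classification-plus-symmetry bookkeeping, so no serious obstacle is anticipated.
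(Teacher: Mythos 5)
Your Step 1 is correct: symmetry makes $\psi_\xi$ real and non-positive, the bound $e^{-a}\leq 1/a$ together with \eqref{hypcaractexpol-1} gives $e^{s\psi_\xi(\cdot)}\in L^1(\mathbb{R})$, and Fourier inversion yields a bounded continuous density for $\xi(s)$. The gap is in Step 2: the ``classification'' you invoke is not a true theorem for general one-dimensional infinitely divisible distributions, and Chapter 5 of \cite{Sato} does not contain it in that form. For instance, the compound Poisson distribution whose L\'evy measure is the uniform distribution on $[1,2]$ has support $\{0\}\cup[1,\infty)$, which is neither $\mathbb{R}$, nor a half-line, nor a singleton, nor a translate of a lattice. (The statement you have in mind holds for \emph{selfdecomposable} laws, but $\xi(s)$ need not be selfdecomposable here.) As written, your case analysis therefore does not exhaust the possibilities, so the argument is incomplete.

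The conclusion you are after can nevertheless be rescued along your lines: Sato's actual support theorems describe $\mathrm{Supp}(\xi(s))$, for finite-variation (type A or B) processes, as the drift plus the smallest closed additive semigroup containing $\mathrm{Supp}(\Pi_\xi)\cup\{0\}$; by symmetry this semigroup is stable under $x\mapsto -x$, hence is a closed subgroup of $\mathbb{R}$, i.e.\ $\{0\}$, $a\mathbb{Z}$, or $\mathbb{R}$, and your absolute continuity then forces $\mathbb{R}$; the remaining (type C) case gives $\mathbb{R}$ directly. But note that the paper's own proof is much shorter and bypasses the density computation entirely: as observed in the introduction, \eqref{hypcaractexpol-1} already forces $\xi$ to be of type C in the sense of Definition 11.9 of \cite{Sato} (otherwise $|\psi_\xi(y)|=o(|y|)$ and the integral in \eqref{hypcaractexpol-1} would diverge), and Theorem 24.10 of \cite{Sato} states that a type C L\'evy process on $\mathbb{R}$ has $\mathrm{Supp}(\xi(s))=\mathbb{R}$ for every $s>0$. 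You should either cite that theorem or replace your classification step by the semigroup/subgroup argument above.
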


\begin{proof}
As explained a little below the statement of \eqref{hypcaractexpol-1}, that condition implies that $\xi$ is of type C in the sense of Definition 11.9 of \cite{Sato}. By Theorem 24.10 of \cite{Sato} we get $Supp(\xi(s))=\mathbb{R}$ for all $s>0$. 
\end{proof}

\begin{lemme} \label{xismallonint}
For any $t>0$ and $\epsilon>0$ we have $\mathbb{P}(\sup_{s \in [0,t]}|\xi(s)|<\epsilon)>0$. 
\end{lemme}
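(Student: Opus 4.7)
The idea is to use the L\'evy-It\^o decomposition of $\xi$ to split it into three independent pieces and force each to be small on a single positive-probability event. Fix $t, \epsilon > 0$. Thanks to the symmetry of $\Pi_\xi$, which makes the compensator integral $\int_{\alpha < |u| \le 1} u \, \Pi_\xi(du)$ vanish for every $\alpha \in (0,1)$, for any $\alpha \in (0, \epsilon/2)$ we can write $\xi(s) = \sigma B(s) + M(s) + L(s)$ for $s \in [0,t]$, where $B$ is a standard Brownian motion (with $\sigma^2 = A_\xi$), $M(s) = \int_0^s \int_{|u| \le \alpha} u \, \tilde N(dr, du)$ is a zero-mean square-integrable martingale with $\mathbb{E}[M(s)^2] = s \int_{|u| \le \alpha} u^2 \Pi_\xi(du)$, and $L$ is the compound Poisson process collecting the jumps of $\xi$ of magnitude strictly larger than $\alpha$, whose rate $\lambda := \Pi_\xi(\{|u| > \alpha\})$ is finite thanks to $\int (1 \wedge u^2) \Pi_\xi(du) < \infty$. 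These three pieces are mutually independent by standard properties of the L\'evy-It\^o decomposition.

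The choice of $\alpha$ is tuned to make $M$ controllable by Doob's inequality: since $\int_{|u| \le 1} u^2 \Pi_\xi(du) < \infty$, dominated convergence yields $\int_{|u| \le \alpha} u^2 \Pi_\xi(du) \to 0$ as $\alpha \to 0^+$, so one may fix $\alpha \in (0, \epsilon/2)$ with $t \int_{|u| \le \alpha} u^2 \Pi_\xi(du) < \epsilon^2/8$. Introduce the events $A = \{L \equiv 0 \text{ on } [0,t]\}$, $E_1 = \{\sup_{[0,t]}|\sigma B| < \epsilon/2\}$, and $E_2 = \{\sup_{[0,t]}|M| < \epsilon/2\}$. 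On $A \cap E_1 \cap E_2$ one has $\xi = \sigma B + M$ on $[0,t]$, hence $\sup_{[0,t]}|\xi| \le \sup|\sigma B| + \sup|M| < \epsilon$. By independence of $B, M, L$, $\mathbb{P}(\sup_{[0,t]}|\xi|<\epsilon) \ge \mathbb{P}(A) \mathbb{P}(E_1) \mathbb{P}(E_2)$, so it suffices to check that each factor is positive.

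For $A$: $\mathbb{P}(A) = e^{-\lambda t} > 0$ from the Poisson law of the number of jumps of $L$. For $E_1$: if $\sigma = 0$ then $\mathbb{P}(E_1) = 1$, and otherwise positivity is the classical small-ball property of Brownian motion, which is a standard fact (provable via explicit Fourier series expansion of the Dirichlet heat kernel on $(-\epsilon/(2\sigma), \epsilon/(2\sigma))$) and I take it for granted. For $E_2$: by Doob's $L^2$ maximal inequality applied to the martingale $M$, $\mathbb{P}(\sup_{[0,t]}|M| \ge \epsilon/2) \le \mathbb{E}[M(t)^2]/(\epsilon/2)^2 = 4t \int_{|u|\le\alpha} u^2 \Pi_\xi(du)/\epsilon^2 < 1/2$, so $\mathbb{P}(E_2) > 1/2$. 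Combining, $\mathbb{P}(\sup_{[0,t]}|\xi|<\epsilon) > 0$. The only non-trivial external input is Brownian small-ball positivity; the rest is the L\'evy-It\^o decomposition, the symmetry hypothesis on $\xi$, and Doob's inequality.
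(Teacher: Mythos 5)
Your proof is correct and follows essentially the same route as the paper's: Lévy--Itô decomposition at a small jump threshold, Doob's $L^2$ maximal inequality to control the compensated small-jump martingale, the no-jump event for the large-jump compound Poisson part, and the Brownian small-ball property, combined by independence. The only (welcome) extra detail is your explicit remark that symmetry of $\Pi_\xi$ kills the compensator between the two cutoffs, which the paper uses implicitly.
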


\begin{proof}
Let us fix $\eta>0$. By the L\'evy-Ito decomposition we have 
\begin{align}
\xi(s) & = \sqrt{A_{\xi}} W(s) + \int_0^s \int_{[-\eta,\eta]} z \tilde M_1(ds,dz) + \int_0^s \int_{\mathbb{R}\setminus [-\eta,\eta]} z M_2(ds,dz), \label{levyitoxieta}
\end{align}
where the three terms are independent. $W$ is a standard Brownian motion. $M_1(ds,dz)$ (resp. $M_2(ds,dz)$) is a Poisson random measure on $[0,\infty) \times [-\eta,\eta]$ (resp. $[0,\infty) \times (\mathbb{R}\setminus [-\eta,\eta])$) with intensity measure $\textbf{1}_{z \in [-\eta,\eta]} ds \times \Pi_{\xi}(dz)$ (resp. $\textbf{1}_{z \notin [-\eta,\eta]} ds \times \Pi_{\xi}(dz)$), and $\tilde M_1(ds,dz) := M_1(ds,dz) - \textbf{1}_{z \in [-\eta,\eta]}ds \times \Pi_{\xi}(dz)$. 

It is well-known that $\mathbb{P}(\sup_{s \in [0,t]}|W(s)|<\epsilon/2\sqrt{A_{\xi}})>0$. For the second term in the right-hand side of \eqref{levyitoxieta}, Doob's martingale inequality yields 
\[ \mathbb{E} \left [ \left ( \sup_{s \in [0,t]} \int_0^s \int_{[-\eta,\eta]} z \tilde M_1(ds,dz) \right )^2 \right ] \leq 4\mathbb{E} \left [ \left ( \int_0^{t} \int_{[-\eta,\eta]} z \tilde M_1(ds,dz) \right )^2 \right ] =4t\int_{[-\eta,\eta]}u^2\Pi_{\xi}(du). \]
Since $\int_{\mathbb{R}} (1 \wedge u^2) \Pi_{\xi}(du)<\infty$, the right-hand side goes to $0$ as $\eta$ goes to $0$. For $\eta$ chosen small enough we thus have $\mathbb{P}(\sup_{s \in [0,t]}|\int_0^s \int_{[-\eta,\eta]} z \tilde M_1(ds,dz)|<\epsilon/2)>0$. Finally, with probability $e^{-t\Pi_{\xi}(\mathbb{R}\setminus [-\eta,\eta])}$, the third term in the right-hand side of \eqref{levyitoxieta} is null for all $s \in [0,t]$. We conclude that $\mathbb{P}(\sup_{s \in [0,t]}|\xi(s)|<\epsilon)>0$. 
\end{proof}

\section{Fourier duality for the partition function: Proof of Remark \ref{altexpr}} \label{proofaltexpr}

We work under the assumptions of Remark \ref{altexpr} and the choice $\pot:=-\log(v^r_\xi(\cdot))$ and $\pen:=-\log(m(\cdot))$. $v^r_\xi(\cdot) \in \crz$ by Lemma \ref{linkpsizpotential} and $m \in L^1(\mathbb{R})$ so, for all $n\geq 2$, $\partfct_n$ is well-defined and we have 
\begin{align}
\partfct_n = \int_{\mathbb{R}^n} v^r_\xi(y_2-y_1) \dots v^r_\xi(y_{n}-y_{n-1}) v^r_\xi(y_1-y_{n}) m(y_1)\dots m(y_n) dy_1\dots dy_n. \label{defzntransva} 
\end{align}
Using \eqref{defzntransva}, Lemma \ref{linkpsizpotential}, Fubini's theorem, and the definitions of $\hat m(\cdot)$ and $\hat \partfct_n$ we get 
\begin{align*}
\partfct_n = & \frac1{(2\pi)^n} \int_{\mathbb{R}^n} \left ( \int_{\mathbb{R}^n} \frac{e^{i(y_2-y_1)z_1}\times \dots \times e^{i(y_{n}-y_{n-1})z_{n-1}} \times e^{i(y_{1}-y_{n})z_{n}}}{(r-\psi_\xi(z_1)) \times \dots \times (r-\psi_\xi(z_n))} m(y_1)\dots m(y_n) dz_1\dots dz_n \right ) dy_1\dots dy_n \\
= & \frac1{(2\pi)^n} \int_{\mathbb{R}^n} \left ( \int_{\mathbb{R}^n} \frac{e^{-i(z_1-z_n)y_1} \times e^{-i(z_2-z_1)y_2} \times \dots \times e^{-i(z_{n}-z_{n-1})y_{n}}}{(r-\psi_\xi(z_1)) \times \dots \times (r-\psi_\xi(z_n))} m(y_1)\dots m(y_n) dy_1\dots dy_n \right ) dz_1\dots dz_n \\
= & \frac1{(2\pi)^n} \int_{\mathbb{R}^n} \frac{\hat m(z_1-z_n) \times \hat m(z_2-z_1) \times \dots \times \hat m(z_{n}-z_{n-1})}{(r-\psi_\xi(z_1)) \times \dots \times (r-\psi_\xi(z_n))} dz_1\dots dz_n = \frac{\hat \partfct_n}{(2\pi)^n}. 
\end{align*}

\section{Removal of periodic boundary condition: Proof of Remark \ref{perboundcond}} \label{proofremovboundcond}

We assume that the assumptions of Theorem \ref{encadrementtransfreeenergythtrans} are satisfied and set $\pot:=-\log(v^r_\xi(\cdot))$ and $\pen:=-\log(m(\cdot))$ in the definitions \eqref{hamilt} and \eqref{perboundcondexpr}. 
From Lemma \ref{linkpsizpotential} we have $0 \leq v^r_\xi(y) \leq v^{r}_\xi(0)$ for any $y \in \mathbb{R}$ so $\partfct_n \leq v^{r}_\xi(0) \partfct_{n}^f$. Recall from the combination of Lemma \ref{traces2new}, Remark \ref{mult1equiv}, and Proposition \ref{exproftracesrec} that $\partfct_n \sim (v^{r}_\xi(0)\lambda^R_{1})^n$ for large $n$. We thus get that $\liminf_{n \rightarrow \infty}\partfct_{n}^f/(v^{r}_\xi(0)\lambda^R_{1})^n>0$. 

Let us define $g_0:=\int_{\mathbb{R}} m(y) \phi^{r}_{y} dy$, where $\phi^{r}_{y}$ is defined in Section \ref{defopbasprop}. We also define the operator $H_r \in \mathcal{L}(L^2((-1,1)))$ by $H_r.f := \psl f,g_0 \psri g_0$. Let $n\geq 4$ and repeat the proof of Lemma \ref{traces2new} to compute $Tr(H_r.R_{r}^{n-2})$. We get $Tr(H_r.R_{r}^{n-2})=\sum_{j \geq 1} (\lambda_{j}^R)^{n-2} |\psl g_0,a_j \psri|^2$, with $(a_j)_{j \geq 1}$ as in Section \ref{basiseigfctgen}. Repeating the proof of Proposition \ref{exproftracesrec} we get $Tr(H_r.R_{r}^{n-2})=\partfct_{n}^f/v^{r}_\xi(0)^{n-1}$. Combining both expressions we get $\partfct_{n}^f/v^{r}_\xi(0)^{n-1}=\sum_{j \geq 1} (\lambda_{j}^R)^{n-2} |\psl g_0,a_j \psri|^2$. Recall from Remark \ref{mult1equiv} that the eigenvalue $\lambda_{1}^R$ of $R_{r}$ has multiplicity $1$ so, if $|\psl g_0,a_1 \psri|^2=0$ then, proceeding as in the proof of Lemma \ref{traces2new} we get $\partfct_{n}^f/v^{r}_\xi(0)^{n-1} = o((\lambda^R_{1})^n)$, which contradicts $\liminf_{n \rightarrow \infty}\partfct_{n}^f/(v^{r}_\xi(0)\lambda^R_{1})^n>0$. Therefore $|\psl g_0,a_1 \psri|^2>0$ so, proceeding as in the proof of Lemma \ref{traces2new} we get $\partfct_{n}^f/v^{r}_\xi(0)^{n-1} \sim |\psl g_0,a_1 \psri|^2 (\lambda^R_{1})^{n-2}$. Therefore $\lim_{n \rightarrow \infty} \partfct_{n}/\partfct_{n}^f = v^{r}_\xi(0)(\lambda^R_{1})^2/|\psl g_0,a_1 \psri|^2 \in (0,\infty)$, and the result follows. 

\end{appendix}

\textbf{Acknowledgments:} This paper is supported by NSFC grant No. 11688101. The author is grateful to Professor Fuzhou Gong and to Eric Endo for interesting discussions and references. 

\bibliographystyle{plain}
\bibliography{thbiblio}

\end{document}